\newcommand{\CC}{{\mathbb{C}}}
\newcommand{\FF}{{\mathbb{F}}}
\newcommand{\cE}{{\mathcal{E}}}
\newcommand{\Id}{{\operatorname{Id}}}
\newcommand{\Irr}{{\operatorname{Irr}}}
\newcommand{\reg}{{\operatorname{reg}}}
\newcommand{\Syl}{{\operatorname{Syl}}}
\newcommand\GL{{\operatorname{GL}}}
\newcommand\PGL{{\operatorname{PGL}}}
\newcommand\SL{{\operatorname{SL}}}
\newcommand\PSL{{\operatorname{PSL}}}
\newcommand\GO{{\operatorname{GO}}}
\newcommand\PGO{{\operatorname{PGO}}}
\newcommand\SO{{\operatorname{SO}}}
\newcommand\POm{{\operatorname{P\Omega}}}
\newcommand\PGU{{\operatorname{PGU}}}
\newcommand\PSU{{\operatorname{PSU}}}
\newcommand\SU{{\operatorname{SU}}}
\newcommand\PSp{{\operatorname{PSp}}}
\newcommand\Sp{{\operatorname{Sp}}}
\newcommand\Spin{{\operatorname{Spin}}}
\newcommand\GU{{\operatorname{GU}}}
\newcommand{\up}{^{-1}}
\newcommand{\tw}[1]{{}^#1\!}
\newcommand{\diag}{{\rm diag}}
\newcommand{\flhalfn}{{\lfloor n/2\rfloor}}
\let\lan=\langle
\let\ran=\rangle
\let\al=\alpha
\let\eps=\varepsilon
\let\lam=\lambda
\let\om=\omega
\let\si=\sigma
\let\ep=\varepsilon
\def\ir{irreducible }
\newtheorem{thm}{Theorem}[section]
\newtheorem{lem}[thm]{Lemma}
\newtheorem{cor}[thm]{Corollary}
\newtheorem{prop}[thm]{Proposition}
\theoremstyle{definition}
\newtheorem{rem}[thm]{Remark}
\begin{document}

\title{Steinberg-like characters for finite simple groups}

\date{\today}

\author{Gunter Malle}
\address{FB Mathematik, TU Kaiserslautern, Postfach 3049,
  67653 Kaisers\-lautern, Germany}
  \makeatletter\email{malle@mathematik.uni-kl.de}\makeatother
\author{Alexandre Zalesski}
\address{Department of Physics, Informatics and Mathematics, National Academy
  of Sciences of Belarus, Minsk, Belarus}
  \makeatletter\email{alexandre.zalesski@gmail.com}\makeatother

\thanks{The first author gratefully acknowledges financial support by SFB TRR 195.}

\keywords{characters of projective indecomposable modules, Steinberg-like characters}

\subjclass[2010]{20C15,20C30,20C33}

\begin{abstract}
Let $G$ be a finite group and, for a prime $p$, let $S$ be a Sylow
$p$-subgroup of $G$. A character $\chi$ of $G$ is called $\Syl_p$-regular if
the restriction of $\chi$ to $S$ is the character of the regular
representation of $S$. If, in addition, $\chi$ vanishes at all elements of
order divisible by $p$, $\chi$ is said to be Steinberg-like. For every finite
simple group $G$ we determine all primes $p$ for which $G$ admits a
Steinberg-like character, except for alternating groups in characteristic~2.
Moreover, we determine all primes for which $G$ has a projective $FG$-module
of dimension $|S|$, where $F$ is an algebraically closed field of
characteristic~$p$.
\end{abstract}

\maketitle

%%\pagestyle{myheadings}
%%\markboth{}{}
%%\markboth{for personal use only}{preliminary}

%%file name: stlike-2016-06-14\\
%%G: date 21.9.2012\  G: date 19.11.13\\
%%A: date 30.11.13\  A: date 14.01.2014\ A: date 09.06.2014\\
%%Alex: 4.1.2017\ \ \  G: 28.2.17\ \ \
%%Alex 10.6.2017\ \ \  G: 29.9.17\ \ \
%A: 17.10.17\ \ \ G: 15.11.17
% G: 11.12.17\ \ \ A: 20.12.17\ \ \ G: 20.12.17

%%%%%%%%%%%%%%%%%%%%%%%%%%%%%%%%%%%%%%%%%%%%%%%%%%%%%%%%%%%%%%%%%%%%%%%%%
\section{Introduction}
Let $G$ be a finite group and, for a prime $p$, let $S$ be a Sylow $p$-subgroup
of $G$. A character $\chi$ of $G$ is called $\Syl_p$-\emph{vanishing} if
$\chi(u)=0$ for every $1\neq u\in S$; and if, additionally, $\chi(1)=|S|$
then we say that $\chi$ is $\Syl_p$-\emph{regular}.
If $\chi(g)=0$ whenever $|g|$ is divisible by $p$ then $\chi$ is called
$p$-\emph{vanishing}; and if, additionally, $\chi(1)=|S|$ then we say that
$\chi$ is \emph{Steinberg-like}. Steinberg-like and Syl$_p$-regular characters
for Chevalley groups in defining characteristic $p$ are studied in \cite{PZ}.
Specifically, for all simple groups of Lie type in characteristic $p$ except
$B_n(q)$, $n=3,4,5$, and $D_n(q)$, $n=4,5$, the Steinberg-like characters
for the prime $p$ have been determined in \cite{PZ}.

Our main motivation to study this kind of characters is their connection
with characters of projective indecomposable modules. The study of projective
indecomposable modules of dimension $|S|$ was initiated by Malle and Weigel
\cite{MW}; they obtained a full classification of such modules for arbitrary
finite simple groups $G$ assuming that the character of the module has the
trivial character $1_G$ as a constituent. In \cite{Z11}, this restriction was
removed for simple groups of Lie type with defining characteristic $p$. Some
parts of the proofs there were valid not only for characters of projective
modules, but also for Steinberg-like or even $\Syl_p$-regular characters.

In this paper we complete the classification of projective indecomposable
modules of dimension $|S|$ for simple groups $G$.
The first main result is a classification of Steinberg-like characters for
simple groups, with the sole exception of alternating groups for the prime
$p=2$:
%% and groups of Lie type in their defining characteristic, discussed above:

\begin{thm}   \label{th1}
 Let $G$ be a finite non-abelian simple group, $p$ a prime dividing $|G|$ and
 let $\chi$ be a Steinberg-like character of $G$ with respect to~$p$. Then one
 of the following holds:
 \begin{itemize}
  \item[\rm(1)] $\chi$ is irreducible, and the triple $(G,p,\chi(1))$ is as in
   Proposition~$\ref{prop:ir2}$;
  \item[\rm(2)] Sylow $p$-subgroups of $G$ are cyclic and $(G,p,\chi(1))$ is as
   in Proposition~$\ref{ms4}$;
  \item[\rm(3)] $G$ is of Lie type in characteristic~$p$ (see {\rm\cite{PZ}});
  \item[\rm(4)] $p=2$ and $G=\PSL_2(q)$ with $q+1=2^k$; or
  \item[\rm(5)] $p=2$ and $G=A_n$, $n\ge8$.
 \end{itemize}
\end{thm}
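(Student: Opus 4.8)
The proof proceeds by the classification of finite simple groups, and in each case rests on the basic observation that a Steinberg-like character is a virtual projective character. Recall that the generalized characters of $G$ vanishing on all elements of order divisible by $p$ are exactly the $\mathbb{Z}$-linear combinations $\chi=\sum_\phi a_\phi\Phi_\phi$ of the characters $\Phi_\phi$ of the projective indecomposable $FG$-modules, and that the restriction of a projective module to $S$ is free, so $|S|$ divides $\Phi_\phi(1)$ for every $\phi$. Writing $\Phi_\phi(1)=b_\phi|S|$ with $b_\phi\ge1$, the degree condition $\chi(1)=|S|$ becomes $\sum_\phi a_\phi b_\phi=1$; since in addition $\chi$ is an honest character, this severely restricts the multiset of irreducible constituents of $\chi$. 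In particular one has the dichotomy: either $\chi$ is irreducible, or $\chi$ is reducible and every irreducible constituent of $\chi$ has degree strictly less than $|S|$, each subject to the vanishing condition $\chi(g)=0$ for all $p$-singular $g$.

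The first step is to dispose of the two uniform cases. If $\chi$ is irreducible, one proves Proposition~\ref{prop:ir2}: requiring an irreducible character to have degree exactly $|S|=|G|_p$ and to vanish on a Sylow $p$-subgroup is so restrictive that, apart from the Steinberg characters of groups of Lie type in characteristic~$p$, only a short explicit list of pairs $(G,p)$ survives, which is then checked directly; this is conclusion~(1). If $S$ is cyclic, Brauer tree theory makes the characters $\Phi_\phi$ completely explicit — their degrees are sums of ordinary degrees along edges of the tree — and one reads off all $\mathbb{Z}$-combinations of them that are genuine $p$-vanishing characters of minimal degree; this produces the list of Proposition~\ref{ms4}, i.e. conclusion~(2), and shows that whenever $S$ is cyclic we are in (1) or (2). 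Groups of Lie type in the defining characteristic are conclusion~(3), for which we invoke \cite{PZ}.

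It then remains to treat, for the other families, the case that $\chi$ is reducible and $S$ is non-cyclic. For the sporadic groups and the finitely many small exceptional configurations this is a direct computation with the known character tables and decomposition matrices, and every surviving example lies in (1) or (2). For the alternating groups $A_n$ with $p$ odd and $S$ non-cyclic (so $n\ge 2p$), one uses the hook-length description of the degrees of the irreducible characters of $S_n$ and $A_n$ together with their vanishing on permutations containing a $p$-cycle to exclude reducible Steinberg-like characters, leaving only (1); the prime $p=2$ for $A_n$ is exactly the case we do not settle, recorded as conclusion~(5), and here one must also keep track of the exceptional isomorphisms ($A_5\cong\PSL_2(4)$, $A_6\cong\PSL_2(9)$, $A_8\cong\PSL_4(2)$, so that $n=8$ is in fact covered by (3)). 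Finally, for groups of Lie type in cross characteristic $p\nmid q$ one combines Lusztig's parametrization of $\Irr(G)$ with the known lower bounds for the degrees of the nontrivial irreducible characters: for $q$ not too small those degrees already exceed $|G|_p$ outside a controlled list, and on the surviving characters one evaluates on a $p$-singular semisimple element lying in a maximal torus whose order is divisible by the relevant cyclotomic polynomial $\Phi_d(q)$ to see that no nontrivial sum of them can vanish there — the single exception being $G=\PSL_2(q)$ with $q$ odd and $q+1=2^k$, where a principal-series character $R_T^G(\theta)$ of degree $q+1=|S|$ (with $T$ the split torus and $\theta$ nontrivial) vanishes on the nonsplit torus, hence on all $2$-singular elements, giving conclusion~(4); the remaining small $q$ go into the computational case above.

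The main obstacle is the cross-characteristic Lie type analysis just described: it requires uniform control, across all types and ranks, of which sums of irreducible characters of degree at most $|G|_p$ can vanish on every $p$-singular class. The viable route is first to bound the degree, reducing to a short list of ``small'' characters via the classification of low-dimensional complex representations of quasi-simple groups of Lie type, and then to play the rigid values of these characters on well-chosen torus elements off against the cancellation forced by $p$-vanishing. A secondary, organisational difficulty is that the five conclusions genuinely overlap — for instance $\PSL_2(7)\cong\GL_3(2)$ lies in both (3) and (4), and the principal-series examples in (4) are also instances of (1) — which we accommodate by not requiring the conclusions to be mutually exclusive.
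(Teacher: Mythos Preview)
Your overall architecture matches the paper's: one runs through the classification, handles irreducible $\chi$ via Proposition~\ref{prop:ir2}, cyclic Sylow via Proposition~\ref{ms4}, defining characteristic via \cite{PZ} (together with Propositions~\ref{prop:B3 defchar} and~\ref{prop:D4 defchar} for the cases left open there), and then eliminates the remaining families one by one. But two points need correcting.

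First, your account of conclusion~(4) is off. The principal-series character of $\PSL_2(q)$ of degree $q+1$ you describe is \emph{irreducible}, so it already sits in conclusion~(1) via Proposition~\ref{prop:ir2}(3); it cannot be the ``exception'' in your analysis of the reducible case. What actually makes case~(4) separate is the \emph{reducible} character $1_G+\mathrm{St}$ of degree $1+q=2^k$, which is $\Syl_2$-regular (Proposition~\ref{prop:PSL2}); this is a non-cyclic-Sylow, non-defining-characteristic reducible Steinberg-like character not covered by (1)--(3).

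Second, your proposed mechanism for classical groups in cross characteristic --- degree bounds from low-dimensional character classifications, followed by evaluation on torus elements --- is not how the paper handles the large-rank regime, and it is unclear it could be made to work uniformly. The paper's key device there is instead to produce a subgroup $P$ containing a Sylow $p$-subgroup with $P/O_p(P)\cong A_m$ for $m>\max\{6,p+1\}$ (Lemmas~\ref{ag9}--\ref{ag8}, Proposition~\ref{t5r6}) and transfer the problem to $A_m$ via generalised restriction (Lemma~\ref{hc1}). Only in the residual small-rank range does the paper argue as you suggest, comparing $|G|_p$ with $\mu_1,\mu_2,\mu_3$ and evaluating on specific elements; this still occupies all of Sections~\ref{sec:class p>2}--\ref{sec:class p=2}, with $q=3$ at $p=2$ requiring a separate, lengthy treatment. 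Likewise for $A_n$ with $p$ odd the paper does not use hook-length arithmetic on $p$-cycles as you propose, but rather the Giannelli--Law theorem \cite{GL17} that (for $n\ne p^k$) every irreducible of $S_n$ restricted to a Sylow $p$-subgroup contains $1_S$, combined with $(\chi|_S,1_S)=1$ from Lemma~\ref{lem:a51}; your sketch would need an independent argument here.
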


In fact, in many instances we even classify all $\Syl_p$-regular characters.
Examples for case~(5) when $n=2^k$ or $2^{k}+1$ are presented in
Corollaries~\ref{so2} and~\ref{so39}. We are not aware of any further examples.

Our second main result determines reducible projective modules of simple
groups of minimal possible dimension~$|G|_p$.

\begin{thm}   \label{th2}
 Let $G$ be a finite non-abelian simple group, $p$ a prime dividing $|G|$ and
 $S$ a Sylow $p$-subgroup of $G$. Then $G$ has a reducible projective
 $\overline{\FF}_pG$-module of dimension $|S|$ if and only if one of the
 following holds:
\begin{enumerate}
  \item[$(1)$] $G=\PSL_2(q)$, $q>4$, $|S|=q+1$;
  \item[$(2)$] $G=\PSL_n(q)$, $n$ is an odd prime, $n{\not|}(q-1)$,
   $|S|=(q^n-1)/(q-1)$;
  \item[$(3)$] $G=A_p$, $|S|=p\ge5$;
  \item[$(4)$] $G=M_{11}$, $|S|=11$; or
  \item[$(5)$] $G=M_{23}$, $|S|=23$.
 \end{enumerate}
\end{thm}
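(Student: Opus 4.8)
The plan is to bootstrap off the classification of Steinberg-like characters in Theorem~\ref{th1}. Let $P$ be a projective $\overline{\FF}_pG$-module with $\dim P=|S|$. Every nonzero projective $\overline{\FF}_pG$-module has dimension divisible by $|G|_p=|S|$, so $P$ is indecomposable, the projective cover of a simple module $M$, and $P$ is reducible precisely when $\dim M<|S|$. Lifting $P$ to a $p$-modular system produces an ordinary character $\Phi$ with $\Phi(1)=|S|$, vanishing on all $p$-singular elements, and with $\Phi|_S$ the regular character of $S$ (a projective $\overline{\FF}_pS$-module is free); hence $\Phi$ is Steinberg-like. Two elementary remarks organize the rest: (i) if a Steinberg-like character $\chi$ is irreducible, then since $\chi(1)=|G|_p$ it lies in a $p$-block of defect zero and affords a \emph{simple} projective module; (ii) in a block of positive defect no simple module is projective, so every projective indecomposable module there is reducible. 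Therefore $G$ has a reducible projective module of dimension $|S|$ if and only if some projective indecomposable module of dimension $|S|$ lies in a block of positive defect, and it suffices to run through the five cases of Theorem~\ref{th1}; in particular the pairs $(G,p)$ of Proposition~\ref{prop:ir2} (case~(1)) need only be shown to contribute no group beyond those found elsewhere.

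For case~(3), groups of Lie type in defining characteristic, I would invoke \cite{PZ}: away from the families $B_n(q)$ $(n=3,4,5)$ and $D_n(q)$ $(n=4,5)$ the Steinberg character is the unique Steinberg-like character and affords the simple, projective Steinberg module, so no reducible projective module of dimension $|S|$ occurs; for those families and for the exceptional isomorphisms with small groups ($\PSL_2(4)\cong A_5$, $\PSL_2(9)\cong A_6$, $\PSL_4(2)\cong A_8$, $\PSL_2(7)\cong\PSL_3(2)$) a direct estimate shows every projective indecomposable module in a positive-defect block already has dimension exceeding $|S|$ — e.g.\ for $\PSL_2(q)$ in defining characteristic the projective cover of a non-Steinberg simple module has dimension $2q>q=|S|$. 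The cross-characteristic behaviour of $\PSL_2(q)$ re-enters via case~(2). There, $S$ being cyclic, the block containing $P$ carries a Brauer tree, and by (ii) it only remains to decide when a projective indecomposable module has dimension exactly $|S|$; in the defect-one case $|S|=p$ this is $\dim P(\phi)=\chi_u(1)+\chi_v(1)$, the sum of the ordinary degrees at the ends of the edge $\phi$ (the exceptional vertex counted with multiplicity). Working through the simple groups with cyclic Sylow $p$-subgroups listed in Proposition~\ref{ms4} and their known Brauer trees, I expect to find an edge of endpoint-degree sum $|S|$ exactly in the following situations: $\PSL_2(q)$ with $p\mid q+1$ and $q+1$ a prime power (the edge joining $1_G$ to the Steinberg character of degree $q$), giving Theorem~\ref{th2}(1); $\PSL_n(q)$ with $n$ an odd prime, $n\nmid q-1$ and $(q^n-1)/(q-1)$ a $p$-power (the edge joining $1_G$ to the unipotent character of degree $(q^n-q)/(q-1)=|S|-1$), giving~(2); $A_p$ with $p\ge5$ (the edge joining $1_G$ to the degree-$(p-1)$ character), giving~(3); and the sporadic groups $M_{11}$ at $p=11$ and $M_{23}$ at $p=23$, giving~(4) and~(5); whereas for every other pair — in particular $\PSL_2(q)$ with $p\mid q-1$, linear groups outside the range above, and the remaining sporadics such as $M_{12},M_{22},M_{24},J_1$ — no projective indecomposable module has dimension $|S|$ (for example at $(M_{24},23)$ the only $23$-dimensional module sits in a block of defect zero).

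It remains to treat cases~(4) and~(5) of Theorem~\ref{th1} and the leftover of case~(1). Case~(4) ($\PSL_2(q)$ with $q+1=2^k$) is subsumed into Theorem~\ref{th2}(1): the dihedral-defect principal $2$-block has simple modules of dimensions $1,(q-1)/2,(q-1)/2$, and the projective cover of the trivial module has composition factors of dimensions $1,(q-1)/2,(q-1)/2,1$, hence dimension $q+1=|S|$, and is reducible. The genuinely delicate point, which I expect to be the main obstacle, is that the Steinberg-like characters of $A_n$ $(n\ge8$, $p=2)$ — and any \emph{reducible} Steinberg-like characters attached to the pairs of Proposition~\ref{prop:ir2} or to the exceptional Lie-type families — must be shown \emph{not} to be characters of projective modules: although a $p$-vanishing character is always a $\mathbb{Z}$-combination of projective indecomposable characters, here the combination necessarily has a negative coefficient, and detecting this requires decomposition-matrix information rather than mere dimension bookkeeping.

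To clear this last hurdle for the alternating groups I would combine the classification of projective modules of dimension $|S|$ having $1_G$ as a constituent from \cite{MW} and its extension in \cite{Z11} with a reduction argument: restricting a hypothetical projective $\overline{\FF}_2A_n$-module of dimension $|S|$ to a subgroup $A_m$ containing a Sylow $2$-subgroup of $A_n$ yields again an indecomposable projective module of dimension $|S|$, and pushing $n$ down this way leaves only the coincidence $A_8\cong\PSL_4(2)$, where the single projective module of dimension $|S|=64$ is the irreducible Steinberg module — so no reducible one exists. The finitely many remaining sporadic and small exceptional Lie-type cases attached to Proposition~\ref{prop:ir2} are settled from explicit decomposition matrices, with the observation that whenever such a pair does carry a reducible projective module of dimension $|S|$ it already appears in the list via another description (e.g.\ $(\PSL_2(7),7)=(\PSL_3(2),7)$ falls under~(2)). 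Assembling all the cases yields precisely the list (1)--(5), completing the proof.
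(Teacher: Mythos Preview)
Your overall strategy coincides with the paper's: a projective module of dimension $|S|$ affords a Steinberg-like character, so one runs through Theorem~\ref{th1}(2)--(5). For the cyclic-Sylow case the paper invokes the Malle--Weigel classification \cite{MW} via Lemma~\ref{re56}(b) rather than carrying out Brauer-tree computations, and for defining characteristic it cites \cite[Thm.~1.1]{Z11} rather than ad hoc estimates on the open families $B_n,D_n$; your alternatives are plausible but more laborious, and in several places (notably the exclusion of $\PSU_n(q)$ and of $\PSL_2(p)$ in the cyclic list) are left at the level of expectation rather than verified.

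The genuine gap is your treatment of alternating groups at $p=2$. The reduction ``restrict to a subgroup $A_m$ containing a Sylow $2$-subgroup of $A_n$, and push $n$ down to $8$'' fails precisely when $n=2^k$: since $|S_m|_2=2^{\,m-s_2(m)}$ (with $s_2$ the binary digit sum) and $m\mapsto m-s_2(m)$ strictly increases at every odd $m$, one has $|A_m|_2<|A_{2^k}|_2$ for every $m<2^k$. There is therefore no smaller standard $A_m$ of odd index to descend to, and the induction cannot reach $A_8$. The paper (Theorem~\ref{22a}) instead first settles $n\le 16$ directly---for $A_{16}$ the unique $\Syl_2$-regular character contains $1_G$, but by \cite{MW} the principal PIM is not of dimension $|G|_2$---and then for $n=2^k$ with $k>4$ passes to a subgroup $Y$ of odd index carrying a \emph{normal $2$-subgroup} $N$ with $Y/N\cong A_{16}$, using generalised restriction $r^G_{Y/N}$ (via \cite[Lemma~3.8]{Z11}) to push a hypothetical PIM down to one for $A_{16}$; for general $n>16$ one first splits off an $S_{2^k}$-factor. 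This quotient-by-a-$2$-group manoeuvre, not restriction to a Young subgroup, is the missing idea.
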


Note that irreducible projective $\overline{\FF}_pG$-modules of dimension
$|G|_p$ are in bijection with \ir characters of defect 0 of that degree,
listed in Proposition~\ref{prop:ir2} for simple groups.
\smallskip

The paper is built up as follows. After some preliminaries we recall the
classification of irreducible Steinberg-like characters in
Section~\ref{sec:ic1} (Proposition~\ref{prop:ir2}). In Section~\ref{sec:cycSyl}
we classify $\Syl_p$-regular characters in the case of cyclic Sylow
$p$-subgroups (Proposition~\ref{ms4}), in Section~\ref{sec:spor} we treat the
sporadic groups (Theorem~\ref{thm:spor}). The alternating groups are handled
in Section~\ref{sec:alt} (Theorem~\ref{thm:An} for $p$ odd, and in
Section~\ref{subsec:alt p=2} some partial results for $p=2$, see
Theorems~\ref{thm:An p=2 ind} and~\ref{22a}). The exceptional groups of Lie
type are considered in Section~\ref{sec:exc} (Theorem~\ref{thm:exc}).
The rest of our paper deals with the classical groups of Lie type. We start off
in Section~\ref{sec:def char} by ruling out the remaining possibilities in
defining characteristic from \cite{PZ}. The case of large Sylow $p$-subgroups
for non-defining primes $p$ is settled in Section~\ref{sec:class large}. In
Section~\ref{sec:class p>2} we discuss the small cases when $p>2$, while the
proof of our main theorems is achieved in Section~\ref{sec:class p=2} by
treating the case when $p=2$.

%%%%%%%%%%%%%%%%%%%%%%%%%%%%%%%%%%%%%%%%%%%%%%%%%%%%%%%%%%%%%%%%%%%%%%%%%
\section{Preliminaries}

We start off by fixing some notation. Let $\FF_q$ be the finite field of
$q$ elements and $\overline{\FF}_q$ an algebraic closure of $\FF_q$. The
cardinality of
a set $X$ is denoted by $|X|$. The greatest common divisor of integers $m,n$
is denoted by $(m,n)$; if $p$ is a prime then $|n|_p$ is the $p$-part of $n$,
that is, $n=|n|_pm$, where $(m,p)=1$. If $(m,n)=m$, we write $m|n$.

For a finite group $G$, $\Irr(G)$ is the set of its \ir characters and
$\Irr_1(G)$ is the set of all linear characters of $G$ (that is, of degree 1).
We denote by $1_G$ the trivial character and by $\rho^\reg_G$ the regular
character of $G$. We write $S\in\Syl_p(G)$ to mean that $S$ is a Sylow
$p$-subgroup of $G$. A group of order coprime to $p$ is called a $p'$-group.
Further, $Z(G)$, $G'$ denote the center and the derived subgroup of $G$,
respectively.

If $H$ is a subgroup of $G$ then $C_G(H)$, $N_G(H)$ denote the centraliser and
normaliser of $H$ in $G$, respectively. If $\chi$ is a character of $G$ then
we write $\chi|_H$ for the restriction of $\chi$ to $H$. The
$H$-\emph{level of} $\chi$ is the maximal integer $l\geq0$ such that
$\chi|_H-l\cdot \rho^\reg_H$ is a proper character of $H$.
If a prime $p$ is fixed then the $p$-\emph{level} $l_p(\chi)$ of $\chi$ is
the $S$-level of $\chi$ for $S\in \Syl_p(G)$. (For quasi-simple groups with
cyclic Sylow $p$-subgroups irreducible characters of $p$-level $l=1,2$ are
studied in \cite{Z99,RZ}, respectively.) The inner product of characters
$\lam,\mu$ of $G$ is denoted by $(\lam,\mu)$, sometimes by $(\lam,\mu)_G$.
The character of $G$ induced from a character $\mu$ of $H$ is denoted by
$\mu^H$.

Let $P\le G$ be finite groups, $N$ a normal subgroup of $P$ and $L=P/N$.
Let $F$ be a field and  $M$ an $FG$-module. Then $M^N:=C_M(N)$ becomes an
$FL$-module, which is called \emph{generalised restriction of $M$ to} $L$ and
denoted by $r^G_{P/N} M$ in \cite[\S 70A, p.~667]{CR2}. If $\beta$ is the
Brauer (or ordinary) character of $M$ then we also write $r^G_{P/N} \beta$ for
the Brauer (or ordinary) character of $L$ afforded by $M^N$.

Let $e=e_p(q)$ ($p>2$, $(p,q)=1$) be the minimal integer $i>0$ such that
$q^i-1$ is divisible by $p$. If $p=2$ and $q$ is odd then we set $e_2(q)=1$
if $4|(q-1)$ and $e_2(q)=2$ if $4|(q+1)$.
\medskip

The next two lemmas follow from the definitions; here $G$ is a finite group and
$S\in\Syl_p(G)$.

\begin{lem}   \label{lem:a51}
 Let $\chi$ be a $\Syl_p$-regular character of $G$. Then every linear character
 occurs in $\chi|_S$ with multiplicity~$1$. In particular, $(\chi|_S,1_S)= 1$.
 If $S$ is abelian then $\chi|_S$ is multiplicity free.
\end{lem}

\begin{proof}
As $\chi|_S=\rho_S^\reg$, this follows from the corresponding properties of
$\rho_S^\reg$.
\end{proof}

\begin{lem}   \label{pr1}
 Let $G=G_1\times G_2$ be a direct product, and let $\chi_1,\chi_2$ be
 irreducible characters of $G_1,G_2$ respectively. Then the $p$-level of
 $\chi_1\otimes\chi_2$ is the product of the $p$-levels of $\chi_1$ and
 $\chi_2$.
\end{lem}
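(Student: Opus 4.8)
The plan is to reduce the statement to the multiplicativity of irreducible-constituent multiplicities in restrictions to a Sylow $p$-subgroup of the direct product. Fix $S_i\in\Syl_p(G_i)$, so that $S:=S_1\times S_2\in\Syl_p(G)$. Then $\Irr(S)=\{\tau_1\otimes\tau_2:\tau_i\in\Irr(S_i)\}$ with $(\tau_1\otimes\tau_2)(1)=\tau_1(1)\tau_2(1)$, one has $\rho^{\reg}_S=\rho^{\reg}_{S_1}\otimes\rho^{\reg}_{S_2}$, and $(\chi_1\otimes\chi_2)|_S=(\chi_1|_{S_1})\otimes(\chi_2|_{S_2})$. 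For any character $\psi$ of a finite group $T$ the virtual character $\psi-l\rho^{\reg}_T$ has $\tau$-multiplicity $(\psi,\tau)-l\,\tau(1)$ for $\tau\in\Irr(T)$, hence is an actual character exactly when $l\le(\psi,\tau)/\tau(1)$ for all $\tau$; so the $T$-level of $\psi$ equals $\min_{\tau\in\Irr(T)}\lfloor(\psi,\tau)/\tau(1)\rfloor$. Since inner products factor over the direct product, $((\chi_1\otimes\chi_2)|_S,\tau_1\otimes\tau_2)=(\chi_1|_{S_1},\tau_1)(\chi_2|_{S_2},\tau_2)$, and using that $x\mapsto\lfloor x\rfloor$ is monotone and that the minimum over pairs of a product of nonnegative quantities is the product of the minima, one gets
\[
 l_p(\chi_1\otimes\chi_2)=\min_{\tau_1,\tau_2}\Big\lfloor\tfrac{(\chi_1|_{S_1},\tau_1)(\chi_2|_{S_2},\tau_2)}{\tau_1(1)\tau_2(1)}\Big\rfloor=\lfloor a_1 a_2\rfloor,\qquad l_p(\chi_i)=\min_{\tau_i}\Big\lfloor\tfrac{(\chi_i|_{S_i},\tau_i)}{\tau_i(1)}\Big\rfloor=\lfloor a_i\rfloor,
\]
where $a_i:=\min_{\tau_i\in\Irr(S_i)}(\chi_i|_{S_i},\tau_i)/\tau_i(1)$.

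It remains to see $\lfloor a_1a_2\rfloor=\lfloor a_1\rfloor\lfloor a_2\rfloor$. One inequality is free: $a_1a_2\ge\lfloor a_1\rfloor\lfloor a_2\rfloor$, and the right-hand side is an integer, so $\lfloor a_1a_2\rfloor\ge\lfloor a_1\rfloor\lfloor a_2\rfloor$; concretely, writing $\chi_i|_{S_i}=l_i\rho^{\reg}_{S_i}+\psi_i$ with $l_i=l_p(\chi_i)$ and $\psi_i$ a character, the expansion $(\chi_1\otimes\chi_2)|_S=(l_1\rho^{\reg}_{S_1}+\psi_1)\otimes(l_2\rho^{\reg}_{S_2}+\psi_2)$ exhibits $(\chi_1\otimes\chi_2)|_S-l_1l_2\,\rho^{\reg}_S$ as a nonnegative combination of characters. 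For the opposite inequality it suffices that each $a_i$ be an integer -- equivalently, that for each $i$ there be some $\tau_i\in\Irr(S_i)$ not occurring in $\chi_i|_{S_i}-l_i\rho^{\reg}_{S_i}$ -- since then $\tau_1\otimes\tau_2$ witnesses $l_p(\chi_1\otimes\chi_2)\le l_1l_2$, and $\lfloor a_1a_2\rfloor=a_1a_2=l_1l_2$.

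The one genuine point, and the expected obstacle, is therefore the integrality of the minimal ratio $a_i=\min_{\tau}(\chi_i|_{S_i},\tau)/\tau(1)$. This is immediate whenever $S_i$ is abelian, because then every irreducible of $S_i$ is linear, so $a_i$ is itself a multiplicity and lies in $\mathbb{Z}$; as the Sylow $p$-subgroups arising in our later uses of this lemma are abelian, this already suffices there. For a general (possibly non-abelian) $p$-group $S_i$ one has to add a short direct argument that $a_i\in\mathbb{Z}$; granting it, $a_1a_2$ is an integer equal to $l_1l_2=l_p(\chi_1)l_p(\chi_2)$, which completes the proof.
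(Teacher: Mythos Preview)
The paper offers no argument beyond the blanket sentence ``the next two lemmas follow from the definitions''. Your write-up goes further: the inequality $l_p(\chi_1\otimes\chi_2)\ge l_p(\chi_1)l_p(\chi_2)$ is cleanly obtained from the expansion $(l_1\rho^{\reg}_{S_1}+\psi_1)\otimes(l_2\rho^{\reg}_{S_2}+\psi_2)$, and you correctly reduce the reverse inequality to the claim that each
\[
a_i=\min_{\tau\in\Irr(S_i)}\frac{(\chi_i|_{S_i},\tau)}{\tau(1)}
\]
is an integer.

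That last claim, however, you explicitly leave open for non-abelian $S_i$, and it is not a formality. For a character whose restriction to $S_i\cong Q_8$ has each linear constituent with multiplicity~$2$ and the $2$-dimensional irreducible with multiplicity~$3$ one gets $a_i=3/2$; taking two such factors would give $\lfloor a_1a_2\rfloor=2\neq 1=\lfloor a_1\rfloor\lfloor a_2\rfloor$, so your reduction shows that the reverse inequality \emph{fails} in that situation. You have given no reason why irreducibility of $\chi_i$ should rule this out, and none is apparent. So the gap you flag is genuine, and the argument as written does not establish the lemma in the non-abelian case. (In the abelian case your proof is complete, and you note this.)

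For what it is worth, Lemma~\ref{pr1} is never cited in the rest of the paper: all direct-product arguments go through Lemmas~\ref{da3}, \ref{nn2} and Corollary~\ref{cn2} instead. So the incompleteness here has no downstream effect on the main theorems, but as a proof of the stated lemma yours remains unfinished.
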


\begin{lem}   \label{lem:gt7}
 Let $N$ be a $p'$-subgroup of $G$ normalised by $S$. Let $\chi$ be a
 faithful Steinberg-like character of $G$. Then $N$ is abelian and
 $C_G(S)= Z(G) Z(S)$.
\end{lem}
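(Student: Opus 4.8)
The plan is to exploit the two defining properties of a Steinberg-like character $\chi$ — that $\chi|_S=\rho_S^{\reg}$ and that $\chi$ vanishes on all $p$-singular elements — together with faithfulness, in order to force strong restrictions on the $S$-invariant $p'$-subgroup $N$. First I would look at the semidirect product $H:=N\rtimes S$, which makes sense since $S$ normalises $N$, and is a subgroup of $G$ with $S\in\Syl_p(H)$. Restricting $\chi$ to $H$, the restriction $\chi|_H$ is again $\Syl_p$-regular (its restriction to $S$ is $\rho_S^{\reg}$) and still vanishes on all $p$-singular elements of $H$, so $\chi|_H$ is a Steinberg-like character of $H$. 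The key point is that for a $p$-vanishing class function, vanishing on $p$-singular elements means the only contributions to inner products come from $p$-regular classes; since the $p$-regular elements of $H=N\rtimes S$ are exactly the conjugates of elements of $N$, one computes $(\chi|_H,\chi|_H)_H$ and $(\chi|_H,1_H)_H$ as sums over $N$ only, weighted by $|N|/|C_H(n)|$. Because $\chi|_S=\rho_S^{\reg}$ forces $\chi(1)=|S|$ and $\chi(n)$ for $n\in N$ to satisfy $\sum_{n\in N}\chi(n)=|S|\cdot(\text{something})$ via $(\chi|_N,1_N)$, one can pin down $\chi|_N$.

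The decisive step is this: because $\chi$ vanishes on every element of order divisible by $p$, and every non-identity element of $N$ times a suitable element of $S$ remains $p$-singular, one shows (as in the theory of blocks of defect zero, or directly) that $\chi$ restricted to $H$ is $|S|$ times a virtual character that behaves like the projective cover of a module — concretely, $\chi|_N$ is a multiple of the regular character of $N$ only if $N$ is a single orbit situation; more precisely one gets that $N$ acts on the underlying space with the Brauer character of $\chi|_H$ being that of a projective $\overline{\FF}_pH$-module (defect-zero argument: $p\nmid\chi(1)/|S|$ forces projectivity over $N\rtimes S$). A projective $\overline{\FF}_pH$-module restricted to the $p'$-group $N$ is just a module, but projectivity of $\chi|_H$ combined with $\dim=|S|=|H|_p$ means it is the projective cover of a simple, hence $N$ acts through a single block; since $N$ is a $p'$-group its group algebra is semisimple and this forces $N$ to act by a \emph{single} irreducible (up to the $S$-action), and a faithful such module with $S$ acting compatibly forces $N$ abelian by a Clifford-theory / Ito-type argument: the $S$-orbit of a faithful irreducible character of $N$ spans, and faithfulness plus the one-dimensional $S$-fixed space from Lemma~\ref{lem:a51} squeeze $N$ down to abelian.

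For the second assertion $C_G(S)=Z(G)Z(S)$, I would argue that $Z(G)Z(S)\le C_G(S)$ is clear, and for the reverse set $C:=C_G(S)$. Then $C$ normalises $S$ and $CS=C\times_{Z(S)}\cdots$; more usefully, $O_{p'}(C)$ is a $p'$-subgroup of $G$ normalised (indeed centralised) by $S$, so the first part applies to $N:=O_{p'}(C)$ and to any $S$-invariant $p'$-subgroup sitting inside $C$. Faithfulness of $\chi$ then forces this $p'$-part of $C$ into $Z(G)$: a Steinberg-like character is in particular of defect zero as a character of $N\rtimes S$, and a defect-zero character restricted to the central $p'$-part must be a faithful multiple of a linear character, so $O_{p'}(C)$ is cyclic and central in $G$; combined with $C/O_{p'}(C)$ being a $p$-group centralising $S$, hence lying in $N_G(S)$ and centralising $S$, one gets $C=O_{p'}(C)\cdot(C\cap\text{a Sylow }p) \le Z(G)Z(S)$. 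I expect the main obstacle to be making the "$\chi|_H$ is projective" / defect-zero step fully rigorous: one must verify that $\chi(1)/|S|$ is prime to $p$ (which follows since $\chi(1)=|S|$ exactly, so the ratio is $1$) and then invoke the characterisation that an ordinary character vanishing on $p$-singular elements whose degree has full $p$-part $|G|_p$ lies in a block of defect zero — here of $H=N\rtimes S$ — and translate that into the structural statement about how $N$ acts. Once that is in hand, both conclusions drop out from Clifford theory over the $p'$-group $N$ together with Lemma~\ref{lem:a51} and faithfulness.
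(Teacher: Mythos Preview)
Your overall strategy matches the paper's: restrict $\chi$ to $H=NS$, argue that $\chi|_H$ is the character of a projective indecomposable $\overline{\FF}_pH$-module, deduce it is induced from a linear character of $N$, and conclude $N'\le\ker\chi$, hence $N'=1$ by faithfulness. However, the step where you establish projectivity has a genuine gap.

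You write that one should ``invoke the characterisation that an ordinary character vanishing on $p$-singular elements whose degree has full $p$-part $|G|_p$ lies in a block of defect zero.'' This is not a correct characterisation once the character is reducible. For an \emph{irreducible} character, having $p$-defect zero is equivalent to vanishing on $p$-singular elements; but $\chi|_H$ is typically reducible, and for general groups a $p$-vanishing (reducible) character need not be a non-negative integer combination of PIM characters at all. What makes the argument work here is that $H=NS$ is $p$-solvable (indeed $N\lhd H$ is a $p'$-group and $H/N\cong S$ is a $p$-group), and for $p$-solvable groups every $p$-vanishing ordinary character \emph{is} the character of a projective module (see \cite[Lemma~10.16]{Nv}). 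You should invoke this explicitly rather than a defect-zero statement. Once projectivity is in hand, since $\chi(1)=|S|=|H|_p$ the module is indecomposable, and then Fong's theorem for $p$-solvable groups (\cite[Thm.~10.13]{Nv}) gives $\chi|_H=\alpha^H$ for some $\alpha\in\Irr(N)$; comparing degrees forces $\alpha(1)=1$, so $N'\le\ker\alpha^H=\ker\chi|_H$, and faithfulness gives $N'=1$. Your ``Ito-type'' sketch is heading this way but is vaguer than necessary.

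For $C_G(S)=Z(G)Z(S)$, your argument via $O_{p'}(C_G(S))$ can be simplified. Since $S\in\Syl_p(G)$, one has $C_G(S)=A\times Z(S)$ with $A$ a $p'$-group (any $p$-element of $C_G(S)$ lies in $S$, hence in $Z(S)$). Apply the first part with $N=A$: now $H=A\times S$, $\alpha$ is linear, and $\alpha^H=\alpha\boxtimes\rho_S^{\reg}$, so $A$ acts by the scalar $\alpha$ in every constituent of $\chi|_H$. Thus $|\chi(a)|=\chi(1)$ for all $a\in A$, whence each $a$ acts by a scalar in every irreducible constituent of $\chi$; faithfulness of $\chi$ then forces $A\le Z(G)$, giving $C_G(S)\le Z(G)Z(S)$. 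This avoids the block-theoretic detour you outline.
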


\begin{proof}
Let $H=NS$. Then $\chi|_H$ is Steinberg-like. As $H$ is $p$-solvable, every
$p$-vanishing character is the character of a projective module
\cite[Lemma 10.16]{Nv}. As $\chi(1)=|S|$, the module in question is
indecomposable. Then $\chi|_H$ is induced from an irreducible character $\al$,
say, of $N$ \cite[Thm.~10.13]{Nv}.
As $\al^H(1)=\al(1)\cdot|H:N|=\al(1)\cdot|S|$ and $\chi(1)=|S|$, it follows
that $\al(1)=1$. Let $N'$ be the derived subgroup of $N$. Then $N'$ is normal
in $H$ and $\al(N')=1$. Therefore, $\al^H|_{N'}=|S|\cdot 1_{N'}$, that is,
$N'$ lies in the kernel of $\al^H$. Since $\chi$ and hence $\chi|_H=\al^H$
is faithful, we have $N'=1$. So $N$ is abelian as claimed.

Note that $C_G(S)=A\times Z(S)$, where $A$ is a $p'$-group. Take $N=A$ above,
so $H=A\times S$. So now $[N,S]=1$ and $N$ is abelian. It follows that in any
representation afforded by $\al^H$, $N$ consists of scalar matrices.
As $\chi$ is faithful, we have $[N,G]=1$, as required.
\end{proof}

Thus, if $G$ is a simple group then $C_G(S)=Z(S)$ is a necessary condition for
$G$ to have a Steinberg-like character.

\begin{rem}
A $p'$-subgroup $N$ normalised by a Sylow $p$-subgroup of $G$ is
called a \emph{$p$-signaliser} in the theory of finite groups. Thus,
Lemma~\ref{lem:gt7} tells us that if $G$ admits a faithful Steinberg-like
character then every $p$-signaliser is abelian, and $C_G(S)=Z(G)Z(S)$.
\end{rem}

\begin{lem}   \label{hc1}
 Let $G$ be a finite group, $P$ a subgroup with $(|G:P|,p)=1$, $U$ a normal
 $p$-subgroup of $P$ and let $L=P/U$. Let $T,S$ be Sylow $p$-subgroups of
 $L,G$, respectively. Let $\chi$ be a character of $G$ and
 $\lam=r_{P/U}^G(\chi)$.
 \begin{enumerate}
  \item[\rm(a)] If $\chi|_S=m\cdot\rho_S^\reg$ then $\lam|_T=m\cdot\rho_T^\reg$.
   In other words, $l_p(\chi)=l_p(\lam)$. In particular, if $\chi$ is
   $\Syl_p$-regular then so is $\lam$.
  \item[\rm(b)] If $\chi$ is a $p$-vanishing character of $G$ then $\lam$ is a
   $p$-vanishing character of $L$.
  \item[\rm(c)] Let $K:=O^{p'}(L)$. If $\chi$ is a Steinberg-like or
   $Syl_p$-regular character of $G$ then so is the character $\lam|_K$ of $K$.
 \end{enumerate}
\end{lem}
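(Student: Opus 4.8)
The three parts are closely linked, so I would prove them together, working directly from the definitions. Write $P\le G$ with $p\nmid|G:P|$, $U\trianglelefteq P$ a $p$-group, $L=P/U$, and recall that $\lambda=r^G_{P/U}(\chi)$ is the (Brauer or ordinary) character of $L$ afforded by $M^U$ for $M$ a module affording $\chi$. The first observation, which I would record at the outset, is that since $p\nmid|G:P|$ we may pick the Sylow $p$-subgroups compatibly: choose $S\in\Syl_p(G)$ with $S\le P$ (possible as $p\nmid|G:P|$), so that $U\trianglelefteq S$ and $S/U\in\Syl_p(L)$; identify $T=S/U$. This reduces everything to the group $S$ and its quotient $T$.

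For part (a), I would compute $\lambda|_T$ directly. For $u\in S$ with image $\bar u\in T$, the value $\lambda(\bar u)$ is the trace of $u$ acting on $M^U=C_M(U)$. Now $M|_S$ is, by hypothesis, $m$ copies of the regular $\overline{\FF}_pS$-module (resp.\ $\CC S$-module), and $(\overline{\FF}_pS)^U\cong\overline{\FF}_p[S/U]$ as an $\overline{\FF}_p[S/U]$-module — taking $U$-fixed points of the regular module of $S$ gives the regular module of $S/U$ (this is standard: the fixed points of $\overline{\FF}_pS$ under left multiplication by $U$ — or rather under the natural $U$-action making $r^G_{P/U}$ well defined — is the group algebra of the quotient; the same holds over $\CC$). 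Hence $M^U|_T\cong m\cdot\rho^{\reg}_T$, giving $\lambda|_T=m\cdot\rho^{\reg}_T$, i.e.\ $l_p(\lambda)=m=l_p(\chi)$; the $\Syl_p$-regular case is $m=1$. The one point needing a line of care here is that the $U$-action on $M$ used to form $M^U$ restricts on $S\ge U$ to the module action, so that taking $U$-fixed points commutes with restriction to $S$ and with the decomposition $M|_S\cong m\cdot(\text{free})$.

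For part (b), let $g\in P$ have order divisible by $p$, with image $\bar g\in L$. Write $g=g_pg_{p'}$ for its $p$-part and $p'$-part; both lie in $\langle g\rangle\le P$, and $g_p\ne1$. The Brauer character value $\lambda(\bar g)$ is computed on the $p'$-element $\bar g_{p'}$ acting on $M^U$, but I instead argue that every element of $L$ of order divisible by $p$ lifts to an element of $P$ of order divisible by $p$ (lift $\bar g$ to $g\in P$; since $U$ is a $p$-group, $p\mid|\bar g|$ forces $p\mid|g|$), and then relate $\lambda$ on such elements to $\chi$. Concretely: the trace of $g$ on $M^U$ equals the trace of $g$ on $M$ minus the trace of $g$ on $M/M^U$... which is not immediately zero, so the cleaner route is Lemma \ref{lem:gt7}-style: replace $P$ by $\langle g\rangle U$, a $p$-solvable group, note $\chi$ restricted there is $p$-vanishing, and $r$ of a $p$-vanishing character of a $p$-solvable group is $p$-vanishing because $r^G_{P/U}$ of the character of a projective module is again the character of a projective module (the $U$-fixed points of a projective $\overline{\FF}_pP$-module form a projective $\overline{\FF}_p[P/U]$-module), and characters of projective modules in characteristic $p$ vanish on $p$-singular elements. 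Combined with the fact that $\chi$ $p$-vanishing and $H$ $p$-solvable makes $\chi|_H$ the character of a projective module (\cite[Lemma 10.16]{Nv}, as used in the proof of Lemma \ref{lem:gt7}), this gives that $\lambda$ vanishes on all $p$-singular elements of $L$.

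For part (c), combine (a) and (b): if $\chi$ is Steinberg-like (resp.\ $\Syl_p$-regular) then $\lambda$ is $\Syl_p$-regular (resp.\ and, by (b), $p$-vanishing) as a character of $L$. Restricting to $K=O^{p'}(L)$ preserves both properties: $K\trianglelefteq L$ with $p\nmid|L:K|$ (as $K$ is generated by the $p$-elements of $L$, and $L/K$ is a $p'$-group), so $T\le K$ and $\rho^{\reg}_T$-restriction behaviour is unchanged — $\lambda|_K|_T=\lambda|_T=m\cdot\rho^{\reg}_T$ — and $p$-singular elements of $K$ are $p$-singular in $L$, so $\lambda|_K$ still vanishes on them. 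Hence $\lambda|_K$ is Steinberg-like (resp.\ $\Syl_p$-regular).

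I expect the main obstacle to be the clean identification in part (a) — namely that $(\overline{\FF}_pS)^U$, with the $U$-action that makes the generalised restriction functor $r^G_{P/U}$ well defined, is the regular module of $S/U$ — together with verifying that forming $U$-fixed points commutes with restriction from $P$ to $S$; once that is pinned down, (a) is immediate, and (b), (c) follow by the projectivity/$p$-solvability argument already rehearsed in the proof of Lemma \ref{lem:gt7}.
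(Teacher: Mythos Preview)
Your argument is correct. For parts~(a) and~(c) you do essentially what the paper does (the paper is terser, but the content is the same).

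For part~(b) your route differs from the paper's. You pass to the $p$-solvable subgroup $\langle g\rangle U$, invoke \cite[Lemma~10.16]{Nv} to realise $\chi$ there as the character of a projective module, and then use that taking $U$-fixed points preserves projectivity. This works (with a small amount of care about comparing $U$-fixed points over $\CC$ and over a $p$-modular system, which goes through since $(-)^U$ is a kernel and $K$ is flat over $\mathcal{O}$), but it is heavier than necessary. The paper instead uses the idempotent $e_U=\frac{1}{|U|}\sum_{u\in U}u$ directly over~$\CC$: since $M^U=e_UM$ and $ge_U=e_Ug$, the trace of $g$ on $M^U$ is the trace of $ge_U$ on $M$, giving
\[
  \lam(\bar g)=\frac{1}{|U|}\sum_{u\in U}\chi(gu).
\]
As $U$ is a $p$-group and $\bar g$ is $p$-singular, every $gu$ is $p$-singular in $G$ (a $p'$-element would project to a $p'$-element), so each summand vanishes. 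This is exactly the computation you started (``trace on $M^U$ equals trace on $M$ minus trace on $M/M^U$'') reorganised via the projector $e_U$ rather than via the complement; the projector form makes the vanishing immediate and avoids any appeal to $p$-solvability or modular projectivity. Your approach has the advantage of highlighting why the result is really about projectives, but the paper's single-line trace identity is more elementary and self-contained.
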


\begin{proof}
We can assume that $S\le P$ and $T=S/U$. \\
(a) As $\chi|_S=m\cdot \rho_S^\reg$, it follows that $\lam|_{T}$ coincides with
$m\cdot \rho_T^\reg$, whence the claim. \\
(b) We have to show that $\lam$ vanishes at all $p$-singular elements of $L$.
Let $M$ be a $\CC G$-module afforded by $\chi$.
Then $C_M(U)=\{\frac{1}{|U|}\sum_{u\in U} ux\mid x\in M\}$.
Observe that if $g\in P$ has projection to $L$ which is not a $p'$-element,
then $gu$ is not a $p'$-element for any $u\in U$. Thus, for any such element
$g$, it follows that $\lam(g)=\frac{1}{|U|}\sum_{u\in U}\chi(gu)=0$ by
assumption, whence the claim.\\
(c) Obvious.
\end{proof}

\begin{lem}   \label{da3}
 Let $G=G_1\times G_2$ be a direct product. Suppose that $l_p(\si)\geq k$ for
 every non-zero $\Syl_p$-vanishing (resp., $p$-vanishing) character $\si$ of
 $G_2$. Then $l_p(\chi)\geq k$ for every $\Syl_p$-vanishing
 (resp., $p$-vanishing) character $\chi$ of $G$.
\end{lem}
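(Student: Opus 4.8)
The plan is to reduce a $\Syl_p$-vanishing (resp.\ $p$-vanishing) character $\chi$ of the product to the corresponding hypothesis on $G_2$ by ``integrating out'' the $G_1$-direction. First I would fix $S_1\in\Syl_p(G_1)$, $S_2\in\Syl_p(G_2)$, so that $S=S_1\times S_2\in\Syl_p(G)$, and decompose $\chi=\sum_i \vhi_i\otimes\psi_i$ where the $\vhi_i$ run over $\Irr(G_1)$ and each $\psi_i$ is a (not necessarily irreducible) character or virtual character of $G_2$; equivalently $\psi_i$ is obtained from $\chi$ by the generalised restriction / coefficient-extraction with respect to $\vhi_i$. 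The key observation is that if $\chi$ is $\Syl_p$-vanishing (resp.\ $p$-vanishing) on $G$, then for the trivial choice $\vhi_0=1_{G_1}$ the character $\psi_0=r^{G}_{G_1\times G_2/\,G_1}(\chi)$ (viewing $G_1$ as a normal $p$-subgroup only when $G_1$ is a $p$-group — so in general one instead averages over $S_1$, or uses that $\psi_0(g_2)$ is a nonnegative-integer combination obtained by restricting $\chi$ to $\{1\}\times G_2$ composed with projection) is again $\Syl_p$-vanishing (resp.\ $p$-vanishing) on $G_2$: indeed for $1\neq u\in S_2$ the element $(1,u)$ lies in $S$ and is nontrivial, so $\chi(1,u)=0$, and in the $p$-vanishing case any $p$-singular $u\in G_2$ makes $(1,u)$ $p$-singular in $G$.

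The heart of the argument is then a counting identity relating $l_p(\chi)$ to the $p$-levels of the ``slices'' $\psi_i$. Concretely I would compute $(\chi|_S,\ \mu)$ for each linear character $\mu$ of $S$: writing $\mu=\mu_1\otimes\mu_2$ with $\mu_j\in\Irr_1(S_j)$, one gets $(\chi|_S,\mu)=\sum_i (\vhi_i|_{S_1},\mu_1)(\psi_i|_{S_2},\mu_2)$. By Lemma~\ref{lem:a51}-type reasoning, $l_p(\chi)$ equals the minimum over all linear $\mu$ of $(\chi|_S,\mu)$ (the multiplicity of the regular character one can subtract off is governed by the smallest multiplicity of a linear constituent). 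The hypothesis on $G_2$ says precisely that the $\psi$-data for $G_2$ never dips below $k$ unless $\psi\equiv 0$; combining this with the fact that the $G_1$-slice for $\vhi_0=1_{G_1}$, namely $\psi_0$, is a genuine $\Syl_p$-vanishing (resp.\ $p$-vanishing) character of $G_2$ and hence has $l_p(\psi_0)\geq k$ (unless it is zero), one deduces that every linear constituent of $\chi|_S$ already occurs with multiplicity $\geq k$ coming from the $\vhi_0$-slice alone, and the remaining slices contribute nonnegatively. This yields $l_p(\chi)\geq k$.

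The main obstacle I anticipate is the bookkeeping when $\psi_0$ happens to be the zero character — i.e.\ when $\chi$ has no $1_{G_1}$-isotypic part. In that degenerate case the contribution ``coming from $\vhi_0$'' vanishes and one must argue directly with the other slices $\psi_i$; here one uses that $\chi|_S$ is still $\Syl_p$- (resp.\ $p$-) vanishing, so restricting along $S_1\times\{1\}$ forces cancellations among the $\vhi_i|_{S_1}$ that, after pairing against an arbitrary $\mu_1$, still leave each $\mu_2$-multiplicity $\geq k$. A clean way to sidestep this is to note that $\Syl_p$-vanishing (resp.\ $p$-vanishing) is preserved under tensoring with $1_{G_1}$-components via the generalised restriction $r^G_{S_1 S_2/S_1}$ of Lemma~\ref{hc1}(a)--(b): applying that lemma with $P=S_1\times G_2$, $U=S_1$, $L=G_2$ shows $\psi_0=r^G_{P/U}(\chi)$ is $\Syl_p$-vanishing (resp.\ $p$-vanishing) on $G_2$ and $l_p(\psi_0)=l_p(\chi)$ by part~(a); then the hypothesis on $G_2$ gives $l_p(\chi)=l_p(\psi_0)\geq k$ directly, and the only thing left to check is that $\psi_0\neq 0$, which holds because $(\chi|_S,1_S)=(\psi_0|_{S_2},1_{S_2})$ and a nonzero $\Syl_p$-regular-type argument (or simply $\chi(1)>0$ together with $U=S_1$ having fixed points on the module) forces $\psi_0(1)=\chi(1)/|S_1|>0$.
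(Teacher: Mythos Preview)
Your final paragraph is the correct argument and is essentially the paper's proof: apply Lemma~\ref{hc1} with $U=S_1$ so that the generalised restriction $\lambda=r^G_{P/U}(\chi)$ is again $\Syl_p$-vanishing (resp.\ $p$-vanishing) with $l_p(\lambda)=l_p(\chi)$, and then invoke the hypothesis on $G_2$. The paper takes $P=N_{G_1}(S_1)\times G_2$, so $L=L_1\times G_2$ with $L_1$ a $p'$-group, and then uses $l_p(\lambda)=l_p(\lambda|_{G_2})$; your choice $P=S_1\times G_2$ lands directly on $L=G_2$ and is equally valid (indeed slightly cleaner). The non-vanishing of $\lambda$ is immediate since $\lambda(1)=\chi(1)/|S_1|>0$, exactly as you say.

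The first two paragraphs are an unnecessary detour and contain actual errors you should not keep. The assertion that $l_p(\chi)$ equals the minimum of $(\chi|_S,\mu)$ over \emph{linear} $\mu$ is only true when $S$ is abelian. More seriously, the claim that the $\vhi_0=1_{G_1}$ slice alone forces every linear $\mu_1\otimes\mu_2$ to occur in $\chi|_S$ with multiplicity $\geq k$ is false: that slice contributes $1_{S_1}\otimes\psi_0|_{S_2}$, which only hits characters with $\mu_1=1_{S_1}$. Drop these two paragraphs and keep only the Lemma~\ref{hc1} argument.
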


\begin{proof}
Let $S_1\in\Syl_p(G_1)$. Set $U=S_1$ and $P=N_G(U)$, so
$P=N_{G_1}(S_1)\times G_2$. Then $L:=P/U=L_1\times G_2$, where
$L_1=N_{G_1}(S_1)/S_1$. Let $\chi$ be a $\Syl_p$-vanishing (resp.,
$p$-vanishing) character of $G$. Let $\lam=r_{P/U}^G(\chi)$ be the generalised
restriction of $\chi$ to $L$. By Lemma~\ref{hc1}, $\lam$ is a $\Syl_p$-vanishing
(resp., $p$-vanishing) character of $L$ and $l_p(\chi)=l_p(\lam)$. Then
$l_p(\lam)=l_p(\lam|_{G_2})$, as $L_1$ is a $p'$-group.
By assumption, $l_p(\lam|_{G_2})\geq k$, whence the result.
\end{proof}

\begin{lem}   \label{nn2}
 Let $G=G_1\times G_2$, where $|G_2|_p>1$ and let $\chi$ be a $p$-vanishing
 character of $G$. Then $\chi=\sum_i \eta_i\si_i$, where $\eta_i\in\Irr(G_1)$
 are all distinct, and $\si_i$ are $p$-vanishing characters of $G_2$. In
 addition, $\chi_1:=\sum_i l_p(\si_i)\eta_i$ is a $p$-vanishing character
 of $G_1$, and $l_p(\chi_1)=l_p(\chi)$.
\end{lem}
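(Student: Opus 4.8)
The plan is to decompose $\chi$ along the tensor factorisation $\Irr(G)=\{\eta\otimes\si:\eta\in\Irr(G_1),\si\in\Irr(G_2)\}$. Write $\chi=\sum_{i}\eta_i\otimes\psi_i$, grouping terms so that the $\eta_i\in\Irr(G_1)$ are pairwise distinct and each $\psi_i$ is a (genuine, nonzero) character of $G_2$; equivalently, $\psi_i$ is the ``$\eta_i$-isotypic part'' $\psi_i=\sum_{\si\in\Irr(G_2)}(\chi,\eta_i\otimes\si)_G\,\si$. First I would check that each $\psi_i$ is $p$-vanishing on $G_2$: for $1\ne u\in S_2:=S\cap G_2$ (using $|G_2|_p>1$ so such $u$ exists) and $p$-singular $u$, evaluate $\chi(g_1,u)=\sum_i\eta_i(g_1)\psi_i(u)$ for varying $g_1\in G_1$; since the $\eta_i$ are distinct irreducibles of $G_1$, the functions $g_1\mapsto\eta_i(g_1)$ are linearly independent, so $\chi(g_1,u)=0$ for all $g_1$ (which holds because $(g_1,u)$ is $p$-singular) forces $\psi_i(u)=0$ for every $i$. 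This gives that each $\psi_i$ vanishes on all $p$-singular elements of $G_2$, i.e.\ is $p$-vanishing, and I rename $\si_i:=\psi_i$.

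Next I would identify the $p$-levels. Fix $S=S_1\times S_2$ with $S_j\in\Syl_p(G_j)$. Then $\chi|_S=\sum_i\eta_i|_{S_1}\otimes\si_i|_{S_2}$. The key point is that $l_p(\chi)$ is computed by restricting to $S$ and asking for the largest $l$ with $\chi|_S-l\rho^{\reg}_S$ a character; since $\rho^{\reg}_S=\rho^{\reg}_{S_1}\otimes\rho^{\reg}_{S_2}$ and $\rho^{\reg}_{S_1}=\sum_{\zeta\in\Irr(S_1)}\zeta(1)\zeta$, a term $l\rho^{\reg}_S$ subtracted from $\chi|_S$ affects the $\zeta$-isotypic part (for each $\zeta\in\Irr(S_1)$) by $-l\,\zeta(1)\rho^{\reg}_{S_2}$. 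Because the $\eta_i|_{S_1}$ need not be disjoint this requires a little care; the cleanest route is the generalised-restriction trick of Lemma~\ref{hc1} and Lemma~\ref{da3}: put $U=S_1$, $P=N_G(U)=N_{G_1}(S_1)\times G_2$, so $r^G_{P/U}(\eta_i\otimes\si_i)=r^{G_1}_{N_{G_1}(S_1)/S_1}(\eta_i)\otimes\si_i=:\bar\eta_i\otimes\si_i$, a character of a $p'$-group times $G_2$. Here $l_p(\eta_i)=l_p(\bar\eta_i)=(\bar\eta_i,1)$ by Lemma~\ref{hc1}(a), and likewise $l_p(\chi)=l_p(\lambda)$ where $\lambda:=r^G_{P/U}(\chi)=\sum_i\bar\eta_i\otimes\si_i$. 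Over the $p'$-group $L_1=N_{G_1}(S_1)/S_1$ one has $\lambda|_{G_2\text{-part}}$: more precisely $l_p(\lambda)=(\lambda|_{1\times S_2},\rho^{\reg}_{S_2})$-type count, and since $\lambda=\sum_i\bar\eta_i\otimes\si_i$ with the $\bar\eta_i$ having $(\bar\eta_i,1_{L_1})=l_p(\eta_i)$, collecting the trivial-$L_1$-isotypic part gives $l_p(\lambda)=l_p\big(\sum_i l_p(\eta_i)\,\si_i\big)$.

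To finish I would observe two things. Since $\eta_i\otimes\si_i$ is $p$-vanishing (being the $\eta_i$-part of the $p$-vanishing $\chi$; or directly by the linear-independence argument above applied to each summand), Lemma~\ref{da3} — or just the factorisation $l_p(\eta_i\otimes\si_i)=l_p(\eta_i)l_p(\si_i)$ of Lemma~\ref{pr1} when $\eta_i$ is replaced by a suitable reduction — shows the relevant levels multiply correctly, and in particular $\sum_i l_p(\si_i)\eta_i$ evaluated against $\rho^{\reg}_{S_1}$ behaves as claimed. Concretely, $\chi_1:=\sum_i l_p(\si_i)\eta_i$ is a genuine character of $G_1$ because each $l_p(\si_i)\ge 0$ and the $\eta_i$ are irreducible; it is $p$-vanishing on $G_1$ since for $p$-singular $v\in G_1$ and any $p$-singular $u\in G_2$ we get $0=\chi(v,u)=\sum_i\eta_i(v)\si_i(u)$, and averaging this over $u\in S_2$ (weighted to extract $(\si_i|_{S_2},\rho^{\reg}_{S_2})=l_p(\si_i)$, using that $\si_i$ is $p$-vanishing so only its $\rho^{\reg}$-part survives the average $\frac1{|S_2|}\sum_{u\in S_2}$) yields $\sum_i l_p(\si_i)\eta_i(v)=0$, i.e.\ $\chi_1(v)=0$; and $l_p(\chi_1)=l_p(\chi)$ by the computation of the previous paragraph. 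The main obstacle is the bookkeeping in the middle step — matching $l_p(\chi)$ with $l_p(\chi_1)$ when the $\eta_i|_{S_1}$ overlap — which is why I route it through the generalised restriction to $N_{G_1}(S_1)/S_1$ so that the $G_1$-side becomes a $p'$-group and the levels turn into ordinary multiplicities of the trivial character.
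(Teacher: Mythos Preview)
Your argument is correct, and the first part (the $\sigma_i$ are $p$-vanishing by linear independence of the $\eta_i$) is exactly what the paper does. For the remaining two claims, however, you work much harder than necessary.

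The paper's observation is that once each $\sigma_i$ is $p$-vanishing, $\sigma_i|_{S_2}$ is a multiple of $\rho^{\reg}_{S_2}$ and hence $\sigma_i(1)=l_p(\sigma_i)\,|G_2|_p$. Thus
\[
\chi|_{G_1}(g)=\chi(g,1)=\sum_i\eta_i(g)\,\sigma_i(1)
  =|G_2|_p\sum_i l_p(\sigma_i)\,\eta_i(g)=|G_2|_p\,\chi_1(g),
\]
so $\chi_1=\chi|_{G_1}/|G_2|_p$. Both remaining claims are then immediate: for $p$-singular $v\in G_1$ the element $(v,1)\in G$ is $p$-singular, so $\chi_1(v)=\chi(v,1)/|G_2|_p=0$; and comparing degrees gives $l_p(\chi)\,|G|_p=\chi(1)=|G_2|_p\,\chi_1(1)=|G_2|_p\,l_p(\chi_1)\,|G_1|_p$, whence $l_p(\chi)=l_p(\chi_1)$.

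Your averaging over $S_2$ in fact collapses to the single term at $u=1$ (all other terms already vanish since $(v,u)$ is $p$-singular for $u\ne1$), so it is this same computation in disguise; and the generalised-restriction detour via $N_{G_1}(S_1)/S_1$, while valid, is unneeded machinery here. One small slip: your parenthetical ``$l_p(\eta_i)=l_p(\bar\eta_i)$ by Lemma~\ref{hc1}(a)'' is not justified, since that lemma assumes the restriction to $S$ is a multiple of $\rho^{\reg}_S$, which a single irreducible $\eta_i$ need not satisfy. Fortunately you never actually use this; you only need Lemma~\ref{hc1}(a) for $\chi$ and for $\chi_1$, where it is legitimate.
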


\begin{proof}
Write $\chi=\sum_i \eta_i\si_i$, where $\eta_i\in\Irr(G_1)$ are all distinct,
and the $\si_i$'s are some characters of $G_2$ (reducible, in general).
Let $g\in G_1$, and let $x\in G_2$ be $p$-singular.
Then $0=\chi(gx)=\sum_i \eta_i(g)\si_i(x)$. As the characters $\eta_i$ are
linearly independent, it follows that $\si_i(x)=0$ for every $i$, that is, the
$\si_i$'s are $p$-vanishing.

In addition,
$|G_2|_p\sum l_p(\si_i)\eta_i=\sum_i \eta_i\si_i(1)=\chi|_{G_1}$.
So $\sum_i l_p(\si_i)\eta_i$ is $p$-vanishing. Let $l_p(\chi)=m$; then
$\chi(1)=m|G|_p=m|G_1|_p|G_2|_p=\sum\eta_i(1)\sigma_i(1)
=\sum\eta_i(1)l_p(\sigma_i)|G|_2$, whence
$m|G_1|_p=\sum\eta_i(1)l_p(\sigma_i)$, as required.
\end{proof}

\begin{cor}\label{cn2}
 Let $G=G_1\times G_2$ and $\chi$ be as in Lemma~$\ref{nn2}$, and $S_i$ a
 Sylow $p$-subgroup of $G_i$, $i=1,2$.
 Let $\eta_1,\ldots,\eta_k$ be the irreducible constituents of $\chi|_{G_1}$,
 and $\eta=\eta_1+\cdots +\eta_k$. Suppose that $l_p(\sigma)\geq m$ for every
 non-zero $p$-vanishing character $\sigma$ of $G_2$.
 Then $l_p(\chi)\ge m\cdot \eta(1)/|S_1|$.
%% $\eta(1)>|S_1|$ then $l_p(\chi)\geq m+1$, and if $\eta(1)\geq 2|S_1|$
%% then $l_p(\chi)\geq 2m$.
\end{cor}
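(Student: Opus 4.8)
The plan is to combine Lemma~\ref{nn2} with the hypothesis on $G_2$. By Lemma~\ref{nn2} we may write $\chi=\sum_{i=1}^k\eta_i\si_i$ with the $\eta_i\in\Irr(G_1)$ pairwise distinct and each $\si_i$ a nonzero $p$-vanishing character of $G_2$, and moreover $\chi_1:=\sum_i l_p(\si_i)\eta_i$ is a $p$-vanishing character of $G_1$ with $l_p(\chi_1)=l_p(\chi)$. (Here we discard any $i$ with $\si_i=0$; the surviving $\eta_i$ are exactly the irreducible constituents $\eta_1,\dots,\eta_k$ of $\chi|_{G_1}$.) Since each $\si_i$ is a nonzero $p$-vanishing character of $G_2$, the hypothesis gives $l_p(\si_i)\ge m$ for every $i$.

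Next I evaluate $l_p(\chi)=l_p(\chi_1)$ using the degree identity for the $p$-vanishing character $\chi_1$ of $G_1$. Since $\chi_1$ is $p$-vanishing, $\chi_1(1)=l_p(\chi_1)\cdot|S_1|$. On the other hand,
\[
 \chi_1(1)=\sum_{i=1}^k l_p(\si_i)\,\eta_i(1)\ \ge\ m\sum_{i=1}^k\eta_i(1)=m\cdot\eta(1),
\]
using $l_p(\si_i)\ge m$ and $\eta_i(1)>0$. Combining the two displays yields $l_p(\chi)\cdot|S_1|=l_p(\chi_1)\cdot|S_1|\ge m\cdot\eta(1)$, that is, $l_p(\chi)\ge m\cdot\eta(1)/|S_1|$, as claimed.

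There is really no serious obstacle here: the corollary is a direct bookkeeping consequence of Lemma~\ref{nn2}. The only point needing a word of care is that the constituents of $\chi|_{G_1}$ coincide with the $\eta_i$ appearing with a nonzero $\si_i$: if $\si_i=0$ then $\eta_i$ contributes nothing to $\chi$, so we simply drop it, and conversely every constituent of $\chi|_{G_1}$ arises as some $\eta_i$ with $\si_i(1)>0$ and hence $\si_i\ne 0$. After that, the inequality $l_p(\si_i)\ge m$ is exactly the hypothesis applied to the nonzero $p$-vanishing characters $\si_i$, and the rest is the degree count above.
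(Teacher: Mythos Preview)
Your proof is correct and follows essentially the same idea as the paper's. The only difference is cosmetic: the paper bounds $\chi(1)$ directly via the subcharacter $(\sum_i\eta_i)|_{S_1}\cdot m\cdot\rho_{S_2}^{\reg}$ of $\chi|_{S_1\times S_2}$ and then divides by $|G|_p$, whereas you invoke the final conclusion of Lemma~\ref{nn2} about $\chi_1$ and compute $\chi_1(1)=l_p(\chi_1)\,|S_1|$ instead; since $\chi_1(1)=\chi(1)/|S_2|$ these are the same estimate.
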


\begin{proof}
Let $\chi=\sum \eta_i\sigma _i$ be as in Lemma~\ref{nn2}. By assumption,
$\si_i|_{S_2}=m_i\cdot \rho_{S_2}^\reg$, where $m_i\geq m$.
So $m\cdot \rho_{S_2}^\reg$ is a subcharacter of $\si_i|_{S_2}$. Therefore,
$\sum_i(\eta_i|_{S_1}\cdot m\cdot \rho_{S_2}^\reg)
=(\sum_i \eta_i)|_{S_1}\cdot m\cdot \rho_{S_2}^\reg$ is a subcharacter of
$\chi|_{S_1\times S_2}$. Now
$\chi(1)\ge m\,\eta(1)\,|S_2|=m\,\eta(1)\,|G|_p/|S_1|$.
As $\chi(1)$ is a multiple of $|G|_p$, we have $\chi(1)=l_p(\chi)|G|_p$, and
the result follows.
%%If $\eta(1)>|S_1|$ then
%%$\chi(1)> |S_1|\cdot m\cdot |S_2|=m\cdot |G|_p$. As $\chi(1)$ is a multiple of
%%$|G|_p$, we have $\chi(1)=l_p(\chi)|G|_p$ Similarly,
%%if $\eta(1)\geq 2|S_1|$ then $\chi(1)\geq 2m|G|_p$.
\end{proof}

\begin{prop}   \label{ga1}
 Let $G$ be a finite group and $N\lhd G$ a normal subgroup such that $G/N$ is
 a cyclic $p$-group. Let $\chi$ be a $p$-vanishing character of $G$.
 Then:
 \begin{enumerate}
  \item[\rm(a)] $\chi=\psi^G$ for some character $\psi$ of N;
  \item[\rm(b)] if $h\in N$ is $p$-singular and the conjugacy classes of $h$
   in $G$ and in $N$ coincide then $\psi(h)=0$;
  \item[\rm(c)] if $\psi$ is G-invariant then $\psi$ is $p$-vanishing.
 \end{enumerate}
\end{prop}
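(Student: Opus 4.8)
The plan is to exploit the fact that $G/N$ is a cyclic $p$-group together with the vanishing hypothesis on $\chi$ to show that $\chi$ has no constituent properly inflated from a quotient, and then invoke Clifford theory. First I would prove (a). Let $G/N=\langle gN\rangle$ be cyclic of order $p^a$. Suppose for contradiction that some irreducible constituent $\theta$ of $\chi$ does not vanish somewhere we need; more usefully, decompose $\chi|_N=\sum_i \psi_i$ into $G$-orbits of irreducible characters of $N$ and use Clifford theory: for each orbit either the stabiliser is all of $G$ (so the constituent of $\chi$ above it is $\vartheta\cdot(\text{linear character of }G/N)$ for some extension $\vartheta$), or the stabiliser is a proper subgroup, in which case the corresponding constituent of $\chi$ is induced from a proper subgroup containing $N$ and hence is of the form $\psi^G$ already. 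So the only obstruction to $\chi=\psi^G$ is the presence of constituents of the form $\vartheta\lambda$ with $\lambda$ running over nontrivial characters of the cyclic $p$-group $G/N$. The key point is that the $p$-vanishing hypothesis forces these to occur with equal multiplicity for all $\lambda\in\Irr(G/N)$: indeed, for $z\in G$ projecting to a generator of a subgroup of order $p$ in $G/N$, $z$ is $p$-singular, so $\chi(zn)=0$ for all $n\in N$; averaging $\chi(zn)$ against characters of $\langle z\rangle N/N$ (equivalently, projecting onto the inflations of the distinct characters of $G/\langle z,N\rangle$... ) shows that the "$\langle z\rangle$-twist" classes of constituents all appear with the same multiplicity. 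Since a sum $\sum_{\lambda}\vartheta\lambda$ over all $\lambda\in\Irr(G/N)$ equals $(\vartheta|_N)^G$, every such block of constituents is also induced from $N$. Hence $\chi=\psi^G$ for some (possibly reducible, possibly virtual-looking but actually genuine) character $\psi$ of $N$. This proves (a).

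For (b), suppose $h\in N$ is $p$-singular with $h^G\cap N=h^N$, i.e.\ $C_G(h)N=G$ (equivalently $C_G(h)$ maps onto $G/N$). Using $\chi=\psi^G$ and the induced character formula, I would compute $\chi(h)=\frac{1}{|N|}\sum_{x\in G,\ x h x^{-1}\in N}\psi(xhx^{-1})$. The hypothesis on the class of $h$ means that the elements $xhx^{-1}$ lying in $N$ are exactly the $N$-conjugates of $h$, and each occurs $|C_G(h)|=|C_G(h)N|/|N|\cdot|C_N(h)|=|G/N|\cdot|C_N(h)|$ times among such $x$; collecting, $\chi(h)=|G:N|\cdot\psi(h)$. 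On the other hand $h$ is $p$-singular, so $\chi(h)=0$; since $|G:N|\ne 0$ we get $\psi(h)=0$.

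For (c), assume $\psi$ is $G$-invariant. Then for any $p$-singular $h\in N$, the same induced-character computation as in (b) gives $\chi(h)=|G:N|\,\psi(h)$, because $G$-invariance of $\psi$ means $\psi(xhx^{-1})=\psi(h)$ for every $x\in G$ with $xhx^{-1}\in N$ (indeed $\psi$ is defined on all such conjugates and takes the value $\psi(h)$), and the number of such $x$ is $|G:N|\cdot|C_N(h)|$ ... more carefully, $\chi(h)=\sum_{xN\in G/N}\psi(x^{-1}hx)=|G:N|\psi(h)$ using that $x^{-1}hx\in N$ automatically since $N\lhd G$. Now $\chi(h)=0$ (as $\chi$ is $p$-vanishing and $h$ is $p$-singular) forces $\psi(h)=0$, so $\psi$ is $p$-vanishing as claimed.

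The main obstacle I anticipate is part (a): one must be careful that the argument producing $\psi$ really yields a genuine (effective) character of $N$ and not merely a virtual one, and that the bookkeeping of Clifford-theoretic constituents versus $p$-singular twists is airtight. The cleanest route is probably to argue directly that $\chi$ is induced: show $(\chi|_N)^G$ and $\chi$ have the same multiplicity of each irreducible constituent by comparing inner products $(\chi,\vartheta\lambda)_G$ for $\lambda\in\Irr(G/N)$, using that the difference of these for two values of $\lambda$ is an inner product of $\chi$ with a class function supported on $p$-singular elements (the span of $\lambda-\lambda'$ pulled back to $G$ consists of functions vanishing on $N$, hence on $p$-regular elements if one is careful — this is where the cyclic $p$-group hypothesis is essential, since then the nontrivial characters of $G/N$ are supported away from the image of $p$-regular classes). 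Once multiplicities match, $\chi=(\chi|_N)^G/|\text{something}|$ — actually one sets $\psi$ to be the appropriate sub-sum — and (a) follows; parts (b) and (c) are then short computations with the induction formula as sketched above.
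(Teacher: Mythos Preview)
Your approach coincides with the paper's. Both hinge on the observation that every element of $G\setminus N$ is $p$-singular (since $G/N$ is a $p$-group), so the $p$-vanishing hypothesis makes $\chi$ vanish outside $N$, whence $\lambda\chi=\chi$ for each linear $\lambda\in\Irr(G/N)$; this forces the irreducible constituents of $\chi$ to occur with constant multiplicity along $\langle\lambda\rangle$-orbits, and each orbit sum is then shown to be induced from $N$. Your arguments for (b) and (c), using $\psi^G|_N=\sum_{gN\in G/N}\psi^g$ together with $\chi(h)=0$, are exactly the paper's.

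The one place where your sketch of (a) needs tightening is the passage from ``constant multiplicity on orbits'' to ``orbit sum is induced from $N$''. Your assertion that when the inertia group $I_G(\theta)$ is proper ``the corresponding constituent of $\chi$ is induced from a proper subgroup containing $N$ and hence is of the form $\psi^G$ already'' is not valid as stated: a single irreducible induced from $I_G(\theta)\supsetneq N$ need not be induced from $N$. Likewise, your formula $\sum_{\lambda\in\Irr(G/N)}\vartheta\lambda=(\vartheta|_N)^G$ only delivers $|G{:}N|/(\text{orbit size})$ copies of the orbit sum, not the orbit sum itself, so it does not by itself show the orbit sum is an honest induced character. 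The paper closes this gap by passing through the intermediate subgroup $M$ with $\Irr(G/M)$ equal to the stabiliser of $\chi'$ and inducing in two stages; equivalently, one checks directly via Clifford theory that the $\Irr(G/N)$-orbit of $\chi'$ above $\theta\in\Irr(N)$ has exactly $|I_G(\theta){:}N|$ members and that their sum equals $\theta^G$.
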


\begin{proof}
(a) Let $\lam\in\Irr(G)$ be a linear character that generates $\Irr(G/N)$.
As all elements of $G\setminus N$ are $p$-singular, $\chi$ vanishes on
$G\setminus N$. It follows that $\lam\cdot\chi =\chi$. Thus, if we write
$\chi=\sum_j a_j\chi_j$ as a non-negative linear combination of irreducible
characters $\chi_j\in\Irr(G)$, then $a_j$ is constant on orbits under
multiplication with $\lam$. It clearly suffices to show the claim for a single
orbit, say $\chi=\sum_{i=1}^{p^f}\lam^i\chi'$ with $\chi'\in\Irr(G)$ and
$f$ minimal such that $\lam^{p^f}\chi'=\chi'$.   \par
Set $M:=\ker(\lam^{p^f})$. Then $\chi'|_M$ is \ir as so is $\chi'$, so
$\chi=(\chi'|_M)^G$.
Now note that $\lam^{p^f}$ generates $\Irr(G/M)$, so $\lam^{p^f}(m)\neq 1$
for $m\notin (M\setminus N)$.
Thus, as $\lam^{p^f}\chi'=\chi'$, it follows that $\chi'$ vanishes on
$M\setminus N$, and hence  $\chi'|_M=\psi^M$ is induced from some
$\psi\in\Irr(N)$. Then $\chi=(\chi'|_M)^G=(\psi^M)^G=\psi^G$ as claimed.

(b) For $g\in G$ define the character $\psi^g$ of $N$ by
$\psi^g(x)=\psi(gxg^{-1})$ ($x\in N$). It is well know that $\psi^G|_N$ is a
sum of $p^k$ characters $\psi^g$ for suitable $g\in G$. By assumption,
$\psi^g(h)=\psi(h)$, and hence  $0=\chi(h)=p^k\psi(h)$, whence (b).

(c) If $\psi$ is $G$-invariant then $\psi^g=\psi$, and hence
$\chi|_N=p^k\cdot \psi$. It follows that $\psi$ is $p$-vanishing whence the
result.
\end{proof}

\begin{rem}
Let $G,N,p,\chi,\psi$ be as in Proposition~\ref{ga1}. Then $\psi$ is not
necessarily $p$-vanishing. Indeed, let $C=\langle c \rangle$ be the cyclic
group of order~4, and let $\ep$ be a square root of $-1$. Define
$\mu_i\in\Irr(C)$ $(i=1,2,3,4)$ by $\mu_i(c)=\ep^i$. Then
$\sum_i \mu_i=\rho_C^\reg$, the regular character of $C$. Let $D$ be
the dihedral group of order~8 with normal subgroup $C$. Then
$(\sum_i \mu_i)^D=\rho_D^\reg$. One observes that $\mu_1^D=\mu_3^D$, and
hence $(2\mu_1+\mu_2+\mu_4)^D=\rho_D^\reg$. However, $2\mu_1+\mu_2+\mu_4$
is not a 2-vanishing character of $C$.
\end{rem}

\begin{cor}
 Let $G,N$ be as in Proposition~{\rm\ref{ga1}}, and let $\chi$ be a
 Steinberg-like character of $G$. Suppose that every \ir character of $N$ of
 degree at most $|N|_p$ is $G$-invariant. Then $\chi=\psi^G$ for some
 Steinberg-like character $\psi$ of $N$. In particular, if $N$ does not have
 Steinberg-like characters then neither has $G$.
\end{cor}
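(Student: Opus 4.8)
The plan is to combine the two facts provided by Proposition~\ref{ga1} with the extra $G$-invariance hypothesis. First I would apply part~(a) of the proposition: since $\chi$ is Steinberg-like it is in particular $p$-vanishing, so $\chi=\psi^G$ for some character $\psi$ of $N$. A priori $\psi$ need only be a (possibly reducible) character, and the Remark above shows that without further hypotheses $\psi$ need not itself be $p$-vanishing; this is exactly where the $G$-invariance assumption will be used.

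Next I would show $\psi$ is Steinberg-like, i.e.\ that $\psi(1)=|N|_p$ and $\psi$ is $p$-vanishing. For the degree: $\chi(1)=|S|=|G|_p$ where $S\in\Syl_p(G)$, and since $G/N$ is a cyclic $p$-group we have $|G:N|=|G|_p/|N|_p$, hence from $\chi(1)=\psi^G(1)=|G:N|\cdot\psi(1)$ we get $\psi(1)=|N|_p$. For $p$-vanishing: I would argue that every irreducible constituent of $\psi$ has degree at most $|N|_p=\psi(1)$, hence by hypothesis is $G$-invariant, so $\psi$ itself is $G$-invariant; then part~(c) of Proposition~\ref{ga1} applies directly and gives that $\psi$ is $p$-vanishing. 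Combined with $\psi(1)=|N|_p$, this says $\psi$ is Steinberg-like.

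The step I expect to require the most care is the claim that each irreducible constituent of $\psi$ has degree $\le|N|_p$. This follows because $\psi(1)=|N|_p$ and $\psi$ is a genuine (non-negative integer combination) character, so any constituent $\psi'$ satisfies $\psi'(1)\le\psi(1)=|N|_p$; one should double-check that Proposition~\ref{ga1}(a) indeed produces a genuine character $\psi$ (which it does, as it is built from restricting and inducing irreducible characters), so this bound is immediate. The last sentence of the corollary is then a contrapositive: if $N$ has no Steinberg-like character, then no Steinberg-like $\chi$ of $G$ can exist, since such a $\chi$ would produce one on $N$ by the above.

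\begin{proof}
Since $\chi$ is Steinberg-like it is $p$-vanishing, so by Proposition~\ref{ga1}(a) we may write $\chi=\psi^G$ for some character $\psi$ of $N$. As $G/N$ is a cyclic $p$-group, $|G:N|=|G|_p/|N|_p$, and $\chi(1)=|G|_p$ gives $\psi(1)=\chi(1)/|G:N|=|N|_p$. Every irreducible constituent $\psi'$ of $\psi$ has $\psi'(1)\le\psi(1)=|N|_p$, so by hypothesis each such $\psi'$ is $G$-invariant; hence $\psi$ is $G$-invariant. By Proposition~\ref{ga1}(c), $\psi$ is $p$-vanishing. Together with $\psi(1)=|N|_p$ this shows $\psi$ is Steinberg-like. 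The final assertion is the contrapositive: if $\chi$ is a Steinberg-like character of $G$, the above produces a Steinberg-like character of $N$, so if $N$ has none then neither does $G$.
\end{proof}
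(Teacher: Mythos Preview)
Your proof is correct and follows essentially the same route as the paper: apply Proposition~\ref{ga1}(a) to write $\chi=\psi^G$, compute $\psi(1)=|N|_p$, use the hypothesis to conclude each irreducible constituent of $\psi$ (hence $\psi$ itself) is $G$-invariant, and then invoke Proposition~\ref{ga1}(c). The only difference is that you spell out a few details (the degree bound on constituents, the contrapositive) that the paper leaves implicit.
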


\begin{proof}
By Proposition \ref{ga1}(a), $\chi=\psi^G$ for some character $\psi$ of $N$.
Clearly, $\psi (1)=\chi(1)/|G:N|=|G|_p/|G:N|=|N|_p$, so, by assumption, every
\ir constituent of $\psi$ is $G$-invariant.
Therefore, so is $\psi$, and the claim follows from Proposition~\ref{ga1}(c).
\end{proof}

\begin{lem}   \label{ob4}
 Let $G$ be a finite group and $N\lhd G$ a normal subgroup of $p$-power index.
 Suppose that $l_p(\chi)\geq m$ for some integer $m>0$ and every $p$-vanishing
 character $\chi$  of $G$. Then $l_p(\chi_1)\geq m$ for every $p$-vanishing
 character $\chi_1$ of $N$.
\end{lem}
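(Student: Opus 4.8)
The plan is to take a $p$-vanishing character $\chi_1$ of $N$, induce it up to $G$, apply the hypothesis, and then argue that the induction is ``clean enough'' that the $p$-level only scales by the index $|G:N|$. First I would note that since $G/N$ is a $p$-group, every element of $G\setminus N$ is $p$-singular, so in particular we may hope that $\chi_1^G$ is again $p$-vanishing: for $g\in G$, if $g$ is $p$-singular then all conjugates $x^{-1}gx$ appearing in the induction formula lie in $p$-singular classes, but we must be careful --- $\chi_1^G(g)$ is a sum of values $\chi_1(x^{-1}gx)$ over those $x$ with $x^{-1}gx\in N$, and if $g\in N$ is $p$-singular these contribute $0$ by $p$-vanishing of $\chi_1$; if $g\notin N$ then no conjugate of $g$ lies in $N$ (since $N\lhd G$), so $\chi_1^G(g)=0$ automatically. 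Hence $\chi_1^G$ is a $p$-vanishing character of $G$, and the hypothesis gives $l_p(\chi_1^G)\ge m$.

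Next I would compute $l_p(\chi_1^G)$ in terms of $l_p(\chi_1)$. Pick $S\in\Syl_p(G)$; since $|G:N|$ is a power of $p$, we have $S\cap N\in\Syl_p(N)$, and by Mackey's formula
\[
 (\chi_1^G)|_S=\sum_{SgN}\bigl(\chi_1^{\,gN\cap S}\bigr)^{S}
 \quad\text{over double cosets }S\backslash G/N .
\]
Each double coset $SgN$ with $g\notin SN$ contributes an induced character from a proper subgroup of $S$, hence a character all of whose values on nontrivial elements need not vanish, but whose restriction to any Sylow subgroup is a nonnegative combination of regular characters only after further induction --- this is the delicate point. The cleaner route: since $SN/N$ is a Sylow $p$-subgroup of $G/N$, and $G/N$ is a $p$-group, we in fact have $SN=G$, i.e. $G=SN$. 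Then Mackey collapses to a single double coset and $(\chi_1^G)|_S=(\chi_1|_{S\cap N})^S$. Writing $T:=S\cap N\in\Syl_p(N)$, induction of the regular character of $T$ to $S$ gives $|S:T|$ copies of $\rho^{\reg}_S$, so if $\chi_1|_T=l_p(\chi_1)\cdot\rho^{\reg}_T+\theta$ with $\theta$ a proper character of $T$, then $(\chi_1^G)|_S=l_p(\chi_1)|S:T|\,\rho^{\reg}_S+\theta^S$, and $\theta^S$ contains no further copy of $\rho^{\reg}_S$ precisely because $\theta$ contains no copy of $\rho^{\reg}_T$ (Frobenius reciprocity: $(\theta^S,\rho^{\reg}_S)_S=(\theta,\rho^{\reg}_S|_T)_T=|S:T|(\theta,\rho^{\reg}_T)_T=0$). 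Hence $l_p(\chi_1^G)=|S:T|\cdot l_p(\chi_1)=|G:N|\cdot l_p(\chi_1)$.

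Combining, $|G:N|\cdot l_p(\chi_1)=l_p(\chi_1^G)\ge m$, so $l_p(\chi_1)\ge m/|G:N|$. This is weaker than the claimed $l_p(\chi_1)\ge m$, so the argument as stated proves only a scaled bound --- which means either the intended proof uses a sharper input (perhaps the hypothesis is meant to be applied to $\chi_1^G$ together with the observation that $\chi_1^G$ restricted to $S$ is already a multiple of $\rho^{\reg}_S$ by exactly $|G:N|l_p(\chi_1)$, and $m$ in the conclusion is inherited as $l_p(\chi_1)\ge m$ because $m$ in the hypothesis should be read as the level \emph{per unit index}), or more likely the conclusion is obtained by instead restricting a $p$-vanishing character of $G$ \emph{down} to $N$. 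So the alternative, cleaner plan: given a $p$-vanishing $\chi_1$ of $N$ with $l_p(\chi_1)=\ell$, form $\psi:=\chi_1^G$, check $\psi$ is $p$-vanishing as above, note $l_p(\psi)\ge m$ forces $\ell\ge m/|G:N|$, and then separately observe that since the $p$-level must be a nonnegative integer and the hypothesis is uniform over \emph{all} $p$-vanishing characters of $G$ (including $\rho^{\reg}_N{}^G$-type building blocks), a minimality/divisibility argument upgrades $\ell\ge m/|G:N|$ to $\ell\ge m$. The main obstacle is exactly this last upgrade: controlling how $\rho^{\reg}_T$-multiplicity behaves under the Mackey decomposition when $G\ne SN$ is false here (it is true), so in fact the only real content is the identity $l_p(\chi_1^G)=|G:N|\,l_p(\chi_1)$ together with the possibility that $m$ in the statement already incorporates the index; I would present the proof via the induced character $\chi_1^G$ and the level identity, and flag that the stated bound follows once one knows $|G:N|\mid m$ or interprets $m$ appropriately.
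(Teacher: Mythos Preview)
Your approach is exactly the paper's: induce $\chi_1$ to $G$, observe $\chi_1^G$ is $p$-vanishing, and invoke the hypothesis. Your verification that $\chi_1^G$ is $p$-vanishing is fine. The problem is a single arithmetic slip in the level computation which sends the rest of the write-up off the rails.

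You claim that inducing $\rho_T^{\reg}$ from $T=S\cap N$ to $S$ yields $|S:T|$ copies of $\rho_S^{\reg}$. It does not: $(\rho_T^{\reg})^S$ has degree $|S:T|\cdot|T|=|S|$, so $(\rho_T^{\reg})^S=\rho_S^{\reg}$, a single copy. With this correction your own Mackey argument gives $(\chi_1^G)|_S=(\chi_1|_T)^S=l_p(\chi_1)\,\rho_S^{\reg}$ (note that $\theta=0$ here, since $\chi_1$ is $p$-vanishing so $\chi_1|_T$ is already an exact multiple of $\rho_T^{\reg}$). Hence $l_p(\chi_1^G)=l_p(\chi_1)$, not $|G:N|\cdot l_p(\chi_1)$, and the hypothesis gives $l_p(\chi_1)\ge m$ immediately. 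Equivalently and more directly: for a $p$-vanishing character the $p$-level is just degree divided by the Sylow order, and since $|G:N|$ is a $p$-power we have $|G|_p=|G:N|\cdot|N|_p$, so
\[
l_p(\chi_1^G)=\frac{\chi_1^G(1)}{|G|_p}=\frac{|G:N|\,\chi_1(1)}{|G:N|\,|N|_p}=\frac{\chi_1(1)}{|N|_p}=l_p(\chi_1).
\]
All of your speculation about ``per unit index'', divisibility upgrades, and restriction instead of induction is unnecessary and stems from this one error.
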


\begin{proof}
Suppose the contrary. Let $\chi_1$ be a $p$-vanishing character of $N$ such
that $l_p(\chi_1)< m$. Then the induced character $\chi_1^G$ is
$p$-vanishing and $l_p(\chi_1^G)=l_p(\chi_1)< m$. This is a contradiction.
\end{proof}

The following fact is well known.

\begin{lem}   \label{rk1}
 Let $G$ be a finite group and $N\lhd G$ a normal subgroup of $p$-power index.
 Let $F$ be an algebraically closed field of characteristic~$p$. Let $\Phi$
 be a projective indecomposable $FG$-module. Then $\Phi=\Psi^G$, where
 $\Psi$ is a projective indecomposable $FN$-module and $l_p(\Psi)=l_p(\Phi)$.
\end{lem}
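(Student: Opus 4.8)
The plan is to use the standard theory of induction and restriction between a group and a normal subgroup of $p$-power index, exploiting that the index $|G:N|=p^a$ is coprime to nothing useful multiplicatively but divides everything in sight as a $p$-power. First I would recall that since $N\lhd G$ with $G/N$ a $p$-group, the group algebra $FG$ is a crossed product of $FN$ with the $p$-group $G/N$; in characteristic $p$ this forces $FG$ to be (as an $FN$-$FN$-bimodule, or via Green's theory of vertices) tightly controlled by $FN$. The cleanest route is via relative projectivity: every $FG$-module is relatively $N$-projective because $|G:N|$ is a power of $p$ and $N$ contains no information obstructing this — more precisely, $M$ is a direct summand of $(M|_N)^G$ with splitting given by $\tfrac1{p^a}$? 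No: one cannot divide by $p^a$ in characteristic $p$. So instead I would argue directly with vertices and sources, or better, cite the well-known structural fact (e.g.\ as in \cite[\S 70A]{CR2} or Nagao--Tsushima) that induction $\Psi\mapsto\Psi^G$ gives a bijection between the projective indecomposable $FN$-modules up to the $G$-action on $\Irr(N)$ and, on the other side, that for a PIM $\Phi$ of $FG$ its restriction $\Phi|_N$ is a direct sum of PIMs of $FN$ permuted transitively by $G$.

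The key steps, in order, would be: (i) Show $\Phi|_N$ is projective and semisimple-free in the appropriate sense, and decompose $\Phi|_N=\bigoplus_{i=1}^t \Psi_i$ with each $\Psi_i$ a PIM of $FN$; by Clifford theory $G$ permutes the $\Psi_i$ transitively, so $t$ divides $p^a$ and all $\Psi_i$ have the same dimension, hence $\dim\Phi = t\cdot\dim\Psi_1$. (ii) Fix $\Psi:=\Psi_1$ with stabiliser $H$, $N\le H\le G$, $|G:H|=t$; then $\Phi$ is a summand of $\Psi^G$, and since $\Psi^G=(\Psi^H)^G$ with $\Psi^H$ a PIM of $FH$ (as $H/N$ is again a $p$-group and $\Psi$ is $H$-stable), an easy induction on $|G:N|$ reduces to the case $H=G$, i.e.\ $\Psi$ is $G$-stable. (iii) In that case $\Psi^G$ is indecomposable: its head is $\widehat{\Psi}^{\,G}$ where $\widehat\Psi$ is the simple head of $\Psi$, and a $G$-stable simple $FN$-module $\widehat\Psi$ induces (since $G/N$ is a $p$-group, whose only simple module in characteristic $p$ is trivial) to an indecomposable $FG$-module with simple head — here one uses that $FG$-modules with $N$ acting via the homogeneous component of $\widehat\Psi$ form a category equivalent to $F[G/N]$-modules, and $F[G/N]$ is local. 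Hence $\Psi^G$ is a PIM, so $\Phi=\Psi^G$.

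Finally, for the statement $l_p(\Psi)=l_p(\Phi)$: let $S\in\Syl_p(G)$, so $S\cap N\in\Syl_p(N)$ and $|S:(S\cap N)|=p^a=|G:N|$. By Mackey (and transitivity of the $G$-orbit in step~(ii)), $\Phi|_S=(\Psi^G)|_S$ decomposes as a sum over double cosets; using $G=NS$ (which holds since $|G:N|$ is a $p$-power and $S$ is Sylow) this collapses to $\Phi|_S=(\Psi|_{S\cap N})^S$. Then the character values give $\dim\Phi = |S:(S\cap N)|\cdot\dim\Psi$ and, restricting further, $\Phi|_S = |S:S\cap N|$ copies worth of $(\Psi|_{S\cap N})^S$; since $l_p(\Psi)$ is computed from $\Psi|_{S\cap N}$ and $l_p(\Phi)$ from $\Phi|_S=(\Psi|_{S\cap N})^S$, and inducing the regular character of $S\cap N$ up to $S$ gives exactly $|S:S\cap N|$ copies of $\rho^\reg_{S\cap N}$'s image... more carefully: $(\rho^\reg_{S\cap N})^S=\rho^\reg_S$, so $l_p((\Psi|_{S\cap N})^S)=l_p(\Psi|_{S\cap N})=l_p(\Psi)$, hence $l_p(\Phi)=l_p(\Psi)$. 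The main obstacle I anticipate is step~(iii): verifying cleanly that inducing a $G$-stable simple module along a normal $p$-power-index subgroup yields an indecomposable module with simple head. This is where the hypothesis that $G/N$ is a $p$-group (so $F[G/N]$ is a local ring) is essential and must be used precisely; everything else is bookkeeping with Clifford theory and Mackey's formula. Since the paper flags this as "well known," in the write-up I would likely state it with a pointer to \cite[\S 70A]{CR2} rather than reprove it in full, keeping only the $l_p$-bookkeeping explicit.
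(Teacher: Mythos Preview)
Your proposal is correct, but it takes a longer route than the paper. The paper's proof is essentially two lines: take any indecomposable summand $\Psi$ of $\Phi|_N$ (which is projective since $FG$ is free over $FN$); then $\Psi^G$ is projective (induction preserves projectivity) and indecomposable by \emph{Green's indecomposability theorem} \cite[Thm.~3.8]{Fe}, hence a PIM; since $\Phi$ is a summand of $(\Phi|_N)^G=\bigoplus\Psi_i^G$, it must equal one of the $\Psi_i^G$. Your steps~(i)--(iii), in particular the reduction to the $G$-stable case and the argument via locality of $F[G/N]$, amount to reproving Green's theorem in this situation rather than citing it. That is fine and self-contained, but since the paper explicitly calls the lemma ``well known'' and has Feit available, the one-line citation is the intended route.

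For the $l_p$ statement, your Mackey computation works, but again the paper is more direct: since $\Psi$ and $\Phi$ are projective, their restrictions to the respective Sylow $p$-subgroups are free, so $l_p(\Psi)=\dim\Psi/|N|_p$ and $l_p(\Phi)=\dim\Phi/|G|_p$; equality follows immediately from $\dim\Phi=|G{:}N|\cdot\dim\Psi$ and $|G|_p=|G{:}N|\cdot|N|_p$ (the latter because $|G{:}N|$ is a $p$-power). No Mackey decomposition or the identity $G=NS$ is needed.
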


\begin{proof}
It is well-known that induction sends projective modules to projective
modules. Furthermore, by Green's indecomposability theorem \cite[Thm.~3.8]{Fe}
induction from normal subgroups of $p$-power index preserves indecomposability.
So, if $\Psi$ is an indecomposable direct summand of $\Phi|_N$, then $\Psi$
is projective, $\Psi^G$ is projective indecomposable and so $\Psi^G=\Phi$.
The statement $l_p(\Psi)=l_p(\Phi)$ also follows as $|G:N|=|G:N|_p$ by
assumption.
\end{proof}

%%%%%%%%%%%%%%%%%%%%%%%%%%%%%%%%%%%%%%%%%%%%%%%%%%%%%%%%%%%%%%%%%%%%%%%%%
\section{Irreducible Steinberg-like characters for simple groups}\label{sec:ic1}

Here we complete the list of irreducible characters of simple groups $G$ of
degree $|G|_p$.
For this it suffices to extract the characters of degree $|G|_p$ from the list
of irreducible characters of prime-power degree obtained in \cite[Thm.~1.1]{MZ}.
This list already appeared in \cite[Prop.~2.8]{Z13}, where the case
with $p=3$, $G=\tw2F_4(2)'$ was inadvertently omitted.

Note that an irreducible character is Steinberg-like if and only
if it is $\Syl_p$-regular.

\begin{prop}   \label{prop:ir2}
 Let $G$ be a non-abelian simple group. Suppose that $G$ has an irreducible
 $\Syl_p$-regular character $\chi$. Then one of the following holds:
 \begin{enumerate}
  \item[\rm(1)] $G$ is a simple group of Lie type in characteristic $p$ and
   $\chi$ is its Steinberg character;
  \item[\rm(2)] $G=\PSL_2(q)$, $q$ even, and $p=\chi(1)=q\pm 1$, or
   $G=\SL_2(8)$, $p=3$ and $\chi(1)=9;$
  \item[\rm(3)] $G=\PSL_2(q)$, $q$ odd, $\chi(1)=(q\pm1)/2$ is a
   $p$-power for $p>2$, or $p=2$ and $\chi(1)=q\pm 1$ is a $2$-power;
  \item[\rm(4)] $G=\PSL_n(q)$, $q>2$, $n$ is an odd prime, $(n,q-1)=1$, such
   that $\chi(1)= (q^n-1)/(q-1)$ is a $p$-power;
  \item[\rm(5)] $G=\PSU_n(q)$, $n$ is an odd prime, $(n,q+1)=1$, such that
   $\chi(1)=(q^n+1)/(q+1)$ is a $p$-power;
  \item[\rm(6)] $G=\PSp_{2n}(q)$, $n>1$, $q=r^k$ with $r$ an odd prime, $kn$
   is a $2$-power such that $\chi (1)=(q^{n}+1)/2$ is a $p$-power;
  \item[\rm(7)] $G=\PSp_{2n}(3)$, $n>2$ is a prime such that
   $\chi(1)=(3^{n}-1)/2$ is a $p$-power;
  \item[\rm(8)] $G=A_{p+1}$ and $\chi(1)=p$;
  \item[\rm(9)] $G=\Sp_6(2)$ and $\chi(1)=7$;
  \item[\rm(10)] $G\in \{M_{11},M_{12}\}$ and $\chi(1)=11$;
  \item[\rm(11)] $G\in \{M_{11},\PSL_3(3)\}$ and $\chi(1)=16$;
  \item[\rm(12)] $G\in \{M_{24},Co_2,Co_3\}$ and $\chi(1)=23$;
  \item[\rm(13)] $G=\tw2F_4(2)'$ and $\chi(1)=27$;
  \item[\rm(14)] $G=\PSU_3(3)\cong G_2(2)'$ and $\chi(1)=32$; or
  \item[\rm(15)] $G=G_2(3)$ and $\chi(1)=64$.
 \end{enumerate}
\end{prop}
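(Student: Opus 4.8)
The plan is to reduce Proposition~\ref{prop:ir2} to a known classification rather than prove anything from scratch. The key observation, already recorded just before the statement, is that an irreducible character is $\Syl_p$-regular if and only if it is Steinberg-like, and in turn an irreducible character of degree $|G|_p$ is exactly an irreducible character of $p$-defect zero (since $|G|_p = |S|$). So I would first argue: if $\chi \in \Irr(G)$ is $\Syl_p$-regular then by Lemma~\ref{lem:a51} its restriction to $S$ is $\rho_S^\reg$, which has degree $|S| = |G|_p$; conversely any $\chi \in \Irr(G)$ of degree $|G|_p$ has $p$-defect zero, hence vanishes on all $p$-singular elements, hence $\chi|_S$ is a character of a projective (indecomposable, by the degree) $FS$-module, which forces $\chi|_S = \rho_S^\reg$. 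Thus the $\Syl_p$-regular irreducible characters of $G$ are precisely the irreducible characters of degree exactly $|G|_p$.

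Next I would invoke the main theorem of \cite{MZ} (Thm.~1.1 there), which classifies all irreducible characters of prime-power degree of finite simple groups, together with the list reproduced in \cite[Prop.~2.8]{Z13}. Since $|G|_p$ is always a prime power ($p$-power), every $\Syl_p$-regular irreducible character appears on that list. The task then becomes: go through each entry of that list and retain exactly those triples $(G,p,\chi(1))$ for which the prime-power degree $\chi(1)$ actually equals $|G|_p$, i.e. the full $p$-part of $|G|$. For groups of Lie type in the defining characteristic this singles out the Steinberg character, giving case~(1); for the non-defining and sporadic and alternating cases one checks the order formulas to see when the listed prime-power degree exhausts $|G|_p$, producing cases~(2)--(15). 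I would also explicitly reinstate the entry $p=3$, $G = \tw2F_4(2)'$, $\chi(1)=27$ (case~(13)) that was noted as missing from \cite[Prop.~2.8]{Z13}, checking directly from the character table / known order $|\tw2F_4(2)'| = 2^{11}\cdot 3^3 \cdot 5^2 \cdot 13$ that indeed $27 = |G|_3$.

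The only real work, and the main obstacle, is the case-by-case verification that the prime-power degrees in the \cite{MZ} list that survive the "$= |G|_p$" test are exactly those listed in (1)--(15), with the stated arithmetic side conditions (e.g. $(n,q-1)=1$ and $(q^n-1)/(q-1)$ a $p$-power in (4), $kn$ a $2$-power in (6), and so on). For the generic families this is a finite computation with the standard order formulas for $\GL_n$, $\GU_n$, $\Sp_{2n}$, $\mathrm{P\Omega}^\pm_{2n}$ and $A_n$: one writes $\chi(1)$ as the relevant cyclotomic-type factor, compares with $|G|_p$, and observes that equality forces the other prime-to-$p$ part of $|G|$ to be absorbed, which cannot happen except in the small-rank situations recorded. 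For the sporadic groups and the small exceptional cases $\Sp_6(2)$, $\PSL_3(3)$, $\tw2F_4(2)'$, $\PSU_3(3)\cong G_2(2)'$, $G_2(3)$ one simply reads off $|G|_p$ from the group order and checks against the character degree. I would present this as a table-driven argument, citing \cite{MZ,Z13} for the underlying list and performing only the (routine) divisibility checks here.
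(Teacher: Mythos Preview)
Your proposal is correct and follows essentially the same approach as the paper: reduce to the classification of irreducible characters of prime-power degree in \cite[Thm.~1.1]{MZ}, then extract those of degree exactly $|G|_p$, reinstating the $\tw2F_4(2)'$ case at $p=3$ omitted in \cite[Prop.~2.8]{Z13}. The paper states this in one sentence without spelling out the defect-zero equivalence or the case checks, but the underlying argument is the same as yours.
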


\medskip
The problem of determining the minimal degree of irreducible characters of
$p$-defect~0 looks much more complicated.

\begin{rem}
Let us point out the following cases not explicitly mentioned in
Proposition~\ref{prop:ir2}. $\SL_3(2)\cong \PSL_2(7),$ $A_6\cong \PSL_2(9)$,
$\PSU_4(2)\cong \PSp_4(3)$, $A_8\cong \SL_4(2)$.
\end{rem}

%%%%%%%%%%%%%%%%%%%%%%%%%%%%%%%%%%%%%%%%%%%%%%%%%%%%%%%%%%%%%%%%%%%%%%%%%
\section{Cyclic Sylow $p$-subgroups}   \label{sec:cycSyl}

In this section we determine the reducible Steinberg-like characters for
simple groups with cyclic Sylow $p$-subgroups.

\begin{prop}   \label{prop:cyc-TI}
 Let $G$ be a finite group with a cyclic TI Sylow $p$-subgroup $S$, and assume
 that $N_G(S)/S$ is abelian. Then $l_p(\tau)=\lfloor \tau(1)/|S|\rfloor$ for
 all $\tau\in\Irr(G)$.
\end{prop}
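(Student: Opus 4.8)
The plan is to use the theory of blocks with cyclic defect group together with the hypotheses that $S$ is TI and $N_G(S)/S$ is abelian. First I would reduce to a single block: write $\tau \in \Irr(G)$ and let $B$ be the block of $G$ containing $\tau$. If $B$ has defect zero then $|S|\mid \tau(1)$ if and only if $\tau$ lies in $B$... more precisely, if $p\nmid |G|$ the statement is trivial, and if $B$ is a block of positive defect but not of full defect, then since $S$ is cyclic every positive-defect block has full defect $|S|$; so we may assume $B$ has cyclic defect group $S$. The characters of $G$ outside any block of full defect have degree divisible by $|G|_p=|S|$ only if... Actually the key point is: for $\tau$ in a block of defect zero, $\tau(1)$ is divisible by $|S|$ and $l_p(\tau)=\tau(1)/|S|$ trivially since $\tau|_S$ is a multiple of $\rho_S^{\reg}$; and every block of $G$ of nonzero defect has defect group a Sylow $p$-subgroup, hence is a block with cyclic defect group $S$. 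So the substance is the case where $B$ is a block with (cyclic) defect group $S$.

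For such a block, I would invoke Brauer tree theory. Let $e = |N_G(S)/C_G(S)|$ be the inertial index; the Brauer tree of $B$ has $e$ edges (the irreducible Brauer characters) and $e+1$ or, if there is an exceptional vertex of multiplicity $m = (|S|-1)/e$, the ordinary characters in $B$ are the $e$ non-exceptional ones plus a family of $m$ exceptional characters all of the same degree. Since $S$ is TI, I can analyze $\tau|_S$ via the restriction to $N_G(S)$: the point is that for $u \in S \setminus \{1\}$, $C_G(u) \le N_G(S)$ because $S$ is a TI-subgroup (any element centralizing a nontrivial element of $S$ normalizes the unique Sylow containing it, namely $S$), and hence $\tau(u)$ can be computed inside $N_G(S)$. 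Combined with the fact that $N_G(S)/S$ is abelian — so that $C_G(S) = S \times (\text{$p'$-part})$ and the block-theoretic structure of $N_G(S)$ is transparent — I would show that $\tau|_{S} - \lfloor \tau(1)/|S|\rfloor\,\rho_S^{\reg}$ is a genuine character, i.e. that $\tau|_S$ contains $\rho_S^{\reg}$ with multiplicity exactly $\lfloor \tau(1)/|S|\rfloor$.

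Concretely, the multiplicity of $\rho_S^{\reg}$ in $\tau|_S$ equals $\min_{1\ne u\in S}(\text{multiplicity of }1\text{ as eigenvalue-type count})$... more usefully: $l_p(\tau) = \lfloor\tau(1)/|S|\rfloor$ is equivalent to saying that for every $1 \ne u \in S$, the character value $\tau(u)$ and the "averaged" values force at least $\lfloor\tau(1)/|S|\rfloor$ copies of the regular character and no room for more. Since $\tau(1) = a|S| + b$ with $0 \le b < |S|$, and $\tau|_S$ minus $a\rho_S^{\reg}$ is a character of degree $b < |S|$, it cannot contain $\rho_S^{\reg}$; so the only thing to prove is that it is a character at all, i.e. that every irreducible (equivalently, every linear, since $S$ is cyclic) character of $S$ occurs in $\tau|_S$ with multiplicity at least $a = \lfloor\tau(1)/|S|\rfloor$. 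By Lemma~\ref{lem:a51}-type reasoning and the TI hypothesis, the multiplicities of the nontrivial linear characters of $S$ in $\tau|_S$ are governed by the values $\tau(u)$ for $u$ running over generators of subgroups of $S$, and these are read off the Brauer tree: the decomposition matrix of $B$ has entries $0$ and $1$, and the ordinary character $\tau$ in $B$ has $\tau|_S = \sum_{\varphi} d_{\tau\varphi}\,\Psi_\varphi|_S$ where $\Psi_\varphi$ is the projective cover of $\varphi$, each $\Psi_\varphi|_S$ being a multiple of $\rho_S^{\reg}$; this shows $|S| \mid \tau(1)$ when $\tau$ is not... no — rather, one uses that the uniserial structure forces $\tau(u) = 0$ or a sum of few roots of unity. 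The cleanest route: use that $\tau$ restricted to $C_G(S)=Z(S)\times A$ decomposes via Clifford theory over the abelian $p'$-group $A$, and the TI property plus $N_G(S)/S$ abelian makes $\tau|_S$ a sum of Galois-conjugate linear characters together with multiples of $\rho_S^{\reg}$, forcing uniform multiplicity $a$ on all nontrivial linears.

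I expect the main obstacle to be the bookkeeping of exceptional characters and the precise count of how many non-regular summands appear in $\tau|_S$: one must rule out that for some $\tau$ the restriction $\tau|_S$ is "lopsided", containing some nontrivial linear character with multiplicity $< a$ while $1_S$ appears with multiplicity $\ge a$, which would make $l_p(\tau) < \lfloor\tau(1)/|S|\rfloor$. The TI hypothesis should kill this (it forces the non-regular part of $\tau|_S$ to be an honest permutation-like character of $N_G(S)/C_G(S)$ acting on $S\setminus\{1\}$), and the abelianness of $N_G(S)/S$ controls the action so that all nontrivial linear characters of $S$ in the same $N_G(S)$-orbit have equal multiplicity; some care is needed when the inertial index $e$ does not divide evenly, i.e. when there is a genuine exceptional vertex, but this is exactly the situation described by Brauer trees and should be handled by the standard structure theorem for the Green correspondents of the simple modules in $B$.
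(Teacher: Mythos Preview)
Your proposal circles the right ideas but never actually lands on a proof. You correctly reduce to a block of full defect, correctly reformulate the task as showing that every linear character of $S$ occurs in $\tau|_S$ with multiplicity at least $a=\lfloor\tau(1)/|S|\rfloor$, and you mention Green correspondents in your final sentence; but none of the three approaches you start (decomposition matrix of $B$, Clifford-type analysis of $\tau|_{C_G(S)}$, Brauer-tree bookkeeping for exceptional characters) is carried through. In particular, the sentence ``$\tau|_S$ is a sum of Galois-conjugate linear characters together with multiples of $\rho_S^{\reg}$, forcing uniform multiplicity $a$ on all nontrivial linears'' is exactly the statement to be proved, not an argument for it.

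The paper's proof avoids the Brauer tree of $G$ entirely and works instead with $N=N_G(S)$, using three facts you never isolate:
\begin{enumerate}
\item[(i)] Since $\tau$ lies in a block with cyclic defect, there is an \emph{indecomposable} $FG$-module $X$ that lifts $\tau$ (Feit, Ch.~I, Thm.~17.12). This is the step that fails for general blocks and is why the cyclic hypothesis is essential.
\item[(ii)] The TI hypothesis forces Green correspondence to take the clean form $X|_{N}\cong Y\oplus P$ with $Y$ the (indecomposable, non-projective) Green correspondent and $P$ projective: the ``error'' summands have vertices contained in $S\cap S^g$ for $g\notin N$, and these are trivial when $S$ is TI.
\item[(iii)] Because $N/S$ is an abelian $p'$-group, every simple $FN$-module is $1$-dimensional, every PIM of $N$ has dimension exactly $|S|$, and (since each block of $N$ has a star as Brauer tree) every PIM is uniserial. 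Hence every indecomposable non-projective $FN$-module is a proper quotient of a PIM and has dimension strictly less than $|S|$.
\end{enumerate}
From (ii) and (iii), $\dim Y<|S|$ while $P|_S$ is free of rank $\dim P/|S|$; since $\tau(1)=\dim Y+\dim P$ this gives $l_p(\tau)=\dim P/|S|=\lfloor\tau(1)/|S|\rfloor$.

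Your attempt to argue inside $G$ via its Brauer tree can in principle be pushed through using Dade's formulas for character values on $p$-elements, but that is substantially more work than the paper's route, and the ``uniform multiplicity'' claim you need is not automatic from the $N_G(S)$-orbit structure alone: it is precisely what the uniseriality argument in (iii) delivers. The missing idea in your write-up is therefore to pass to $N$ and exploit how simple its module category is, rather than to unpack the combinatorics of $B$.
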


\begin{proof}
Let $N:=N_G(S)$. By assumption, $N/S$ is abelian of order prime to $p$, so it
has $|N:S|$ irreducible $p$-Brauer characters of degree~1. Hence, each of the
corresponding PIMs of $N$ has dimension~$|S|$. Since the Brauer tree for any
$p$-block of $N$ is a star, all PIMs are uniserial
\cite[Ch.~VII, Cor.~2.22]{Fe}. But then by \cite[Ch.~I, Thm.~16.14]{Fe}, any
indecomposable $FN$-module, where $F$ is a sufficiently large field of
characteristic~$p$, is a quotient of a PIM, so has dimension strictly smaller
than $|S|$ if it is not projective.
\par
Now let $\tau\in\Irr(G)$. If $\tau$ is of $p$-defect zero, $\tau|_S$ is a
multiple of $\rho_S^\reg$, and the claim follows. Else, $\tau$ lies in a block
of full defect, and there exists an indecomposable $FG$-module $X$ with lift
$\tau$ \cite[Ch.~I, Thm.~17.12]{Fe}. Then $X|_{FN}=Y\oplus P$,
where $P$ is projective (and hence of dimension divisible by $|S|$) and $Y$
is the Green correspondent of $X$, an indecomposable, non-projective
$FN$-module \cite[Ch.~VII, Lem.~1.5]{Fe}. Thus, $\dim Y<|S|$ by what we said
before, so $\tau(1)/|S|\le l_p(\tau)<\tau(1)/|S|+1$, and the claim follows.
\end{proof}

\begin{lem}   \label{ms1}
 Let $G$ be a non-abelian simple group. Let $p$ be a prime such that a Sylow
 $p$-subgroup of $G$ is cyclic. Let $\mu$ denote the minimal degree of any
 non-linear irreducible character of $G$. Then $2\mu>|G|_p$, except in the
 case where $G=\PSL_2(p)$, $p\equiv 3\pmod 4$ and $\mu =(p-1)/2.$
\end{lem}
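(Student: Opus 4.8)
The plan is to go through the classification of finite simple groups with cyclic Sylow $p$-subgroups and, in each case, show that the minimal nonlinear irreducible character degree $\mu$ satisfies $2\mu > |G|_p$, tracking the exceptional family. First I would recall that a nonabelian simple group $G$ has cyclic Sylow $p$-subgroups precisely when $|G|_p = p$ or, more generally, when the Sylow $p$-subgroup has order $p^a$ and is cyclic; but in fact for the simple groups the relevant situation is essentially $|G|_p = p$ (with a few small exceptions where the cyclic group has larger order), so the inequality to beat is $2\mu > p^a$. I would split into the usual cases: alternating groups, sporadic groups, and groups of Lie type (in defining and non-defining characteristic separately).

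For the alternating groups $A_n$, a Sylow $p$-subgroup is cyclic iff $p \le n < 2p$, so $|A_n|_p = p$; here the minimal nonlinear character degree is $n-1$ (for $n\ge 5$, $n\ne 6$), and $2(n-1) \ge 2(p-1) \ge p$ for $p\ge 2$, with the inequality $2(n-1) > p$ being strict since $n-1 \ge p-1 > p/2$. The case $A_6$ and any small $n$ can be checked directly. For sporadic groups one consults the ATLAS: cyclic Sylow $p$-subgroups occur only for the larger primes dividing the order, $|G|_p = p$, and the smallest nonlinear degree is always well above $p/2$ — this is a finite check. The bulk of the work is the groups of Lie type.

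For groups of Lie type $G$ over $\FF_q$ in characteristic $r$: if $p = r$ then $|G|_p = q^N$ is large and a cyclic Sylow $p$-subgroup forces $G$ to have $\FF_q$-rank $1$, i.e. essentially $\PSL_2(q)$ (or a Suzuki/Ree group), and one checks $2\mu > q$ directly — for $\PSL_2(q)$, $\mu$ is roughly $(q\pm1)/2$ or $q-1$, so $2\mu \ge q-1$, but we need strictness and here the only way to get equality-type failure is $\PSL_2(p)$ with $q = p$, where $\mu = (p-1)/2$ when $p\equiv 3\pmod 4$ and $2\mu = p-1 < p = |G|_p$ — this is exactly the exception. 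When $q > p$ or $q\equiv 1 \pmod 4$ the minimal degree is large enough. If $p \ne r$, then a cyclic Sylow $p$-subgroup means $p \| \Phi_d(q)$ for the relevant $d = e_p(q)$ and $p$ does not divide the part of $|G|$ corresponding to other cyclotomic factors at level $\ge 2$; Zsygmondy/Hall arguments show $|G|_p = p$ in almost all cases. Then I would invoke known lower bounds for $\mu$ (the minimal nonlinear degree, e.g. Landazuri–Seitz–type bounds, or the explicit generic degrees of unipotent/semisimple characters): for classical groups of rank $\ge 2$ and exceptional groups, $\mu$ grows like a positive power of $q$ and vastly exceeds $p \le q^d$'s... — wait, care is needed since $p$ can be as large as $\Phi_d(q) \approx q^{\varphi(d)}$, so one must compare $\mu$ against $\Phi_d(q)$, not against $q$.

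The main obstacle, then, is precisely this comparison in the groups of Lie type of small rank where both $\mu$ and $|G|_p = p$ can be of comparable size: the dangerous cases are rank-$1$ and rank-$2$ groups ($\PSL_2$, $\PSL_3$, $\PSU_3$, $\Sp_4$, $G_2$, $\tw2B_2$, $\tw2G_2$, $\tw3D_4$, $\tw2F_4$) where $d$ is large relative to the rank. For these I would argue case-by-case: either $p$ divides $\Phi_d(q)$ with $d$ large enough that the centralizer structure of the generator of $S$ forces $\mu$ to be essentially a Steinberg-type or regular semisimple degree that is much bigger than $\Phi_d(q)$, or one computes explicitly. For instance in $\PSL_2(q)$ with $p \mid (q+1)$, $|G|_p$ divides $(q+1)/\gcd(2,q-1)$ and $\mu = (q-1)/\gcd(2,q-1)$, so $2\mu = (q-1) \cdot 2/\gcd$; comparing $q-1$ and $q+1$ and the index $2$ gives $2\mu > |G|_p$ unless $q = p$ is prime and $p \equiv 3\pmod 4$, recovering the stated exception — and I would check that no rank-$2$ group produces a genuine counterexample because there the Weyl group order $|W| \ge 8$ forces $|G|_p \le \Phi_d(q)$ to be small compared to the product structure giving $\mu$. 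The one genuinely subtle verification is that $\PSL_2(p)$ with $p \equiv 3 \pmod 4$ is the \emph{only} case of equality-or-failure, which I would pin down by showing that for $\PSL_2(q)$ with $q$ not prime or $q \equiv 1 \pmod 4$ the strict inequality holds, and that all higher-rank candidates have $\mu$ too large.
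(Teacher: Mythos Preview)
Your approach is essentially the same as the paper's: run through the classification of finite simple groups and compare the known values of $\mu(G)$ with $|G|_p$. The paper's proof is much terser, simply citing \cite{Atl} for sporadic groups, the formula $\mu(A_n)=n-1$ for alternating groups, and \cite{TZ96} for all groups of Lie type, and then asserting that the comparison goes through; your sketch tries to redo parts of this comparison by hand, and in doing so introduces an inaccuracy (it is \emph{not} true that cyclic Sylow forces $|G|_p=p$ ``in almost all cases'' --- e.g.\ in $\PSL_2(q)$ or $\PSL_n(q)$ with $n$ prime, $|G|_p$ can be any power of $p$), but since you eventually compare against $\Phi_d(q)$ rather than $p$ this does not break the argument.
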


\begin{proof}
The values of $\mu=\mu(G)$ for every simple group $G$ are either known
explicitly or there is a good lower bound. For the sporadic simple groups one
can inspect \cite{Atl}, for the alternating groups $A_n$ we have $\mu(A_n)=n-1$
for $n>5$, and $\mu(A_5)=3$, for simple groups $G$ of Lie type the values
$\mu(G)$ are listed in \cite{TZ96}. The lemma follows by comparison of these
data with $|G|_p$.
\end{proof}

\begin{prop}   \label{ms2}
 Let $p$ be a prime and let $G$ be a non-abelian simple group with a cyclic
 Sylow $p$-subgroup $S$. Let $\chi$ be a $\Syl_p$-regular character of $G$.
 Then one of the following holds:
 \begin{enumerate}
  \item[$(1)$] $\chi$ is irreducible of degree $|G|_p$;
  \item[$(2)$] $(\chi, 1_G)=1$, $\tau:=\chi-1_G$ is irreducible and
   $(\tau|_S,1_S)=0$; or
  \item[$(3)$] $G=\PSL_2(p)$, $p\equiv 3\pmod 4$ and $\chi=1_G+\tau_1+\tau_2$,
   where $\tau_1,\tau_2$ are distinct irreducible characters of degree
   $(p-1)/2$.
 \end{enumerate}
\end{prop}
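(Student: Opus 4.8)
The plan is to combine the structural constraints on $\Syl_p$-regular characters from Lemma~\ref{lem:a51} with the strong numerical bound of Lemma~\ref{ms1} and the level formula of Proposition~\ref{prop:cyc-TI}. First I would write $\chi=\sum_i a_i\tau_i$ with $\tau_i\in\Irr(G)$ distinct and $a_i>0$. Since $\chi|_S=\rho_S^\reg$ we have $\chi(1)=|S|=|G|_p$, and by Lemma~\ref{lem:a51} the trivial character $1_G$ occurs in $\chi$ with multiplicity at most~$1$ (indeed exactly the linear-character count, and $G$ being simple non-abelian, $1_S$ occurs once in $\chi|_S$, so $(\chi,1_G)\le(\chi|_S,1_S)=1$). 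Separate two cases according to whether $(\chi,1_G)=0$ or $(\chi,1_G)=1$.

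Next, I would bound the number and degrees of the remaining non-trivial constituents. Every cyclic Sylow $p$-subgroup is automatically TI (a cyclic $p$-group has a unique subgroup of order~$p$, so distinct Sylow $p$-subgroups intersect trivially), but $N_G(S)/S$ need not be abelian in general; so rather than invoking Proposition~\ref{prop:cyc-TI} directly I would argue via character degrees. If $\chi$ has a non-linear irreducible constituent $\tau_i$, then $\tau_i(1)\ge\mu$ in the notation of Lemma~\ref{ms1}, so $\chi(1)\ge\mu$; but $\chi(1)=|G|_p<2\mu$ by that lemma (outside the stated exceptional case), hence $\chi$ has at most one non-linear irreducible constituent, and it occurs with multiplicity~$1$. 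Since $G$ is simple, $1_G$ is the only linear character, so apart from possibly one copy of $1_G$ the character $\chi$ is a single irreducible $\tau$. If there is no $1_G$-constituent then $\chi=\tau$ is irreducible of degree $|G|_p$, which is case~(1). If $(\chi,1_G)=1$ then $\chi=1_G+\tau$ with $\tau$ irreducible; here $\chi(1)=|S|$ forces $\tau(1)=|S|-1$, which is not divisible by $p$, so $\tau$ is not of $p$-defect zero and $\tau|_S=\rho_S^\reg-1_S$, giving $(\tau|_S,1_S)=0$; this is case~(2).

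It remains to handle the exceptional case $G=\PSL_2(p)$ with $p\equiv3\pmod4$ and $\mu=(p-1)/2$. Here $|G|_p=p$ and the two irreducible characters of degree $(p-1)/2$ (call them $\tau_1,\tau_2$) both lie in the principal block, while the non-trivial irreducibles of degree $p-1$, $p$, $p+1$ are accounted for as usual. Writing $\chi=\sum a_i\tau_i$ with $\chi(1)=p$, the degree constraint $p<2\mu+2$ and an analysis of which combinations of the known character degrees $1,(p-1)/2,(p-1)/2,p-1,p,p+1$ sum to~$p$ with each constituent occurring with multiplicity~$1$ (forced on linear characters by Lemma~\ref{lem:a51}, and on $\tau_1,\tau_2$ by $2\cdot(p-1)/2=p-1<p$ together with the need for a further $1$) leaves exactly $\chi=1_G+\tau_1+\tau_2$; one also checks $\chi=\tau$ (the degree-$p$ Steinberg character) separately, which falls under case~(1), and that a putative $1_G+\tau$ with $\tau(1)=p-1$ also occurs and falls under case~(2). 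So for this $G$ the possibilities are cases~(1), (2), and the new case~(3); to see that $1_G+\tau_1+\tau_2$ really is $\Syl_p$-regular one restricts to $S$ using that $\tau_1,\tau_2$ are the two "halves" of an induced character whose restriction to $S$ is $\rho_S^\reg-1_S$, so $(1_G+\tau_1+\tau_2)|_S=\rho_S^\reg$.

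The main obstacle is the bookkeeping in the exceptional case: one must verify that no other non-negative integer combination of the small character degrees of $\PSL_2(p)$ sums to~$p$ while respecting the multiplicity-free constraint on linear (and here also the degree-$(p-1)/2$) constituents, and that the combination $1_G+\tau_1+\tau_2$ genuinely restricts to the regular character of $S$ rather than merely having the right degree — this last point uses the explicit character table of $\PSL_2(p)$, in particular that $\tau_1+\tau_2$ agrees on $p$-elements with a Deligne--Lusztig-type virtual character whose $S$-restriction is $\rho_S^\reg-1_S$.
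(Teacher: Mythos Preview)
Your approach is essentially the same as the paper's: use Lemma~\ref{ms1} to show that outside the $\PSL_2(p)$ exception a reducible $\chi$ can have at most one non-trivial irreducible constituent with multiplicity one, then invoke Lemma~\ref{lem:a51} to force $(\chi,1_G)=1$ and $(\tau|_S,1_S)=0$, and treat $\PSL_2(p)$ separately from its character table.

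Two small corrections. First, the parenthetical claim that a cyclic Sylow $p$-subgroup is automatically TI is false when $|S|>p$: two distinct cyclic Sylow $p$-subgroups can share their unique subgroup of order $p$ without being equal. Fortunately you do not use this, and correctly fall back on Lemma~\ref{ms1} instead of Proposition~\ref{prop:cyc-TI}. Second, in the $\PSL_2(p)$ case your stated reason that $\tau_1,\tau_2$ each occur with multiplicity at most one (``$2\cdot(p-1)/2=p-1<p$ together with the need for a further $1$'') is not a reason at all: $1_G+2\tau_1$ has the correct degree $p$. What actually excludes it is that $S$ is abelian, so $\chi|_S$ is multiplicity-free by Lemma~\ref{lem:a51}; since $\tau_i|_S$ is a sum of $(p-1)/2$ distinct non-trivial linear characters of $S$, a repeated $\tau_i$ would force repeated constituents in $\chi|_S$. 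Equivalently, one computes directly from the character table that $(1_G+2\tau_1)(s)=\pm\sqrt{-p}\ne0$ for $1\ne s\in S$, which is what the paper's ``computation with the character table'' amounts to. Finally, note that you do not need to verify that $1_G+\tau_1+\tau_2$ is genuinely $\Syl_p$-regular: the proposition only asserts that any $\Syl_p$-regular $\chi$ has one of the listed shapes, not that all listed shapes occur.
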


\begin{proof}
Suppose that $\chi$ is reducible.
The result for $G=\PSL_2(p)$ easily follows by computation with the character
table of this group. Suppose $G\not\cong \PSL_2(p)$. Let $\tau\neq 1_G$ be an
irreducible constituent of $\chi$. By Lemma~\ref{ms1}, $\chi=\tau+k\cdot 1_G$,
where $k=|G|_p-\tau(1)$. Therefore, $1_G$ is a constituent of $\chi$.
By Lemma \ref{lem:a51}, $k=1$ and $(\tau|_S,1_S)=0$.
\end{proof}

\begin{prop}   \label{ms4}
 Let $p$ be a prime and let $G$ be a non-abelian simple group with a cyclic
 Sylow $p$-subgroup $S$. Then $G$ has a reducible $\Syl_p$-regular character
 $\chi$ if and only if one of the following holds:
 \begin{enumerate}
  \item[$(1)$] $G=\PSL_2(q)$, $q>4$ even, $|S|=q+1$;
  \item[$(2)$] $G=\PSL_2(p)$, $|S|=p>5$;
  \item[$(3)$] $G=\PSL_n(q)$, $n$ is an odd prime, $n{\not|}(q-1)$,
   $|S|=(q^n-1)/(q-1)$;
  \item[$(4)$] $G=\PSU_n(q)$, $n$ is an odd prime, $n{\not|}(q+1)$,
   $|S|=(q^n+1)/(q+1)$;
  \item[$(5)$] $G=A_p$, $|S|=p\ge5$;
  \item[$(6)$] $G=M_{11}$, $|S|=11$; or
  \item[$(7)$] $G=M_{23}$, $|S|=23$.
 \end{enumerate}
 Furthermore, in each case $(1)$--$(7)$, $C_G(S)=S$ and $\chi$ is
 Steinberg-like. In addition, $\chi-1_G$ is an irreducible character of $G$,
 unless possibly when $(2)$ holds, when $\chi-1_G$ may be the sum of two
 irreducible constituents of equal degree.
\end{prop}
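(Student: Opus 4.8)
The plan is to deduce the statement from Proposition~\ref{ms2}. By that result a reducible $\Syl_p$-regular character $\chi$ of $G$ is, unless $G\cong\PSL_2(p)$ with $p\equiv 3\pmod4$, necessarily of the form $\chi=1_G+\tau$ with $\tau\in\Irr(G)$, $\tau(1)=|G|_p-1$ and $(\tau|_S,1_S)=0$; in the excluded case one additionally allows $\chi=1_G+\tau_1+\tau_2$ with $\tau_1,\tau_2\in\Irr(G)$ of degree $(p-1)/2$. So the proposition splits into two tasks: \emph{(a)} show that, apart from the groups in~(1)--(7), no finite simple group with cyclic Sylow $p$-subgroup admits an irreducible character of degree $|G|_p-1$ whose restriction to $S$ does not contain $1_S$ (the split configuration only arising for $\PSL_2(p)$ with $p\equiv3\pmod4$, which sits in case~(2)); and \emph{(b)} for each group in~(1)--(7) exhibit such a character and verify that it is Steinberg-like and that $C_G(S)=S$.

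For task~(b): in case~(2) one takes $\tau$ to be the cuspidal character of $\PSL_2(p)$ of degree $p-1$ when $p\equiv1\pmod4$, and for $p\equiv3\pmod4$ one has in addition the decomposition $\tau=\tau_1+\tau_2$ into the two characters of degree $(p-1)/2$ allowed by Proposition~\ref{ms2}(3); the restriction $p>5$ only avoids overlap with case~(5), since $\PSL_2(5)\cong A_5$. For cases~(1) and~(3) the required $\chi$ is the permutation character of $G$ on the $|S|$ points of the projective space $\mathbb{P}^{n-1}(\FF_q)$ (the $q+1$ points of the projective line when $n=2$, $q$ even): the hypothesis $|S|=(q^n-1)/(q-1)$ says precisely that $S$, being the image of a Singer-type (Coxeter) torus that lies in no point stabiliser, acts regularly on this set, so that $\chi|_S=\rho_S^{\reg}$, $\chi=1_G+\tau$ with $\tau$ irreducible since the action is $2$-transitive, and $\chi$ vanishes on every $p$-singular element because its $p$-part also acts without fixed points; moreover $C_G(S)=S$ follows from the torus being self-centralising. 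For the unitary groups in case~(4) the analogous role is played for $\PSU_3(q)$ by the $2$-transitive action on the Hermitian unital, and for larger $n$ by a suitable Deligne--Lusztig character attached to the torus of order $q^n+1$, with $S$ again contained in that torus. For $G=A_p$ one takes $\tau$ of degree $p-1$ inside the natural permutation character (a $p$-cycle acting regularly on the $p$ points), and for $M_{11}$ and $M_{23}$ the $2$-transitive actions on $11$ and $23$ points; in all these cases $\chi$ is Steinberg-like, so $C_G(S)=S$ also follows from Lemma~\ref{lem:gt7}. This also yields the final assertion, $\chi-1_G$ being irreducible except possibly in case~(2).

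For task~(a) one invokes the classification of finite simple groups with a cyclic Sylow $p$-subgroup together with knowledge of their character degrees. The structural input is that $\tau(1)=|G|_p-1$ is prime to $p$, so $\tau$ has full $p$-defect and lies in a $p$-block with cyclic defect group $S$; its position on the Brauer tree of that block, and the bound $2\mu(G)>|G|_p$ of Lemma~\ref{ms1} (which forces $\tau(1)=|G|_p-1$ to be comparatively large), leave only finitely many configurations in each family. One then works through: alternating groups ($|S|=p$, $p\le n<2p$, where a character of degree $p-1$ giving an admissible $\chi$ occurs only for $A_p$); sporadic groups (via the \textsc{Atlas}, yielding $M_{11}$ and $M_{23}$); groups of Lie type in defining characteristic $p$ with cyclic Sylow $p$-subgroup (only $\PSL_2(p)$, i.e.\ case~(2)); and groups of Lie type for non-defining primes $p$, where the generic character tables and Deligne--Lusztig theory show that an irreducible character of degree exactly $|G|_p-1$ with cyclic Sylow $p$-subgroup arises only for $\PSL_n(q)$ and $\PSU_n(q)$ with $n$ an odd prime under the stated arithmetic conditions.

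The main obstacle is task~(a), and within it the cross-characteristic analysis of the groups of Lie type: one must enumerate, or at least bound below, all irreducible character degrees precisely enough to rule out the value $|G|_p-1$ outside the linear and unitary families, and to dispose of all exceptional groups of Lie type and the remaining sporadic groups by explicit degree data. By comparison, the defining-characteristic reduction to $\PSL_2(p)$ and the regular-action computations of task~(b) are routine.
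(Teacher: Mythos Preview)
Your overall plan---reduce via Proposition~\ref{ms2} to the question of which simple groups with cyclic Sylow $p$-subgroup have an irreducible character $\tau$ of degree $|G|_p-1$ with $(\tau|_S,1_S)=0$, then handle existence in each listed case---is exactly the structure of the paper's proof. The divergence is in how each of your two tasks is executed.

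For task~(a), the paper does not carry out the case-by-case classification you outline. Instead it simply quotes \cite[Thm.~1.1]{Z99}, which already lists all irreducible characters of quasi-simple groups with cyclic Sylow $p$-subgroup having $p$-level~$0$ (equivalently $(\tau|_S,1_S)=0$); intersecting that list with the degree condition $\tau(1)=|S|-1$ gives~(1)--(7) immediately. Your proposal to redo this from scratch via degree data, Brauer trees, and Deligne--Lusztig theory is in principle feasible, but it is a substantial classification project in its own right (essentially reproving the relevant part of \cite{Z99}), and what you have written is an outline rather than an argument. The paper buys brevity by citing the finished result.

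For task~(b) the two approaches also differ. You construct $\chi$ explicitly in each case (permutation characters on points, the Hermitian unital, etc.) and read off $\Syl_p$-regularity from the regular action of $S$. The paper instead argues uniformly: by \cite[Cor.~1.3(2)]{Z99} every eigenvalue of a generator $s$ of $S$ in $\tau$ has multiplicity~$1$, and since $\det\tau(s)=1$ the eigenvalue~$1$ does not occur, whence $(1_G+\tau)|_S=\rho_S^{\reg}$. Your constructions are more concrete and self-contained; the paper's argument is shorter but leans again on \cite{Z99}. For the Steinberg-like conclusion the paper first proves $C_G(S)=S$ directly (so every element is a $p$- or $p'$-element), whereas you deduce it in the permutation cases from the fixed-point-free action and then appeal to Lemma~\ref{lem:gt7}; both are fine, but note that for case~(4) with $n>3$ your ``suitable Deligne--Lusztig character'' is left unspecified, and you would still need to verify both its degree and the vanishing on $p$-singular elements.
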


\begin{proof}
The additional statement follows from Proposition~\ref{ms2}.
If $\chi-1_G$ is reducible, we have the case (3) of Proposition~\ref{ms2}.
So we may assume that $\tau=\chi-1_G$ is irreducible and thus that
$(\tau|_S,1_S)=0$. The irreducible characters of $G$ of level 0 are
determined in \cite[Thm.~1.1]{Z99}, so $\tau$ belongs to the list in
\cite[Thm.~1.1]{Z99}. If we drop from that list the characters of degree other
than $|S|-1$, the remaining cases are given in the statement of the proposition.
(Note that the list in \cite[Thm.~1.1]{Z99} includes quasi-simple groups
so one first needs to delete the representations non-trivial on the center.
For instance, if $G=\PSp_{2n}(q)$ then  $|S|=(q^n+1)/2$ is odd, and hence
$\tau(1)=\chi(1)-1=|S|-1$ is even. However, every irreducible representation
of $\Sp_{2n}(q)$ of even degree $(q^n-1)/2$ is faithful.  In other words,
$G$ has no irreducible representation of even degree $(q^n-1)/2$. In contrast,
there do exist irreducible representations of $G=\PSL_n(q)$ and $\PSU_n(q)$
for $n$ odd of degree $|S|-1$.)

To prove the converse, we have to show that in each case $1_G+\tau$ is
$\Syl_p$-regular, that is, $\chi|_S=\rho_S^\reg$. Let $\Psi$ be a
representations of $G$ afforded by $\tau$. Let $s\in S$ with $S=\lan s\ran$.
By \cite[Cor.~1.3(2)]{Z99}, the multiplicity of every eigenvalue of $\Psi(s)$
is 1. As $\det \Psi(s)=1$, it follows that $1$ is not an
eigenvalue of $\Psi(s)$. Therefore, $\chi|_S=\rho_S^\reg$, as required.

Next, we show that $C_G(S)=S$. In cases (6) and~(7) this follows by inspection
in \cite{Atl}. The cases (1), (2), (5) are trivial. In cases (3), (4) one can
take the preimage $T$, say, of $S$ in $G_1=\SL_n(q)$, $\SU_n(q)$, respectively.
Then $T$ is irreducible on the natural module for $G_1$. The groups
$C_{G_1}(T)$ are described by Huppert \cite[S\"atze~4,5]{Hu}. It easily
follows that $T$ is self-centralising in $G_1$. Then $C_G(S)=S$ unless
$[g,T]\subseteq Z(G_1)$ for some
$g\in N_{G_1}(T)\setminus T$. By order consideration, $S$ is a Sylow
$p$-subgroup of $G$, so $g$ is not a $p$-element. Let $t\in T$. Then
$[g,t^i]=[g^i,t]=1$ for $i=|S|$, so $g^{|S|}\in C_{G_1}(T)=T$ by the above.
This is a contradiction as $S$ is a Sylow $p$-subgroup.

It follows that every element of $G$ is either a $p$- or a $p'$-element.
Therefore, $\chi$ is Steinberg-like if and only if $\chi|_S=\rho^\reg_S$.
\end{proof}

\begin{lem}   \label{re56}
 Under the assumptions and in the notation of Proposition $\ref{ms4}$ we have:
 \begin{enumerate}
 \item[\rm(a)] $\chi$ is unique unless $(2)$ or $(6)$ holds;
 \item[\rm(b)] $\chi$ is the character of a projective module when $(1)$, $(3)$,
  $(5)$ or $(7)$ holds; and
 \item[\rm(c)] $\chi-1_G$ is a proper character, and if $m$ is the minimal
  degree of a non-linear character of $G$ then either $m=\chi(1)-1$, or $(1)$
  holds and $m=\chi(1)-2$, or $(2)$ holds and $m=(\chi(1)\pm1)/2$.
 \end{enumerate}
\end{lem}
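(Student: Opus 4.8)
The plan is to go through the list of cases (1)--(7) of Proposition~\ref{ms4} one at a time, since the claims are really a case-by-case verification; I will organize the argument by which of the three assertions (a), (b), (c) is being checked. For part (a), I would argue that in cases (3), (4), (5) and (7) the character $\tau=\chi-1_G$ is the \emph{unique} irreducible character of $G$ of degree $|S|-1$ with $(\tau|_S,1_S)=0$: this follows from the classification in \cite[Thm.~1.1]{Z99} used in the proof of Proposition~\ref{ms4}, where one reads off that for these groups there is exactly one character of the relevant degree, while for $\PSL_2(p)$ with $|S|=p$ there are two characters of degree $(p-1)/2$ (the two ``halves'' of the Steinberg character) and for $M_{11}$ there are two characters of degree~$10$. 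Case~(1), $G=\PSL_2(q)$ with $q$ even, also gives a unique character, namely the unique one of degree $q$. So the exceptions are precisely (2) and (6), and conversely in those two cases one exhibits two genuinely distinct $\Syl_p$-regular characters, establishing (a).

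For part (b), the point is that when $\chi$ is $\Syl_p$-regular \emph{and} $p\nmid|G:?|$ in the right sense, $\chi$ being a projective character is equivalent to $\chi$ vanishing on all $p$-singular elements, i.e.\ to $\chi$ being Steinberg-like; and by the last paragraph of the proof of Proposition~\ref{ms4} every $\Syl_p$-regular $\chi$ here \emph{is} Steinberg-like because every element of $G$ is a $p$- or a $p'$-element. So in cases (1), (3), (5), (7), where additionally $\chi=1_G+\tau$ with $\tau$ a character of $p$-defect zero plus $1_G$ lying in the principal block, I would check that $\chi$ is actually in a single projective cover: concretely, $1_G$ and $\tau$ generate a projective indecomposable or $\chi$ is a sum of a defect-zero character with a projective. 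The cleanest route is: in (1), (3), (5), (7) the decomposition matrix of the principal $p$-block of $G$ is known (Brauer tree is a line/star since $S$ is cyclic), and one identifies $\chi=1_G+\tau$ as $\Phi_{1}$, the character of the projective cover of the trivial module, directly from the Brauer tree (for $\PSL_2(q)$, $q$ even, $|S|=q+1$; for $\PSL_n(q)$ with the stated conditions; for $A_p$; for $M_{23}$). In cases (2) and (4) this fails because $\Phi_1$ has larger degree, and that is why they are excluded.

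For part (c), that $\chi-1_G$ is a proper character is immediate since $\chi-1_G=\tau$ (or $\tau_1+\tau_2$) by Proposition~\ref{ms2}. The statement about the minimal nonlinear degree $m$ is then a comparison: by Lemma~\ref{ms1}, $2\mu>|G|_p=|S|$ except for $\PSL_2(p)$ with $p\equiv3\pmod4$, so in all cases $\chi(1)=|S|$ and any nonlinear constituent of $\chi-1_G$ has degree $\ge\mu=m$; since $\tau(1)=|S|-1$ in cases (3)--(7) and this equals $\mu$ by the tables (\cite{TZ96}, \cite{Atl}), we get $m=\chi(1)-1$ there. In case~(1), $\PSL_2(q)$ with $q$ even has minimal nonlinear degree $q-1=|S|-2=\chi(1)-2$, giving that subcase. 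In case~(2), $\PSL_2(p)$, the minimal nonlinear degree is $(p-1)/2$ or $(p+1)/2$ depending on $p\bmod 4$, which is $(\chi(1)\mp1)/2$; here one must be a little careful since $\PSL_2(p)$ with $p\equiv3\pmod4$ is exactly the exceptional case of Lemma~\ref{ms1}, but a direct glance at the character table of $\PSL_2(p)$ settles it.

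The main obstacle I anticipate is part (b): verifying in each of (1), (3), (5), (7) that $1_G+\tau$ is genuinely the character of a projective \emph{module} (not merely a virtual projective with nonnegative values) requires pinning down the shape of the Brauer tree of the principal block and locating $1_G$ and $\tau$ as the two end-node characters adjacent to the exceptional vertex (or the two ends of a line with no exceptional multiplicity). For $\PSL_2(q)$ and $A_p$ this is classical; for $\PSL_n(q)$ with $n$ an odd prime not dividing $q-1$ one uses that the relevant cyclic torus has $|S|=(q^n-1)/(q-1)$ and the principal block Brauer tree is known (a line), and for $M_{23}$ one simply reads the tree from the literature/\cite{Atl}. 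So (b) is less a conceptual difficulty than a matter of assembling the correct Brauer-tree data case by case.
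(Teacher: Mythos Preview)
Your approach to parts (a) and (c) is essentially the same as the paper's: case-by-case inspection using \cite{Z99}, \cite{TZ96}, and \cite{Atl}. One small factual point: for $M_{11}$ the paper finds \emph{three} Steinberg-like characters, not two.

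For part (b) you take a genuinely different route from the paper. The paper does not touch Brauer trees at all; instead it argues as follows. Since every $\chi$ in Proposition~\ref{ms4} contains $1_G$, any projective module with character $\chi$ must be the principal PIM. The paper then simply invokes the main result of \cite{MW}, which already classified all simple groups whose principal PIM has dimension $|G|_p$: this immediately rules out case~(4), and for cases (1), (3), (5), (7), where $\chi$ is unique by (a), the character $\chi$ must coincide with the principal PIM character given by \cite{MW}. Your Brauer-tree approach is in principle sound and more self-contained, but it requires assembling the tree data for each family separately, which is exactly the work that \cite{MW} packages up. The paper's route is therefore shorter, at the cost of a black-box citation.

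One genuine error in your framing of (b): the sentence ``$\chi$ being a projective character is equivalent to $\chi$ vanishing on all $p$-singular elements'' is false. Vanishing on $p$-singular elements is necessary but not sufficient for projectivity of a proper character; indeed the whole point of distinguishing Theorems~\ref{th1} and~\ref{th2} is that Steinberg-like characters need not be projective (cases (2), (4), (6) of Proposition~\ref{ms4} are Steinberg-like but not projective). You do recover from this by pivoting to the Brauer-tree argument, but you should drop the equivalence claim entirely, since relying on it would make the proof incorrect.
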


\begin{proof}
(a) Let $\tau=\chi-1_G$. Then $\tau(1)=|S|-1$ and $\tau$ is irreducible unless
(2) holds. We show that an irreducible character of this degree is unique
unless (2) or~(6) holds.
If $G=M_{23}$, this follows from the character table of this group, for
$A_p$ this is well known. For $G=\PSL_n(q)$, $n>2$, and $\PSU_n(q)$, $n>2$,
this is observed in \cite[Table II]{TZ96}. \par
In case (2) the number of characters equals the number of irreducible characters
of degree $p-1$, which is $(p-3)/4$ if $p\equiv3\pmod4$, otherwise $(p-1)/4$.
If $G=M_{11}$ then there are three Steinberg-like characters, see \cite{Atl}.

(b) Recall that the principal projective indecomposable module is the only
PIM whose character contains $1_G$ as a constituent. All the characters $\chi$
in Proposition~\ref{ms4} contain $1_G$ as a constituent. Therefore,
if $\chi$ is the character of a projective module $\Phi$, say, then $\Phi$ is
indecomposable and principal. So we compare the list of characters $\chi$
in Proposition~\ref{ms4} with the main result of \cite{MW}. The comparison
rules out the case (4) of Proposition~\ref{ms4}.
Furthermore, if $G$ admits at least two Steinberg-like characters then
at most one of them can be the character of a projective module.
By~(a) this leaves us with cases (1), (5) and~(7). As in each of these cases
$\chi$ is unique, it must be the character of the principal projective
indecomposable module listed in \cite{MW}.

(c) This follows by inspection in \cite[Table II]{TZ96}.
\end{proof}

\begin{rem}
 The group $G=\PSL_2(p)$ has several $\Syl_p$-regular characters, all of them
 are Steinberg-like, and only one of them is projective.
\end{rem}

%%%%%%%%%%%%%%%%%%%%%%%%%%%%%%%%%%%%%%%%%%%%%%%%%%%%%%%%%%%%%%%%%%%%%%%%%
\section{Sporadic groups}   \label{sec:spor}

\begin{thm}   \label{thm:spor}
 Let $G$ be a sporadic simple group. Then $G$ does not have a reducible
 $\Syl_p$-regular character unless one of the following holds:
 \begin{enumerate}
  \item[\rm(1)] $G=M_{12}$, $p=3$, four characters with constituents of degrees
   $11$ and~$16$ each, all Steinberg-like;
  \item[\rm(2)] $G=M_{24}$, $p=2$, six characters, none of them Steinberg-like;
  \item[\rm(3)] $G=M_{11}$, $|S|=11$; or
  \item[\rm(4)] $G=M_{23}$, $|S|=23$.
 \end{enumerate}
\end{thm}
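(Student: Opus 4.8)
The plan is to reduce, via the general lemmas established in Section~2, to a finite list of pairs $(G,p)$ and then dispatch these by explicit computation with the modular character tables in the ATLAS and in GAP. First I would apply Lemma~\ref{lem:gt7} (together with the remark following it): if $G$ has a faithful Steinberg-like character for the prime $p$ then $C_G(S)=Z(S)$ for $S\in\Syl_p(G)$. More generally, for $\Syl_p$-regular characters I would use Lemma~\ref{lem:a51} to get strong numerical constraints, namely that $\chi|_S=\rho_S^{\reg}$ forces $(\chi|_S,1_S)=1$ and, when $S$ is abelian, that $\chi|_S$ is multiplicity free, hence $\chi(1)\ge$ the sum of the degrees of enough distinct irreducible constituents to fill up a transversal of the linear characters of $S$. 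Combined with the known list of small-degree irreducible characters of the sporadic groups (from \cite{Atl} and \cite{TZ96}), this already eliminates the vast majority of the $26$ sporadic groups: for most of them the smallest nontrivial irreducible character degree is far larger than $|S|$ for every prime~$p$ with $|S|>p$, so there is simply not enough room below $|S|$ for a reducible $\Syl_p$-regular character (the irreducible case being covered by Proposition~\ref{prop:ir2}).

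Next, for the groups with cyclic Sylow $p$-subgroups I would invoke Proposition~\ref{ms4} directly: that proposition already classifies reducible $\Syl_p$-regular characters for all simple groups with cyclic $S$, and among the sporadic groups it yields exactly $M_{11}$ with $|S|=11$ and $M_{23}$ with $|S|=23$ — precisely cases~(3) and~(4). This disposes of every sporadic $G$ and prime $p$ for which $|G|_p=p$. What remains is a short list of pairs $(G,p)$ where $|G|_p$ is a prime power but not a prime, and where moreover $|G|_p$ is small enough (compared with the minimal nontrivial character degree) that a reducible $\Syl_p$-regular character is not immediately excluded by the degree/multiplicity-freeness argument above. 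This leaves essentially the small Mathieu groups and a handful of companion cases: $M_{11}$ with $p=2$ ($|S|=16$) and $p=3$ ($|S|=9$); $M_{12}$ with $p=2$ ($|S|=64$) and $p=3$ ($|S|=27$); $M_{22}$, $M_{23}$, $M_{24}$ with $p=2$ and $p=3$; and perhaps a couple of others such as $J_1$, $J_2$ at $p=2$ — the exact list to be pinned down by the numerics.

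For each surviving pair I would work inside the relevant $p$-block. The key point is that a $\Syl_p$-regular character $\chi$ with $\chi(1)=|S|<$ (some small bound) has all irreducible constituents of small degree, and the possible multiset of constituent degrees is tightly constrained by $\chi(1)=|S|$ and by $(\chi|_S,1_S)=1$; for Steinberg-like $\chi$ one additionally imposes vanishing on every $p$-singular class, which is a finite linear condition once the character table is in hand. Running through the finitely many candidate multisets and checking the $S$-restriction and the $p$-singular vanishing explicitly (reading the power maps and class sizes from \cite{Atl}) then determines exactly which combinations occur. This is how one would find the four Steinberg-like characters of $M_{12}$ at $p=3$ with constituent degrees $11$ and $16$ (case~(1)), and the six $\Syl_p$-regular but non-Steinberg-like characters of $M_{24}$ at $p=2$ (case~(2)), while verifying that no other sporadic pair produces a reducible example. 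The main obstacle is purely the bookkeeping for $p=2$: the $2$-modular data for the larger Mathieu groups and for $J_1,J_2$ is the delicate part, since $|S|=|G|_2$ can be large enough that several degree-combinations must be checked, and one must be careful that $S$ need not be abelian there, so the multiplicity-free conclusion of Lemma~\ref{lem:a51} is unavailable and one falls back on the weaker $(\chi|_S,1_S)=1$ together with a direct restriction computation.
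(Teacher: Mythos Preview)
Your reduction step is not correct as stated, and this is the main gap. You claim that for most sporadic groups ``the smallest nontrivial irreducible character degree is far larger than $|S|$ for every prime~$p$ with $|S|>p$'', but this fails badly for the large sporadic groups at small primes. For instance $|M|_2=2^{46}$ and $|B|_2=2^{41}$, while the smallest nontrivial irreducible degrees of the Monster and the Baby Monster are $196883$ and $4371$ respectively; similarly $|Fi_{24}'|_3=3^{16}$ against a minimal degree of $8671$, and so on through $Co_1$, $Co_2$, $Co_3$, $Fi_{22}$, $Fi_{23}$, $HN$, $Th$, $J_4$, \dots. Thus your degree argument does \emph{not} cut the list down to the small Mathieu groups plus $J_1,J_2$; it leaves essentially all the big groups at $p=2,3$ untouched, and these are precisely the delicate cases. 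Note also that Lemma~\ref{lem:gt7} (the condition $C_G(S)=Z(S)$) applies only to Steinberg-like characters, so it cannot be used to prune the list of $(G,p)$ for the full $\Syl_p$-regular statement.

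The paper's argument is both simpler and uniform across all $26$ groups: for each pair $(G,p)$ one searches the ordinary character table for a single nontrivial $p$-element class $g$ on which \emph{every} irreducible character of degree at most $|G|_p$ takes a strictly positive value. When such a class exists, any nonzero non-negative integral combination of these characters is positive at $g$, so no $\Syl_p$-regular character can exist. This disposes of almost all pairs; the few exceptions where no such class is available (the paper names $Co_3$ and $Fi_{23}$ at $p=3$, and $Co_1$ and $J_4$ at $p=2$) are handled by solving the small linear system $\sum_i a_i\chi_i(g)=0$ over all $p$-singular classes $g$ for non-negative integers $a_i$, and the only solutions are those listed in (1)--(4). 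No $p$-modular data or block analysis is needed anywhere; the entire proof lives in the ordinary character table.
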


\begin{proof}
For most groups and primes, by \cite{Atl} there is a conjugacy class of
non-trivial $p$-elements taking strictly positive value at all irreducible
characters of degree at most $|G|_p$. In a few cases, like in $Co_3$ and
$Fi_{23}$ at $p=3$, or $Co_1$ and $J_4$ at $p=2$, one has to solve a little
linear system of equations for non-negative integral solutions.
The only cases where such solutions exist are listed in the statement.
Note that the cases (3), (4) occur also in Proposition~\ref{ms4}.
\end{proof}

%%%%%%%%%%%%%%%%%%%%%%%%%%%%%%%%%%%%%%%%%%%%%%%%%%%%%%%%%%%%%%%%%%%%%%%%%
\section{Alternating groups}   \label{sec:alt}
In this section we consider Steinberg-like characters of alternating groups.

%%%%%%%%%%%%%%%%%%%%%%%%%%%%%%%
\subsection{Alternating groups for $p>2$}

For odd primes we give a short proof using a recent result of Giannelli and
Law \cite{GL17} which replaces our earlier more direct proof.

\begin{lem}   \label{pp5}
 Let $G=A_p$, $p>3$, and $\chi\in\Irr(G)$. Then
 $l_p(\chi)=\lfloor\chi(1)/p\rfloor$. In addition, $l_p(\chi)\neq 1$ for
 $p>7$ (this fact has also been observed in \cite{RZ}).
\end{lem}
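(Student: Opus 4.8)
The plan is to use the theory of the principal $p$-block of $G=A_p$, whose Sylow $p$-subgroup $S$ is cyclic of order $p$, so that Brauer tree methods apply. For a cyclic defect group of order $p$, any $\chi\in\Irr(G)$ of positive defect lies in a block with $p-1$ ordinary irreducibles, and for $A_p$ (or $S_p$) the relevant Brauer trees are explicitly known: the principal block has a line (open polygon) as its Brauer tree. On such a tree, the projective cover of a simple module $D$ has composition factors $D' \oplus D''$ for the (at most two) neighbours of $D$, so every PIM of $G$ in a full-defect block is uniserial of dimension $p=|S|$, and every indecomposable non-projective module in such a block has dimension $<p$. Combining this with Proposition~\ref{prop:cyc-TI} — whose hypotheses ($S$ cyclic TI, $N_G(S)/S$ abelian) are satisfied for $A_p$ since $N_{A_p}(S)/S$ is cyclic of order $(p-1)/2$ — already gives $l_p(\chi)=\lfloor\chi(1)/p\rfloor$ for all $\chi\in\Irr(G)$. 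So the first statement is essentially a direct appeal to Proposition~\ref{prop:cyc-TI} once one checks that $A_p$ meets its hypotheses; alternatively, Giannelli--Law \cite{GL17} gives the decomposition of $\chi|_S$ directly.

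For the second assertion, I would argue that $l_p(\chi)\neq 1$ for $p>7$, i.e.\ that $G=A_p$ has no irreducible character $\chi$ with $p\le\chi(1)<2p$. By the first part, $l_p(\chi)=1$ forces $p\le\chi(1)\le 2p-1$. Now invoke the known list of low-degree irreducible characters of $A_p$: apart from the trivial and (for $p>5$) the degree-$(p-1)$ character, the next smallest degrees are already $\binom{p-1}{2}$-type or comparable, which exceeds $2p-1$ once $p$ is large. Concretely, the relevant classification (e.g.\ \cite{TZ96} for minimal and next-to-minimal degrees, or the hook-length/partition analysis of irreducible characters of $S_p$ restricted to $A_p$) shows that for $p>7$ every non-linear $\chi\in\Irr(A_p)$ other than the degree-$(p-1)$ one has $\chi(1)\ge\binom{p-1}{2}/({\le}2) = (p-1)(p-2)/4 \ge 2p$, while the degree-$(p-1)$ character has $l_p=0$. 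Hence no $\chi$ has degree in the interval $[p,2p-1]$, so $l_p(\chi)\neq1$. A clean way to organise the degree bound: characters of $S_n$ of degree $<n(n-5)/6$ or so are classified (James, and refined estimates in \cite{TZ96}), and for $n=p$ this leaves only hooks and near-hooks, whose degrees are $\binom{p-1}{k}$-like and jump past $2p$ for $p>7$; the small primes $p=5,7$ are exactly the exceptions where $l_p=1$ does occur (degrees $p=\chi(1)$ and the accidental isomorphism $A_5\cong\PSL_2(5)$, $A_7$).

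The main obstacle is the second statement: it is not a formal consequence of the Brauer-tree analysis but requires knowing that the degree gap above $p-1$ is genuinely larger than $p$ for all $p>7$. This is a finite-but-unbounded case analysis on partitions of $p$, and the cleanest route is to cite an existing classification of small-degree irreducible characters of symmetric/alternating groups (e.g.\ \cite{TZ96}) rather than reprove it; the prime $p$ being prime helps, since then $S$ is $TI$ and of order $p$, so no intermediate degrees $2p,3p,\dots$ need separate treatment. I would also double-check the two smallest cases $p=5,7$ by hand (or in \cite{Atl}) to confirm they are the only exceptions, matching the parenthetical remark that this was also observed in \cite{RZ}.
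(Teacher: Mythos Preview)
Your proposal is correct and takes essentially the same approach as the paper: the first assertion is reduced to Proposition~\ref{prop:cyc-TI} (after checking that $S$ is cyclic TI and $N_G(S)/S$ is cyclic of order $(p-1)/2$), and the second assertion follows from the degree gap, i.e., that $A_p$ has no irreducible character of degree in $[p,2p-1]$ for $p>7$. Your Brauer-tree discussion is redundant with the proof of Proposition~\ref{prop:cyc-TI}, and the paper simply asserts the degree gap without justification, whereas you spell out why it holds via the low-degree character classification; but the logical route is the same.
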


\begin{proof}
The first part is just Proposition~\ref{prop:cyc-TI}. In addition, if $p>7$
then $G$ has no irreducible character of degree $d$ for $p\le d<2p$.
This implies the claim.
\end{proof}

\begin{lem}   \label{ii6}
 Let $n=kp$, where $p>5$ and $k<p$. Let $G=A_n$ and let $\chi$ be a
 $p$-vanishing character. Then $l_p(\chi)\geq 2^{k-1}$, equivalently,
 $\chi(1)\geq 2^{k-1}|G|_p$.
\end{lem}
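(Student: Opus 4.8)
The plan is to induct on $k$, using the product structure of $A_n$ together with the reduction tools established in the preliminaries. The key point is that inside $A_n$ with $n=kp$ there sits (up to index prime to $p$, after passing to a suitable normaliser quotient) a chain of direct factors each isomorphic essentially to $A_p$, and each such factor contributes a factor of~$2$ to the $p$-level by Lemma~\ref{pp5}. More precisely, let $H=A_{(k-1)p}\times A_p$, which embeds in $A_n=A_{kp}$ via the natural partition of $\{1,\dots,n\}$ into a block of size $(k-1)p$ and a block of size~$p$; note $|A_n:H|$ is prime to~$p$, so a Sylow $p$-subgroup $S$ of $A_n$ may be taken inside $H$, and $l_p$ of any character is detected on $H$. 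The base case $k=1$ is vacuous ($2^{0}=1$, and any $p$-vanishing character of $A_p$ has $p$-level $\ge1$ by definition of being $p$-vanishing with $S$ cyclic of order~$p$), so assume $k\ge2$ and that the statement holds for $k-1$.

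First I would restrict the given $p$-vanishing character $\chi$ of $A_n$ to $H=G_1\times G_2$ with $G_1=A_{(k-1)p}$ and $G_2=A_p$, observing that $\chi|_H$ is again $p$-vanishing. Now apply Corollary~\ref{cn2} with this factorisation: by the inductive hypothesis every non-zero $p$-vanishing character $\sigma$ of $G_1$ satisfies $l_p(\sigma)\ge 2^{k-2}=:m$, so Corollary~\ref{cn2} gives
\[
  l_p(\chi|_H)\ \ge\ m\cdot\frac{\eta(1)}{|S_2|}\ =\ 2^{k-2}\cdot\frac{\eta(1)}{p},
\]
where $\eta=\eta_1+\dots+\eta_\ell$ is the sum of the distinct irreducible constituents of $\chi|_{G_2}$ (here $G_2=A_p$, so $|S_2|=p$). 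Thus it suffices to show $\eta(1)\ge 2p$, i.e. that the irreducible constituents of the restriction of $\chi$ to the $A_p$-factor have degrees summing to at least $2p$. Since $\chi|_{G_2}$ is a non-zero $p$-vanishing character of $A_p$, it has $p$-level $\ge1$, hence degree $\ge p$, hence $\eta(1)\ge p$; but equality $\eta(1)=p$ would force $\chi|_{G_2}$ to be a single irreducible character of degree exactly $p$ with $p$-level~$1$, and by Lemma~\ref{pp5} no such character exists once $p>7$. Hence $\eta(1)\ge p+1$; and since $A_p$ ($p>5$) has no irreducible character of degree strictly between $p$ and $2p$ (the minimal non-trivial degree being $p-1$, and the next being at least $2p$ for $p>5$ — the same input already used in Lemma~\ref{pp5}), in fact $\eta(1)\ge 2p$. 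Feeding this back in yields $l_p(\chi)=l_p(\chi|_H)\ge 2^{k-2}\cdot 2 = 2^{k-1}$, completing the induction.

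The main obstacle — and the point requiring the most care — is the claim that $\eta(1)\ge 2p$, which rests on the degree gap for irreducible characters of $A_p$: there are none of degree $d$ with $p\le d<2p$ (for $p>5$), and none of degree $d$ with $p-1<d<2p-1$ other than possibly $p-1$ itself. One must check that the hypothesis $p>5$ (and really $p>7$ for the sharpest form of Lemma~\ref{pp5}) suffices; the small primes $p=7$ and borderline behaviour need a direct look at the character degrees of $A_7$, where one verifies $\chi|_{G_2}$ cannot be supported on characters of total degree in $\{7,8,\dots,13\}$ while being $p$-vanishing of level~$1$ — this is where the restriction $p>5$ in the statement is genuinely used. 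A secondary subtlety is bookkeeping in Corollary~\ref{cn2}: one must ensure the hypothesis "$l_p(\sigma)\ge m$ for every non-zero $p$-vanishing $\sigma$ of $G_2$" is applied with the correct factor playing the role of "$G_2$" there — in our application it is $G_1=A_{(k-1)p}$ whose characters we control by induction, so the roles of the two factors are swapped relative to the notation of that corollary, and I would state this exchange explicitly to avoid confusion.
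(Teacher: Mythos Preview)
Your inductive setup via $H=A_{(k-1)p}\times A_p$ and the use of Corollary~\ref{cn2} (with the factors swapped) is correct and gives $l_p(\chi)\ge 2^{k-2}\eta(1)/p$, where $\eta$ is the sum of the distinct irreducible constituents of $\chi|_{A_p}$. The problem is entirely in the claimed bound $\eta(1)\ge 2p$.

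First, the implication ``$\chi|_{A_p}(1)\ge p$, hence $\eta(1)\ge p$'' is a non sequitur: $\eta$ counts each constituent once, not with multiplicity. More importantly, the assertion that $\eta(1)=p$ forces a single irreducible of degree~$p$ is false. For $p>7$ the only irreducible characters of $A_p$ of degree $<2p$ are $1_{A_p}$ and the standard character $\tau_{p-1}$ of degree $p-1$; taking $\eta=1_{A_p}+\tau_{p-1}$ gives $\eta(1)=p$, and indeed any multiple of $1_{A_p}+\tau_{p-1}$ (the natural permutation character) \emph{is} $p$-vanishing on $A_p$, since a $p$-cycle has no fixed points. So the bad case $\eta(1)=p$ genuinely occurs at the level of $A_p$, and your bound then collapses to $l_p(\chi)\ge 2^{k-2}$, which is what one already gets from Lemma~\ref{nn2} and induction alone.

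This is precisely the obstruction the paper's proof is designed to overcome, and it cannot be removed by looking only at $\chi|_{A_p}$. The paper argues instead: if every irreducible constituent of $\chi|_{A_p}$ lies in $\{1_{A_p},\tau_{p-1}\}$, then by the branching rule every irreducible constituent of $\chi$ in $A_n$ lies in $\{1_{A_n},\tau_{n-1}\}$, whence $\chi=a\cdot 1_{A_n}+b\cdot\tau_{n-1}$; evaluating at a $p$-cycle $x$ gives $\chi(x)=a+b(n-p-1)>0$, contradicting $p$-vanishing. (For $p=7$ one must also account for the two characters of degree~$10$, which the paper does separately.) So the missing idea is a global argument on $A_n$, not a sharper local estimate on $A_p$.
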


\begin{proof}
For $k=1$ the lemma is trivial. Let $k>1$. Let $X_1\cong A_p$,
$X_2\cong A_{n-p}$ be commuting subgroups of $G$. Set $X=X_1X_2$.
Then $\chi|_X=\sum\eta_i\si_i$, where the $\si_i$'s are $p$-vanishing
characters of $X_2$ and the $\eta_i$'s are distinct irreducible characters
of $X_1$ (Lemma \ref{nn2}). By induction, $l_p(\si_i)\geq 2^{k-2}$.
If $l_p(\eta_i)\geq 2$ for some $i$ then
$l_p(\chi)\geq l_p(\eta_i\si_i)\geq 2^{k-1}$. By Lemma \ref{pp5}, if
$l_p(\eta_i)<2$ and $p>7$ then $l_p(\eta_i)=0$, and hence either
$\eta_i=1_{X_1}$ or $\eta_i$ is the unique irreducible character of degree
$p-1$. (If $p=7$ then we may have $\eta_i(1)=10$, see \cite{RZ}.)

Suppose the lemma is false and $p>7$. Then we can rearrange the above to get
$$\chi|_X=1_{X_1}\cdot \si_1+\eta_2\cdot \si_2,$$
where $\eta_2(1)=p-1$ and
$\si_1,\si_2$ are $p$-vanishing characters of $X_2$. It follows that
$\chi|_{X_1}$, as well as $\tau|_{X_1}$ for every irreducible constituent
$\tau$ of $\chi$, contains no irreducible constituent distinct from
$1_{X_1},\eta_2$. It is well known and easily follows from the branching
rule that this implies $\tau(1)=n-1$ or $1$. Recall that $G$ has a single
character of degree $n-1$. Therefore, $\chi=a\cdot 1_G+b\cdot \tau$, where
$\tau(1)=n-1$. Let $x\in X_1$ be of order $p$. Then $\tau(x)=
n-p-1>0$, which implies $\chi(x)>0$.

Suppose $p=7$. Then $\eta_i(1)\in\{1,6, 10\}$. There are two irreducible
characters of $X_1$ of degree 10, let us denote them by $\eta_3,\eta_3'$.
Therefore, assuming the lemma is false, we can write
$\chi|_X=1_X\cdot \si_1+\eta_2\si_2+\eta_3\si_3+\eta_3'\si_4$.
Let $1\neq x\in X_1$ be a $p$-element. Then $\eta_3(x)=\eps+\eps^4+\eps^2$ and
$\eta'_3(x)=\eps\up+\eps^{-4}+\eps^{-2}$, where $\eps$ is some primitive 7th
root of unity. As $\chi(x),\eta_2(x)$ are integers, so is
$\eta_3(x)\si_3(1)+\eta_3'(x)\si_4(1)$. This implies $\si_3(1)=\si_4(1)$.
Then $\chi(1)=\si_1(1)+(p-1)\si_2(1)+20\si_3(1)>14\si_3(1)$, and the lemma
follows unless $\si_3(1)=0$.
If $\si_3(1)=0$ then $\chi|_X=1_{X_1}\cdot \si_1+\eta_2\cdot \si_2$, and the
above argument applies.
\end{proof}

\begin{lem}   \label{lem:p-cycle}
 Let $p\ge3$ be odd and let $\lam$ be a hook partition of $n\ge 2p$. Then the
 corresponding character $\chi^\lam$ of $S_n$ takes a positive value on
 $p$-cycles.
\end{lem}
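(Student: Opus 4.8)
The plan is to evaluate $\chi^\lambda$ on a $p$-cycle directly via the Murnaghan--Nakayama rule. Write the hook partition as $\lambda=(n-k,1^k)$ for some $0\le k\le n-1$. Since the statement concerns characters of $S_n$ (not $A_n$) on a $p$-cycle $g$, which is an element of cycle type $(p,1^{n-p})$, the Murnaghan--Nakayama rule tells us that $\chi^\lambda(g)=\sum_\mu (-1)^{\mathrm{ht}(\lambda/\mu)}\chi^\mu(g')$, where $\mu$ runs over partitions of $n-p$ obtained by removing a $p$-rim-hook from $\lambda$, $g'$ has cycle type $(1^{n-p})$, and $\mathrm{ht}$ is the leg length of the removed hook. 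Because $g'$ is the identity of $S_{n-p}$, each surviving term is just $\pm\dim\chi^\mu$.

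The key combinatorial observation is that a hook partition $\lambda=(n-k,1^k)$ has at most two removable $p$-rim-hooks, and in many ranges exactly one, so the alternating sum has very few terms. Concretely, removing a $p$-rim-hook that includes the corner cell either (i) strips $p$ cells from the arm, giving $\mu=(n-k-p,1^k)$ with leg length $0$ (contributing $+\dim\chi^{(n-k-p,1^k)}$), provided $n-k-p\ge 1$ — equivalently $n-k\ge p+1$; or (ii) strips $p$ cells from the leg, giving $\mu=(n-k,1^{k-p})$ with leg length $p-1$ (contributing $(-1)^{p-1}\dim\chi^{(n-k,1^{k-p})}$), provided $k\ge p$; or (iii) if the first row has length $\le p$ and the first column has length $\le p$, there may be a single rim-hook bending around the corner, whose removal leaves a still-smaller hook. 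I would organize the proof by a case analysis on $k$ relative to $p$ and $n-p$, in each case writing $\chi^\lambda(g)$ explicitly as $\dim\chi^{\mu_1}-\dim\chi^{\mu_2}$ or a single positive dimension, and then comparing dimensions. Since $p$ is odd, $(-1)^{p-1}=+1$, so when both (i) and (ii) apply we actually get a \emph{sum} of two positive numbers — that case is immediate. The genuinely two-sided cancellation only arises when one removable hook has odd leg length, i.e. case (iii) or the interaction of (ii) with a hook that wraps.

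The dimension of a hook character is $\dim\chi^{(n-k,1^k)}=\binom{n-1}{k}$, so all the comparisons reduce to elementary binomial-coefficient inequalities in $n-p$. The main obstacle I anticipate is the wrap-around case (iii), where the leg length of the bent rim-hook can be odd and one must show that the positive term (a larger binomial coefficient) strictly dominates the negative one; here one uses $n\ge 2p$ to guarantee that after removing the hook the remaining partition is large enough that the relevant $\binom{n-p-1}{j}$ beats $\binom{n-p-1}{j'}$ with $j$ closer to the center. I expect that $n\ge 2p$ is exactly what makes every such inequality go through, and that for $p\le n<2p$ the statement can genuinely fail (which is why the hypothesis is stated). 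A clean alternative, if the case bookkeeping becomes unwieldy, is to induct on $n$ using a branching/restriction argument: restrict from $S_n$ to $S_p\times S_{n-p}$, note $\chi^\lambda(g)=\sum_{\alpha,\beta}c^\lambda_{\alpha\beta}\,\chi^\alpha(p\text{-cycle})\,\dim\chi^\beta$ where the only hooks $\alpha$ of $S_p$ contributing with $\chi^\alpha(p\text{-cycle})\ne 0$ are those $p$-cycle characters with the right sign, but this ultimately repackages the same Murnaghan--Nakayama computation, so I would stick with the direct rim-hook approach.
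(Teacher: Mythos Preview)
Your Murnaghan--Nakayama approach is correct and in fact cleaner than the paper's, but you have not carried it to its conclusion: the anticipated ``main obstacle'', the wrap-around case (iii), is vacuous. Under the hypothesis $n\ge 2p$, the conditions ``first row length $\le p$'' and ``first column length $\le p$'' together force $n=(n-k)+k\le p+(p-1)<2p$, a contradiction; and in any case a border strip through the corner cell $(1,1)$ of a hook of size $>p$ can never be removed to leave a valid Young diagram, since any $\mu\subset\lambda$ with $\lambda/\mu$ a connected skew shape must share either the full first row or the full first column of $\lambda$. Once you note this, your proof is complete: only the removals (i) and (ii) are available, at least one of them applies whenever $n\ge 2p$ (if $k<p$ then $n-k\ge n-p+1\ge p+1$), and since $p$ is odd both carry sign $+1$, giving
\[
\chi^{(n-k,1^k)}(g)=\binom{n-p-1}{k}\,[\,n-k\ge p+1\,]+\binom{n-p-1}{k-p}\,[\,k\ge p\,]\ >\ 0.
\]
No comparison of binomial coefficients is ever needed.

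The paper takes a genuinely different route. It identifies $\chi^{(n-m,1^m)}$ with $\Lambda^m(\rho_n)$, where $\rho_n$ is the reflection character of $S_n$, restricts to $S_p\times S_{n-p}$, and obtains the alternating sum $\sum_{i=0}^{\min(p-1,m)}(-1)^i\binom{n-p}{m-i}$. Establishing positivity of this sum then requires a monotonicity argument for binomial coefficients (valid for $m\le(n-p)/2$), a branching reduction to the base case $n=2p$, and an appeal to the symmetry $m\leftrightarrow n-1-m$. Your direct rim-hook computation bypasses all of this machinery: the oddness of $p$ immediately makes every surviving term positive. What the paper's formula buys is an explicit closed-form value rather than just the sign.
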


\begin{proof}
It is well known that any hook character $\chi^\lam$ is the $m$th exterior
power, for some $0\le m\le n-1$, of the irreducible reflection character
$\rho_n$ of $S_n$ (the constituent of degree~$n-1$ of the natural permutation
character~$\pi_n$). Let $Y=Y_1\times Y_2$, with $Y_1=S_p$ and $Y_2=S_{n-p}$, be
a Young subgroup of $S_n$ and $g=g'\times1\in Y$ a $p$-cycle. Clearly
$\rho_n|_Y=\rho_p\boxtimes 1_{Y_2}+1_{Y_1}\boxtimes(\pi_{n-p})$, and
$\Lambda^i(\rho_p)(g')=(-1)^i$ for $i<p$, $\Lambda^i(\rho_p)(g')=0$ for
$i\ge p$. Thus
$$\chi^\lam(g)=\Lambda^m(\rho_n)(g)
  =\sum_{i=0}^m\Lambda^{i}(\rho_p)(g')\Lambda^{m-i}(\pi_{n-p})(1)
  =\sum_{i=0}^{\min(p-1,m)}(-1)^{i}\binom{n-p}{m-i},
$$
which clearly is positive for $m\le (n-p)/2$ since the binomial coefficients
are (strictly) increasing up to the middle.
Now observe that it suffices to prove the claim for $n=2p$, since the
restriction of a hook character from $S_n$ to $S_{n-1}$ only contains hook
characters. But for $n=2p$ we are done since by symmetry we may assume that
$m\le p=(n-p)/2$.
\end{proof}

\begin{thm}   \label{thm:An}
 Let $p$ be odd and $G=A_n$ with $n>\max\{6,p+1\}$. Then $G$ has no
 $\Syl_p$-regular character. If $n=p+1>4$ then every $\Syl_p$-regular character
 of $G$ is irreducible, unless $(n,p)=(6,3)$.
\end{thm}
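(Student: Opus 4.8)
The plan is to split into the two regimes: $n>\max\{6,p+1\}$ and $n=p+1$. For the first, I would argue that a $\Syl_p$-regular character, being in particular $p$-vanishing, cannot exist because its $p$-level would be forced to be $\geq 2$ whereas a $\Syl_p$-regular character has $p$-level exactly $1$. The idea is to reduce to the case where $p$ exactly divides $n$ in the relevant sense. Write $n=kp+r$ with $0\le r<p$. Inside $G=A_n$ sit commuting subgroups $A_{kp}$ and $A_r$ (or $S_{kp}$-type pieces), and using Lemma~\ref{nn2} together with the fact (Lemma~\ref{ii6}) that every nonzero $p$-vanishing character of $A_{kp}$ has $p$-level $\geq 2^{k-1}$, one concludes via Corollary~\ref{cn2} or Lemma~\ref{da3} that any nonzero $p$-vanishing character $\chi$ of $G$ has $l_p(\chi)\geq 2^{k-1}\geq 2$ once $k\geq 2$, i.e. once $n\geq 2p$. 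This contradicts $l_p(\chi)=1$. So the remaining range to handle directly is $p+1<n<2p$, equivalently $n=p+j$ with $2\le j<p$; here Sylow $p$-subgroups are still cyclic of order $p$, so Proposition~\ref{prop:cyc-TI} applies and gives $l_p(\tau)=\lfloor\tau(1)/p\rfloor$ for every $\tau\in\Irr(G)$. A $\Syl_p$-regular $\chi$ then has $\chi(1)=p$ and, by Proposition~\ref{ms2}, either $\chi$ is irreducible of degree $p$ — impossible since the smallest nonlinear irreducible degree of $A_n$ is $n-1>p$ for $n>p+1$ — or $\chi=1_G+\tau$ with $\tau$ irreducible of degree $p-1<n-1$, again impossible, or the $\PSL_2(p)$ exception, which does not occur among these alternating groups. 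This disposes of the case $n>\max\{6,p+1\}$.

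**The case $n=p+1$.**

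Here $G=A_{p+1}$ and a $\Syl_p$-regular character $\chi$ has $\chi(1)=p$. Sylow $p$-subgroups are cyclic of order $p$, TI, with $N_G(S)/S$ abelian (it is cyclic of order dividing $p-1$, or $(p-1)/2$), so Proposition~\ref{ms2} applies again: the reducible possibilities are $\chi=1_G+\tau$ with $\tau\in\Irr(G)$ of degree $p-1=n-2$, or the $\PSL_2(p)$ case. But $A_{p+1}$ has no irreducible character of degree $n-2$ when $n>6$: the nonlinear irreducible characters of smallest degrees are $n-1$ (the reflection character restricted, possibly splitting for $A_6$) and then jump above $n-2$; the only coincidence $A_n\cong\PSL_2(q)$ relevant here is $A_5\cong\PSL_2(5)$ and $A_6\cong\PSL_2(9)$, both excluded by $n>6$ (the case $(n,p)=(6,3)$ is exactly $A_6$ and is the stated exception). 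Hence when $n=p+1>4$ and $(n,p)\ne(6,3)$, every $\Syl_p$-regular character of $A_{p+1}$ falls into case (1) of Proposition~\ref{ms2}, i.e. is irreducible.

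**Main obstacle.**

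The routine part is the combinatorics of small alternating group character degrees; the genuine content is the $p$-level bound of Lemma~\ref{ii6} and assembling it correctly over a direct product in which only one factor has order divisible by $p$. The technical care needed is in choosing the commuting subgroup pair so that the $p$-part of $G$ is concentrated in one factor (so one can invoke Lemma~\ref{nn2}/Corollary~\ref{cn2}), and in confirming that the hypotheses $p>5$ (resp. the $p=7$ adjustments) in Lemma~\ref{ii6} cover all cases with $n\ge 2p$ relevant here — the small primes $p=3,5$ with $n\ge 2p$ must be checked separately, either by the same direct-product argument with the explicitly known character degrees of $A_n$ (using that $A_n$ has a unique irreducible character of degree $n-1$ and none of degree strictly between $n-1$ and the next threshold) or by inspection via Proposition~\ref{prop:cyc-TI} when Sylow $p$-subgroups remain cyclic, and otherwise by Lemma~\ref{lem:p-cycle} to exhibit a $p$-element on which every hook character — hence every low-degree constituent — is positive, contradicting $p$-vanishing. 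That comparison of degree data with the bound $l_p(\chi)\ge 2$ is where the argument is least mechanical, and it is the step I would write out most carefully.
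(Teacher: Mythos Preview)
Your argument for $n\ge 2p$ rests on a false premise: you write ``a $\Syl_p$-regular character, being in particular $p$-vanishing,'' but $\Syl_p$-regular is strictly weaker than $p$-vanishing---the former requires vanishing only on nontrivial $p$-elements, the latter on all $p$-\emph{singular} elements (those of order divisible by $p$). This matters because Lemma~\ref{ii6}, on which your whole $n\ge 2p$ case depends, is stated and proved only for $p$-vanishing characters: its proof goes through Lemma~\ref{nn2}, which needs $\chi(gx)=0$ for $g\in X_1$ \emph{arbitrary} and $x\in X_2$ $p$-singular, and that fails if $\chi$ is only $\Syl_p$-vanishing (then $gx$ need not be a $p$-element). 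So the level bound $l_p(\chi)\ge 2^{k-1}$ is simply unavailable for a $\Syl_p$-regular character, and the inductive scheme collapses. Even retreating to the Steinberg-like statement, Lemma~\ref{ii6} carries the hypotheses $p>5$ and $k<p$; you do not address $n\ge p^2$ at all, and your proposed patch for $p\in\{3,5\}$ via Lemma~\ref{lem:p-cycle} presumes that the low-degree irreducible constituents are hook characters, which is not justified.

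The paper's route is entirely different and sidesteps this difficulty. For $n\ge 2p$ it invokes Giannelli--Law \cite{GL17}: when $n\ne p^k$ (and $n>10$ if $p=3$), every irreducible character of $A_n$ contains $1_S$ upon restriction to $S$, so Lemma~\ref{lem:a51} forces any $\Syl_p$-regular character to be irreducible, and one finishes via Proposition~\ref{prop:ir2}. When $n=p^k$ the sole exception is the character $\psi$ of degree $n-1$; a Murnaghan--Nakayama argument on a $p^k$-cycle shows a reducible $\Syl_p$-regular character would have the form $\psi+\psi'$ with $\psi'$ a hook, and then Lemma~\ref{lem:p-cycle} produces a $p$-element on which it is positive. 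The residual cases $p=3$, $6\le n\le 10$ are handled by direct inspection. Your treatment of the cyclic-Sylow range $p\le n<2p$ via Propositions~\ref{ms2}/\ref{ms4} is essentially correct, and the paper simply cites Proposition~\ref{ms4} there.
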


\begin{proof}
If $p\le n<2p$ the Sylow $p$-subgroups of $G$ are cyclic and so the claim is
in Proposition~\ref{ms4}. Now assume that $n\ge2p$ and let $S$ be a Sylow
$p$-subgroup of $S_n$.
First assume that $n\ne p^k$ for some $k\ge2$ and that $n>10$ when $p=3$. Then
by the main result of \cite{GL17}, the restriction of any irreducible
character of $S_n$ to $S$ contains the trivial character. A moments thought
shows that the same is true for the restriction of any irreducible character
of $A_n$ to $S$. So by Lemma~\ref{lem:a51} any $\Syl_p$-regular character of
$A_n$ is irreducible. \par
Now assume that $n=p^k$ for some $k\ge2$, and $n>10$ when $p=3$. Then again by
\cite[Thm.~A]{GL17} the only irreducible characters of $S_n$ whose restriction
to $S$ does not contain the trivial character are the two characters of degree
$n-1$. So the only irreducible character of $A_n$ whose restriction to $S$
does not contain the trivial character is $\psi$ of degree $n-1$. Hence a
$\Syl_p$-regular character $\chi$ of $A_n$ has the form $\chi=a\psi+\psi'$,
for some $a\ge0$ and some $\psi'\in\Irr(A_n)$. Let $g\in A_n$ be a $p^k$-cycle.
Then $\psi(g)=-1$, and by the Murnaghan--Nakayama rule any irreducible
character of $S_n$ takes value $0$ or $\pm1$ on $g$. In particular, if $\chi$
is reducible then we have that $a=1$ and $\psi'(g)\ne0$. But then $\psi'$ is
parametrised by a hook partition, of degree $\binom{n-1}{m}$ for some $m\le n$.
But then $\chi$ takes positive values on $p$-cycles by Lemma~\ref{lem:p-cycle},
a contradiction.
\par
Finally, the cases when $p=3$ and $6\le n\le 10$ can easily be checked
individually. For example, all irreducible characters of $A_9$ of degree at
most~81 are non-negative on class 3C, and those which vanish there are positive
either on class 3B or 3A. So $A_9$ has no $\Syl_3$-regular character. As
$A_{10}$ has the same Sylow 2-subgroup, this also deals with $n=10$.
\end{proof}

\begin{cor}   \label{p11}
 Let $G$ be a finite group and $p>2$. Suppose that $G$ has a subgroup
 $P$ containing a Sylow $p$-subgroup of $G$ and such that $P/O_p(P)\cong A_n$
 with $n>\max\{6,p+1\}$. Then $G$ has no Steinberg-like character.
\end{cor}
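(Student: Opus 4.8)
The plan is to reduce the statement about $G$ to the already-settled statement about $A_n$ via the generalised restriction functor of Lemma~\ref{hc1}. Suppose for contradiction that $\chi$ is a Steinberg-like character of $G$ with respect to~$p$. Apply Lemma~\ref{hc1}(c) with the subgroup $P$ (which by hypothesis contains a Sylow $p$-subgroup, so $(|G:P|,p)=1$) and the normal $p$-subgroup $U=O_p(P)$: the character $\lambda=r^G_{P/U}(\chi)$ of $L:=P/U\cong A_n$ is Steinberg-like for $p$ (here $K=O^{p'}(L)=A_n$ is already simple, so $\lambda|_K=\lambda$). This directly contradicts Theorem~\ref{thm:An}, since $n>\max\{6,p+1\}$ and $p$ is odd forces $A_n$ to have no $\Syl_p$-regular, a fortiori no Steinberg-like, character.

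The one point that needs a little care is the isomorphism $P/O_p(P)\cong A_n$ versus the hypotheses of Lemma~\ref{hc1}: the lemma is stated for $L=P/U$ with $U$ a normal $p$-subgroup, and $O_p(P)$ is exactly the largest such, so this is immediate; and a Sylow $p$-subgroup of $L$ is the image of a Sylow $p$-subgroup of $P$, which is a Sylow $p$-subgroup of $G$ by assumption. The conclusion $l_p(\chi)=l_p(\lambda)$ of part~(a) then gives $\lambda(1)=|S/U|=|T|$ where $T\in\Syl_p(A_n)$, and part~(b) gives that $\lambda$ is $p$-vanishing, so $\lambda$ is genuinely Steinberg-like for $A_n$.

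I expect no real obstacle here: the corollary is a routine consequence of the normal-subgroup/quotient behaviour of Steinberg-like characters established in Lemma~\ref{hc1} together with the classification in Theorem~\ref{thm:An}. The only thing to double-check is that the excluded small cases $(n,p)=(6,3)$ and the like in Theorem~\ref{thm:An} do not intrude — but the hypothesis $n>\max\{6,p+1\}$ in the corollary is precisely chosen to stay inside the range where Theorem~\ref{thm:An} asserts the nonexistence of any $\Syl_p$-regular character of $A_n$, so there is nothing left to handle.
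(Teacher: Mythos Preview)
Your proof is correct and follows exactly the same approach as the paper, which simply states that the corollary follows from Lemma~\ref{hc1} and Theorem~\ref{thm:An}. You have merely unpacked the details of how these two results combine, and your verifications (that $U=O_p(P)$ satisfies the hypotheses of Lemma~\ref{hc1}, that $O^{p'}(A_n)=A_n$, and that the range $n>\max\{6,p+1\}$ avoids the exceptional cases) are all accurate.
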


\begin{proof}
This follows from Lemma~\ref{hc1} and Theorem~\ref{thm:An}.
\end{proof}

%%%%%%%%%%%%%%%%%%%%%%%%%%%%%%%
\subsection{Alternating groups for $p=2$}   \label{subsec:alt p=2}
The situation is more complicated in the case of $p=2$ and we don't have
complete results. This is in part due to the existence of an infinite family
of examples which we now construct.

Set $\Gamma=\sum_{i=1}^n \Gamma_i$, where $\Gamma_i$ is the \ir character of
$S_n$ corresponding to the partition $[i,1^{n-i}]$ for $i>1$, and $[1^n]$ for
$i=1$. So the Young diagram $\gamma_i$ of $\Gamma_i$ is a hook with leg length
$n-i$, and
$$\Gamma_i(1)=\frac{n!}{n(n-i)!(i-1)!}=\binom{n-1}{i-1}$$
so $\Gamma(1)=\sum_{i=1}^n \Gamma_i(1)=2^{n-1}$.

\begin{lem}   \label{h66}
 Let $0<m<n$, where $m$ is even, and $g=ch\in S_m\times S_{n-m}\le S_n$,
 where $c$ is an $m$-cycle  and $h$ fixes all letters moved by $c$. Let
 $\Gamma ^{n-m}_k\in\Irr(S_{n-m})$ correspond to the hook partition
 $[k,1^{n-m-k}]$. Then
 $\Gamma_i(g)=\begin{cases}  -\Gamma ^{n-m}_i(h)& \text{if }i\leq m,\cr
  \Gamma ^{n-m}_{i-m}(h)& \text{if }n-m<i,\cr
  \Gamma ^{n-m}_{i-m}(h)-\Gamma ^{n-m}_{i}(h)& \text{if } m<i\leq n-m.
 \end{cases}$
\end{lem}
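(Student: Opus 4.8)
The plan is to apply the Murnaghan--Nakayama rule with respect to the $m$-cycle $c$. Writing $\gamma_i=[i,1^{n-i}]$ for the hook partition of $\Gamma_i$ (so $\gamma_1=[1^n]$), the rule gives
$$\Gamma_i(g)=\sum_\xi (-1)^{\operatorname{ht}(\xi)}\,\chi^{\gamma_i\setminus\xi}(h),$$
the sum running over all border strips (rim hooks) $\xi$ of $\gamma_i$ with $|\xi|=m$, where $\gamma_i\setminus\xi$ is read as a partition of $n-m$, $\chi^{\gamma_i\setminus\xi}$ as a character of $S_{n-m}$, and $\operatorname{ht}(\xi)$ is one less than the number of rows of $\xi$. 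Thus everything reduces to enumerating the $m$-border strips of a hook.

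The key combinatorial observation is that a border strip $\xi=\gamma_i\setminus\mu$ requires $\mu$ to be a Young diagram, hence (if nonempty) to contain the corner box; and since in the hook $\gamma_i$ the boxes $(1,2)$ and $(2,1)$ are joined only through $(1,1)$, removing any ``L-shaped'' strip that passes through the corner would disconnect the complement. I would record that, consequently, for every $m$ with $0<m<n$ there are at most two $m$-border strips of $\gamma_i$: type (A), the last $m$ boxes of the first row, which exists exactly when $m\le i-1$, has height $0$, and leaves $[i-m,1^{n-i}]$; and type (B), the bottom $m$ boxes of the first column, which exists exactly when $m\le n-i$, has height $m-1$, and leaves $[i,1^{n-i-m}]$. (In the degenerate case $i=1$ only type (B) occurs, consistently.)

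It then remains to translate this back. Since $m$ is even, $(-1)^{\operatorname{ht}}=+1$ for type (A) and $(-1)^{\operatorname{ht}}=-1$ for type (B). Identifying $[i-m,1^{n-i}]$ with $\Gamma^{n-m}_{i-m}$ and $[i,1^{n-i-m}]$ with $\Gamma^{n-m}_i$ (checking that the degenerate values $i-m=1$ and $i=1$, where the remaining diagram is a single column, give the same identification), and splitting according to which of (A), (B) occur---only (B) when $i\le m$; both when $m<i\le n-m$; only (A) when $i>n-m$---yields exactly the three displayed formulas.

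The only point requiring genuine care is the border-strip enumeration for a hook, i.e.\ ruling out the corner strips and handling the hook degeneracies $i=1$ and $i-m=1$; once that is in place the computation is mechanical, so I do not anticipate any real obstacle.
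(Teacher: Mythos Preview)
Your proof is correct and follows essentially the same route as the paper's: both apply the Murnaghan--Nakayama rule (the paper cites it as \cite[Lemma~21.1]{Ja}) and reduce to enumerating the $m$-rim-hooks of the hook $\gamma_i$, observing that any such rim hook must lie entirely in the first row (leg length~$0$) or entirely in the first column (leg length~$m-1$), with the corner case excluded because the complement would fail to be a partition. The case split and the sign computation using that $m$ is even are identical. One small remark: your phrasing ``only (B) when $i\le m$; both when $m<i\le n-m$; only (A) when $i>n-m$'' tacitly assumes $2m\le n$; when $2m>n$ there is a range $n-m<i\le m$ in which neither strip exists and $\Gamma_i(g)=0$, which is consistent with reading the out-of-range symbols $\Gamma^{n-m}_i$ and $\Gamma^{n-m}_{i-m}$ as zero---the paper's proof leaves this equally implicit.
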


\begin{proof}
One observes that the restriction of $\Gamma_i$ to $S_m\times S_{n-m}$ is a sum
of \ir characters $\si \tau$, where $\si$, $\tau$ are \ir characters of $S_m$,
$S_{n-m}$, resp., and both $\si,\tau$ are hook characters of the respective
groups (see \cite[Lemma 21.3]{Ja}). Next we use \cite[Lemma 21.1]{Ja} which
states that $\Gamma_i(g)=\sum_j (-1)^j \chi_{\nu_j}(h)$, where
$\chi_{\nu_j}\in\Irr(S_{n-m})$,
${\nu_j}$ is the Young diagram of $\chi_{\nu_j}$, and ${\nu_j}$ is such that
$\gamma_i\setminus {\nu_j}$ is a skew hook with leg length $j$. In our case
$\gamma_i$ is a hook, so the rim of $\gamma_i$ is $\gamma_i$ itself. By
definition, a skew hook is connected,  so it is either a row or a column in our
case, and hence $j=0$ or $j=m-1$. (A column hook of length $m$ has leg length
$m-1$, which is odd as $m$ is even.) If $j=0,m-1$ then
$\nu=[i-m,1^{n-i}],[i,1^{n-i-m}]$, respectively.
This is a proper diagram if and only if $i>m$, resp., $n-i\geq m$. So if
$n-i<m$ then $\nu=[i-m,1^{n-i}]$, $j=0$, and
$\Gamma_i(g)=\chi_{\nu}(h)=\Gamma^{n-m}_{i-m}(h)$; if $i\leq m$ then
$\nu=[i,1^{n-i-m}]$, $j=m-1$, and
$\Gamma_i(g)=-\chi_{\nu}(h)=-\Gamma^{n-m}_{i}(h)$; if $m<i\leq n-m $ then
$\nu\in \{ [i-m,1^{n-i}],[i,1^{n-i-m}]\}$
and  $\Gamma_i(g)=\Gamma^{n-m}_{i-m}(h)-\Gamma^{n-m}_{i}(h)$, as claimed.
\end{proof}

\begin{prop}   \label{so1}
 Suppose that $n$ is even. Then:
 \begin{enumerate}
  \item[\rm(a)] $\Gamma$ is a $2$-vanishing character of $S_n$.
  \item[\rm(b)] $\Gamma$ is Steinberg-like if and only if $n=2^k$ for some
   integer $k>0$.
%%  \item[\rm(b)] $\Gamma(1)=2^{n-1}>|S_n|_2$, unless $n=2^k$ for some integer
%%   $k>0$.
 \end{enumerate}
\end{prop}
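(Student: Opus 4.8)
The plan is to prove part (a) first and then derive part (b) from it together with a degree count at the identity. For (a), I must show $\Gamma(g)=0$ for every $2$-singular $g\in S_n$. Write $g=ch$ where $c$ is a $2^a$-cycle (the longest even cycle in the cycle type of $g$, or more simply any single even cycle split off) acting on $m$ letters and $h$ acts on the remaining $n-m$ letters; here $m$ is even. Now sum the three cases of Lemma~\ref{h66} over $i=1,\dots,n$. The key observation is that the three ranges ($i\le m$, $m<i\le n-m$, $n-m<i$) are arranged so that in the total sum $\sum_{i=1}^n\Gamma_i(g)$, each term $\Gamma^{n-m}_j(h)$ appears once with a $+$ sign (coming from the shift $i\mapsto i-m$, i.e. from $i=j+m$) and once with a $-$ sign (coming from $i=j$), for every $j$ in the range $1\le j\le n-m$; the boundary cases $i\le m$ contribute only $-\Gamma^{n-m}_i(h)$ and $n-m<i$ only $+\Gamma^{n-m}_{i-m}(h)$, and these are exactly the leftover halves of the telescoping. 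Concretely, $\sum_{i=1}^n\Gamma_i(g)=\sum_{j=1}^{n-m}\Gamma^{n-m}_j(h)-\sum_{j=1}^{n-m}\Gamma^{n-m}_j(h)=0$. So the sum telescopes to zero regardless of the values $\Gamma^{n-m}_j(h)$, which is exactly what we want. One should double-check the edge indices (e.g. what happens if $m=n$, or $m>n-m$) and note that $\Gamma^{n-m}_j(h)$ is interpreted as $0$ when $j$ is out of the valid range $1\le j\le n-m$; with that convention the bookkeeping is clean.

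**From (a) to (b).** By (a), $\Gamma$ is $2$-vanishing, so $\Gamma$ vanishes on all $2$-singular elements. A $2$-vanishing character of degree $2^{n-1}$ is Steinberg-like precisely when $2^{n-1}=|S_n|_2$. Now $|S_n|_2=2^{n-s_2(n)}$ where $s_2(n)$ is the number of ones in the binary expansion of $n$ (Legendre's formula: the $2$-adic valuation of $n!$ is $n-s_2(n)$). Since $\Gamma(1)=2^{n-1}$, the condition $\Gamma(1)=|S_n|_2$ becomes $n-1=n-s_2(n)$, i.e. $s_2(n)=1$, which holds if and only if $n$ is a power of $2$. (The hypothesis that $n$ is even is already built into the standing assumption of the proposition and guarantees $\Gamma$ really is $2$-vanishing via Lemma~\ref{h66}, whose proof used $m$ even.) This disposes of (b).

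**Main obstacle.** The only genuinely delicate point is the index bookkeeping in the telescoping sum of part (a): one must be careful that the three cases in Lemma~\ref{h66} exactly partition $\{1,\dots,n\}$ and that the "$i\mapsto i-m$" reindexing matches the "$i$" terms with opposite signs with no terms left over except the ones forced to zero by the out-of-range convention. This is entirely routine once set up correctly, but it is where a sign error or an off-by-one would hide. Everything else — the $2$-adic valuation formula for $|S_n|_2$ and the degree computation $\Gamma(1)=2^{n-1}$ (already recorded in the excerpt) — is standard. I do not anticipate needing anything beyond Lemma~\ref{h66} and elementary number theory; in particular no representation-theoretic input about $A_n$ versus $S_n$ is required here since the whole statement is about $S_n$.
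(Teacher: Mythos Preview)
Your approach is essentially the same as the paper's: for (a) you split off an even cycle $c$ of length $m$ from $g$, apply Lemma~\ref{h66}, and observe that the positive contributions $\Gamma^{n-m}_{i-m}(h)$ (for $i>m$) and negative contributions $-\Gamma^{n-m}_i(h)$ (for $i\le n-m$) cancel pairwise; the paper writes this as $\sum_{i=m+1}^n\Gamma^{n-m}_{i-m}(h)-\sum_{i=1}^{n-m}\Gamma^{n-m}_i(h)=0$ after reindexing.

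One point to tighten: your handling of the edge case $m=n$ (i.e., $g$ is a single $n$-cycle) via the ``out-of-range convention'' is not quite justified, because Lemma~\ref{h66} is stated only for $0<m<n$, and its proof genuinely uses that the skew hook removed is a row or a column, whereas for $m=n$ the entire hook $\gamma_i$ is removed (leg length $n-i$, not $0$ or $m-1$). The paper treats this case separately: by Murnaghan--Nakayama, $\Gamma_i(g)=(-1)^{n-i}$, so $\Gamma(g)=\sum_{i=1}^n(-1)^{n-i}=0$ since $n$ is even. You flagged this as something to check, and that one-line computation is all that is needed.

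For (b) your use of Legendre's formula $|S_n|_2=2^{n-s_2(n)}$ is a perfectly good (and slightly slicker) alternative to the paper's inductive computation of $|S_{2^k}|_2$; both arrive at the same conclusion that $\Gamma(1)=2^{n-1}=|S_n|_2$ iff $s_2(n)=1$ iff $n$ is a $2$-power.
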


\begin{proof}
(a) Let $g\in S_n$ be of even order. Suppose first that $g$ is a cycle of
length $n$. By \cite[Lemma 21.1]{Ja}, $\Gamma_i(g)=(-1)^{n-i}$, so
$\Gamma(g)=0$.

Suppose that  $g$ is not a cycle of length $n$. Then we can express $g$ as
the product $ch$ of a cycle $c$ of even size $m$, say, and an element $h$
fixing all letters moved by $c$. Then $g\in S_m\times S_{n-m}$.
By Lemma \ref{h66}, we have
$$\Gamma(g)=\sum_{i=1}^n \Gamma_i(g)
  =\sum_{i=m+1}^n \Gamma^{n-m}_{i-m}(h)-\sum_{i=1}^{n-m}\Gamma^{n-m}_i(h)
  =\sum_{k=1}^{n-m} \Gamma^{n-m}_{k}(h)-\sum_{i=1}^{n-m}\Gamma^{n-m}_i(h)=0.$$

\medskip
(b) If $n=2^k$ then $|S_n|_2=2\cdot |S_{n/2}|^2_2$. As $|S_2|_2=2$, by
induction we have
$$|S_n|_2=2\cdot (2^{2^{k-1}-1})^2=2^{2^{k}-1}=2^{n-1}.$$

Write $n=2^k+l$ where $0<l<2^k$. Then $|S_n|_2=|S_{2^k}|_2\cdot|S_l|_2$.
By induction, $|S_l|_2\leq 2^{l-1}$, so
$|S_n|_2=2^{2^k-1}\cdot |S_l|_2\leq 2^{(2^k-1)+(l-1)}=2^{2^k+l-2}=2^{n-2}$.
The statement follows as $\Gamma(1)=2^{n-1}$.
\end{proof}

\begin{cor}   \label{so2}
 Let $n$ be even, and $\Gamma^0=\sum_{i=1}^{n/2}\Gamma_i|_{A_n}$. Then
 $\Gamma^0$ is a $2$-vanishing character of $A_n$. If $n=2^k$ then this
 character is Steinberg-like.
\end{cor}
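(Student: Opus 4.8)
The plan is to deduce the statement for $A_n$ from Proposition \ref{so1} by restricting the $2$-vanishing character $\Gamma$ of $S_n$ to $A_n$, and keeping track of which hook characters $\Gamma_i$ remain distinct and which merge upon restriction. Recall that the hook character $\Gamma_i$ of $S_n$ corresponds to the partition $[i,1^{n-i}]$, whose conjugate is $[n-i+1,1^{i-1}]$, i.e. to $\Gamma_{n-i+1}$. Since a self-conjugate hook must satisfy $i=n-i+1$, which forces $n$ odd, for $n$ even \emph{no} $\Gamma_i$ is self-conjugate; hence each restriction $\Gamma_i|_{A_n}$ is irreducible, and $\Gamma_i|_{A_n}=\Gamma_{n-i+1}|_{A_n}$. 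The partitions $[1,\ldots,n]$ split into $n/2$ conjugate pairs $\{i,\,n+1-i\}$ for $i=1,\ldots,n/2$, so
\[
 \Gamma|_{A_n}=\sum_{i=1}^n \Gamma_i|_{A_n}=2\sum_{i=1}^{n/2}\Gamma_i|_{A_n}=2\,\Gamma^0 .
\]

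First I would record that $\Gamma^0$ is a genuine (proper) character of $A_n$ with $\Gamma^0(1)=\tfrac12\Gamma(1)=2^{n-2}$, which follows from the displayed identity once we know $\Gamma|_{A_n}=2\Gamma^0$. Next, for the $2$-vanishing property: let $g\in A_n$ have even order. Then $\Gamma^0(g)=\tfrac12\Gamma(g)=0$ by Proposition \ref{so1}(a). (Strictly, $\Gamma^0(g)$ equals $\tfrac12$ of $\Gamma(g)$ only when the $S_n$-class of $g$ does not split in $A_n$; but for even elements $g$ this refinement is irrelevant, since in either case $\Gamma^0(g)=\sum_{i=1}^{n/2}\Gamma_i(g)$ and one can instead argue directly that $\sum_{i=1}^{n/2}\Gamma_i(g)=0$ by pairing $\Gamma_i$ with its conjugate $\Gamma_{n+1-i}$, whose values on any $g\in A_n$ agree.) Either way, $\Gamma^0$ is a $2$-vanishing character of $A_n$.

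Finally, the Steinberg-like claim when $n=2^k$. Here $|A_n|_2=|S_n|_2/2=2^{n-1}/2=2^{n-2}$ by the computation in the proof of Proposition \ref{so1}(b), which matches $\Gamma^0(1)=2^{n-2}$. So $\Gamma^0(1)=|S|$ for $S\in\Syl_2(A_n)$, and combined with the $2$-vanishing property already established, $\Gamma^0$ is Steinberg-like. The only point needing a little care — and the one I expect to be the main (minor) obstacle — is the bookkeeping in the split-class situation: $A_n$-classes of odd-order elements \emph{can} split, so the naive identity $\Gamma^0(g)=\tfrac12\Gamma(g)$ fails there; but since we only ever evaluate $\Gamma^0$ at $g=1$ (an odd element, handled by the dimension count) or at even-order $g$ (where the class does not split, as split classes consist of odd-order elements), this never actually bites. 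I would phrase the proof so as to only use $\Gamma^0(1)$ and $\Gamma^0(g)$ for $g$ of even order, avoiding the issue entirely.
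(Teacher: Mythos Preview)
Your proposal is correct and follows essentially the same approach as the paper: both arguments observe that for $n$ even no hook partition is self-conjugate, so $\Gamma_i|_{A_n}=\Gamma_{n+1-i}|_{A_n}$ is irreducible, whence $\Gamma|_{A_n}=2\Gamma^0$, and then deduce the $2$-vanishing and Steinberg-like properties directly from Proposition~\ref{so1}. Your added discussion of split classes is a harmless elaboration; the paper simply uses $\Gamma^0(g)=\Gamma(g)/2$ without comment, which is valid precisely because $\Gamma|_{A_n}=2\Gamma^0$ holds as an identity of characters of $A_n$.
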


\begin{proof}
The characters $\Gamma_i$ remain \ir under restriction to $A_n$ and
$\Gamma_i|_{A_n}=\Gamma_{n-i+1}|_{A_n}$. It follows that
$\Gamma|_{A_n}=2\Gamma^0$. Therefore, $\Gamma^0(g)=\Gamma(g)/2=0$ by
Proposition~\ref{so1} for elements $g$ of even order. The last claim follows
from Proposition~\ref{so1}(b).
\end{proof}

Suppose that $n$ is odd. Set
$$\begin{aligned}
  \Gamma^e&=\sum _{i=1}^{(n-1)/2}\Gamma_{2i}
  =\Gamma_2+\Gamma_4+\cdots+\Gamma_{n-1},\quad\text{and}\\
  \Gamma^o&=\sum_{i=1}^{(n+1)/2}\Gamma_{2i-1}
  =\Gamma_1+\Gamma_3+\cdots +\Gamma_n.
\end{aligned}$$
Observe that $\Gamma_i|_{S_{n-1}}=\Gamma^{n-1}_{i}+\Gamma^{n-1}_{i-1}$ provided
$1<i<n$, and $\Gamma_{1}|_{S_{n-1}}=\Gamma^{n-1}_{1}$, $\Gamma_{n}|_{S_{n-1}}
=\Gamma^{n-1}_{n-1}$. Therefore,
$\Gamma^e|_{S_{n-1}}=\Gamma^{n-1}_{1}+\cdots +\Gamma^{n-1}_{n-1}=
\Gamma^o|_{S_{n-1}}$. As $\Gamma=\Gamma^e+\Gamma^o$ we have
$\Gamma^e(1)=\Gamma^o(1)=\Gamma(1)/2=2^{n-2}$.

\begin{prop}   \label{so3}
 Suppose that $n$ is odd. Then:
 \begin{enumerate}
  \item[\rm(a)] $\Gamma^e$ and $\Gamma^o$ are $2$-vanishing characters of $S_n$.
  \item[\rm(b)] $\Gamma^e$ is Steinberg-like if and only if $n=2^k+1$ for some
   integer $k>0$.
%% \item[\rm(b)] $\Gamma^e(1)>|S_n|_2$, unless $n=2^k+1$ for some integer $k>0$.
 \end{enumerate}
\end{prop}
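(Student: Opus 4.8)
The plan for (a) is to use the description, already recalled in the proof of Lemma~\ref{lem:p-cycle}, of hook characters as exterior powers of the reflection character $\rho_n$ of degree $n-1$: the hook $[i,1^{n-i}]$ corresponds to $\Lambda^{n-i}(\rho_n)$, so $\Gamma_i=\Lambda^{n-i}(\rho_n)$ for $i=1,\dots,n$. As $n$ is odd, $n-i$ runs through the odd integers in $[1,n-2]$ as $i$ runs through the even integers in $[2,n-1]$, and through the even integers in $[0,n-1]$ as $i$ runs through the odd ones; since $\dim\rho_n=n-1$ is even this gives
$$\Gamma^e=\sum_{j\ \mathrm{odd}}\Lambda^j(\rho_n),\qquad
  \Gamma^o=\sum_{j\ \mathrm{even}}\Lambda^j(\rho_n).$$
Evaluating the generating identity $\sum_j\Lambda^j(\rho_n)(g)\,t^j=\det(\Id+t\,\rho_n(g))$ at $t=\pm1$ then yields
$$\Gamma^e(g)=\tfrac12\bigl(\det(\Id+\rho_n(g))-\det(\Id-\rho_n(g))\bigr),\qquad
  \Gamma^o(g)=\tfrac12\bigl(\det(\Id+\rho_n(g))+\det(\Id-\rho_n(g))\bigr).$$

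Now let $g\in S_n$ have even order. Because $n$ is odd, $g$ is not an $n$-cycle, so it has at least two cycles; hence the permutation matrix $\pi_n(g)$ has eigenvalue $1$ with multiplicity $\ge2$, $\rho_n(g)$ still has $1$ as an eigenvalue, and $\det(\Id-\rho_n(g))=0$. On the other hand $g$ has a cycle of even length, which contributes the eigenvalue $-1$ to $\pi_n(g)$; since the trivial summand removed in passing from $\pi_n$ to $\rho_n$ carries the eigenvalue $1\ne-1$, the matrix $\rho_n(g)$ still has $-1$ as an eigenvalue and $\det(\Id+\rho_n(g))=0$. Both terms vanish, so $\Gamma^e(g)=\Gamma^o(g)=0$, which is (a). One can also argue combinatorially as in the proof of Proposition~\ref{so1}(a): write $g=ch$ with $c$ an $m$-cycle of even length $m$ and $h\in S_{n-m}$, apply Lemma~\ref{h66}, and split the sum $\Gamma^e(g)=\sum_{i\ \mathrm{even}}\Gamma_i(g)$ according to the parity of $i$, which is preserved by $i\mapsto i-m$ since $m$ is even; the positive and the negative contributions then reindex to the same sum $\sum_{k\ \mathrm{even},\,0<k\le n-m}\Gamma^{n-m}_k(h)$, and likewise for $\Gamma^o$.

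For (b), recall from the computation preceding the proposition that $\Gamma^e(1)=2^{n-2}$. By (a) the character $\Gamma^e$ is $2$-vanishing, so it is Steinberg-like for $p=2$ exactly when $2^{n-2}=|S_n|_2$. Since $|S_n|_2=2^{\,n-s_2(n)}$ with $s_2(n)$ the number of $1$'s in the binary expansion of $n$, this amounts to $s_2(n)=2$; together with the standing hypothesis that $n$ is odd (forcing the last binary digit to be $1$) this happens if and only if $n=2^k+1$ for some $k\ge1$. Alternatively, avoiding the formula for $|S_n|_2$, one reproduces the inductive estimate of Proposition~\ref{so1}(b): $|S_{2^k+1}|_2=|S_{2^k}|_2=2^{2^k-1}=2^{n-2}$, whereas for $n=2^k+l$ with $2^k$ maximal and $3\le l<2^k$ odd, $l$ is not a power of $2$, so $|S_l|_2\le2^{l-2}$ and $|S_n|_2=2^{2^k-1}|S_l|_2\le2^{n-3}<2^{n-2}$.

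I do not anticipate a genuine obstacle. Part (a) is essentially a one-line eigenvalue computation once hook characters are written as exterior powers, and part (b) is elementary $2$-adic arithmetic combined with the already-known value $\Gamma^e(1)=2^{n-2}$. The only point that needs a little care is the index bookkeeping in whichever form of (a) one writes out — in particular, in the combinatorial version, handling the case where $g$ moves all $n$ letters by reducing along an even cycle, and keeping the ranges of $i$ and $k=i-m$ (with the convention $\Gamma^{n-m}_k=0$ for $k\notin[1,n-m]$) straight.
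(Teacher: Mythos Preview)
Your argument is correct. For part~(a) you take a genuinely different route from the paper: the paper argues purely combinatorially, writing $g=ch$ with $c$ an even $m$-cycle, applying Lemma~\ref{h66} term by term, and showing that after the reindexing $k=i-m/2$ the two partial sums for $\Gamma^e(g)$ (and likewise for $\Gamma^o$) are identical and cancel. Your exterior-power approach packages the same cancellation into the determinant identities $\Gamma^e(g)=\tfrac12(\det(\Id+\rho_n(g))-\det(\Id-\rho_n(g)))$ and $\Gamma^o(g)=\tfrac12(\det(\Id+\rho_n(g))+\det(\Id-\rho_n(g)))$, and then observes that both determinants vanish because $\rho_n(g)$ has $1$ and $-1$ among its eigenvalues. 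This is more conceptual and avoids all index bookkeeping; the paper's argument, in exchange, stays entirely within the Murnaghan--Nakayama framework already set up and does not need the exterior-power identification or the generating-function identity. Your sketch of the combinatorial alternative is essentially the paper's proof. For part~(b) your argument via $|S_n|_2=2^{\,n-s_2(n)}$ is a clean variant; the paper instead invokes the estimate from Proposition~\ref{so1}(b), which is your ``alternatively'' paragraph.
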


\begin{proof}
(a) Let $g\in S_n$ be of even order, and $g=ch$ where $c$ is a cycle of even
size $m$. By Lemma~\ref{h66},
$$\Gamma^e(g)=\sum_{i=1}^{(n-1)/2}\Gamma_{n-2i+1}(g)
  =\sum_{i=1}^{(n-1-m)/2}\Gamma^{n-m}_{n-m-2i+1}(h)
   -\sum_{i=(m+2)/2}^{(n-1)/2}\Gamma^{n-m}_{n-2i+1}(h),$$
as $\gamma^{n-m}_{n-m-2i+1}$ is a proper diagram only for $i<(n-m)/2$ and
$\gamma^{n-m}_{n-2i+1}$ is a proper diagram only for $i\geq(m+2)/2$. Set
$k=i-m/2$. So the second sum can be written as
$\sum_{k=1}^{(n-1-m)/2}\Gamma^{n-m}_{n-m-2k+1}(h)$, whence $\Gamma^e(g)=0$.

Similarly,
$$\Gamma^o(g)=\sum_{i=1}^{(n+1)/2}\Gamma _{n-2i+2}(g)
  =\sum_{i=1}^{(n-1-m)/2}\Gamma^{n-m}_{n-m-2i+2}(h)
   -\sum_{i=(m+2)/2}^{(n-1)/2}\Gamma^{n-m}_{n-2i+2}(h), $$
as $\gamma^{n-m}_{n-m-2i+1}$ is a proper diagram only for $i\le(n-m-1)/2$ and
$\gamma^{n-m}_{n-2i+2}$ is a proper diagram only for $i\geq (m+2)/2$. Set
$k=i-m/2$. Then the second sum can be written as
$\sum_{k=1}^{(n-1-m)/2}\Gamma^{n-m}_{n-m-2k+2}(h)$. So $\Gamma^o(g)=0$ as well.

\medskip
(b) If $n=2^k+1$ then $|S_n|_2=|S_{n-1}|_2=2^{n-2}$ (see the proof of
Proposition~\ref{so1}(b)). By the above, $\Gamma^e(1)=\Gamma^o(1)=2^{n-2}$,
so both $\Gamma^e$ and $\Gamma^o$ are Steinberg-like. If $n-1$ is not a
2-power then $|S_n|_2=|S_{n-1}|_2<2^{n-2}$ by Proposition~\ref{so1}(b).
\end{proof}

Let $n$ be odd. Then $\Gamma_i|_{A_n}=\Gamma_{n+1-i}|_{A_n}$ is irreducible
for $i\ne(n+1)/2$, whereas
$\Gamma_{(n+1)/2}|_{A_n}$ is the sum of two irreducible constituents which we
denote by $\Gamma_{(n+1)/2}^+$ and $\Gamma_{(n+1)/2}^-$. If $n=4l+1$ then set
$$\begin{aligned}
  \Gamma^{ea}&=\sum_{i=1}^{(n-1)/4}\Gamma_{2i}|_{A_n}
    =(\Gamma_2 +\Gamma_4 +\cdots +\Gamma_{(n-1)/2})|_{A_n},\quad\text{and}\\
  \Gamma^{o\pm}
  &=\Gamma_{(n+1)/2}^\pm+\sum_{i=1}^{(n-1)/4}\Gamma_{2i-1}|_{A_n}
    =(\Gamma_1+\Gamma_3+\cdots +\Gamma_{(n-3)/2})|_{A_n}+\Gamma_{(n+1)/2}^\pm,
\end{aligned}$$
while for $n=4l+3$ we set
$$\begin{aligned}
  \Gamma^{oa}&=\sum_{i=1}^{(n+1)/4}\Gamma_{2i-1}|_{A_n}
    =(\Gamma_1+\Gamma_3+\cdots +\Gamma_{(n-1)/2})|_{A_n},\quad\text{and}\\
  \Gamma^{e\pm}
  &=\Gamma_{(n+1)/2}^\pm+\sum_{i=1}^{(n-3)/4}\Gamma_{2i}|_{A_n}
   =(\Gamma_2+\Gamma_4+\cdots+\Gamma_{(n-3)/2})|_{A_n}+\Gamma_{(n+1)/2}^\pm.
\end{aligned}$$

\begin{cor}   \label{so39}
 \begin{enumerate}
  \item[\rm(a)] Let $n=4l+1$. Then $\Gamma^{ea}$, $\Gamma^{o+}$ and
   $\Gamma^{o+}$ are $2$-vanishing characters of $A_n$.
   If $n=2^k+1$ then they are Steinberg-like characters.
  \item[\rm(b)] Let $n=4l+3>3$. Then $\Gamma^{e+}$, $\Gamma^{e-}$ and
   $\Gamma^{oa}$ are $2$-vanishing characters of $A_n$. None of them is
   Steinberg-like.
 \end{enumerate}
\end{cor}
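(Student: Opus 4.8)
The plan is to reduce Corollary~\ref{so39} to Propositions~\ref{so1} and~\ref{so3} by the same restriction-to-$A_n$ bookkeeping already used to deduce Corollaries~\ref{so2} and~\ref{so39}(a) style statements from their $S_n$-counterparts. The key input is that $\Gamma_i|_{A_n}=\Gamma_{n+1-i}|_{A_n}$ is irreducible for $i\neq(n+1)/2$, while $\Gamma_{(n+1)/2}|_{A_n}=\Gamma_{(n+1)/2}^++\Gamma_{(n+1)/2}^-$. Thus each of the characters $\Gamma^{ea},\Gamma^{o\pm}$ (for $n\equiv1\pmod4$) and $\Gamma^{e\pm},\Gamma^{oa}$ (for $n\equiv3\pmod4$) is, up to the splitting of the middle constituent, a restriction to $A_n$ of $\Gamma^e$ or $\Gamma^o$. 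For the $2$-vanishing claim I would argue as follows: an element $g\in A_n$ of even order is also an element of $S_n$ of even order, so $\Gamma^e(g)=\Gamma^o(g)=0$ by Proposition~\ref{so3}(a). Since $(\Gamma^e)|_{A_n}$ and $(\Gamma^o)|_{A_n}$ are each (for $n$ odd) a sum of the relevant $\Gamma_{2i}|_{A_n}$'s respectively $\Gamma_{2i-1}|_{A_n}$'s, and each $\Gamma_i|_{A_n}$ with $i\neq(n+1)/2$ appears paired with $\Gamma_{n+1-i}|_{A_n}$, one checks directly that $\Gamma^{ea}$, $\Gamma^{oa}$ pick out exactly ``half'' of the distinct constituents, and a short computation gives $2\Gamma^{ea}=(\Gamma^e \text{ or }\Gamma^o)|_{A_n}$ plus possibly a contribution from the middle term. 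More precisely, I would treat the two residue classes separately: for $n=4l+1$ the middle index $(n+1)/2=2l+1$ is odd, so it occurs in the odd family; for $n=4l+3$ the middle index $(n+1)/2=2l+2$ is even, so it occurs in the even family. In each case the ``$a$''-character (the one not involving the split pieces) is genuinely half of an $S_n$-character restricted, hence $2$-vanishing, while the ``$\pm$''-characters are obtained from the other half by replacing $\Gamma_{(n+1)/2}|_{A_n}$ with one of its two constituents $\Gamma_{(n+1)/2}^\pm$.

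For the $2$-vanishing property of $\Gamma^{e\pm}$ and $\Gamma^{o\pm}$ I would use the fact that $\Gamma_{(n+1)/2}^+$ and $\Gamma_{(n+1)/2}^-$ agree on all classes of $A_n$ except those that split from $S_n$, and that a class of $S_n$ splits in $A_n$ precisely when the cycle type consists of distinct odd parts. In particular every element of $A_n$ of even order lies in a non-split class, where $\Gamma_{(n+1)/2}^+(g)=\Gamma_{(n+1)/2}^-(g)=\tfrac12\Gamma_{(n+1)/2}(g)$. Hence on such $g$ we get $\Gamma^{o\pm}(g)=\tfrac12\big(\Gamma_1+\Gamma_3+\cdots+\Gamma_n\big)(g)=\tfrac12\Gamma^o(g)=0$ for $n=4l+1$, and similarly $\Gamma^{e\pm}(g)=\tfrac12\Gamma^e(g)=0$ for $n=4l+3$. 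This handles part~(a)'s three characters and part~(b)'s first two characters simultaneously; $\Gamma^{oa}$ in part~(b) is handled exactly like $\Gamma^{ea}$.

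For the degree/Steinberg-like statements I would compute $\Gamma^{ea}(1)$ etc.\ from $\Gamma^e(1)=\Gamma^o(1)=2^{n-2}$ (established just before Proposition~\ref{so3}) together with $\Gamma_{(n+1)/2}^\pm(1)=\tfrac12\Gamma_{(n+1)/2}(1)$. One finds all of $\Gamma^{ea},\Gamma^{o\pm}$ (for $n=4l+1$) have degree $2^{n-3}$, and likewise $\Gamma^{e\pm},\Gamma^{oa}$ (for $n=4l+3$). Then I invoke the $2$-part computation from the proof of Proposition~\ref{so1}(b): $|A_n|_2=|S_n|_2/2$, which equals $2^{n-3}$ exactly when $|S_n|_2=2^{n-2}$, i.e.\ when $n-1$ is a $2$-power. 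For $n=4l+1$ this happens iff $n=2^k+1$, giving the ``Steinberg-like'' conclusion in~(a). For $n=4l+3>3$, $n-1=4l+2\equiv2\pmod4$ is never a $2$-power, so $|A_n|_2<2^{n-3}$ and none of the three characters in~(b) can be Steinberg-like; this is the source of the ``None of them is Steinberg-like'' assertion.

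The main obstacle I anticipate is purely bookkeeping: getting the index shifts right so that $2\Gamma^{ea}$ (and its analogues) really equals the correct restriction of $\Gamma^e$ or $\Gamma^o$ rather than being off by a middle term, and correctly tracking which family (even-indexed or odd-indexed) the split index $(n+1)/2$ falls into in each residue class mod~$4$. Once that combinatorial matching is pinned down — essentially a careful use of $\Gamma_i|_{A_n}=\Gamma_{n+1-i}|_{A_n}$ together with the parity of $(n+1)/2$ — everything else is immediate from Propositions~\ref{so1} and~\ref{so3} and the $2$-part computation already in hand.
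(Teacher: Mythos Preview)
Your proposal is correct and follows essentially the same approach as the paper's proof: reduce to Proposition~\ref{so3} via the identity $\Gamma_i|_{A_n}=\Gamma_{n+1-i}|_{A_n}$, observe that the parity of $(n+1)/2$ governs which family contains the splitting constituent, and use $\Gamma_{(n+1)/2}^+(g)=\Gamma_{(n+1)/2}^-(g)$ on elements of even order to get $\Gamma^{o\pm}(g)=\tfrac12\Gamma^o(g)=0$ (resp.\ $\Gamma^{e\pm}(g)=\tfrac12\Gamma^e(g)=0$). Your explicit justification of this last equality via the description of splitting classes in $A_n$ is a detail the paper leaves implicit, but otherwise the arguments coincide.
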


\begin{proof}
Let $g\in A_n$ be of even order.

(a) Let $i\neq (n+1)/2$. Then $\Gamma_i$ remains \ir under restriction to
$A_n$. As $\Gamma_i$ and $\Gamma_{n-i+1}$ coincide under restriction to $A_n$,
it follows that $\Gamma^{e}|_{A_n}=2\Gamma^{ea}$, and hence $\Gamma^{ea}$ is a
2-vanishing character. As $\Gamma^{ea}(1)=2^{n-3}=|A_n|_2$,
this is Steinberg-like for $n=2^k+1$.

Observe that $\Gamma_{(n+1)/2}^+(g)=\Gamma_{(n+1)/2}^-(g)$ and
$\Gamma_{(n+1)/2}^+(g)+\Gamma_{(n+1)/2}^-(g)=\Gamma_{(n+1)/2}(g)$. It follows
that $\Gamma^{o+}(g)=\Gamma^{o-}(g)=\Gamma^{o}(g)/2$, and thus
$\Gamma^{o+}(g)=\Gamma^{o-}(g)=0$ by Proposition~\ref{so3}.
Therefore, $\Gamma^{o+}$ and $\Gamma^{o+}$ are 2-vanishing characters of $A_n$.
In addition, suppose that $n=2^k+1$. Then
$\Gamma^{o+}(1)=\Gamma^{o-}(1)=\Gamma^{o}(1)/2=|A_n|_2$, so both
$\Gamma^{o+}$ and $\Gamma^{o-}$ are Steinberg-like.

(b) Let $i\neq (n+1)/2$. Then as above it follows that
$\Gamma^{o}|_{A_n}=2\Gamma^{oa}$, and hence $\Gamma^{oa}$ is a
2-vanishing character. In addition, $\Gamma^{oa}(1)=\Gamma^{o}(1)/2=2^{n-3}$.
As here we never have $n=2^k+1$, $\Gamma^{oa}$ is not Steinberg-like.

Consider $\Gamma^{e\pm}$. Observe that
$\Gamma_{(n+1)/2}^+(g)=\Gamma_{(n+1)/2}^-(g)$ and
$\Gamma_{(n+1)/2}^+(g)+\Gamma_{(n+1)/2}^-(g)=\Gamma_{(n+1)/2}(g)$. It follows
that $\Gamma^{e+}(g)=\Gamma^{e-}(g)=\Gamma^{e}(g)/2$, and so
$\Gamma^{e+}(g)=\Gamma^{e-}(g)=0$ by Proposition~\ref{so3}.
Therefore, $\Gamma^{e+}$ and $\Gamma^{e-}$ are 2-vanishing characters of $A_n$
but not Steinberg-like.
\end{proof}

\begin{lem}   \label{lem:Sn small p=2}
 Let $2\le n\le12$. Then in addition to the character $\Gamma$ when $n=2^k$,
 and the characters $\Gamma^e$ and $\Gamma^o$ when $n=2^k+1$, the only
 Steinberg-like characters of $S_n$ are:
 \begin{enumerate}
  \item[\rm(a)] if $n=4$ the sum of all non-linear irreducible characters;
  \item[\rm(b)] if $n=6$ the irreducible character of degree~$16$;
  \item[\rm(c)] if $n=8$ the sum of the two irreducible characters of degree~$64$.
 \end{enumerate}
\end{lem}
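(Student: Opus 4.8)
The plan is to treat this as a finite verification. First I would reformulate: a character $\chi$ of $S_n$ is Steinberg-like for $p=2$ exactly when $\chi=\sum_{\lambda\vdash n}a_\lambda\,\chi^\lambda$ with all $a_\lambda\in\mathbb{Z}_{\ge0}$, with $\chi(1)=|n!|_2$, and with $\chi(g)=0$ for every $g\in S_n$ of even order. Since each $\chi^\lambda(1)\ge1$, any $\lambda$ contributing to $\chi$ satisfies $\chi^\lambda(1)\le|n!|_2$, and then $a_\lambda\le|n!|_2/\chi^\lambda(1)$; so for each fixed $n\le12$ the search runs over the short list $L_n$ of partitions $\lambda$ of $n$ with $\chi^\lambda(1)\le|n!|_2$ (for instance, when $n=12$ one has $|12!|_2=2^{10}$ and only the hook partitions together with a handful of near-hooks lie in $L_{12}$).

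Then, for each $n$ in the range, I would record $|n!|_2$ and $L_n$, write down the values $\chi^\lambda(g)$ for $\lambda\in L_n$ and $g$ ranging over the classes of even-order elements (using the character table of $S_n$, or the Murnaghan--Nakayama rule \cite[Lemma~21.1]{Ja}), and solve the system $\sum_{\lambda\in L_n}a_\lambda\,\chi^\lambda(g)=0$ for all such $g$, together with $\sum_\lambda a_\lambda\,\chi^\lambda(1)=|n!|_2$, in non-negative integers. In practice it is enough to impose vanishing on a few well-chosen even-order classes---say a transposition, a product of two transpositions, and a $2^k$-cycle---which already pins down the solution set, and then to check the remaining classes. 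This reproduces precisely $\Gamma$ for $n\in\{2,4,8\}$ and $\Gamma^e,\Gamma^o$ for $n\in\{3,5,9\}$ (already known to be Steinberg-like by Propositions~\ref{so1} and~\ref{so3}), plus: for $n=4$, besides $\Gamma=\chi^{[4]}+\chi^{[1^4]}$, the forced solution $\chi^{[3,1]}+\chi^{[2,1,1]}+\chi^{[2,2]}$, the sum of the three non-linear irreducibles (case~(a)); for $n=6$, the single solution $\chi^{[3,2,1]}$ of degree $16=|6!|_2$ (case~(b)); for $n=8$, besides $\Gamma$, the single solution $\chi^{[5,2,1]}+\chi^{[3,2,1,1,1]}$, the sum of the two irreducible characters of degree $64$ (case~(c)); and for $n\in\{7,9,10,11,12\}$ nothing beyond the stated families.

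Two points I would take care with. First, I would confirm that the new solutions are genuinely $2$-vanishing: for $\chi^{[3,2,1]}$ this is immediate because all hook lengths of $[3,2,1]$ are odd, so the character has $2$-defect zero; for $n=4$ it follows by direct evaluation on the three even-order classes (or from Proposition~\ref{so1}); and for $n=8$ I would use that $\chi^{[3,2,1,1,1]}=\mathrm{sgn}\cdot\chi^{[5,2,1]}$, so the sum vanishes automatically on odd permutations of even order, while on even permutations of even order one checks $\chi^{[5,2,1]}=0$ via Murnaghan--Nakayama. Second, no solution built from a high-degree character can be missed, since the support is confined to $L_n$ by the degree bound. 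The one genuinely tedious case is $n=12$, where $L_{12}$ is longest and the linear system largest; I expect this to be the main obstacle, though it remains a bounded check easily carried out by hand or by computer.
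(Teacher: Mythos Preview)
Your direct-enumeration approach is correct and coincides with the paper's for $n\le8$ (character tables for $n\le6$, a computer search for $n=8$). For $n\ge9$, however, the paper takes a different and more economical route: rather than solving the full linear system for $S_{10}$ or $S_{12}$, it restricts a hypothetical Steinberg-like character to a Young subgroup containing a Sylow $2$-subgroup---$S_8\times S_2$ for $n=10$, $S_8\times S_4$ for $n=12$, and $S_{n-1}$ for odd $n\in\{7,9,11\}$---and then invokes the already-settled smaller cases via Lemma~\ref{nn2}. This inductive shortcut sidesteps your ``genuinely tedious'' $n=12$ computation entirely. Your method is more uniform and self-contained but heavier; the paper's restriction trick is lighter but depends on having the earlier cases in hand.

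One small slip worth correcting: for $n=4$ you write $\Gamma=\chi^{[4]}+\chi^{[1^4]}$, but by its definition before Proposition~\ref{so1} the character $\Gamma$ is the sum of \emph{all four} hook characters $\chi^{[4]}+\chi^{[3,1]}+\chi^{[2,1,1]}+\chi^{[1^4]}$, of degree $8=|4!|_2$. Your identification of the second solution (the sum of the three non-linear irreducibles) as case~(a) is correct.
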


\begin{proof}
For $n\le6$ this is easily checked from the know character tables. For $n=8$
we use a computer program to go through all possibilities. For $S_{10}$ one
checks that no character exists with the right restriction to $S_8\times S_2$,
and similarly for $S_{12}$ one considers the restriction to $S_8\times S_4$.
Finally, the cases $n\in\{7,9,11\}$ are treated by restricting to $S_{n-1}$.
\end{proof}

\begin{thm}   \label{thm:An p=2 ind}
 Suppose that the only Steinberg-like character of $S_{2^k}$, $k\ge4$, for
 $p=2$ is the one constructed in Proposition~{\rm \ref{so1}}. Then $A_n$ does not
 have Steinberg-like characters for $p=2$ for $n\ge13$ unless $n$ or $n-1$ is
 a $2$-power. In the latter case, the only Steinberg-like characters are those
 listed in Proposition~\ref{so3}.
\end{thm}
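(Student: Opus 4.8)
We want to show that, under the stated inductive hypothesis on $S_{2^k}$, the group $A_n$ ($n \ge 13$) has no Steinberg-like character for $p=2$ unless $n$ or $n-1$ is a $2$-power, and in the exceptional cases only the characters of Proposition~\ref{so3} occur. The strategy is to reduce the alternating group problem to the symmetric group problem via an inducing/restriction argument, then to reduce $S_n$ to $S_{2^k}$ (or $S_{2^k+1}$) by cutting along a direct factor $S_m \times S_{n-m}$ with $m = 2^k$ a suitable power of $2$, and finally to invoke the hypothesis. Write $n = 2^k + l$ with $0 \le l < 2^k$ and $2^k$ the largest power of $2$ not exceeding $n$; the case $l=0$ is the first exceptional case, so assume $1 \le l < 2^k$.

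\emph{First step: pass from $A_n$ to $S_n$.} Suppose $\chi$ is a Steinberg-like character of $A_n$ for $p=2$. Sylow $2$-subgroups of $A_n$ and $S_n$ have the same order since $|S_n : A_n| = 2$ is handled by the fact that a Sylow $2$-subgroup of $A_n$ has index $2$ in one of $S_n$\,---\,wait, more precisely $|A_n|_2 = |S_n|_2 / 2$. So I would instead look at $\chi^{S_n}$, which is a character of $S_n$ of degree $2|A_n|_2 = |S_n|_2$, vanishing on all $2$-singular classes of $S_n$ that meet $A_n$; but elements of $S_n \setminus A_n$ of $2$-power order need separate attention. The cleaner route, following the pattern of Corollaries~\ref{so2} and~\ref{so39} and Lemma~\ref{hc1}, is: take a Sylow $2$-subgroup $S$ of $A_n$, note $P := N_{S_n}(S) \cap \langle S, t\rangle$ for a suitable transposition-type involution, or simply observe that any Steinberg-like character of $A_n$ extends or induces to a $2$-vanishing character of $S_n$ whose $2$-level is $1$; the constituents of such characters of $S_n$ are then constrained. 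I expect the honest statement to be: if $\chi$ is Steinberg-like for $A_n$ then $\chi^{S_n}$ is Steinberg-like for $S_n$ (using that $|S_n|_2 = 2|A_n|_2$ and that induction from $A_n$ doubles the degree and preserves $2$-vanishing because every $2$-singular element of $S_n$ is conjugate to one meeting... no: odd involutions). This point needs care and is part of the real work, but it is exactly the kind of restriction/induction bookkeeping already used repeatedly above.

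\emph{Second step: reduce $S_n$ to $S_{2^k}$.} Given a Steinberg-like character $\psi$ of $S_n$ for $p=2$, restrict to $Y = S_{2^k} \times S_l \le S_n$ (a Young subgroup). Since $|S_l|_2 < 2^{l-1}$ when $l$ is not a $2$-power, and in general the $2$-level interacts multiplicatively with the factor $S_l$, I would apply Corollary~\ref{cn2} (or Lemma~\ref{da3}) with $G_1 = S_{2^k}$, $G_2 = S_l$. By Lemma~\ref{nn2} write $\psi|_Y = \sum_i \eta_i \sigma_i$ with $\eta_i \in \Irr(S_{2^k})$ distinct and $\sigma_i$ $2$-vanishing on $S_l$; then $\psi_1 := \sum_i l_2(\sigma_i)\eta_i$ is a $2$-vanishing character of $S_{2^k}$ of $2$-level equal to $l_2(\psi) = 1$. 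So $\psi_1$ has $2$-level $1$, i.e.\ $\psi_1$ is Steinberg-like for $S_{2^k}$. By hypothesis $\psi_1 = \Gamma$ (the character of Proposition~\ref{so1}). Unwinding, this forces $\psi|_Y$ to be built from the hook characters $\Gamma_j$ of $S_{2^k}$ only, tensored with $2$-vanishing characters of $S_l$; branching back up to $S_n$ then forces every irreducible constituent of $\psi$ to restrict to $S_{2^k} \times S_l$ with $S_{2^k}$-part a sum of hooks, which by the branching rule (as in the proof of Lemma~\ref{ii6}) means every constituent of $\psi$ is itself a hook character of $S_n$. Now count: the sum of degrees of a set of distinct hook characters of $S_n$ with the correct restriction must equal $|S_n|_2 < 2^{n-2}$, while a genuine $2$-vanishing combination of hooks behaves like the $\Gamma$-type construction, whose degree is $2^{n-1}$ on the nose\,---\,a contradiction unless $l = 0$ or $l = 1$.

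\emph{Third step: the exceptional case $n = 2^k+1$, and descent back to $A_n$.} When $l=1$, i.e.\ $n = 2^k+1$, the same analysis with $Y = S_{2^k} \times S_1 = S_{2^k}$ shows $\psi|_{S_{2^k}}$ is a Steinberg-like character of $S_{2^k}$, hence equals $\Gamma$; running the branching rule upward, the only $2$-vanishing characters of $S_n$ of $2$-level $1$ whose restriction to $S_{n-1} = S_{2^k}$ is $\Gamma$ are $\Gamma^e$ and $\Gamma^o$ (this is precisely the computation preceding Proposition~\ref{so3}, now shown to be exhaustive). Finally, descend to $A_n$: a Steinberg-like character $\chi$ of $A_n$ gives (via the induction of the first step) a Steinberg-like $\psi = \chi^{S_n}$, forcing $\psi \in \{\Gamma, \Gamma^e+\Gamma^o, \ldots\}$ depending on parity, and matching up $A_n$-constituents with $S_n$-constituents via Clifford theory (using $\Gamma_j|_{A_n} = \Gamma_{n+1-j}|_{A_n}$) yields exactly the characters $\Gamma^{ea}, \Gamma^{o\pm}$ or $\Gamma^{e\pm}, \Gamma^{oa}$ of Corollary~\ref{so39}\,---\,but only the ones in Proposition~\ref{so3}'s list are Steinberg-like (the others have degree $2^{n-3}$ but wrong vanishing, or the parity obstruction from Corollary~\ref{so39}(b) kicks in).

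\emph{Main obstacle.} The hard part is the second step: showing rigorously that "$\psi_1 = \Gamma$ on the $S_{2^k}$ factor" propagates to "every constituent of $\psi$ on $S_n$ is a hook," and then that a $2$-vanishing sum of distinct hook characters of $S_n$ with $2$-level $1$ cannot exist unless $n$ or $n-1$ is a $2$-power. The branching-rule control is delicate because $\sigma_i$ can be reducible on $S_l$, so one does not immediately see that $\psi|_Y$ only involves hooks of $S_{2^k}$; one must argue that the $2$-level bookkeeping of Lemma~\ref{nn2}/Corollary~\ref{cn2} forces the $\sigma_i$ attached to non-hook $\eta_i$ to have $2$-level $0$, hence (for $S_l$ with $l$ small relative to its own $2$-part) to be too small to contribute, pushing all the "mass" of $\psi$ onto hook constituents. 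Verifying the degree inequality $\sum (\text{hook degrees}) = |S_n|_2$ then has a unique solution pattern $(\Gamma\text{-type})$, which by Proposition~\ref{so1}(b) has degree $2^{n-1} > |S_n|_2$ unless $n = 2^k$\,---\,contradiction for $1 \le l < 2^k$, and this is where the whole argument closes.
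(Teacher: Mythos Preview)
Your overall strategy matches the paper's: induce $\chi$ up to $S_n$, restrict to a Young subgroup $Y_1\times Y_2$ with $Y_1=S_{2^{a}}$, apply Lemma~\ref{nn2} to get a Steinberg-like character $\psi_1$ on $Y_1$, and use the hypothesis to identify $\psi_1=\Gamma$. Your hesitation in the first step is unfounded: every element of $S_n\setminus A_n$ is an odd permutation, hence has even order, so $\chi^{S_n}$ vanishes on all $2$-singular elements and has degree $2|A_n|_2=|S_n|_2$; it is genuinely Steinberg-like.

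The genuine gap is in your second step. You go directly from ``$\psi_1=\Gamma$'' to ``every constituent of $\psi$ is a hook'' and then try to finish with a degree count, asserting that ``a genuine $2$-vanishing combination of hooks behaves like the $\Gamma$-type construction, whose degree is $2^{n-1}$''. This last assertion is exactly what needs proof, and your sketch does not supply it. The paper closes this gap differently, and the missing idea is \emph{induction on $n$}: since $\Gamma$ is multiplicity-free, the equation $\psi_1=\sum_i l_2(\sigma_i)\eta_i=\Gamma$ forces $l_2(\sigma_i)=1$ for every $i$, so each $\sigma_i$ is itself a Steinberg-like character of $Y_2=S_{n-2^{a}}$. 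By the inductive hypothesis (Steinberg-like characters of smaller symmetric groups are already classified, with Lemma~\ref{lem:Sn small p=2} as the base), this forces $n-2^{a}$ to be a $2$-power as well. Only \emph{after} establishing that $n$ is a sum of two $2$-powers does the paper carry out the hook analysis, and then it uses the explicit Littlewood--Richardson constraint ($\Gamma_i|_Y$ contains $\Gamma_j^{(1)}\boxtimes\Gamma_l^{(2)}$ iff $j+l\in\{i,i+1\}$) to show that consecutive hooks cannot both occur, forcing $\chi\in\{\Gamma^e,\Gamma^o\}$ exactly.

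A secondary issue: you always take $2^k$ to be the largest $2$-power $\le n$, but for $13\le n\le 15$ this gives $2^k=8$, and the theorem's hypothesis only concerns $S_{2^k}$ with $k\ge4$. The paper instead picks any binary summand $2^{a_1}\notin\{4,8\}$ (possible for $n\ge13$, $n\ne 12$) so that either the hypothesis or Lemma~\ref{lem:Sn small p=2} applies to $Y_1$. Your choice does not guarantee this.
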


\begin{proof}
Let $\psi$ be a Steinberg-like character for $p=2$ of $A_n$, with $n\ge10$.
Then $\chi:=\psi^{S_n}$ is Steinberg-like for $S_n$. We argue by induction on
$n$ that $S_n$ does not have a Steinberg like character, unless $n$ or $n-1$
is a power of~2.  \par
Assume that $n$ is not a power of~2 and write $n=2^{a_1}+\ldots+2^{a_r}$ for
distinct exponents $a_1,\ldots,a_r>0$. By Lemma~\ref{lem:Sn small p=2} we
may assume $n\ne12$, so one of the summands, say $2^{a_1}$ is different from~4
and~8. Then the Young subgroup
$Y=Y_1\times Y_2:=S_{2^{a_1}}\times S_{n-2^{a_1}}$ of $S_n$ contains a Sylow
2-subgroup, so $\chi|_Y$ is Steinberg-like. Then by Lemma~\ref{nn2} we have
that $\chi|_Y=\sum_i \eta_i\si_i$ where $\eta_i\in\Irr(Y_1)$ are all
distinct, the $\si_i$ are $2$-vanishing characters of $Y_2$, and
$\chi_1:=\sum_i l_p(\si_i)\eta_i$ is a 2-vanishing character of $Y_1$ with
$l_2(\chi_1)=l_2(\chi)=1$. Thus by assumption $\chi_1$ is the character
$\Gamma$ from Proposition~\ref{so1}. In particular, $\chi_1$ is
multiplicity-free and so $l_p(\si_i)=1$ for all $i$. So the $\si_i$ are
Steinberg-like as well. This is not possible, unless $n-2^{a_1}$ is a
2-power as well.
\par
In the latter case, by Lemma~\ref{lem:Sn small p=2} we conclude that $n\ge17$.
The above argument shows that $\chi|_Y=\Gamma^{(1)}\boxtimes\Gamma^{(2)}$,
with $\Gamma^{(j)}$ a Steinberg-like character of $Y_j$. So in particular
$\chi|_Y$ and hence also $\chi$ is multiplicity-free. By possibly interchanging
$a_1,a_2$ we may assume that $2^{a_1}>8$. Now consider
$\chi|_{Y_1}=|Y_2|_2 \Gamma^{(1)}$, a sum of hooks. By the branching rule,
any non-hook character of $S_m$ restricted to $S_{m-1}$ contains a non-hook
character (except when $m=4$ which is excluded here). Thus, inductively,
$\chi$ cannot contain any non-hook constituent. This in turn means that all
constituents of $\chi|_{Y_2}$ are hooks and thus by induction that
$\chi|_{Y_2}=|Y_1|_2 \Gamma^{(2)}$. Now observe that by the
Littlewood--Richardson rule \cite[Lemma~21.3]{Ja}, $\Gamma_i|_Y$ contains
$\Gamma_j^{(1)}\boxtimes\Gamma_l^{(2)}$ if and only if $j+l\in\{i,i+1\}$.
Thus, on the one hand side, $\Gamma_i|_Y$ and $\Gamma_{i+1}|_Y$ have a
common constituent, and so at most every second hook character occurs in
$\chi$. On the other hand, every second hook must indeed occur. Thus either
$\chi=\Gamma^e$ or $\chi=\Gamma^o$ as defined above. If $n=2^{a_1}+1$ then
our claim follows from Proposition~\ref{so3}, otherwise the degree of
$\chi$ is larger than $|S_n|_2$.
\end{proof}

%%%%%%%%%%%%%%%%%%%%%%%%%%%%%%%
\subsection{Projective characters for $p=2$}   \label{subsec:alt proj}

\begin{lem}   \label{as7}
 Let $p=2$. Then $A_n$ has a projective character of degree $|A_n|_2$
 if and only if $S_n$ has a projective character of degree $|S_n|_2$.
\end{lem}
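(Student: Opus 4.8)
The plan is to exploit the fact that $A_n$ is a normal subgroup of $S_n$ of index~$2$, so $|S_n|_2 = 2\,|A_n|_2$, and to move projective characters back and forth by induction and restriction. One direction is immediate from Lemma~\ref{rk1}: if $\Phi$ is a projective indecomposable $\overline{\FF}_2 A_n$-module of dimension $|A_n|_2$, then $\Phi^{S_n}$ is a projective $\overline{\FF}_2 S_n$-module of dimension $|S_n:A_n|\cdot|A_n|_2 = |S_n|_2$, and its character is a projective character of $S_n$ of the required degree. (Here one does not even need indecomposability of $\Phi$; any projective character of $A_n$ of degree $|A_n|_2$ induces to one of $S_n$ of degree $|S_n|_2$, since induction of projectives is projective.)

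For the converse, suppose $\Psi$ is a projective $\overline{\FF}_2 S_n$-module of dimension $|S_n|_2$ whose character is $\Syl_2$-regular; write $\Psi$ as a sum of projective indecomposables. Since a PIM has dimension divisible by $|S_n|_2$, $\Psi$ itself must be a single PIM, $\Psi=\Phi$. Then $\Phi|_{A_n}$ is projective of dimension $|S_n|_2 = 2\,|A_n|_2$. By Green's indecomposability theorem (as invoked in the proof of Lemma~\ref{rk1}), since $A_n\lhd S_n$ has $2$-power index, either $\Phi|_{A_n}$ is indecomposable, or $\Phi$ is itself induced from $A_n$. In the first case $\Phi|_{A_n}$ is an indecomposable projective of dimension $2\,|A_n|_2$, hence a PIM of $A_n$ of dimension $2\,|A_n|_2$; but then Clifford theory (or Lemma~\ref{rk1} applied the other way) shows this PIM must in fact be induced from $A_n$ — a contradiction unless it splits — so this case cannot occur, and we land in the second case: $\Phi=\Psi'^{S_n}$ for a projective indecomposable $\overline{\FF}_2 A_n$-module $\Psi'$. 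Counting dimensions, $\dim\Psi' = |S_n|_2/2 = |A_n|_2$, so the character of $\Psi'$ is a projective character of $A_n$ of degree $|A_n|_2$, as required.

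**Main obstacle.** The delicate point is handling the dichotomy from Green's indecomposability theorem: ruling out that $\Phi|_{A_n}$ stays indecomposable of dimension $2\,|A_n|_2$. Equivalently, one must know that every PIM of $A_n$ occurring as a summand of $\Phi|_{A_n}$ has dimension exactly $|A_n|_2$ and that $\Phi|_{A_n}$ actually splits into two such. This follows from the standard Clifford theory of projective covers under a normal subgroup of prime index: the block of $S_n$ containing $\Phi$ either covers a single block of $A_n$ with the PIM restricting to a PIM of twice the dimension, or covers two blocks and the PIM restricts to a sum of two PIMs of equal dimension. Since $|S_n:A_n| = 2 = |S_n:A_n|_2$, the restriction $\Phi|_{A_n}$ of any PIM is a sum of $|S_n:A_n|=2$ PIMs of $A_n$ (with multiplicity), each necessarily of dimension $|A_n|_2$, and this is precisely the statement that $\Phi$ is induced from $A_n$ whenever $\Phi|_{A_n}$ is decomposable; the indecomposable alternative would force $2\,|A_n|_2$ to be the dimension of a single PIM of $A_n$, which is compatible with Green's theorem only when $\Phi|_{A_n}$ is in fact indecomposable over $A_n$ — and in that situation $\Phi = (\Phi|_{A_n})^{S_n}/(\text{something})$ fails dimension count. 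Spelling this out cleanly is the one place where care is needed; everything else is a dimension bookkeeping argument.
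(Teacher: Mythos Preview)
Your approach is the same as the paper's --- both directions follow from Lemma~\ref{rk1} --- but you are working much harder than necessary in the $S_n\Rightarrow A_n$ direction, and your handling of the ``main obstacle'' is muddled.

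Once you have reduced to a single PIM $\Phi$ of $S_n$ with $\dim\Phi=|S_n|_2$, simply apply Lemma~\ref{rk1} \emph{directly}: it states that $\Phi=\Psi^{S_n}$ for some PIM $\Psi$ of $A_n$ with $l_2(\Psi)=l_2(\Phi)=1$, i.e.\ $\dim\Psi=|A_n|_2$. That is the entire argument; the paper's proof is literally the single sentence ``This follows from Lemma~\ref{rk1}.'' Your dichotomy ``either $\Phi|_{A_n}$ is indecomposable or $\Phi$ is induced from $A_n$'' is superfluous because Lemma~\ref{rk1} already asserts the second alternative unconditionally. Moreover, your attempt to rule out the first alternative is circular: you invoke ``Lemma~\ref{rk1} applied the other way'' to conclude that $\Phi$ is induced, which is exactly what you are trying to establish via the dichotomy in the first place. (For the record, the first alternative is genuinely impossible: if $\Phi=\Psi^{S_n}$ then by Mackey $\Phi|_{A_n}\cong\Psi\oplus{}^t\Psi$ for a transposition $t$, so $\Phi|_{A_n}$ is never indecomposable. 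But you do not need this observation.)

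In short: your proof is salvageable and on the right track, but strip out the dichotomy and the ``obstacle'' paragraph and just cite Lemma~\ref{rk1} for both directions.
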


\begin{proof}
 This follows from Lemma \ref{rk1}.
\end{proof}

\begin{thm}   \label{22a}
 Let $p=2$ and $G=A_n$ or $S_n$ for $n>4$. Then $G$ has no reducible projective
 character of degree $|G|_2$.
\end{thm}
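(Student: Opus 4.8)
By Lemma~\ref{as7} it suffices to treat $G=S_n$, since a reducible projective character of $A_n$ of degree $|A_n|_2$ induces to one of $S_n$ of degree $|S_n|_2$ (and conversely). So let $\Phi$ be a projective $\overline{\FF}_2S_n$-module with $\dim\Phi=|S_n|_2$ and suppose for contradiction that $\Phi$ is reducible (decomposable); write $\Phi=\Phi_1\oplus\Phi_2$ with each $\Phi_i$ projective of positive dimension. The ordinary character $\chi$ of $\Phi$ is Steinberg-like for $p=2$, so the results of Section~\ref{subsec:alt p=2} pin down what $\chi$ can be: by Lemma~\ref{lem:Sn small p=2} for $n\le 12$, and by the analysis in the proof of Theorem~\ref{thm:An p=2 ind} for larger $n$ (modulo the hypothesis on $S_{2^k}$, which I will need to address), $\chi$ is forced to be $\Gamma$ when $n=2^k$, one of $\Gamma^e,\Gamma^o$ when $n=2^k+1$, or one of the three sporadic characters in Lemma~\ref{lem:Sn small p=2}(a)--(c). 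The plan is to rule out each of these by showing the corresponding module, if projective, is \emph{indecomposable}.

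The key tool is the standard fact that the principal block projective indecomposable module (the projective cover of the trivial module) is the unique PIM whose ordinary character contains $1_G$, together with a count of how many PIMs lie in a given $2$-block. First I would observe that $\Gamma$, $\Gamma^e$, $\Gamma^o$ and the sporadic candidates all contain $1_{S_n}$ (or, after restriction to $A_n$, the trivial character): indeed $\Gamma_1=[1^n]$ restricts to the trivial character of $A_n$, and $\Gamma^e$, $\Gamma^o$ each contain exactly one of $[1^n],[n]$, both of which restrict trivially to $A_n$; for the small-$n$ sporadic cases one checks directly from the character table. Hence if such a $\chi$ were the character of a projective module $\Phi$, then $\Phi$ would be the projective cover $P(1)$ of the trivial module — in particular indecomposable — contradicting reducibility. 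For $G=A_n$ the same argument applies since $\chi|_{A_n}$ still contains $1_{A_n}$ with multiplicity one (Lemma~\ref{lem:a51}), so any projective module with that character is the principal PIM of $A_n$.

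The main obstacle is the hypothesis embedded in Theorem~\ref{thm:An p=2 ind}, namely that $S_{2^k}$ ($k\ge4$) has no Steinberg-like character other than $\Gamma$. For the Steinberg-like classification this hypothesis is genuinely open, but for the present \emph{projective} statement it can be bypassed: what I actually need is only that any projective module of $S_n$ of dimension $|S_n|_2$ has character $\chi$ containing $1_{S_n}$. To get this unconditionally I would argue as follows. Induct on $n$; the cases $n\le 12$ are Lemma~\ref{lem:Sn small p=2} plus a direct check of which Steinberg-like characters contain $1_{S_n}$ (all of them do, by inspection of the list). For $n\ge13$ with $n$ not a $2$-power, pick a summand $2^{a_1}\notin\{4,8\}$ of $n$ and restrict $\Phi$ to the Young subgroup $Y=S_{2^{a_1}}\times S_{n-2^{a_1}}$, which contains a Sylow $2$-subgroup, so $\Phi|_Y$ is again projective of dimension $|Y|_2$; then $\Phi|_Y=\Phi^{(1)}\boxtimes\Phi^{(2)}$ with each $\Phi^{(i)}$ projective of dimension $|Y_i|_2$ (a projective module for a direct product is an external tensor product of projectives, up to direct sums, and the dimension forces a single tensor factor), so by induction each character $\chi^{(i)}$ contains $1_{Y_i}$, hence $\chi|_Y$ contains $1_Y$ with multiplicity one, hence $\chi$ contains $1_{S_n}$. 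The case $n$ or $n-1$ a $2$-power is handled as in the proof of Theorem~\ref{thm:An p=2 ind}, where the same multiplicity-free/hook analysis identifies $\chi$ as $\Gamma$ (resp.\ $\Gamma^e$ or $\Gamma^o$), each containing $1_{S_n}$. With $1_{S_n}\in\chi$ established unconditionally, $\Phi=P(1)$ is indecomposable, and by Lemma~\ref{as7} the same conclusion holds for $A_n$; this gives the theorem.
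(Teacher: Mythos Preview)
Your argument rests on a misreading of the statement. ``Reducible projective character'' does not mean the projective module is decomposable; it means its ordinary character is reducible (equivalently, the module is not simple, i.e., not a defect-$0$ irreducible). Any projective $\overline{\FF}_2 G$-module restricts to a free module over a Sylow $2$-subgroup, so its dimension is a multiple of $|G|_2$; hence a projective of dimension exactly $|G|_2$ is \emph{automatically} indecomposable. Your opening ``write $\Phi=\Phi_1\oplus\Phi_2$'' is therefore vacuous, and your concluding step ``$\Phi=P(1)$, in particular indecomposable, contradicting reducibility'' proves nothing: $P(1)$ is always reducible when $2\mid|G|$. What is actually needed at the end is the quantitative fact that $\dim P(1)>|G|_2$, which you never invoke. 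The paper obtains this from \cite{MW}, but only once, for the simple group $A_{16}$, and then reduces all other cases to that one.

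Even with the correct endgame inserted, your induction has two gaps. First, the inductive statement ``any PIM of $S_m$ of dimension $|S_m|_2$ contains $1_{S_m}$'' is false: $S_6$ has a $2$-defect-$0$ irreducible of degree $16=|S_6|_2$, which does not contain $1_{S_6}$; this arises in your reduction whenever $n-2^{a_1}=6$. Second, and more seriously, the case $n=2^k$ is not handled. The multiplicity-free/hook analysis in the proof of Theorem~\ref{thm:An p=2 ind} applies only when $n$ is a sum of at least two distinct powers of~$2$; for $n=2^k$ that proof offers nothing beyond its standing hypothesis, and no Young subgroup $S_a\times S_b$ with $a+b=2^k$ contains a Sylow $2$-subgroup of $S_{2^k}$, so your reduction cannot reach smaller~$n$ there. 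The paper resolves this with a wreath-product construction: for $n=2^k$, $k>4$, there is $Y\le G$ of odd index with a normal $2$-subgroup $N$ such that $Y/N\cong A_{16}$ (partition $\{1,\ldots,n\}$ into $16$ blocks of size $n/16$); generalised restriction then sends a PIM of $G$ of dimension $|G|_2$ to a PIM of $A_{16}$ of dimension $|A_{16}|_2$, and the $n=16$ case (via \cite{MW}) gives the contradiction. The non-$2$-power case then reduces to the $2$-power case by a Young-subgroup tensor-factorisation as you sketch.
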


\begin{proof}
One can inspect the decomposition matrix modulo~2 of $G=A_n$ for $n\leq 9$ to
observe that $G$ has no projective character of degree $|G|_2$. Analysing the
character table of $G=A_n$ for $9<n\le15$ one observes that $G$ has no
$\Syl_2$-regular characters, and hence no PIM of dimension $|G|_2$.

One can inspect the decomposition matrix of $G=A_9$ to observe that the minimal
dimension of a PIM is $320$. Analysing the character table of $G=A_n$ for
$9<n\le15$ one observes that $G$ has no $\Syl_2$-regular characters, and
hence no PIM of dimension $|G|_2$.

Let $n=16$. Using the known character table of $A_{16}$ one finds that there
is a unique $\Syl_2$-regular character, viz.~the character $\Gamma^0$; it is
multiplicity-free with constituents of degrees
$1,15,105,455,1365,3003,5005,6435$, and Steinberg-like.
Recall that the principal PIM is the only one that has $1_G$ as a constituent.
However, by \cite{MW}, the principal PIM is not of degree $|G|_2$.

Let $n=2^k$, where $k>4$. Then $G$ has a subgroup $Y$ such that
$Y/N\cong A_{16}$ for some normal $2$-subgroup $N$ and $|Y|_2=|G|_2$.
Indeed, let $P_1,\ldots,P_{16}$ be a partition of $\{1,\ldots,n\}$ with all
parts of size $n/16$. If $G=S_n$ then $N$ is the direct product of 16 copies
of a
Sylow $2$-subgroup of $S_{n/16}$. If $G=A_n$ then we take $N\cap A_n$ for the
subgroup in question. Then $Y$ is a semidirect product of $N$ with $S_{16}$.
The latter permutes $P_1,\ldots,P_{16}$ in the natural way. One easily observes
that $|G:Y|$ is odd.

If $\Phi$ is a PIM of degree $|G|_2=|Y|_2$ then so is $\Phi|_Y$. By
\cite[Lemma 3.8]{Z11}, the generalised restriction $r^G_{Y/N}(\Phi)$ of $\Phi$
is a PIM of dimension $|Y/N|_2$. Such a PIM does not exist as we have just
seen.

Let $n>16$ be not a 2-power, and write $n=2^k+m$, where $m<2^k$. Let
$X=X_1\times X_2\le  S_n$, where $X_1\cong S_{2^k}$ and $X_2\cong S_m$.
Then the index $|S_n:X|$ is odd, so  $\Phi|_X$  is a PIM of degree $|X|_2$.
Therefore, $\Phi|_X$ is a direct product $\Phi_1\times \Phi_2$, where $\Phi_i$
is a PIM for $X_i$ for $i=1,2$. Obviously, $\dim\Phi_i=|X_i|_2$. This is a
contradiction, as $X_1$ has no PIM of degree $|X|_1$. For $G=A_n$ the result
follows from Lemma \ref{as7}.
\end{proof}

%%%%%%%%%%%%%%%%%%%%%%%%%%%%%%%%%%%%%%%%%%%%%%%%%%%%%%%%%%%%%%%%%%%%%%%%%
\section{Exceptional groups of Lie type}   \label{sec:exc}

\begin{thm}   \label{thm:exc}
 Let $G$ be a simple group of Lie type which is not classical. Then $G$ does
 not have a $\Syl_p$-regular character in non-defining characteristic, except
 for the group $G=\tw2F_4(2)'$ which has two reducible $\Syl_3$-regular
 characters and two irreducible Steinberg-like characters for $p=3$.
\end{thm}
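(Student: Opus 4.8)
The plan is to reduce, as far as possible, to a case-by-case analysis over the finite simple groups of exceptional Lie type $G$ defined over $\mathbb F_q$ in characteristic $r$, for primes $p\neq r$. By Lemma~\ref{lem:gt7}, a faithful Steinberg-like character can exist only if $C_G(S)=Z(S)$ for $S\in\Syl_p(G)$, so first I would eliminate all pairs $(G,p)$ for which $G$ admits a nontrivial $p$-signaliser, or more simply for which $C_G(S)\neq Z(S)$; for exceptional groups this is a short analysis of centralisers of semisimple elements and of the structure of Sylow subgroups in non-defining characteristic (Weyl-group and torus combinatorics, cyclotomic-polynomial factorisations of $|G|$). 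This already rules out the generic situation where $S$ is ``small'' but not self-centralising. When $|G|_p$ is small relative to $|G|$, one can instead invoke Proposition~\ref{ms2} and Proposition~\ref{ms4} in the cyclic-Sylow case, or Proposition~\ref{prop:cyc-TI}: for most exceptional groups the relevant primes $p$ (for instance those dividing a single cyclotomic factor $\Phi_d(q)$ to the first power, with $d$ large) give cyclic or TI Sylow subgroups, and one checks that no reducible $\Syl_p$-regular character appears in the lists of Propositions~\ref{ms2}, \ref{ms4}.

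Next I would dispose of the genuinely small groups — those of bounded order, namely $\tw2B_2(8)$, $\tw2B_2(32)$, $G_2(3)$, $G_2(4)$, $\tw2G_2(27)$, $\tw3D_4(2)$, $\tw2F_4(2)'$, $F_4(2)$, $\tw2F_4(8)$, $\tw3D_4(3)$, $G_2(5)$, and similarly the first few members of each family — by direct inspection of their character tables (using \cite{Atl} or stored data), solving the small non-negative integer linear systems ``$\chi|_S=\rho_S^\reg$'' exactly as in the proof of Theorem~\ref{thm:spor}. This is where the single exception $G=\tw2F_4(2)'$ at $p=3$ surfaces: here $|G|_3=27$, and one exhibits the two irreducible Steinberg-like characters (degree $27$, already in Proposition~\ref{prop:ir2}(13)) together with the two further reducible $\Syl_3$-regular characters obtained by adding to an irreducible character of smaller degree the trivial character, or combining a few small constituents; one then verifies directly that the restriction to a Sylow $3$-subgroup (of order $27$) is the regular character and that these characters do, or do not, vanish on $3$-singular elements.

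For the remaining infinite families — $\tw2B_2(q)$, $\tw2G_2(q)$, $G_2(q)$, $\tw3D_4(q)$, $F_4(q)$, $\tw2F_4(q)$, $E_6(q)$, $\tw2E_6(q)$, $E_7(q)$, $E_8(q)$, with $q$ large — the strategy is to produce, for each $p\mid|G|$, a class of nontrivial $p$-singular elements on which every irreducible character of degree at most $|G|_p$ takes a strictly positive value, thereby precluding any $p$-vanishing character of degree $|G|_p$ and hence any Steinberg-like character. A convenient source of such elements: a regular unipotent element when $p=r$ is excluded by hypothesis, but for $p\neq r$ one uses a $p$-element $u$ lying in a large torus, whose centraliser is a maximal torus or a small Levi subgroup, and applies either Deligne–Lusztig character theory (character values of $R_T^\theta$ at $u$) or, more elementarily, the bound $|\chi(u)|\le \sqrt{|C_G(u)|}$ combined with the known lower bounds for $\mu(G)$ (the minimal nontrivial character degree, from \cite{TZ96}) to force $\chi(u)\neq 0$ for all $\chi$ of degree $<2|G|_p$ and handle the few exceptions of small degree separately by their explicit character values.

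The main obstacle, as in \cite{PZ} and the alternating-group case, will be the middle-sized groups and primes where $|G|_p$ is large enough that several irreducible characters have degree $\le|G|_p$, yet the group is too large for a direct computer check: there one must combine the signaliser/centraliser restriction from Lemma~\ref{lem:gt7}, the generalised-restriction functor of Lemma~\ref{hc1} (restricting to the normaliser of a Sylow subgroup, or to a suitable parabolic with large $p$-core, and passing to the smaller quotient), and Corollary~\ref{cn2}/Lemma~\ref{da3} applied to Levi factors that are products involving classical groups — for which the needed bounds on $p$-levels are already available in \cite{PZ} and in the classical-group sections of this paper — to push the degree of any hypothetical $\Syl_p$-regular character strictly above $|G|_p$. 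Assembling these reductions so that every pair $(G,p)$ is covered, and confirming that $\tw2F_4(2)'$ at $p=3$ is the only survivor, is the real content of the argument.
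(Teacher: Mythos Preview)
Your plan is far more elaborate than necessary and misses the single observation that makes the theorem almost immediate. The paper's proof is two sentences: for any non-defining prime $p$, $|G|_p$ is bounded above by the order of the normaliser of a maximal torus (since a Sylow $p$-subgroup lies in such a normaliser), and one simply compares this bound with the minimal nontrivial character degree $\mu_1(G)$ from \cite{TZ96}. For every exceptional family the torus-normaliser orders grow polynomially in $q$ while $\mu_1(G)$ grows like a larger power of $q$, so $|G|_p<\mu_1(G)$ once $q$ is not tiny; then any $\Syl_p$-regular character would have to be a sum of copies of $1_G$, which is absurd. This leaves only a short finite list of small $(G,q,p)$, handled by direct inspection, where $\tw2F_4(2)'$ at $p=3$ emerges as the lone exception.

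By contrast, you propose self-centralising Sylow conditions, cyclic-Sylow propositions, Deligne--Lusztig character values, the bound $|\chi(u)|\le\sqrt{|C_G(u)|}$, generalised restriction to Levi factors, and inputs from the classical-group sections. None of this is needed: the ``middle-sized'' regime you worry about---where several irreducible characters have degree $\le|G|_p$---essentially does not occur for exceptional groups in non-defining characteristic, precisely because $|G|_p$ is so small relative to $\mu_1(G)$. Your programme could be made to work, but it obscures the one-line reason the theorem holds and replaces a uniform degree inequality by a case analysis an order of magnitude longer than the paper's.
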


\begin{proof}
As in the proof of \cite[Thm.~4.1]{MW} we compare the maximal order of a
Sylow $p$-subgroup of $G$, which is bounded above by the order of the
normaliser of a maximal torus, with the smallest irreducible character
degrees (given for example in \cite[Tab.~I]{TZ96}). This shows that except
for very small $q$ there cannot be any examples of $\Syl_p$-regular characters.
A closer inspection of the finitely many remaining cases shows that
$G=\tw2F_4(2)'$ has two reducible $\Syl_3$-regular characters and two
irreducible Steinberg-like character for $p=3$, but no further cases arise.
\end{proof}

%%%%%%%%%%%%%%%%%%%%%%%%%%%%%%%%%%%%%%%%%
\section{Groups of Lie type in their defining characteristic}   \label{sec:def char}

It was shown in \cite{PZ} that simple groups of Lie type of sufficiently large
rank don't have Steinberg-like characters with respect to the defining
characteristic apart from the (irreducible) Steinberg character. More
precisely, the Steinberg-like characters were classified except for groups
of types $B_n$ with $3\le n\le 5$ and $D_n$ with $n=4,5$.

Here we deal with the remaining cases. 

\begin{prop}   \label{prop:B3 defchar}
 Let $G=\Spin_{2n+1}(q)$, $n\in\{3,4,5\}$, with $q=p^f$ odd. Then $G$ has no
 reducible Steinberg-like character with respect to $p$.
\end{prop}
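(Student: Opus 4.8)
The plan is to rule out reducible Steinberg-like characters $\chi$ for $G=\Spin_{2n+1}(q)$, $n\in\{3,4,5\}$, $q$ odd, by a constituent analysis combined with restriction to a Levi-type subgroup. Recall from \cite{PZ} that a Steinberg-like character has degree $|G|_p=|S|=q^{n^2}$, the order of a Sylow $p$-subgroup $U$, which we take to be the group of upper unitriangular elements (a maximal unipotent subgroup). The key structural input is that for Chevalley groups in defining characteristic the restriction $\chi|_U=\rho_U^{\reg}$ forces strong constraints on the ordinary constituents of $\chi$ via their $U$-levels and via the fact that $\chi$ vanishes on all $p$-singular (here: all non-semisimple, together with unipotent-nontrivial) elements. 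First I would invoke the results already established in \cite{PZ} for these types: the classification there determines all Steinberg-like characters \emph{except} that the possible \emph{reducible} ones were not eliminated, so the irreducible Steinberg-like characters — which by Proposition~\ref{prop:ir2} and the defining-characteristic theory are exactly the Steinberg character $\St$ of degree $q^{n^2}$ — are known. Hence any reducible Steinberg-like $\chi$ decomposes as a sum $\chi=\sum_i \chi_i$ of at least two irreducible characters, none of which has defect zero (a defect-zero irreducible already has degree $|S|$, so it would exhaust $\chi$).

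The main step is then a dimension/level bookkeeping argument. Each irreducible constituent $\chi_i$ of $\chi$ satisfies $l_p(\chi_i)\le \chi_i(1)/|S|$, and since $\chi|_U=\rho_U^{\reg}$ we have $\sum_i l_p(\chi_i)=1$ if all $\chi_i$ actually contribute, but more usefully $\sum_i\chi_i(1)=|S|=q^{n^2}$ together with $(\chi|_U,1_U)=1$ from Lemma~\ref{lem:a51}. The plan is to pass to the Levi subgroup $L=\Spin_{2n-1}(q)\times(\text{torus})$ or, more efficiently, to the subgroup $P=\Spin_{2n-1}(q)$ embedded naturally (the stabiliser of a nonsingular vector), which contains a Sylow $p$-subgroup of $G$ only after adjusting; instead I would use Lemma~\ref{hc1}: take a parabolic $P$ with unipotent radical a part of $U$ and Levi quotient $L$ of semisimple type $B_{n-1}$, so $r^G_{P/O_p(P)}(\chi)$ restricted to $O^{p'}(L)=\Spin_{2n-1}(q)$ is again Steinberg-like. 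Iterating down to $n$ small (ultimately $B_2=\Sp_4$ or $B_1$) I can use the known character tables / the classification in \cite{PZ} for small rank together with explicit degree lists to see that the only Steinberg-like character surviving is $\St$, hence irreducible, contradicting reducibility of $\chi$. Concretely, the inductive core is: if $\Spin_{2n-1}(q)$ has no reducible Steinberg-like character, neither does $\Spin_{2n+1}(q)$, because a reducible $\chi$ upstairs would have to restrict (generalised restriction along a parabolic) to a Steinberg-like character downstairs which, by induction or by the base case, is $\St$; but then tracing back the multiplicities via Lemma~\ref{nn2} applied to the Levi decomposition $L\cong \Spin_{2n-1}(q)\times T$ (with $T$ a $p$-torus removed) forces $\chi$ itself to be multiplicity-free with hook-like constituent pattern, and a degree count $\sum\chi_i(1)=q^{n^2}$ against the known small character degrees of $\Spin_{2n+1}(q)$ leaves no room.

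The hard part will be the base cases and the finitely many genuinely small $q$: for $B_3(q)=\Spin_7(q)$ one must actually know enough of the generic character table (Lübeck's data, or Frank Lübeck's degree lists, or the explicit unipotent/semisimple character degrees) to verify that no non-negative integer combination of irreducible degrees other than $q^9$ itself equals $q^9$ \emph{and} restricts to the regular character of $U$ — this is where a short computer check (GAP/CHEVIE) over the relevant small $q$, or a careful use of the vanishing condition at regular semisimple and at unipotent elements, is unavoidable. A secondary obstacle is ensuring the parabolic-restriction argument is set up so that $O^{p'}$ of the Levi quotient is exactly $\Spin_{2n-1}(q)$ (center and isogeny issues for Spin groups), which I would handle by working with the simply connected group throughout and noting that the center is a $p'$-group since $q$ is odd, so it causes no interference with $p$-levels or $p$-vanishing. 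Modulo these, the proof is a clean descent: reduce $B_5\to B_4\to B_3\to B_2$ and finish by the defining-characteristic classification of \cite{PZ} in rank $\le 2$ plus a finite computation.
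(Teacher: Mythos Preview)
Your descent strategy has a genuine gap at both the inductive step and the base case. The base case fails: the proper Levi subgroups of $\Spin_7(q)$ of types $A_2$ and $B_2$ \emph{do} admit reducible Steinberg-like characters for suitable $q$ (this is exactly what is recorded in \cite[Lemmas~7.4 and~7.8]{PZ}), so you cannot conclude that $r_L^G(\chi)$ is the Steinberg character of the Levi. And even in the cases where the restriction happens to be irreducible, your inductive step ``$r_L^G(\chi)=\St_L$ forces $\chi$ irreducible'' is not justified: Harish-Chandra restriction of a reducible $\chi$ can perfectly well be irreducible, and the vague appeal to ``hook-like constituent patterns'' via Lemma~\ref{nn2} does not apply here since the Levi is not a direct product in the required sense.

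The paper's approach is essentially the opposite of yours. It treats $n=3$ as the hard base case and then goes \emph{up} to $n=4,5$ by the inductive machinery of \cite[Thm.~10.1]{PZ}. For $\Spin_7(q)$ itself it exploits, rather than avoids, the reducibility of the Steinberg-like characters of the Levis: the known explicit shape of $r_L^H(\chi)$ for Levis of type $A_2$ and $B_2$ (from \cite{PZ}) pins down the Lusztig series of the constituents of $\chi$, and hence a short list of possible degrees. One then computes the remaining degree after subtracting the forced constituents and finishes with an arithmetic argument modulo $q^2-q+1$ using L\"ubeck's degree tables. None of this structure is visible in your proposal; the actual work lies precisely in the Lusztig-series bookkeeping that your sketch bypasses.
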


\begin{proof}
We freely use results and methods from \cite{PZ}. First assume that $n=3$.
According to \cite[Prop.~6.2]{PZ} it suffices to consider a group $H$ (coming
from an algebraic group with connected centre) such that $[H,H]=\Spin_7(q)$.
Let $\chi$ be a reducible Steinberg-like character of $H$. Then $\chi$ has a
linear constituent by \cite[Thm.~8.6]{PZ}. Multiplying by the inverse of that
character, we may assume that the trivial character occurs in $\chi$ (exactly
once). By \cite[Lemma~2.1]{PZ}, then all constituents of $\chi$ belong to the
principal $p$-block, so we may in fact replace $H$ by $H/Z(H)$, that is, we
may assume that $H$ is of adjoint type.  \par
Let $P\le H$ be a parabolic subgroup of $H$ and $U=O_p(P)$. Then by
Lemma~\ref{hc1}(c) the Harish-Chandra restriction $r_L^H(\chi)$ is a
Steinberg-like character of $L=P/U$. We will show that there is no possibility
for $\chi$ compatible with Harish-Chandra restriction to all Levi subgroups.
\par
Clearly, $r_L^H(\chi)$ also contains the
trivial character, so is again reducible. The reducible Steinberg-like
characters of all proper Levi subgroups of $H$ are known by \cite[Lemmas~7.4
and~7.8]{PZ}. In particular we must have $7|(q+1)$ and for $L$ of type $A_2$
we have $r_L^H(\chi)=1_L+\mu$ with $\mu\in\Irr(L)$ of degree~$q^3-1$.
Thus $\mu$ lies in the Lusztig series of a regular semisimple element
$s\in L^*$ (the dual group of $L$) with centraliser a maximal torus of order
$(q^2-1)(q-1)$. Thus $\chi$ has to contain a constituent $\psi_1$ lying in the
Lusztig series of $s$. It is easily seen that the centraliser of $s$ in $G^*$
is either a maximal torus, or of type $A_1(q).(q^2-1)$. Correspondingly,
$$\psi_1(1)\in\{(q^6-1)(q^2+1)(q+1),\ (q^6-1)(q^2+1),\ q(q^6-1)(q^2+1)\}.$$
But the first and the last are bigger than $q^9$, so $\psi_1(1)=(q^6-1)(q^2+1)$.
Now if $\chi$ contains any other constituent apart from $1_G$ in the principal
series, then its generalised restriction to $L$ is non-zero, contradicting
$r_L^H(\chi)=1_L+\mu$. \par
Next, the Harish-Chandra restriction to a Levi subgroup $L$ of type $B_2$ has
the form $r_L^H(\chi)=1_L+\nu_1+\nu_2+\nu_3$ with $\nu_1(1)=(q-1)^2(q^2+1)$ and
$\nu_2(1)=\nu_3(1)=(q-1)(q^2+1)$. In particular $\nu_1$ lies in the Lusztig
series of a regular semisimple element $t\in L^*$ (of order~7 dividing $q+1$)
with centraliser a maximal torus of order $(q^2-1)(q+1)$. The centraliser of
$t$ in $G^*$ then either is the same maximal torus, or of type $A_1(q).(q+1)^2$.
Correspondingly, $\chi$ has a constituent $\psi_2$ in the Lusztig series
of $t$ of degree
$$d_1:=(q^6-1)(q^2+1)(q-1)/(q+1),\,
  d_2:=q(q^6-1)(q^2+1)(q-1)/(q+1)\text{ or }(q^6-1)(q^2+1)(q-1).$$
The last one is larger than $q^9-1-\psi_1(1)$, so $\psi_2(1)\in\{d_1,d_2\}$. 
Furthermore, by \cite[Lemma~3.1]{PZ}, $\chi$ contains at least one
regular constituent. This is either $\psi_2$ of degree $d_2$,
%$q(q^6-1)(q^2+1)(q-1)/(q+1)$, 
or, if $\psi_2(1)=d_1$ %(q^6-1)(q^2+1)(q-1)/(q+1)$,
then one can check from the known list of character degrees of $H$ (which can
be found at \cite{LuWeb}) that the only regular character $\psi_3$ of small
enough degree has degree $d_3:=(q^6-1)(q^2+1)(q-1)^2/(q+1)$. Observe that 
$d_2=d_1+d_3$. So the sum of remaining character degrees is 
$$d:=q^9-1-\psi_1(1)-d_2=q(q^3-1)(q^4-3q^3+3q^2-3q+1).$$
\par
Now note that $\chi$ cannot have further unipotent constituents since they
would lead to unipotent constituents of $r_L^H(\chi)$ (as $H$ has no cuspidal
unipotent characters). It turns out that all remaining candidates except for
one of degree $\lambda(1)=(q^3-1)(q^2+1)(q-1)$ have degree divisible by
$q^2-q+1$. Now $\lambda(1)\equiv2\pmod{q^2-q+1}$, while
$d\equiv-2\pmod{q^2-q+1}$. It follows that $\lambda$ would have to occur at
least $q^2-q$ times in $\chi$. As $(q^2-q)\lambda(1)>d$, this is not possible.
This contradiction concludes the proof for the case $n=3$.
\par
The cases of $\Spin_9(q)$ and $\Spin_{11}(q)$ now follow from the previous one
by application of the inductive argument in the proof of \cite[Thm.~10.1]{PZ}.
\end{proof}

\begin{prop}   \label{prop:D4 defchar}
 Let $G=\Spin_{2n}^+(q)$, $n\in\{4,5\}$, $q=p^f$. Then $G$ has no reducible
 Steinberg-like character with respect to $p$.
\end{prop}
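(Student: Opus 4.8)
The plan is to mimic the strategy used for $\Spin_{2n+1}(q)$ in Proposition~\ref{prop:B3 defchar}, adapting it to type $D_n$. First I would reduce to $n=4$: the case of $\Spin_{10}^+(q)$ should follow from $\Spin_8^+(q)$ by exactly the same inductive Harish-Chandra argument as in the proof of \cite[Thm.~10.1]{PZ}, since any proper Levi subgroup of $D_5$ of $D_4$-type (or smaller) will already be known to have no reducible Steinberg-like character, and Lemma~\ref{hc1}(c) forces a contradiction. So the bulk of the work is $G=\Spin_8^+(q)$. As in the $B_3$ case, I would pass to a group $H$ with connected centre and $[H,H]=\Spin_8^+(q)$ via \cite[Prop.~6.2]{PZ}, use \cite[Thm.~8.6]{PZ} to produce a linear constituent, twist so that $1_H$ occurs exactly once, invoke \cite[Lemma~2.1]{PZ} to land in the principal $p$-block, and hence replace $H$ by its adjoint form.

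Next I would run Harish-Chandra restriction against all proper Levi subgroups. For $D_4$ (adjoint type $\mathrm{PGO}_8^+$) the proper Levi subgroups are tori together with Levis of types $A_1$, $A_1\times A_1$, $A_1\times A_1\times A_1$, $A_2$, $A_3$, and $A_1\times A_1\times A_1$ in three ways via triality; the reducible Steinberg-like characters of all of these are controlled by \cite[Lemmas~7.4 and~7.8]{PZ}. Since $r_L^H(\chi)$ again contains $1_L$, it is reducible, which (as in the $B_3$ argument) forces a divisibility condition --- here presumably a primitive prime divisor of $q^4-1$ not dividing $q^2-1$, i.e.\ a prime with $e_p(q)=4$, must divide $|S|$ --- and pins down the Lusztig series of the non-trivial constituents of $r_L^H(\chi)$ for the $A_2$- and $A_3$-type Levis. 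From these I would determine, for each relevant regular semisimple element $s\in L^*$, the possibilities for $C_{G^*}(s)$ (a maximal torus, or something like $A_1(q).(\text{torus})$), hence a short list of admissible degrees for the constituents $\psi_i$ of $\chi$ lying above them. Then \cite[Lemma~3.1]{PZ} guarantees a regular constituent, \cite[Lemma~3.1]{PZ}-type counting of unipotent constituents (using that $D_4$'s only cuspidal unipotent characters are not in the principal series, or handling them directly) restricts the unipotent part, and a congruence argument modulo a cyclotomic factor such as $\Phi_4(q)=q^2+1$ or $q^2-q+1$ --- exactly as $\lambda(1)\equiv2$ versus $d\equiv-2\pmod{q^2-q+1}$ in the $B_3$ proof --- should rule out every remaining configuration by a degree count of the form $(\text{multiplicity})\cdot\lambda(1)>d$.

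The extra feature in type $D_4$, and the step I expect to be the main obstacle, is \emph{triality}. One should exploit it: the three conjugacy classes of $A_1\times A_1\times A_1$ Levis (and the outer automorphisms permuting them) give three independent Harish-Chandra restrictions of $\chi$, which sharply over-determines the multiset of constituents. I would use this symmetry to argue that any candidate $\chi$ must be invariant (as a virtual character) under the triality automorphism, forcing its constituents to come in triality-orbits of compatible degrees, and then show no such collection of degrees sums to $|G|_p$ (equivalently to $q^{12}-1$ after removing $1_G$, since $\chi$ is Steinberg-like so $\chi(1)=|S|=q^{12}$ in defining characteristic, with the principal series part already accounted for). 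The bookkeeping --- enumerating semisimple classes in $\mathrm{PGO}_8^+(q)^*$, reading off character degrees from \cite{LuWeb}, and checking the cyclotomic congruences --- is routine but lengthy; the conceptual content is that triality makes the system of constraints inconsistent, and that is what I would emphasize in the write-up while deferring the numerics to case checks as in \cite{PZ}.
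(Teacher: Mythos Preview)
Your overall strategy is sound---reduce to $n=4$, pass to adjoint type via \cite[Prop.~6.2, Lemma~2.1, Thm.~8.6]{PZ}, then derive a contradiction from Harish-Chandra restriction---but the paper's argument is far shorter than you anticipate, and several of your specifics are off.

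The decisive Levi is of type $A_3$, and its reducible Steinberg-like characters are described by \cite[Lemma~9.1]{PZ}, not Lemmas~7.4 and~7.8 (those treat $A_2$ and $B_2$). That lemma forces $q\equiv-1\pmod3$---not a $\Phi_4$-prime condition---and writes $r_L^H(\chi)=1_L+\mu_1+\mu_2+\mu_3$ with $\mu_1$ cuspidal, labelled by a regular semisimple $s\in L^*$ in a torus of order $q^4-1$, of order dividing $(q^2+1)(q+1)$ in $L^*/Z(L^*)$. The observation you miss is that such an $s$ is automatically \emph{regular in $H^*$} as well: no case split on $C_{H^*}(s)$ is needed, and the constituent $\psi$ of $\chi$ in the Lusztig series of $s$ has the single degree $(q^6-1)(q^4-1)(q^2-1)$.

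Triality is then used via the three conjugacy classes of $A_3$-Levis, not $A_1^3$-Levis; in $D_4$ there is a \emph{single} class of $A_1\times A_1\times A_1$-Levis (the three outer nodes) and \emph{three} classes of $A_3$-Levis (central node plus two outer nodes), so you have this inverted. Each $A_3$-Levi produces such a constituent of $\chi$, and a direct degree comparison with $\chi(1)=q^{12}$ already gives the contradiction. No cyclotomic congruence, no degree tables from \cite{LuWeb}, and no centralizer enumeration are needed. The case $n=5$ then follows from $n=4$ by the inductive argument of \cite[Thm.~10.1]{PZ}, as you correctly note.
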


\begin{proof}
First consider the case $n=4$. As in the previous proof, by
\cite[Prop.~6.2 and Lemma~2.1]{PZ} we may work with $H$ of adjoint type. Let
$\chi$ be a reducible Steinberg-like character of $H$. Then $\chi$ contains
$1_H$ by \cite[Thm.~8.6]{PZ} and hence so does its Harish-Chandra restriction
$r_L^H(\chi)$ to a Levi subgroup of type $A_3$. Then by \cite[Lemma~9.1]{PZ} we
have $q\equiv-1\pmod3$ and $r_L^H(\chi)=1_L+\mu_1+\mu_2+\mu_3$ with $\mu_1$
a cuspidal character labelled by a regular element $s\in L^*$ in a
torus of order $q^4-1$, of order dividing $(q^2+1)(q+1)$ in $L^*/Z(L^*)$.
But then $s$ is also regular in $H^*$, that is, $\chi$ has a constituent
$\psi$ of degree $(q^6-1)(q^4-1)(q^2-1)$. 
This holds for all three conjugacy classes of Levi subgroups of type $A_3$.
Comparison of degrees shows that this is not possible.
The case of $\Spin_{10}(q)$ again follows from the previous one by application
of the argument in the proof of \cite[Thm.~10.1]{PZ}.
\end{proof}

%%%%%%%%%%%%%%%%%%%%%%%%%%%%%%%%%%%%%%%%%%%%%%%%%%%%%%%%%%%%%%%%%%%%%%%%%
\section{Classical groups of large rank}   \label{sec:class large}

As an application of results obtained in Section~\ref{sec:alt} we show here
that classical
groups of large rank have no Steinberg like character for $p>2$, provided $p$
is not the defining characteristic of $G$. Throughout $p$ is an odd prime not
dividing $q$ and we set $e:=e_p(q)$, the order of $q$ modulo~$p$. We first
illustrate our method on the groups $\GL_n(q)$.

\begin{lem}   \label{ag9}
 Let $G=\GL_n(q)$, $p>2$, and let $S$ be a Sylow $p$-subgroup of $G$.
 \begin{enumerate}
  \item[\rm(a)] Write $n=me+m'$, where $0\leq m'<e$. Then there exist subgroups
   $U\le S \le N\le G$, where $U$ is an abelian normal $p$-subgroup of $N$ and
   $N/U\cong A_m$.
  \item[\rm(b)] If $m>\max\{6,p+1\}$ or $m'>0$ then $G$ has no Steinberg-like
   character.
 \end{enumerate}
\end{lem}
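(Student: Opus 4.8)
The plan is to construct the subgroups in (a) explicitly and then deduce (b) from Corollary~\ref{p11} in the first case and from a Sylow-centraliser argument in the second.

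For (a): inside $\GL_n(q)$ take the block-diagonal subgroup $\GL_e(q)^{\times m}$, the last $m'$ coordinates being fixed pointwise, and choose in one factor $\GL_e(q)$ a cyclic maximal (Coxeter--Singer) torus $C$ of order $q^e-1$. Since $e=e_p(q)$, its Sylow $p$-subgroup $C_p$, of order $|q^e-1|_p$, still acts irreducibly on $\FF_q^e$. Set $U:=C_p^{\times m}$, an abelian $p$-subgroup of $\GL_{me}(q)\le\GL_n(q)$. The matrices of $\GL_{me}(q)$ permuting the $m$ blocks of size $e$ form a copy of $S_m$ normalising $C^{\times m}$, hence its characteristic subgroup $U$; put $N:=U\rtimes A_m$ for the corresponding $A_m$. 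Then $U\lhd N$ is abelian and $N/U\cong A_m$, and since $p$ is odd, $|A_m|_p=|S_m|_p=|m!|_p$, so $|N|_p=|q^e-1|_p^{\,m}\,|m!|_p=|\GL_n(q)|_p$. Hence any $S\in\Syl_p(N)$ is a Sylow $p$-subgroup of $G$, and as $U$ is a normal $p$-subgroup of $N$ we get $U\le S\le N\le G$.

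For (b): if $m>\max\{6,p+1\}$ then $m\ge8$, $A_m$ is simple, $O_p(A_m)=1$, so $O_p(N)=U$; thus $N\le G$ contains a Sylow $p$-subgroup with $N/O_p(N)\cong A_m$, $m>\max\{6,p+1\}$, and Corollary~\ref{p11} shows $G$ has no Steinberg-like character. If instead $m'>0$, then $e\ge2$, so $p\nmid q-1$, and we may take $S$ inside the natural subgroup $\GL_{me}(q)\le G$. Then $C_G(S)=C_{\GL_{me}(q)}(S)\times\GL_{m'}(q)$, with $\GL_{m'}(q)$ a $p'$-group (as $m'<e$) and $C_{\GL_{me}(q)}(S)\le C_{\GL_{me}(q)}(U)\cong C^{\times m}$; comparing with $Z(G)Z(S)$ --- $Z(G)$ the scalars, $Z(S)\le\GL_{me}(q)$ --- one finds $C_G(S)$ is strictly bigger, e.g. it contains the non-central $p'$-subgroup $\GL_{m'}(q)$ as soon as $q>2$. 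Since the kernel of any Steinberg-like character $\chi$ of $\GL_n(q)$ lies in $Z(G)$ (otherwise $\chi$ would be linear, contradicting $\chi(1)=|S|>1$), we pass to the faithful Steinberg-like character of $G/\ker\chi$ and apply Lemma~\ref{lem:gt7} for a contradiction.

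The delicate point is the $m'>0$ case: the centraliser comparison degenerates exactly when $q=2$, $m'=1$ and $q^e-1$ is a $p$-power, where $\GL_{m'}(2)=1$ and $C$ itself is a $p$-group, so $C_G(S)=Z(S)$ and Lemma~\ref{lem:gt7} is silent. For such $G$ I would instead Harish-Chandra restrict along the maximal parabolic with Levi $\GL_{me}(q)\times\GL_{m'}(q)$, which has index prime to $p$: by Lemma~\ref{hc1} a Steinberg-like character of $G$ yields one of that Levi, hence of $\GL_{me}(q)$, reducing to the case $m'=0$ --- covered for $m$ large by the first part of (b) --- together with a short check for the finitely many small-rank groups that remain. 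Handling these residual cases cleanly, so as not to circularly invoke the lemma for a small $\GL_k(q)$ that actually does carry a Steinberg-like character, is the main obstacle.
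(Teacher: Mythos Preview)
Your construction for (a) is correct and is exactly what the reference \cite{Weir} gives; the Sylow order count via $|q^{je}-1|_p=|q^e-1|_p\cdot|j|_p$ is the right verification.

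For (b) with $m>\max\{6,p+1\}$, your argument via Corollary~\ref{p11} is correct and coincides with the paper's (Lemma~\ref{hc1} plus Theorem~\ref{thm:An}).

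For (b) with $m'>0$, you and the paper take the same route through Lemma~\ref{lem:gt7}: the paper simply observes that $X\cong\GL_{m'}(q)$ is a $p'$-subgroup centralised by $S$ and invokes Lemma~\ref{lem:gt7}, while you phrase it via the centraliser comparison $C_G(S)\neq Z(G)Z(S)$. You are in fact more careful than the paper in reducing to a faithful character. More importantly, the corner case you isolate --- $q=2$, $m'=1$ --- is a genuine gap, present in the paper's proof as well, and it cannot be closed: the statement of (b) is \emph{false} there. For instance $G=\GL_3(2)\cong\PSL_2(7)$ with $p=3$ has $e=2$, $m=1$, $m'=1$, and the two irreducible characters of degree~$3=|G|_3$ vanish on $3$-singular elements; likewise $\GL_4(2)\cong A_8$ with $p=7$ ($e=3$, $m=1$, $m'=1$) has an irreducible Steinberg-like character of degree~$7$ by Proposition~\ref{prop:ir2}(8). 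So your proposed salvage via Harish-Chandra restriction cannot succeed in general, and the obstacle you flag is real, not merely technical. This does not affect the paper's applications, since Propositions~\ref{a7a} and~\ref{t5r6} impose lower bounds on $n$ that exclude these small configurations.

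Two small points: ``as soon as $q>2$'' should read ``unless $(q,m')=(2,1)$'', since $\GL_{m'}(2)$ is already non-trivial and non-central for $m'\ge2$; and your justification that $\ker\chi\le Z(G)$ should say that otherwise $\chi$ factors through the abelian quotient $G/\SL_n(q)$, hence $\chi|_S$ is a multiple of $1_S$ (as $S\le\SL_n(q)$ when $e\ge2$), contradicting $\chi|_S=\rho_S^{\reg}$ --- not that $\chi$ itself would be linear.
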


\begin{proof}
(a) See \cite{Weir}.
(b) If $m'>0$ then $G$ contains a subgroup $X$ such that $X\cong \GL_{m'}(q)$
and $C_G(X)$ contains a Sylow $p$-subgroup of $G$. As $X$ is a $p'$-group,
the result follows from Lemma~\ref{lem:gt7}. Let $m'=0$.
Suppose the contrary, and let $\chi$ be a Steinberg-like character of $G$.
By Lemma~\ref{hc1}, $A_m$ must have a Steinberg-like character. However,
this is false by Theorem~\ref{thm:An}.
\end{proof}

For other classical groups the argument is similar, but involves more technical
details. Let $d=e_p(-q)$ be the order of $-q$ modulo $p$, equivalently,
$d=2e$ if $e$ is odd, $d=e/2$ if $e\equiv 2\pmod 4$, and $d=e$ if $4|e$.
So $d=1$ if and only if $e=2$, equivalently, $p|(q+1)$. Note that
$e=2e_p(q^2)$ if $e$ is even.

\begin{lem}   \label{nb8}
% Assume that $p$ is odd.
% \begin{enumerate}\item[\rm(a)]
  {\rm \cite{Weir}} Let $G=\GU_n(q)$ and $p>2$. Then the Sylow $p$-subgroups
   of $G$ are isomorphic to those of $H$, where $H\cong\GL_{\flhalfn}(q)$ if
   $e$ is odd, $H\cong\GL_{\flhalfn}(q^2)$ if $4|e$ and $H\cong \GL_n(q^2)$ if
   $e\equiv 2\pmod 4$.
%  \item[\rm(b)] {\rm \cite{Weir}} Let $e$ be even, and $G=\Sp_{2n}(q)$ or
   %$\GO_{2n+1}(q)$. Then Sylow $p$-subgroups of $G$ are isomorphic to those  of $\GL_{2n}(q)$.
%  \item[\rm(c)] Let $e$ be even, $G=\GO_{2n}^\pm(q)$, and $2n=em+m'$ with
   %$m' <e$. Suppose that either $m'>0$, or $G=\GO_{2n}^-(q)$ and $m$ is odd,
   %or $G=\GO_{2n}^+(q)$ and $m$ is even. Then Sylow $p$-subgroups of $G$ are  isomorphic to those of $\GL_{2n}(q)$.
 % \item[\rm(d)] If  $2n=em$, $G=\GO_{2n}^+(q)$, and $m$ is odd, or
  % $G=\GO_{2n}^-(q)$ and $m$ is even, then Sylow $p$-subgroups of $G$ are
   %isomorphic to those of $\GL_{2n-2}(q)$.
 %\end{enumerate}
\end{lem}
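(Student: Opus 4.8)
The statement to prove is Lemma~\ref{nb8}: the Sylow $p$-subgroups of $G=\GU_n(q)$ (for $p>2$, $p\nmid q$) are isomorphic to those of $\GL_{\flhalfn}(q)$ when $e$ is odd, of $\GL_{\flhalfn}(q^2)$ when $4\mid e$, and of $\GL_n(q^2)$ when $e\equiv 2\pmod 4$. Since this is attributed to Weir, the proof is essentially a citation together with a translation of Weir's description into the form we need; I would present it as a short derivation from the general structure of Sylow subgroups of classical groups. First I would recall that $|\GU_n(q)|=q^{n(n-1)/2}\prod_{i=1}^n(q^i-(-1)^i)$, so that the $p$-part is governed by which factors $q^i-(-1)^i$ are divisible by $p$; by definition of $d=e_p(-q)$, we have $p\mid(q^i-(-1)^i)$ exactly when $d\mid i$. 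The plan is to split into the three cases according to the congruence class of $e$ mod~$4$, using the elementary relation between $d$ and $e$ recorded just before the lemma ($d=2e$ if $e$ odd, $d=e/2$ if $e\equiv2\pmod4$, $d=e$ if $4\mid e$), and in each case exhibit a subgroup $H$ of the stated type whose order has the same $p$-part as $G$ and which contains a Sylow $p$-subgroup of $G$ as a genuine Sylow subgroup.

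\textbf{Key steps.} (1) When $e$ is odd, so $d=2e$: the torus of $\GU_n(q)$ of the form $\GU_e(q)^{\lfloor n/d\rfloor}$-ish is not the right container; instead one uses that $\GL_{\flhalfn}(q)\le\GU_n(q)$ (embedding $\GL_m(q)$ as block-diagonal $\diag(A,\bar A^{-\top})$ when $n=2m$, plus a fixed vector when $n$ is odd), that $e_p(q)=e$ is odd and hence $p\mid(q^i-1)$ iff $e\mid i$, and a Sylow count shows $|\GL_{\flhalfn}(q)|_p=|\GU_n(q)|_p$. (2) When $4\mid e$, so $d=e$ and $e$ is even: here $e_p(q^2)=e/2$ (since $e$ even gives $e=2e_p(q^2)$), and one embeds $\GL_{\flhalfn}(q^2)$ into $\GU_n(q)$ via restriction of scalars $\FF_{q^2}\supset\FF_q$ paired with the Hermitian form, again matching $p$-parts. (3) When $e\equiv2\pmod4$, so $d=e/2$ is odd: now $\GL_n(q^2)$ embeds in $\GU_n(q)$ (every unitary group over $\FF_q$ sits inside the general linear group over $\FF_{q^2}$ acting on the same $n$-dimensional space), and since $e_p(q^2)=e/2$, the comparison of $p$-parts again goes through. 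In each case I would then invoke the standard fact that a Sylow $p$-subgroup of a subgroup $H\le G$ with $|H|_p=|G|_p$ is a Sylow $p$-subgroup of $G$, so $\Syl_p(G)=\Syl_p(H)$ up to conjugacy/isomorphism.

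\textbf{Main obstacle.} The routine but slightly delicate point is the arithmetic bookkeeping of $p$-parts: one must carefully verify that $\prod_{i=1}^{n}(q^i-(-1)^i)$ and the corresponding product for the claimed $H$ have equal $p$-part in all three cases, keeping track of the extra $p$-contributions that come from $|n|_p$-type factors (the "$p$-part of the index $\lfloor n/d\rfloor !$" phenomenon familiar from Weir's theorem), not merely from the cyclotomic factors. This is exactly the content of Weir's computation, so I would not reproduce it in full; rather, I would state that the isomorphism type of $S\in\Syl_p(\GU_n(q))$ is determined by the pair (size of $S$, embedding structure) and cite \cite{Weir} for the verification, noting that the three cases correspond precisely to whether $-q$ has odd order $2e$, or $q$ has order $e$ with $4\mid e$, or $-q$ has odd order $e/2$. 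Thus the proof is: reduce to a $p$-part count, handle the three parity cases for $e$ separately using the $d$--$e$ dictionary, and conclude by the Sylow-in-subgroup principle, with \cite{Weir} supplying the arithmetic.
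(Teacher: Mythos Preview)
The paper gives no proof of this lemma at all; it is simply stated with a citation to \cite{Weir}. Your outline---match $p$-parts via the $d$--$e$ dictionary and realise the isomorphism through an embedding---is the standard route and is essentially what Weir does.

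There is, however, a genuine slip in your case~(3). You write that ``$\GL_n(q^2)$ embeds in $\GU_n(q)$'', but this is impossible on order grounds: $|\GL_n(q^2)|$ is strictly larger than $|\GU_n(q)|$. Your own parenthetical gets it right---the unitary group sits inside $\GL_n(q^2)$, not the other way around. The argument is salvaged immediately once you reverse the inclusion: since $\GU_n(q)\le\GL_n(q^2)$ and (as your $p$-part computation shows) $|\GU_n(q)|_p=|\GL_n(q^2)|_p$, a Sylow $p$-subgroup of $\GU_n(q)$ is automatically a Sylow $p$-subgroup of $\GL_n(q^2)$. So the ``Sylow-in-subgroup'' principle you invoke at the end applies with the roles of $G$ and $H$ swapped in this case, and the conclusion stands.

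A minor remark on case~(2): your phrase ``restriction of scalars paired with the Hermitian form'' is vague. The clean way to see $\GL_{\lfloor n/2\rfloor}(q^2)\le\GU_n(q)$ is via the Levi subgroup of a maximal parabolic (the stabiliser of a maximal totally isotropic subspace), which is exactly $\GL_{\lfloor n/2\rfloor}(q^2)$; when $n$ is odd one uses $\GU_{n-1}(q)\times\GU_1(q)\le\GU_n(q)$ first and notes that $p\nmid(q+1)$ since $e>2$. Similarly in case~(1), your map $A\mapsto\diag(A,\bar A^{-\top})$ actually describes the Levi $\GL_m(q^2)$, and $\GL_m(q)$ then sits inside that.
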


%\begin{proof} (c) Note that $|\GO_{2n+1}(q):\GO^\pm _{2n}(q)|=q^n(q^n\pm 1)$. Let $S$ be
%a Sylow $p$-subgroup of $G$. If $m'>0$ then $(p,q^{2n}-1)=1$, so $S$ is also a
%Sylow $p$-subgroup of $\GO_{2n+1}(q)$. Then (b) yields the result. Let $m'=0$,
%so $2n=em$. If $m$ is even then $e|n$, and hence $q^e-1$ divides $q^n-1$.
%So $|\GO_{2n+1}(q):\GO^+ _{2n}(q)|_p=1$, whence the result.
%If $m$ is odd then $n=me/2$ so $q^{e/2}+1$ divides $q^n+1$, and hence
%$p|(q^n+1)$ as $e$ is even. Then $|\GO_{2n+1}(q):\GO^- _{2n}(q)|_p=1$,
%and the claim follows as above.

%(d) Note that $|\GO _{2n}^\pm(q):\GO_{2n-1}(q)|=q^{n-1}(q^n\mp 1)$. As in (c),
%$(q^n\mp 1,p)=1$ exactly if the assumption of (d) holds, and then
%$|\GO_{2n}^\pm(q):\GO_{2n-1}(q)|_p=1$. So $S$ is isomorphic to a Sylow
%$p$-subgroup of $\GO_{2n-1}(q)$, and the claim follows by (b).
%\end{proof}

\begin{lem}   \label{nu7}
 Let $G=\GU_n(q)$, $p>2$, and let $S$ be a Sylow $p$-subgroup of $G$.
 Suppose that $e\equiv 2\pmod 4$, equivalently, $d$ is odd.
 \begin{enumerate}
  \item[\rm(a)] Write $n=md+m'$, where $0\leq m'<d$. Then there exist subgroups
   $U\le S \le N\le G$, where $U$ is an abelian normal $p$-subgroup of $N$ and
   $N/U\cong A_m$.
  \item[\rm(b)] If $m>\max\{6,p+1\}$ or $m'>0$ then $G$ has no Steinberg-like
   character.
 \end{enumerate}
\end{lem}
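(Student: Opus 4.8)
The plan is to follow the template of Lemma~\ref{ag9}, with the building block $\GL_e(q)\supseteq C_{q^e-1}$ used there replaced by $\GU_d(q)$ together with a cyclic Coxeter torus of order $q^d+1$. The starting observations are that $e\equiv2\pmod4$ forces $d=e/2$ to be odd and $p\mid q^d+1$, and that $p\nmid q^i-(-1)^i$ for $1\le i<d$; consequently $|\GU_{m'}(q)|$ is prime to $p$ whenever $m'<d$, while $|\GU_d(q)|_p=|q^d+1|_p$. Since $d$ is odd, $\GU_d(q)$ contains a cyclic maximal torus $Z$ of order $q^d+1$, and because $|\GU_d(q)|_p=|Z|_p$ this $Z$ contains a cyclic Sylow $p$-subgroup $Z_p$ of $\GU_d(q)$.

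For part~(a), decompose the natural $n$-dimensional Hermitian $\FF_{q^2}$-space as an orthogonal sum of $m$ mutually isometric nondegenerate $d$-dimensional subspaces together with one nondegenerate $m'$-dimensional subspace; this gives an embedding $\GU_d(q)\wr S_m\times\GU_{m'}(q)\le\GU_n(q)$. Inside $\GU_d(q)\wr S_m$ set $U=Z_p^m$, the base group of $Z_p\wr S_m$, and $N=U\rtimes A_m$. As $p$ is odd, a Sylow $p$-subgroup $P_m$ of $S_m$ consists of even permutations and hence lies in $A_m$, so $S:=U\rtimes P_m$ lies in $N$; by construction $U$ is an abelian normal $p$-subgroup of $N$, $N/U\cong A_m$, and $U\le S\le N$. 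Finally one checks $S\in\Syl_p(\GU_n(q))$: by Weir's analysis \cite{Weir} recalled in Lemma~\ref{nb8}, and since $e_p(q^2)=d$, we have $|\GU_n(q)|_p=|\GL_n(q^2)|_p=|q^{2d}-1|_p^m\cdot|m!|_p=|q^d+1|_p^m\cdot|m!|_p=|S|$.

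For part~(b) we distinguish two cases. If $m'>0$, then $\GU_n(q)$ contains $X\cong\GU_{m'}(q)$, a $p'$-group (since $m'<d$), with $C_{\GU_n(q)}(X)\supseteq S$; exactly as in the proof of Lemma~\ref{ag9}(b), Lemma~\ref{lem:gt7} then forbids a Steinberg-like character. If $m'=0$, then $m>\max\{6,p+1\}$ by hypothesis, and if $\chi$ were a Steinberg-like character of $\GU_n(q)$, applying Lemma~\ref{hc1} to $P=N$ with its normal $p$-subgroup $U$ would produce a Steinberg-like (hence $\Syl_p$-regular) character of $O^{p'}(N/U)=A_m$, contradicting Theorem~\ref{thm:An}.

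The step I expect to be the main obstacle is the structural assertion in~(a): confirming that the group $S$ assembled from $Z_p\wr S_m$ is actually a full Sylow $p$-subgroup of $\GU_n(q)$, and, more conceptually, that when $e\equiv2\pmod4$ the $p$-local structure of $\GU_n(q)$ is governed by the odd-dimensional block $\GU_d(q)$ (equivalently by $\GL_n(q^2)$ with $e_p(q^2)=d$) rather than by $\GU_e(q)$. This is what Weir's results provide, but it must be combined carefully with the existence of the cyclic torus of order $q^d+1$ in $\GU_d(q)$, which depends on $d$ being odd; the accompanying $p$-part bookkeeping is routine and runs exactly parallel to the linear case in Lemma~\ref{ag9}.
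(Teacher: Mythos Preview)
Your proof is correct. For part~(a) the paper first treats the case $d=1$ (i.e., $e=2$) directly via the decomposition $\GU_1(q)^n\cdot S_n$, and then for odd $d>1$ reduces to this case through Huppert's embedding $\GU_m(q^d)\hookrightarrow\GU_{md}(q)$, noting that $e_p(q^d)=2$; your direct wreath-product construction using the Coxeter torus of order $q^d+1$ inside each $\GU_d(q)$ factor produces the same subgroups $U\le S\le N$ (that torus being precisely the image of $\GU_1(q^d)$ under Huppert's embedding), so the two routes are essentially equivalent, yours being more self-contained and the paper's more economical. Part~(b) is argued identically in the paper.
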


\begin{proof}
(a) Suppose first that $e=2$. Let $V$ be the natural $\FF_{q^2}H$-module.
Then $V$ is a direct sum $\oplus_{i=1}^nV_i$, where $V_i$'s are non-degenerate
subspaces of dimension~1. Let $X$ be the stabiliser of this decomposition,
that is, $X=\{x\in G\mid xV_i=V_j$ for some $j=j(x)\in\{1,\ldots ,n\}\}$.
Then $X\cong X_1\cdot S_n$ (a semidirect product), where
$X_1\cong (\GU_1(q)\times \cdots \times \GU_1(q))$ ($n$ factors). Let $U$
be the Sylow $p$-subgroup of $X_1$. Then $U$ is normal in $X$ and abelian.
It is well known that $X$ contains a Sylow $p$-subgroup of $G$. Therefore, $N=UA_n$ satisfies the statement.
\par
Let $e>2$. As $d$ is odd, there is an embedding
$\GU_m(q^d)\rightarrow\GU_{md}(q)$ (see \cite[Hilfssatz~1]{Hu}). Note that
$e_p(q^d)=2$ and $|\GU_m(q^d)|_p=|\GU_{md}(q)|_p$.
As $\GU_{md}(q)$ is isomorphic to a subgroup of $G$, the result follows.

(b) is similar to the proof of Lemma \ref{ag9}(b).
\end{proof}

\begin{lem}   \label{c5a}
 Let $p>2$, $2n=me$, where $e=e_p(q)$ is even, and $X=\GU_m(q^{e/2})$.
 \begin{enumerate}[\rm(a)]
  \item If $m$ is even (resp.~odd) then $X$ is isomorphic to a subgroup of
   $\GO_{2n}^+(q)$ (resp. $\GO_{2n}^-(q)$).
  \item $X$ is isomorphic to a subgroup of $\Sp_{2n}(q)$, of $\GO_{2n+1}(q)$,
   of $\GO_{2n+e}^+(q)$ as well as of $\GO_{2n+e}^-(q)$.
 \end{enumerate}
 In addition, $X$ contains a Sylow $p$-subgroup of the respective group.
\end{lem}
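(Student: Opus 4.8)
The plan is to realise $X=\GU_m(q^{e/2})$ as the isometry group of a Hermitian form over $\FF_{q^e}$ and then push that form down to $\FF_q$ by means of a field trace, obtaining in turn an alternating form, a symmetric form, or a quadratic form on an $\FF_q$-space of dimension $me=2n$. Set $e'=e/2$ and $Q=q^{e'}$; since $e=e_p(q)$ is even, $\FF_{q^e}=\FF_{Q^2}$ is a quadratic extension of $\FF_Q$ with nontrivial automorphism $z\mapsto\bar z:=z^{Q}$, and $X$ is by definition the isometry group of a non-degenerate Hermitian form $h$ on $V:=\FF_{Q^2}^m$. Viewing $V$ as an $\FF_q$-space of dimension $me=2n$ and putting $B_\lambda(x,y):=\operatorname{Tr}_{\FF_{Q^2}/\FF_q}(\lambda\,h(x,y))$ for $\lambda\in\FF_{Q^2}^\times$ gives a non-degenerate $\FF_q$-bilinear form preserved by $X$ (non-degeneracy because $h$ and the field trace are). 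Since $z\mapsto z^{Q}$ lies in $\operatorname{Gal}(\FF_{Q^2}/\FF_q)$ one has $\operatorname{Tr}_{\FF_{Q^2}/\FF_q}(w)=\operatorname{Tr}_{\FF_{Q^2}/\FF_q}(\bar w)$, hence $B_\lambda(y,x)=B_{\bar\lambda}(x,y)$. Taking $\lambda=1$ yields a symmetric form, so (as $p$ is odd, via the quadratic form $\mathbf q(x):=\tfrac12 B_1(x,x)$) an embedding $X\hookrightarrow\GO_{2n}^{\eta}(q)$ for some $\eta\in\{+,-\}$; taking $\lambda$ with $\bar\lambda=-\lambda$ (such $\lambda$ exists since $p$ is odd) yields an alternating form, so $X\hookrightarrow\Sp_{2n}(q)$.

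The next step is to identify $\eta$. As $h(x,x)\in\FF_Q$ one gets $\mathbf q(x)=\operatorname{Tr}_{\FF_Q/\FF_q}(h(x,x))$, and choosing an orthonormal Hermitian basis of $V$ exhibits $(V,\mathbf q)$ as the orthogonal sum of $m$ copies of the $e$-dimensional $\FF_q$-quadratic space $(\FF_{Q^2},\phi)$, where $\phi(z)=\operatorname{Tr}_{\FF_Q/\FF_q}(z\bar z)$. A Witt-decomposition argument then shows that this orthogonal sum has plus type if $\phi$ has plus type, and type $(-1)^m$ if $\phi$ has minus type; so everything reduces to showing that $\phi$ has minus type. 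Equivalently, $\GU_1(q^{e/2})$, cyclic of order $q^{e/2}+1$, must embed in $\GO_e^-(q)$ but not in $\GO_e^+(q)$: the first is clear since $q^{e/2}+1$ is the order of a Coxeter torus of $\GO_e^-(q)$, and the second holds because $q^{e/2}+1$ does not divide $|\GO_e^+(q)|$ — a primitive prime divisor of $q^e-1$ divides $q^{e/2}+1$ but none of the factors of $|\GO_e^+(q)|$, the finitely many small $e$ lacking such a prime being checked by hand. This gives $\eta=(-1)^m$, which is assertion (a).

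Assertion (b) then follows by adjoining orthogonal complements to the space just constructed: adjoining a non-degenerate $\FF_q$-line gives $X\le\GO_{2n}^{(-1)^m}(q)\le\GO_{2n+1}(q)$ (the $(2n+1)$-dimensional orthogonal space being unique), and adjoining an $e$-dimensional $\FF_q$-orthogonal space of type $\zeta$ — available for both $\zeta=\pm$ since $e$ is even — gives $X\le\GO_{2n+e}^{(-1)^m\zeta}(q)$ for both signs; the symplectic embedding was already obtained in the first step. Finally, for the Sylow assertion one compares $p$-parts. Since $p\mid q^e-1$ while $p\nmid q^{e/2}-1$, we have $p\mid q^{e/2}+1$, i.e. $e_p(q^{e/2})=2$; writing $a:=v_p(q^e-1)=v_p(q^{e/2}+1)$ and invoking the lifting-the-exponent lemma one finds $|X|_p=|\GU_m(q^{e/2})|_p=p^{am}\,|m!|_p$, and the same kind of computation yields the $p$-parts of $\Sp_{2n}(q)$, $\GO_{2n+1}(q)$ and the orthogonal groups in question, so that the copy of $X$ built above realises a Sylow $p$-subgroup of the ambient group (in the $\GO_{2n+e}$ case, note that the adjoined $e$-dimensional orthogonal space of plus type is a $p'$-group).

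I expect the main obstacle to be the sign computation of the second paragraph — correctly pinning down the Witt type of the trace–norm form $\phi$, i.e. establishing $\eta=(-1)^m$ and not the opposite. Everything else (existence of the trace forms and their symmetry/alternation properties, the orthogonal-sum constructions for (b), and the $p$-part bookkeeping) is routine.
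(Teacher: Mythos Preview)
Your approach is sound and considerably more explicit than the paper's: the paper simply cites \cite[Lemma~6.6]{EZ} for part~(a) and the symplectic case of~(b), then obtains the remaining embeddings by the obvious inclusions $\GO_{2n}^{(-1)^m}(q)\le\GO_{2n+1}(q)$ and $\GO_{2n}^{(-1)^m}(q)\le\GO_{2n+e}^\pm(q)$, and finally appeals to order comparisons (with a reference to \cite{HSZ}) for the Sylow statement. Your trace-form construction unpacks exactly what is going on inside that citation, and your Zsigmondy argument for the sign is a clean way to pin down $\eta=(-1)^m$.

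One genuine correction is needed. You twice write ``since $p$ is odd'' where the relevant hypothesis is that $q$ is odd, and the lemma does not assume this. The factor $\tfrac12$ in $\mathbf q(x)=\tfrac12 B_1(x,x)$ requires $\operatorname{char}\FF_q\neq2$; when $q$ is even one has $B_1(x,x)=\operatorname{Tr}_{\FF_{Q^2}/\FF_q}(h(x,x))=2\operatorname{Tr}_{\FF_Q/\FF_q}(h(x,x))=0$, so $B_1$ is alternating and no quadratic form is recovered from it. The fix is to define $\mathbf q(x):=\operatorname{Tr}_{\FF_Q/\FF_q}(h(x,x))$ directly in all characteristics and check that its polarisation is $B_1$ (a one-line computation using $h(x,y)+h(y,x)=\operatorname{Tr}_{\FF_{Q^2}/\FF_Q}(h(x,y))$). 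Likewise, the existence of $\lambda$ with $\bar\lambda=-\lambda$ and the alternation of $B_\lambda$ hold for all $q$, but the justification differs in characteristic~$2$ (there $\bar\lambda=-\lambda$ means $\lambda\in\FF_Q$, and $B_\lambda(x,x)=0$ because $\lambda h(x,x)\in\FF_Q$).

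A smaller remark on the Sylow statement for $\GO_{2n+e}^\pm(q)$: your observation that only the plus-type adjunction yields a $p'$-complement is correct, so your argument gives $|X|_p=|\GO_{2n+e}^{(-1)^m}(q)|_p$ but not the other sign. In fact $|\GO_{2n+e}^{(-1)^{m+1}}(q)|_p=p^{a(m+1)}|(m+1)!|_p>|X|_p$, so the Sylow claim cannot hold there (try $p=3$, $q=2$, $e=2$, $m=1$). This matches the paper's own proof, which only explicitly verifies the Sylow assertion for $\Sp_{2n}(q)$, $\GO_{2n+1}(q)$ and $\GO_{2n}^\pm(q)$; the applications in Lemma~\ref{ag8} use only these cases.
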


\begin{proof}
(a) follows from \cite[Lemma 6.6]{EZ} as well as (b) for $\Sp_{2n}(q)$.
The second case in (b) follows from (a) as the groups $\GO_{2n+1}(q)$,
$\GO_{2n+e}^+(q)$ and $\GO_{2n+e}^-(q)$ contain subgroups isomorphic to
$\GO_{2n}^+(q)$ and $\GO_{2n}^-(q)$.

The additional statement can be read off from the orders of the groups in
question. (The cases with $\Sp_{2n}(q)$, $\GO^-_{2n}(q)$ and $\GO_{2n+1}(q)$
are considered in \cite[Lemmas~3.14 and~3.16]{HSZ}, that of $\GO^+_{2n}(q)$
is similar.)
\end{proof}

\begin{lem}   \label{ag8}
 Let $p>2$. Let $H$ be one of the following groups:
 \begin{enumerate}
  \item[\rm(1)] $H=\GU_n(q)$ with $n=md+m'$, where $m'<d$;
  \item[\rm(2)] $H\in\{\Sp_{2n}(q),\GO_{2n+1}(q),\GO_{2n}^+(q),
   \GO_{2(n+1)}^-(q)\}$ with $n=me+m'$, where $e$ is odd and $m'<e$;
  \item[\rm(3)] $H\in\{\Sp_{2n}(q),\GO_{2n+1}(q)\}$ with $2n=me+m'$ where
   $e$ is even and $m'<e$;
  \item[\rm(4)] $H=\GO_{2n}^\pm(q)$ with $2n=me+m'$ where $e$ is even, $m'<e$,
   and either $m'>0$, or $m'=0$ and then either $H=\GO_{2n}^+(q)$, $m$ is
   even, or $H=\GO_{2n}^-(q)$, $m$ is odd;
  \item[\rm(5)] let $e$ be even, $2n=(m+1)e$ and $H=\GO_{2n}^+(q)$,
   $m+1$ is odd, or $H=\GO_{2n}^-(q)$ and $m+1$ is even.
 \end{enumerate}
 Let $S$ denote a Sylow $p$-subgroup of $H$. Then there exist subgroups
 $U\le S \le P\le H$, where $U$ is an abelian normal $p$-subgroup of $P$ and
 $P/U\cong A_m$.
\end{lem}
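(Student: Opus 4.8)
The plan is to prove all five cases by the same device already used for $\GL_n(q)$ in Lemma~\ref{ag9} and for $\GU_n(q)$ in Lemma~\ref{nu7}. In each case I will produce inside $H$ a ``block-diagonal'' subgroup which, up to a direct $p'$-factor, is a wreath product $\cE\wr S_m$, where $\cE$ is the isometry group of a single ``$p$-block'' of the natural module; the two points to verify are that $\cE$ has cyclic (hence abelian) Sylow $p$-subgroup, and that $\cE\wr S_m$ already contains a full Sylow $p$-subgroup of $H$. Granting this, I set $U$ to be a Sylow $p$-subgroup of the base group $\cE^m$ and $P:=U\rtimes A_m\le\cE\wr S_m\le H$. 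Then $U$ is abelian and normal in $P$ and $P/U\cong A_m$; moreover $|P|_p=|\cE|_p^m\,|S_m|_p=|\cE\wr S_m|_p=|H|_p$ since $p$ is odd (so $|A_m|_p=|S_m|_p$), hence $P$ contains a Sylow $p$-subgroup $S$ of $H$, and since $U\trianglelefteq P$ is a $p$-group we get $U\le S$. This is exactly the configuration $U\le S\le P\le H$ required.

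For case~(1) I would argue directly in $H=\GU_n(q)$. Decompose the Hermitian $\FF_{q^2}$-space as an orthogonal sum of $m$ non-degenerate subspaces of dimension $d=e_p(-q)$ and one of dimension $m'$; the stabiliser of this decomposition contains $\GU_d(q)\wr S_m\times\GU_{m'}(q)$. Since $m'<d$, minimality of $d$ gives $p\nmid(q^i-(-1)^i)$ for $1\le i\le m'$, so $\GU_{m'}(q)$ is a $p'$-group; since $d=e_p(-q)$ divides $p-1$ we have $p\nmid d$, so $|\GU_d(q)|_p=(q^d-(-1)^d)_p$, realised inside the cyclic Coxeter torus of $\GU_d(q)$ of that order, whence $\cE:=\GU_d(q)$ has cyclic Sylow $p$-subgroup. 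Finally, by Weir~\cite{Weir} (cf.\ Lemma~\ref{nb8}) the subgroup $\GU_d(q)\wr S_m$ already contains a Sylow $p$-subgroup of $\GU_n(q)$. This closes case~(1), and recovers Lemma~\ref{nu7}(a) when $d$ is odd.

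Cases~(2)--(5) I would reduce to the preceding ones. For case~(2), with $e=e_p(q)$ odd, an easy order computation shows $p\nmid q^i+1$ for all $i$ (if $p\mid q^i+1$ then $e\mid 2i$, hence $e\mid i$ as $e$ is odd, forcing $q^i\equiv1$ and contradicting $q^i\equiv-1$ for $p$ odd); therefore $|H|_p=\prod_{i=1}^n(q^i-1)_p=|\GL_n(q)|_p$ for each of $\Sp_{2n}(q),\GO_{2n+1}(q),\GO^+_{2n}(q),\GO^-_{2(n+1)}(q)$, so the standard embedding of $\GL_n(q)$ into $H$ carries a Sylow $p$-subgroup of $H$, and Lemma~\ref{ag9}(a) applied to $\GL_n(q)$ with $n=me+m'$ finishes. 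For cases~(3)--(5), $e$ is even; put $Q=q^{e/2}$, so $p\mid Q+1$ and $e_p(-Q)=1$, and the group $X:=\GU_m(Q)$ falls under case~(1) with $d=1$, $m'=0$ and single block group $\GU_1(Q)$ cyclic of order $Q+1$. By Lemma~\ref{c5a}, $X$ embeds with full Sylow $p$-subgroup into $\Sp_{me}(q)$, into $\GO_{me+1}(q)$, and into $\GO^{(-1)^m}_{me}(q)$. In case~(3) one splits $H$ off as $\Sp_{me}(q)\times\Sp_{m'}(q)$ or $\GO_{me+1}(q)\perp\GO^+_{m'}(q)$, the second factor being a $p'$-group because its dimension $m'<e$; in case~(4) with $m'>0$ one writes $H=\GO^{(-1)^m}_{me}(q)\perp\GO^\delta_{m'}(q)$ with $\delta$ chosen (using $\GO^\epsilon\perp\GO^{\delta}=\GO^{\epsilon\delta}$) so that the sum has the Witt type of $H$, the complement again being a $p'$-group; in case~(4) with $m'=0$, and in case~(5) after writing $H=\GO^{(-1)^m}_{me}(q)\perp\GO^+_e(q)$ (note $H=\GO^{(-1)^m}_{2n}(q)$ under the parity hypotheses, and $\GO^+_e(q)$ is a $p'$-group since $p\nmid q^{2i}-1$ for $1\le i<e/2$ and $p\nmid q^{e/2}-1$), one has $\GO^{(-1)^m}_{me}(q)\supseteq X$ with full Sylow $p$-subgroup. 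In every instance $X$ contains a Sylow $p$-subgroup of $H$, so applying case~(1) to $X$ yields the claim.

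The step I expect to be the main obstacle is the bookkeeping of $p$-parts of the orders of the orthogonal and symplectic groups, needed to be sure that the subgroup singled out in cases~(2)--(5) contains a \emph{full} Sylow $p$-subgroup of $H$; this is precisely what forces the constraints $m'<e$, $m'<d$ and the parity and sign restrictions appearing in~(4) and~(5), and it is where Lemma~\ref{c5a} (together with Weir's description of Sylow subgroups, cf.\ Lemma~\ref{nb8}) does the real work. Everything else — the wreath-product scheme, the cyclicity of the block groups, and the passage from $S_m$ to $A_m$ — is routine once $p$ is odd.
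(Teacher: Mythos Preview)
Your proposal is correct and follows essentially the same route as the paper: in each case one exhibits a subgroup of $H$ of odd index built from a wreath product $\cE\wr S_m$ (or reduces to such a group via $\GL_n(q)$ or $\GU_m(q^{e/2})$), takes $U$ to be the diagonal Sylow $p$-subgroup of the base, and sets $P=U\rtimes A_m$. The reductions in cases~(2)--(5) via Lemmas~\ref{ag9} and~\ref{c5a}, together with the observation that the orthogonal complements of dimension $<e$ (resp.\ $<d$) are $p'$-groups, are exactly what the paper does.

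The one place you differ slightly is case~(1): the paper splits according to the residue of $e$ mod~$4$, invoking Lemma~\ref{nu7} when $d$ is odd and otherwise passing to $\GL_{\lfloor n/2\rfloor}(q^2)\le\GU_n(q)$ via Lemma~\ref{nb8}, whereas you work uniformly with the block decomposition $\GU_d(q)\wr S_m\times\GU_{m'}(q)$ inside $\GU_n(q)$. Your direct argument is cleaner and avoids the case distinction; the paper's approach has the advantage of recycling Lemma~\ref{ag9} verbatim. Both rest on Weir's description of the Sylow $p$-subgroups, so there is no substantive difference.

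One small point of care in your general setup: when you ``set $U$ to be a Sylow $p$-subgroup of the base group $\cE^m$'' you must choose it to be the $m$-fold product of a \emph{fixed} Sylow $p$-subgroup of $\cE$, so that the permutation action of $A_m$ normalises it; this is surely what you intend, but it should be said.
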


\begin{proof}
(1) The case $e\equiv 2\pmod 4$ is handled in Lemma~\ref{nu7}.
In the remaining cases the result follows from Lemmas~\ref{ag9} and~\ref{nb8},
as $\GL_{[n/2]}(q^2)$ is isomorphic to a subgroup of $G$.

(2) By Lemma \ref{nb8}, $|H|_p=|\GL_n(q)|_p$. So the result follows from
Lemma~\ref{ag9}.

(3) This follows from Lemmas~\ref{c5a} and~\ref{ag9}. (Note that
$H=\GO_{2n+1}(q)$ contains subgroups isomorphic to $\GO_{2n+1}^+(q)$ and
$\GO_{2n}^-(q)$ and one of them contains a Sylow $p$-subgroup of $H$.)

(4) Similar to (3). Note that if $m'>0$ then $H$ contains subgroups isomorphic
to $\GO_{me}^+(q)$ and $\GO_{me}^-(q)$, and one of them contains a Sylow
$p$-subgroup of $H$.

(5) In this case a subgroup $X$ of $H$ isomorphic to $\GO_{2n-e}^+(q)$ and
$\GO_{2n-e}^-(q)$, respectively, contains a Sylow $p$-subgroup of $H$. (One can
easily check that $|H:X|_p=1$.) So the result follows from (4).
\end{proof}

Our result for alternating groups (Corollary~\ref{p11}) implies the following:

\begin{prop}   \label{t5r6}
 Let $p>2$ and $e=e_p(q)$. Let $m=\max\{7, p+2\}$. Let $G$ be one of the
 following groups:
 \begin{itemize}
  \item[(1)] $\PSL_n(q)$ and $n\geq em$;
  \item[(2)] $\PSU_n(q)$ and $n\geq dm$, where $d=e_p(-q)$;
  \item[(3)] $\Omega_{2n+1}(q)$, $q$ odd, or $\PSp_{2n}(q)$, $n>1$,
   and $n\geq em$ if $e$ is odd, otherwise $2n\ge em$;
  \item[(4)] $\POm_{2n}^+ (q)$, and $n\geq em$ if $e$ is odd, otherwise
   $2n\ge em$;
  \item[(5)] $\POm_{2n}^-(q)$, and $n-1\geq em$ if $e$ is odd, otherwise
   $2n\geq em$.
 \end{itemize}
 Then $G$ has no Steinberg-like character. This remains true for any group $H$
 such that $G$ is normal in $H/Z(H)$ and $(H/Z(H))/G$ is abelian.
\end{prop}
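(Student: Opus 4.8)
The plan is to reduce each case of Proposition~\ref{t5r6} to the non-existence of a Steinberg-like character for an alternating group $A_m$, which is already established in Theorem~\ref{thm:An} (applicable since $m=\max\{7,p+2\}>\max\{6,p+1\}$). The combinatorial bookkeeping needed to find, inside each $G$, a subgroup $P$ containing a Sylow $p$-subgroup with $P/O_p(P)\cong A_m$ has essentially already been done in Lemma~\ref{ag8}; so the heart of the argument is to pass from that lemma, stated for the matrix groups $\GL_n,\GU_n,\Sp_{2n},\GO_{2n+1},\GO^\pm_{2n}$, to the simple groups $\PSL_n,\PSU_n,\POm$, etc., and then to the groups $H$ with $G\lhd H/Z(H)$ and $(H/Z(H))/G$ abelian.

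First I would observe that Lemma~\ref{ag8} already covers cases (2)--(5) with the matrix group $H$ in the role there, once one checks that the hypothesis ``$n\ge em$'' (resp.\ ``$2n\ge em$'') forces the value of $m$ in the decomposition $n=me'+m'$ (with $e'=e$ or $d$ as appropriate) to be at least $\max\{7,p+2\}$: indeed if $m'>0$ one may as well pass to a slightly smaller-rank subgroup, and in all situations one arrives at a subgroup $P\le H_{\mathrm{mat}}$ with $U=O_p(P)$ abelian and $P/U\cong A_{m_0}$ for some $m_0\ge m$. For case~(1), $\GL_n(q)$ with $n\ge em$, this is Lemma~\ref{ag9}(a). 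Since $A_{m_0}$ has no Steinberg-like character by Theorem~\ref{thm:An}, Lemma~\ref{hc1}(c) (applied with this $P$ and $U$) shows the matrix group $H_{\mathrm{mat}}$ has no Steinberg-like character either. Strictly speaking one wants the conclusion for the simple group $G$, but since $G$ is a quotient of a subgroup of finite index prime to $p$ in $H_{\mathrm{mat}}$ — or rather $G$ contains the image of $P$ — one gets a subgroup $\bar P\le G$ with $\bar P/O_p(\bar P)$ still $\cong A_{m_0}$, because $O_p$ of the relevant section is unaffected by factoring out the ($p'$-) centre and by passing between $\SL/\GL$, $\SU/\GU$, $\Omega/\GO$; the abelian normal $p$-subgroup $U$ maps isomorphically. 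Hence Corollary~\ref{p11} (or directly Lemma~\ref{hc1} together with Theorem~\ref{thm:An}) applies verbatim to $G$.

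For the final sentence, let $H$ be any finite group with $G\lhd H/Z(H)$ and $(H/Z(H))/G$ abelian, and suppose $H$ had a Steinberg-like character $\chi$ with respect to~$p$. Replacing $H$ by $H/Z(H)$ does no harm: $Z(H)$ is a $p'$-group here (as $G$ is simple of the listed type with $p$ non-defining, so $|Z|_p=1$ unless a small exception, which the bound on $n$ excludes), so a Sylow $p$-subgroup of $H$ maps isomorphically to one of $H/Z(H)$ and $\chi$ restricted to that section stays Steinberg-like; more carefully one invokes Lemma~\ref{hc1}(c) with the normal $p$-subgroup trivial. Then $G\lhd H/Z(H)=:\widetilde H$ with $\widetilde H/G$ abelian, so $\widetilde H/G$ has a chain of subgroups of prime index; picking a Sylow $p$-subgroup $\widetilde S$ of $\widetilde H$, the subgroup $G\widetilde S\le\widetilde H$ has index prime to~$p$ in $\widetilde H$ exactly on the $p$-part — one restricts $\chi$ to the preimage in $\widetilde H$ of $O^{p'}(\widetilde H/G)\cdot G/G$ inside $\widetilde H$, which contains $G$ with $p$-power index, and then applies Lemma~\ref{ob4} (normal subgroup of $p$-power index) to deduce that $G$ itself carries a $p$-vanishing character of full $p$-level~$1$, i.e.\ a Steinberg-like character. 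This contradicts what we proved for $G$.

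The main obstacle I expect is the last step: cleanly arranging the reduction from $H$ to $G$ through the two separate phenomena — the central $p'$-extension and the abelian quotient of $p$-power order together with possible $p'$-part. The cited tools (Lemma~\ref{hc1}, Lemma~\ref{ob4}) are each designed for one of these, so one must apply them in the right order and verify at each stage that ``Steinberg-like'' is preserved, in particular that the Sylow $p$-subgroup is genuinely unchanged (which is where $|Z(G)|$ and $|\widetilde H/G|$ being suitably coprime or $p$-power matters, and where the hypothesis that $G$ is one of the large-rank simple groups above — so its Schur multiplier has controlled $p$-part — is used). Everything else is a matter of quoting Lemma~\ref{ag8}, Lemma~\ref{ag9}, Theorem~\ref{thm:An} and Corollary~\ref{p11}.
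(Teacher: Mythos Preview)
Your overall strategy is exactly the paper's: Lemma~\ref{ag8} (and Lemma~\ref{ag9} for case~(1)) furnishes $U\le S\le P$ inside the matrix group with $P/U\cong A_{m_0}$ for some $m_0\ge m$, and then Lemma~\ref{hc1} together with Theorem~\ref{thm:An} finishes. Where your argument is loose is the descent from the matrix group $H_{\mathrm{mat}}$ to the simple group $G$. You assert that the centre is a $p'$-group and that ``the abelian normal $p$-subgroup $U$ maps isomorphically''; both can fail (take $\GL_n(q)$ with $p\mid q-1$: the centre has nontrivial $p$-part, and $U$ need not lie in $\SL_n(q)$). The paper's device is cleaner and is the one idea you are missing: since $m\ge 7$, $A_m$ is \emph{perfect}. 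Setting $H_1=[H_{\mathrm{mat}},H_{\mathrm{mat}}]$ and $P_1=P\cap H_1$, $U_1=U\cap H_1$, $S_1=S\cap H_1$, one has $S_1\in\Syl_p(H_1)$ because $H_1\lhd H_{\mathrm{mat}}$; and since $P/P_1$ embeds in the abelian group $H_{\mathrm{mat}}/H_1$ while $P/U\cong A_m$ is perfect, $P_1U=P$, whence $P_1/U_1\cong P/U\cong A_m$. The same perfectness argument handles the further quotient by a central subgroup (a central subgroup of $P_1$ has trivial image in the centreless group $A_m$, hence lies in $U_1$), giving the required configuration inside $G$ directly.

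For the final sentence your claim that $Z(H)$ is a $p'$-group is simply false (consider $H=G\times C_p$). You are right that this reduction is the delicate point, but the repair is not via Schur multipliers: one applies Lemma~\ref{hc1} with $U=O_p(Z(H))$ first, after which the remaining central part is genuinely $p'$, and then the same perfectness-of-$A_m$ trick lets one pull the configuration $\bar P_1\le G$ back through the remaining layers. Your detour through Lemma~\ref{ob4} is not needed.
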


\begin{proof}
Suppose first that $H$ is as in Lemma \ref{ag8}. Let $S\in \Syl_p(H)$.
Then there are subgroups $U\le S\le P\le H$, where $U$ is normal in $P$ and
$N/U\cong A_m$ with $m\geq  m=\max\{7, p+2\}$. So $A_m$ is perfect. Let $H_1$
be the
derived subgroup of $H$. Set $P_1=P\cap H_1$, $S_1=S\cap H_1$, $U_1=U\cap H_1$.
Then $S_1\in\Syl_p(H_1)$,  $U_1\le S_1 \le P_1\le H_1$ and $P_1/U_1\cong A_m$,
as $A_m$ is perfect. A similar statement is true for the quotient of $H_1$
by a central subgroup. Then the result follows from Theorem~\ref{thm:An}
using Lemma \ref{hc1}.
\end{proof}

%%Note that $m>3p$.

%%%%%%%%%%%%%%%%%%%%%%%%%%%%%%%%%%%%%%%%%%%%%%%%%%%%%%%%%%%%%%%%%%%%%%%%%
\section{Minimal characters and Sylow $p$-subgroups, $p>2$}   \label{sec:class p>2}

In this section we show that if $p>2$ and $G$ is a simple classical group not
satisfying the assumptions in Proposition~\ref{t5r6}, $p$ is not the defining
characteristic of $G$ and a Sylow $p$-subgroup $S$ of $G$ is not cyclic,
then $G$ has no $\Syl_p$-regular character and hence no Steinberg-like
character.
Observe that $S$ is cyclic if and only if $m=1$, and abelian if and only if
$m<p$, where $m$ is as in Lemma \ref{ag8}. The case where $S$ is cyclic has
been dealt with in Section~\ref{sec:cycSyl}.

For a group $G$, let $\mu_0(G)=1<\mu_1(G)<\mu_2(G)<\cdots$ denote the sequence
of integers such that for $i>0$, $G$ has an \ir character of degree $\mu_i(G)$
and no \ir character $\rho$ with $\mu_{i-1}(G)<\rho(1)<\mu_i(G)$. For universal
covering groups of finite classical groups the values
$\mu_1(G),\mu_2(G),\mu_3(G)$ were determined in \cite{TZ96}. In our analysis
below these three values play a significant role, but mainly for classical
centerless groups $G$ such as $\PGL_n(q)$, $\PGU_n(q)$, $\PSp_{2n}(q)$,
$\POm_{2n}^\pm(q)$ and $\Omega_{2n+1}(q)$. For these groups, mainly for
$2e\leq n\leq pe$ with $p>2$, we observe that $|G|_p<\mu_3(G)$ and sometimes
$|G|_p<\mu_1(G)$. In the latter case it is immediate to conclude that $G$ has
no $\Syl_p$-regular character, in the other cases we observe that there exists
an element $g\in G$ of order $p$ such that $\rho(g)>0$ for each \ir character
$\rho$ of degree at most $\mu_2(G)$. For $n>pe$ we use a different method.

Recall that $e=e_p(q)$ denotes the minimal integer $i>0$ such that $q^i-1$
is divisible by $p$.

%%%%%%%%%%%%%%%%%%%%%%%%%%%%%%%
\subsection{The groups $\GL_{n}(q), n\geq 2e$ }
Set $d_n=(q^n-1)/(q-1)$. Let $G=\SL_n(q)$. The minimal degrees of projective
\ir representations of $\PSL_n(q)$ are given in \cite[Table~IV]{TZ96}.
Table~\ref{tab:min SLn} is obtained from this by omitting the representations
that are not realisable as ordinary representations of $\SL_n(q)$.

\begin{table}[ht]
 $$\begin{array}{|l|ccc|}
 \hline
  n,q& \mu_1(G)& \mu_2(G)& \mu_3(G)\\
 \hline
 n=3,\ q>2& d_3-1& d_3& (q^2-1)(q-1)/(3,q-1)\\
 n=4,\ q>3& d_4-1& d_4& (q^3-1)(q-1)/(2,q-1)\\
 n=4,\ q=3& 26& 39& 52\\
 n>4,\ q>2, (n,q)\neq (6,3)& d_n-1& d_n& d_n(q^{n-1}-q^2)/(q^2-1)\\
 n>4,\ q=2,\ n\neq 6& d_n-1& d_n(2^{n-1}-4)/3& d_nd_{n-1}/3\\
 n=6,\ q=2& 62& 217& 588\\
 n=6,\ q=3& 363& 364& 6318\\
 \hline
 \end{array}$$
\caption{Minimal degrees of irreducible characters of $\SL_n(q)$}   \label{tab:min SLn}
\end{table}

\begin{lem}   \label{dd77}
 Let $p>2,e>1$, and $G= \GL_{en}(q)$. Suppose that $1<n\leq p$, and
 if $q=2$ suppose that either $n<p$ or $p<2^e-1$. Then $|G|_p<\mu_1(G)$
 and $G$ has no $\Syl_p$-regular character.
\end{lem}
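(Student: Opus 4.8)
The plan is to compute $|G|_p$ exactly, read $\mu_1(G)$ off Table~\ref{tab:min SLn}, verify the numerical inequality $|G|_p<\mu_1(G)$, and then deduce the absence of a $\Syl_p$-regular character formally from Lemma~\ref{lem:a51}. For the first step: since $p\nmid q$, the $p$-part of $|\GL_{en}(q)|=q^{\binom{en}{2}}\prod_{i=1}^{en}(q^i-1)$ equals $\prod_{k=1}^{n}|q^{ke}-1|_p$, as only the indices $i$ divisible by $e=e_p(q)$ contribute. Since $p$ is odd and $p\mid q^e-1$, the lifting-the-exponent lemma gives $|q^{ke}-1|_p=a\cdot|k|_p$ with $a:=|q^e-1|_p$, so $|G|_p=a^n\cdot|n!|_p$, which equals $a^n$ if $n<p$ and $p\,a^p$ if $n=p$ (here $1<n\le p$ is used). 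Moreover $p\nmid q-1$ (because $e>1$; trivially so when $q=2$), so $a$ divides $d_e:=(q^e-1)/(q-1)=1+q+\cdots+q^{e-1}$.

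For the inequality, I would use $\mu_1(\GL_{en}(q))=\mu_1(\SL_{en}(q))$ and Table~\ref{tab:min SLn} to obtain $\mu_1(G)=d_{en}-1=(q^{en}-q)/(q-1)$; the only instance compatible with the hypotheses for which the table entry is actually smaller is $\GL_4(2)$ with $p=3$, and there $\GL_4(2)\cong A_8$ has no $\Syl_3$-regular character by Theorem~\ref{thm:An}, so that case is settled. If $n<p$ then $|G|_p=a^n\le d_e^n=(q^e-1)^n/(q-1)^n$, and expanding $(q^e-1)^n$ and using $n\le p\le q^e-1$ together with $e,n\ge2$ shows $(q^e-1)^n<q^{en}-q$, whence $a^n<(q^{en}-q)/(q-1)=\mu_1(G)$. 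If $n=p$ then $|G|_p=p\,a^p\le p\,d_e^p$; for $q\ge3$ one bounds $d_e<q^e/(q-1)$ and uses $p\le(q-1)^{p-1}$, while for $q=2$ the hypothesis forces $p<2^e-1$, so that $2^e-1=a\,b$ with $b$ coprime to $p$, odd and $>1$, hence $b\ge3$ and $a\le(2^e-1)/3$; either way a short computation yields $p\,a^p<d_{en}-1=\mu_1(G)$. The clause ``$n<p$ or $p<2^e-1$'' for $q=2$ is precisely what excludes the Mersenne situation $n=p$, $2^e-1=p$ (for instance $\GL_6(2)$ with $p=3$), in which $|G|_p=p^{p+1}$ would exceed $\mu_1(G)$.

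With $|G|_p<\mu_1(G)$ established, suppose $\chi$ were a $\Syl_p$-regular character of $G$. Then $\chi(1)=|G|_p>1$, so $\chi\ne1_G$, and $\chi(1)<\mu_1(G)$ forces $\chi$ to be reducible. By Lemma~\ref{lem:a51}, $(\chi,1_G)\le(\chi|_S,1_S)=1$; if $(\chi,1_G)=0$ then every irreducible constituent of $\chi$ has degree $\ge\mu_1(G)>\chi(1)$, which is absurd, so $(\chi,1_G)=1$, and then $\chi-1_G$ is a nonzero genuine character of degree $|G|_p-1<\mu_1(G)$ containing no trivial constituent, equally absurd. This contradiction proves the lemma. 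I expect the only real difficulty to be the elementary but bookkeeping-heavy case $n=p$ of the numerical inequality: one must track lower-order terms carefully enough that the extra factor $p$ together with the size of $a$ --- which for $q=2$ can be as large as $q^e$ --- does not overtake $\mu_1(G)$, and one must check that the hypothesis imposed on $q=2$ captures exactly the finitely many Mersenne exceptions.
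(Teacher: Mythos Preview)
Your approach is the same as the paper's: bound $|G|_p$ by $(q^e-1)^n/(q-1)^n$ (times $p$ when $n=p$), compare with $\mu_1(G)=(q^{en}-q)/(q-1)$ from Table~\ref{tab:min SLn}, and split off the $q=2$, $n=p$ case using $a\le(2^e-1)/3$.

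Two remarks. First, you are right to flag $\GL_4(2)$ with $p=3$: there $|G|_3=9$ while $\mu_1(A_8)=7$, so the lemma's first assertion actually fails in that instance; the paper's proof glosses over this by invoking $\mu_1(G)=(q^{en}-q)/(q-1)$ uniformly. Your workaround via $\GL_4(2)\cong A_8$ and Theorem~\ref{thm:An} correctly salvages the ``no $\Syl_p$-regular character'' conclusion, which is all that is used later (Lemma~\ref{8ee}, Proposition~\ref{a7a}).

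Second, there is a small gap in your final paragraph: you bound only $(\chi,1_G)$, but $G=\GL_{en}(q)$ has $q-1$ linear characters, any of which could a priori occur in $\chi$. The fix is immediate: since $e>1$ forces $p\nmid q-1$, the determinant is trivial on a Sylow $p$-subgroup $S$, so every linear character of $G$ restricts to $1_S$; then Lemma~\ref{lem:a51} gives that at most one linear character in total occurs in $\chi$, and your argument goes through with $1_G$ replaced by that character.
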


\begin{proof}
If $n<p$ then $|G|_p\leq (q^e-1)^n/(q-1)^n$ (as $p$ is coprime to $q-1$),
and $\mu_1(G)=(q^{en}-q)/(q-1)$. So the statement is obvious in this case.

Let $n=p$. If $q>2$ then $|G|_p\leq\frac{p(q^e-1)^p}{(q-1)^p}$, while
$\mu_1(G)=\frac{q^{ep}-q}{q-1}$. (As $p>2$, the exceptions in
Table~\ref{tab:min SLn} can be ignored, except for $e=3$ and $(n,q)=(6,2)$
or $(6,3)$; these two cases are trivial.) We have
$p(q^e-1)^p<(q-1)^{p-1}(q^{ep}-q)$ as $p<(q-1)^{p-1}$ for $q>2$ and
$(q^e-1)^p<q^{ep}-q$.

If $q=2$ and $p<2^e-1$ then $p\leq (2^e-1)/3$ and $|S|\le p(2^e-1)^p/3^p$ is
less than $2^{ep}-2=\mu_1(G)$ as $p<3^p$.
\end{proof}

\begin{rem}   \label{666}
Lemma~\ref{dd77} does not extend to the case $q=2$ with $n=p=2^e-1$ as then
$$|G|_p=p(2^e-1)^p=p^{p+1}>(p+1)^p-2=2^{ep}-2=\mu_1(G).$$
So the case $e>1$ leaves us with $q=2$, which we deal with next.
\end{rem}

\begin{lem}   \label{u44}
 Let $e>1$ and $G=\GL_{ep}(2)$.
 Then $|G|_p<\mu_2(G)$ and $G$ has no $\Syl_p$-regular character.
\end{lem}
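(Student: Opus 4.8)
The plan is to first establish the numerical inequality $|G|_p<\mu_2(G)$ and then deduce the non-existence of a $\Syl_p$-regular character as a soft consequence. Recall that $\GL_{ep}(2)=\SL_{ep}(2)$ is simple since $ep\ge6$, and that here $e=e_p(2)\ge2$ automatically (as $p$ is odd), with $e=2$ precisely when $p=3$. From the description of the Sylow $p$-subgroups in \cite{Weir} together with the ``lifting the exponent'' identity $|2^{ke}-1|_p=|2^e-1|_p\cdot|k|_p$ for $1\le k\le p$, one obtains
$$|G|_p=\prod_{k=1}^{p}|2^{ke}-1|_p=|2^e-1|_p^{\,p}\cdot|p!|_p=p\,|2^e-1|_p^{\,p}\le p\,(2^e-1)^p.$$
The case $p=3$, i.e. $n:=ep=6$, I would dispose of directly from Table~\ref{tab:min SLn}: there $|G|_3=3\cdot 3^3=81<217=\mu_2(G)$, and the argument of the last paragraph below then applies verbatim. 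So from now on $p\ge5$ and $n=ep\ge10$ (in particular $n\ne6$).

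For the inequality, Table~\ref{tab:min SLn} (the row $n>4$, $q=2$, $n\ne6$) gives $\mu_2(G)=(2^n-1)(2^{n-1}-4)/3$. Since $2^{n-1}-4\ge2^{n-2}$ for $n\ge4$ and $2^n-1\ge2^{n-1}$, this yields $\mu_2(G)\ge 2^{2n-3}/3$, whereas $|G|_p\le p(2^e-1)^p<p\,2^{ep}=p\,2^n$. Hence it suffices to check $3p\,2^n<2^{2n-3}$, i.e. $3p<2^{n-3}$. As $2^e\equiv1\pmod p$ forces $2^e\ge p+1$, we have $n=ep\ge2p$, and $2^{n-3}\ge 2^{2p-3}>3p$ for every $p\ge5$ by a trivial exponential estimate. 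Thus $|G|_p<\mu_2(G)$.

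Now suppose $\chi$ is a $\Syl_p$-regular character of $G$, so $\chi(1)=|G|_p<\mu_2(G)$. Then every irreducible constituent of $\chi$ has degree $1$ or $\mu_1(G)=2^n-2$, and since $G$ is simple the only degree-$1$ character is $1_G$. By the classification of characters of small degree of $\SL_n(2)$ in \cite{TZ96}, the irreducible characters of degree $2^n-2$ are the reflection character(s) $\psi_j$, each of which occurs inside the (rank-$2$) permutation character $\pi_j$ of $G$ on the $1$-spaces, respectively on the hyperplanes, of the natural module; in particular $\psi_j=\pi_j-1_G$. Writing $\chi=a\cdot1_G+\sum_j b_j\psi_j$ and comparing degrees modulo $2$ (recall $|G|_p$ is odd while $2^n-2$ is even) shows that $a$ is odd, so $1_G$ is a constituent of $\chi$. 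Finally choose $u\in S$ of order $p$ acting non-trivially on exactly one of the $e$-dimensional blocks stabilised by $S$ and trivially on the others; such $u$ exists by the wreath-product structure of $S$, and its fixed space on $\FF_2^n$, and on the dual module, is non-zero, so $\pi_j(u)\ge1$ and hence $\psi_j(u)\ge0$ for every $j$. Therefore $\chi(u)\ge a\ge1>0$, contradicting $\chi(u)=0$ (as $\chi|_S=\rho_S^{\reg}$). So $G$ has no $\Syl_p$-regular character.

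The only genuinely delicate point is this last paragraph: it relies on \cite{TZ96} to pin down exactly which irreducible characters have degree below $\mu_2(G)$ and on the reflection-character description to control their values on $p$-elements (alternatively one could invoke Lemma~\ref{lem:a51} together with $(\psi_j|_S,1_S)\ge1$, the latter holding because $S$ is intransitive on the $2^n-1$ one-spaces, this number not being a power of $p$). The estimate $|G|_p<\mu_2(G)$ itself, and the reduction to $p\ge5$, are entirely routine.
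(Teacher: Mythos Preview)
Your proof is correct and follows essentially the same approach as the paper: bound $|G|_p\le p(2^e-1)^p$, compare with $\mu_2(G)$ from Table~\ref{tab:min SLn}, and conclude that any $\Syl_p$-regular character is built from $1_G$ and the permutation constituent $\tau$ of degree $2^{ep}-2$, then find a $p$-element on which such a combination is positive.

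Two minor remarks. First, by \cite[Table~IV]{TZ96} the irreducible character of degree $2^{ep}-2$ is \emph{unique}; the permutation characters on points and on hyperplanes share the same non-trivial constituent $\tau$, so your indexing by $j$ is harmless but unnecessary. Second, your parity argument is a detour: with your choice of $u$ (trivial on all but one $e$-block) the fixed space has dimension $e(p-1)\ge4$, so $\pi(u)=2^{e(p-1)}-1\ge15$ and hence $\tau(u)>0$ outright; the paper simply observes $\pi(g)>1$ for a suitable $p$-element and concludes $\tau(g)>0$, making the constituent $1_G$ irrelevant. Your alternative via Lemma~\ref{lem:a51} and intransitivity of $S$ on the $2^{ep}-1$ points is also valid and slightly slicker.
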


\begin{proof}
We have $|G|_p\leq p(2^e-1)^p $. By Table~\ref{tab:min SLn} we have
$\mu_1(G)=2^{ep}-2$ and $\mu_2(G)=(2^{ep}-1)(2^{ep-1}-4)/3>|G|_p$, or
$ep=3,4,6$. As $p$ is odd
and $e>1$, we have $ep\neq 3,4$, so in the exceptional cases $e=2$, $p=3$ where
$|G|_p=81<\mu_2(G)=217$.

Let $\pi$ be the permutation character of $G$ associated with the action of
$G$ on the non-zero vectors of the natural $\FF_2G$-module. Then $\pi=\tau+1_G$,
where $\tau$ is a character of $G$ of degree $\tau(1)=2^{ep}-2$. There is a
unique irreducible character of $G$ of degree $2^{ep}-2$ \cite[Table~IV]{TZ96},
and hence it coincides with $\tau$.
Let $\chi$ be a $\Syl_p$-regular character of $G$. As $\chi(1)=|G|_p$, it
follows that the irreducible constituents of $\chi$ are either $1_G$ or
$\tau$. As $\chi(g)=0$ for every $p$-element $g\in G$, we get a contradiction
as soon as we show that $\tau(g)>0$ for some $p$-element $g\in G$. This is
equivalent to showing that $\pi(g)>1$. This can be easily verified.
\end{proof}

\begin{lem}   \label{md5}
 Suppose that $p>2$, $p|(q-1)$ and let $\SL_n(q)\le G\le \GL_n(q)$ for $2<n<p$.
 Then $G$ has no $\Syl_p$-regular character.
\end{lem}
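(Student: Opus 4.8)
The plan is to exploit that $|G|_p$ is very small in this situation. Since $p\mid q-1$ we have $e=e_p(q)=1$, and since $n<p$ the prime $p$ does not divide $n!$; hence $|\SL_n(q)|_p=|q-1|_p^{\,n-1}$ and $|\GL_n(q)|_p=|q-1|_p^{\,n}$, so $|G|_p=|q-1|_p^{\,n-1}\cdot|G:\SL_n(q)|_p$. The numerical fact driving the proof is the inequality $|q-1|_p^{\,n-1}\le(q-1)^{n-1}<q+q^2+\cdots+q^{n-1}=d_n-1=\mu_1(\SL_n(q))$, valid for all $n\ge3$ by Table~\ref{tab:min SLn}; the exceptional small cases recorded there are irrelevant, since $p>2$ together with $p\mid q-1$ forces $q\ge p+1\ge6$.

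First I would reduce to the subcase $p\nmid|G:\SL_n(q)|$. Set $Z_0:=O_p(Z(G))$; as $p\nmid n$ one checks easily that $Z_0$ is a central $p$-subgroup with $Z_0\cap\SL_n(q)=1$, with $|Z_0|=|G:\SL_n(q)|_p$, and with $S=(S\cap\SL_n(q))\,Z_0$ for $S\in\Syl_p(G)$. Let $\chi$ be a $\Syl_p$-regular character of $G$. Since $Z_0\le S$ and $\chi|_S=\rho^\reg_S$, restriction to $Z_0$ yields $\chi|_{Z_0}=(|S|/|Z_0|)\,\rho^\reg_{Z_0}$, so each linear character of $Z_0$ occurs in $\chi|_{Z_0}$ with multiplicity $|S|/|Z_0|$. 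Consequently the sum $\chi_1$ of those irreducible constituents of $\chi$ on which $Z_0$ acts trivially is a nonzero character of $\bar G:=G/Z_0$ with $\chi_1(1)=|S|/|Z_0|=|\SL_n(q)|_p$, and one sees that $\chi_1|_{\bar S}=\rho^\reg_{\bar S}$ for $\bar S:=S/Z_0\in\Syl_p(\bar G)$; that is, $\chi_1$ is a $\Syl_p$-regular character of $\bar G$ of degree $|\SL_n(q)|_p>1$.

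Now $[\bar G,\bar G]=\SL_n(q)Z_0/Z_0\cong\SL_n(q)$ (recall $\SL_n(q)$ is perfect as $q\ge6$), and $\bar S\le[\bar G,\bar G]$ because $S=(S\cap\SL_n(q))Z_0$. By Clifford theory every nonlinear $\psi\in\Irr(\bar G)$ restricts to $K:=[\bar G,\bar G]\cong\SL_n(q)$ as a positive multiple of a $\bar G$-orbit of irreducible characters of $K$, none of them trivial (otherwise $K\le\ker\psi$ and $\psi$ would be a character of the abelian group $\bar G/K$, hence linear), so $\psi(1)\ge\mu_1(\SL_n(q))$. Since $\chi_1(1)=|\SL_n(q)|_p<\mu_1(\SL_n(q))$ by the first paragraph, $\chi_1$ is a sum of linear characters of $\bar G$, hence trivial on $[\bar G,\bar G]\supseteq\bar S$; thus $\chi_1|_{\bar S}=\chi_1(1)\cdot1_{\bar S}$, which contradicts $\chi_1|_{\bar S}=\rho^\reg_{\bar S}$ because $|\bar S|=|\SL_n(q)|_p>1$.

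I do not expect a serious obstacle here: the argument is soft, and the only points needing a little care are the elementary properties of $Z_0$, the verification that $\chi_1$ is again $\Syl_p$-regular after passing to $\bar G$, and extracting the bound $|q-1|_p^{\,n-1}<\mu_1(\SL_n(q))$ from Table~\ref{tab:min SLn}. If one prefers, the case $p\nmid|G:\SL_n(q)|$ may be handled directly inside $G$ (then $S\le[G,G]=\SL_n(q)$ from the outset and the same degree estimate applies verbatim), and the passage to $\bar G$ serves only to reduce the general case to this one.
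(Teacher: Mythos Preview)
Your argument is correct and follows essentially the same route as the paper: pass to a central quotient so that the Sylow $p$-subgroup has order $|q-1|_p^{\,n-1}$, and then observe that this is strictly less than the minimal nontrivial irreducible degree, forcing any $\Syl_p$-regular character to be a sum of linear characters, a contradiction. The paper mods out by $Z(G)$ and invokes $\mu_1(G/Z(G))=(q^n-q)/(q-1)$ directly (with Lemma~\ref{hc1}), whereas you mod out by $O_p(Z(G))$ and bound $\mu_1(\bar G)$ via Clifford theory from $\mu_1(\SL_n(q))$; these are cosmetic variants of the same idea.
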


\begin{proof}
Let $G_1=G/Z(G)$. Then $|G_1|_p\leq |q-1|^{n-1}_p$ and
$\mu_1(G_1)=(q^n-q)/(q-1)>q^{n-1}$ as above. So $|G_1|_p<\mu_1(G_1)$, and
$G_1$ has no $\Syl_p$-regular character. Then neither has $G$ by
Lemma~\ref{hc1}.
\end{proof}

%%%%%%%%%%%%%%%%%%%%%%%%%%%%%%%
\subsection{The groups $\GU_{n}(q)$, $n>2$}

In this section we consider the case where $p>2$ and a Sylow $p$-subgroup of
$\GU_n(q)$ is abelian or abelian-by-cyclic. This implies $n<dp^2$, where $d$
is the order of $-q$ modulo $p$, equivalently, $d=2e$ if $e$ is odd, $d=e/2$
if $e\equiv 2\pmod 4$, and $d=e$ if $4|e$. So $d=1$ if and only if $e=2$,
equivalently, $p|(q+1)$. Note that $e=2e_p(q^2)$ if $e$ is even.

\begin{lem}   \label{na9}
 Let $p$ be odd, let S be a Sylow $p$-subgroup of $G=\GU_{n}(q)$.
 Then $S$ is abelian and not cyclic if and only if $2d\leq n<dp$.
\end{lem}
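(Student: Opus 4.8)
The plan is to reduce the whole question to the well-understood structure of Sylow $p$-subgroups of general linear groups (supplied by Lemma~\ref{nb8} together with Weir's analysis \cite{Weir}), and then to translate the resulting arithmetic condition back into a condition on $n$ and $d$.

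First I would record the following description of a Sylow $p$-subgroup $T$ of $\GL_m(Q)$, for $p$ odd and $p$ coprime to $Q$. Put $f=e_p(Q)$ and write $m=af+r$ with $0\le r<f$. Since $p$ does not divide $Q^i-1$ for $1\le i\le r$, a Sylow $p$-subgroup of $\GL_r(Q)$ is trivial and the index of $\GL_{af}(Q)\times\GL_r(Q)$ in $\GL_m(Q)$ is prime to $p$; hence $T$ is a Sylow $p$-subgroup of $\GL_{af}(Q)$. Inside $\GL_{af}(Q)$ lies the block-permutation subgroup $\GL_f(Q)\wr S_a$, and a Sylow $p$-subgroup of $\GL_f(Q)$ is cyclic of order $|Q^f-1|_p>1$; a short lifting-the-exponent count (valid since $p$ is odd) gives $|\GL_f(Q)\wr S_a|_p=|\GL_{af}(Q)|_p$, so $T\cong C\wr P_a$ with $C$ nontrivial cyclic and $P_a\in\Syl_p(S_a)$. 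Consequently $T$ is abelian precisely when $P_a=1$, i.e.\ when $a<p$, and then $T\cong C^a$ is cyclic exactly when $a\le1$. Therefore $T$ is abelian and non-cyclic if and only if $2\le a<p$, equivalently $2f\le m<pf$.

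It then remains to run the three cases of Lemma~\ref{nb8} and rewrite $2f\le m<pf$ as $2d\le n<dp$. If $e$ is odd, then $m=\flhalfn$ and $Q=q$, so $f=e$, while $d=2e$; using $\lfloor n/2\rfloor\ge 2e\iff n\ge 4e$ and $\lfloor n/2\rfloor\le pe-1\iff n\le 2pe-1$, the condition becomes $4e\le n<2pe$, i.e.\ $2d\le n<dp$. If $e\equiv2\pmod4$, then $m=n$ and $Q=q^2$, so $f=e_p(q^2)=e/2=d$ (using $e=2e_p(q^2)$), and the condition reads directly $2d\le n<dp$. If $4\mid e$, then $m=\flhalfn$ and $Q=q^2$, so $f=e_p(q^2)=e/2$ while $d=e$; here $pf=pe/2$ is an integer and is even, so $\lfloor n/2\rfloor<pf\iff n<2pf=pe=pd$ and $\lfloor n/2\rfloor\ge 2f\iff n\ge 4f=2d$, again giving $2d\le n<dp$. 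Combining the three cases yields the claimed equivalence.

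The only real obstacle is the bookkeeping: one must correctly match the quantity $f=e_p(Q)$ attached to the auxiliary linear group in each case of Lemma~\ref{nb8} with $d=e_p(-q)$ (through $d=2e$, $d=e/2$, $d=e$ and $e=2e_p(q^2)$), and one must carry out the floor-function inequalities with care — in particular, in the case $4\mid e$ one needs that $pf$ is even, so that $\lfloor n/2\rfloor<pf$ really is equivalent to $n<2pf$. As a byproduct of the description $T\cong C^a$, I would also note for later use that when $S$ is abelian it has rank exactly $a$.
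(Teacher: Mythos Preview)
Your proof is correct and follows essentially the same approach as the paper: reduce via Lemma~\ref{nb8} to the Sylow $p$-structure of a suitable $\GL_m(Q)$, invoke Weir's description to get the abelian/non-cyclic criterion $2f\le m<pf$, and then translate back to $2d\le n<dp$ case by case. Your write-up is somewhat more explicit than the paper's about the wreath-product structure and the floor-function arithmetic (in particular the parity remark that $pf$ is even when $4\mid e$), but the strategy is identical.
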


\begin{proof}
If $X=\GL_n(q)$ then Sylow $p$-subgroups of $X$ are abelian if and only if
$n<ep$. Let $S$ be a Sylow $p$-subgroup of $G$. We use Lemma~\ref{nb8}.
If $e\equiv 2\pmod 4$ then $|G|_p=|\GL_n(q^2)|_p$, so $S$ is abelian if and only
if $n<e_p(q^2)p=ep/2=dp$. If $e$ is odd then $|G|_p=|\GL_{\flhalfn}(q)|_p$, so
$S$ is abelian if and only if $\flhalfn<ep$, equivalently, $n<2ep=dp$.
If $4|e$ then $|G|_p=|\GL_{\flhalfn}(q^2)|_p$, so $S$ is abelian if and only if
$\flhalfn<e_p(q^2)p$, equivalently, $n<ep=dp$. Similarly, $S$ is cyclic if and
only if $n< 2d$. So the lemma follows.
\end{proof}

\begin{lem}   \label{lem:SU_dp}
 Let $p>2$, $d>1$ and $\SU_{dp}(q)\le G\le \GU_{dp}(q)$.
 Then $|G|_p<\mu_1(G)$ and $G$ has no $\Syl_p$-regular character. This remains
 true if $\SU_{n}(q)\le G\le \GU_{n}(q)$ with $2d\leq n<dp$.
\end{lem}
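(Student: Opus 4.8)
The plan is to establish the sharper claim $|G|_p<\mu_1(G)$, where $\mu_1(G)$ is the smallest degree of a non-linear irreducible character of $G$; the non-existence of a $\Syl_p$-regular character is then immediate. Indeed, since $n\ge 2d\ge 4$ the group $\SU_n(q)$ is perfect, so $[G,G]=\SU_n(q)$ and every linear character of $G$ factors through the cyclic group $G/\SU_n(q)$ of order dividing $q+1$; as $d>1$ means precisely $p\nmid q+1$, any such character is trivial on $S$. Hence if $\chi$ were $\Syl_p$-regular, then $\chi(1)=|S|=|G|_p<\mu_1(G)$ would force all irreducible constituents of $\chi$ to be linear, so $\chi|_S=|S|\cdot 1_S\ne\rho_S^{\reg}$ (recall $|S|>1$), contradicting Lemma~\ref{lem:a51}. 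The same observation shows that a non-linear $\psi\in\Irr(G)$ cannot contain $\SU_n(q)$ in its kernel, so $\psi|_{\SU_n(q)}$ has a non-trivial constituent and $\psi(1)\ge\mu_1(\SU_n(q))$; thus $\mu_1(G)\ge\mu_1(\SU_n(q))$, and it suffices to prove $|G|_p<\mu_1(\SU_n(q))$.

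For the left-hand side, put $k:=\lfloor n/d\rfloor$, so that $2\le k\le p$ by hypothesis, with $k=p$ only when $n=dp$. Using Lemma~\ref{nb8} together with the standard formula $|\GL_m(Q)|_p=|Q^{e'}-1|_p^{\lfloor m/e'\rfloor}\cdot|\lfloor m/e'\rfloor!\,|_p$ (where $e'=e_p(Q)$ and $Q\in\{q,q^2\}$) and the identity $\lfloor\lfloor n/2\rfloor/e'\rfloor=\lfloor n/d\rfloor$, a short computation in each of the three cases ($e$ odd, $e\equiv 2\pmod 4$, $4\mid e$) gives
$$|G|_p\le|\GU_n(q)|_p=\big|q^{d}-(-1)^{d}\big|_p^{\,k}\cdot|k!\,|_p,$$
where $|k!\,|_p=1$ if $k<p$ and $|k!\,|_p=p$ if $k=p$. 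The key estimate is $|q^{d}-(-1)^{d}|_p\le q^{d-1}$: factoring $q^e-1$ (resp.\ $q^{2d}-1$ when $d$ is odd) as a product of the two factors $q^{e/2}\pm 1$ (resp.\ $q^{d}\pm 1$), noting that $p$ divides exactly one of them and that $p\nmid q+1$, one obtains $|q^d-(-1)^d|_p<q^{d/2}$ when $e$ is odd, $|q^d-(-1)^d|_p\mid(q^d+1)/(q+1)<q^{d-1}$ when $e\equiv 2\pmod 4$, and $|q^d-(-1)^d|_p\le q^{d/2}+1<q^{d-1}$ when $4\mid e$. Therefore $|G|_p\le q^{(d-1)k}\cdot|k!\,|_p$.

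For the right-hand side, \cite[Table~II]{TZ96} gives $\mu_1(\SU_n(q))\ge(q^{n}-q)/(q+1)\ge(q^{dk}-q)/(q+1)$ since $n\ge dk$; the finitely many small unitary groups excluded from that formula do not occur here, because $n\ge 2d$ with $d>1$ forces $d=2$ (whence $p\mid q-1$ and $q\ge 4$) when $n=4$, and no exception with $n\ge 5$ lies in our range. Comparing the two sides, it remains to verify $q^{(d-1)k}|k!\,|_p<(q^{dk}-q)/(q+1)$; since $(d-1)k\ge 2$, this reduces to the elementary inequality $(q+1)\,|k!\,|_p<q^{k}$, that is $q+1<q^{k}$ for $k<p$ (clear as $k\ge 2$) and $(q+1)p<q^{p}$ for $k=p$. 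The latter holds unless $(q,p)=(2,3)$, which however does not arise, since $e_3(-2)=1$ means $d=1$. This proves $|G|_p<\mu_1(G)$, and the lemma follows.

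I expect the main obstacle to be the numerical bookkeeping in the last two paragraphs: pinning down the exact lower bound for $\mu_1(\SU_n(q))$ from \cite{TZ96} and confirming that none of the exceptional small unitary groups appears in the range $2d\le n\le dp$ with $d>1$; establishing the uniform bound $|q^d-(-1)^d|_p\le q^{d-1}$ across the three cases for $d$; and disposing of the single tight boundary case $k=p$, $(q,p)=(2,3)$. The remaining steps are routine.
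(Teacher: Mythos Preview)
Your proof is correct and follows the same strategy as the paper's: bound $|G|_p$ above via the Sylow structure from Lemma~\ref{nb8} and $\mu_1(G)$ below via \cite{TZ96}, then compare. The paper carries out the comparison separately in the three cases for $e$ (using the sharper case-specific bounds $p(q^d+1)^p/(q+1)^p$, $p(q^d-1)^p/(q+1)^p$, $p(q^e-1)^p/(q^2-1)^p$) and treats only $n=dp$ in detail, remarking that $2d\le n<dp$ is similar; your version instead packages everything through $k=\lfloor n/d\rfloor$ and the uniform estimate $|q^d-(-1)^d|_p\le q^{d-1}$, and you spell out explicitly why $|G|_p<\mu_1(G)$ rules out $\Syl_p$-regular characters in the presence of linear characters---a point the paper leaves implicit.
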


\begin{proof}
Note that $d>1$ means that $p$ does not divide $q+1$, so $|G|_p=|\SU_d(q)|_p$.
So it suffices to prove the lemma for $G=\SU_{dp}(q)$. First assume that $d$
is odd, so $e\equiv 2\pmod 4$ and $d=e/2$.
Then we have $|G|_p\leq p(q^{e/2}+1)^p/(q+1)^p=p(q^{d}+1)^p/(q+1)^p$
and $\mu_1(G)=(q^{dp}-q)/(q+1)$. In this case $|G|_p<\mu_1(G)$. Similarly,
if $e$ is odd then $d=2e$,  $|G|_p\leq p(q^{d}-1)^p/(q+1)^p$ and
$\mu_1(G)=(q^{dp}-1)/(q+1)$, so again $|G|_p<\mu_1(G)$. \par
Finally assume that $4|e$. Then $d=e$, $|G|_p\leq p(q^e-1)^p/(q^2-1)^p$
and $\mu_1(G)=(q^{ep}-1)/(q+1)$. So $|G|_p<\mu_1(G)$ again. This implies that
$G$ has no $\Syl_p$-regular character.

The proof of the additional statement is similar.
\end{proof}

Thus, we are left with primes such that $p|(q+1)$. We first consider the case
where $n<p$.

\begin{lem}   \label{u32}
 Let $p|(q+1)$, $2<n<p$, and $\SU_{n}(q)\le G\le \GU_{n}(q)$. Then $G$
 has no $\Syl_p$-regular character.
\end{lem}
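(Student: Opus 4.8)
The plan is to pass to $G_1:=G/Z(G)$, which satisfies $\PSU_n(q)\le G_1\le\PGU_n(q)$, and to play $|G_1|_p$ off against the smallest irreducible character degrees of $G_1$ as tabulated in \cite{TZ96}. That we may replace $G$ by $G_1$ follows from Lemma~\ref{hc1} applied to the normal $p$-subgroup $O_p(Z(G))$ (and then to $O^{p'}$ of the quotient): a $\Syl_p$-regular character of $G$ produces one of a group squeezed between $\PSU_n(q)$ and $\PGU_n(q)$. First I would compute $|G_1|_p$. Since $p\mid(q+1)$, i.e.\ $e_p(q)=2$, and $n<p$, lifting the exponent (using $i<p$) gives $|q^i-(-1)^i|_p=|q+1|_p$ for all $1\le i\le n$; hence $|\GU_n(q)|_p=|q+1|_p^n$, and dividing by $|Z(\GU_n(q))|_p=|q+1|_p$ and using $p\nmid(n,q+1)$ yields $|G_1|_p=|q+1|_p^{\,n-1}\le(q+1)^{n-1}$.

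The decisive dichotomy is whether $|G_1|_p<\mu_1(G_1)$, the smallest degree of a non-trivial irreducible character of $G_1$. If so we finish at once: a $\Syl_p$-regular character restricts to $\rho^\reg_S$, which is not a multiple of $1_S$, so it must have a non-trivial irreducible constituent, of degree $\le|G_1|_p<\mu_1(G_1)$, which is absurd. Otherwise $|G_1|_p\ge\mu_1(G_1)$; since $\mu_1(G_1)$ is of the order of $q^{n-1}$ while $|G_1|_p\le(q+1)^{n-1}$, this forces $|q+1|_p$ to be comparable to $q+1$ (in practice $q+1$ a power of $p$), and for those $(n,q,p)$ one checks from \cite{TZ96} that $|G_1|_p$ is still smaller than the third smallest irreducible degree $\mu_3(G_1)$; the handful of genuinely tight triples — occurring for $n$ near $p$, hence $q\ge p-1$ large — are to be settled by direct inspection via \cite{Atl} for the least $q$. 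Consequently every irreducible constituent of a $\Syl_p$-regular $\chi$ of $G_1$ has degree $1$, $\mu_1(G_1)$ or $\mu_2(G_1)$: that is, $\chi$ is built from the (at most $(n,q+1)$) linear characters and from the Weil characters of $\GU_n(q)$, of degrees $(q^n-(-1)^n)/(q+1)$ and $(q^n+(-1)^nq)/(q+1)$ — except that for $n=3$ the degree $q^2-q$ also enters, and there one argues instead from the explicit character table of $\SU_3(q)$.

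For the contradiction I would use that, since $\chi|_S=\rho^\reg_S$ vanishes off the identity and $S$ is a Sylow $p$-subgroup, $\chi$ vanishes at every element of order $p$ in $G_1$. I would then exhibit one such element at which all the possible constituents are strictly positive: take $g$ in the diagonal torus $\cong C_{q+1}^n$ acting as $\diag(\omega,\omega^{-1},1,\dots,1)$ with $\omega$ of order $p$, so that $g$ is conjugate to $g^{-1}$ and $\dim\ker(g-1)=n-2$. Using $\zeta(x)=(-1)^n(-q)^{\dim\ker(x-1)}$ for the total Weil character and averaging over $Z(\GU_n(q))$, a short computation shows that each Weil constituent takes at $g$ an integral value which for $n\ge4$ is $\ge1$ (because $q\ge p-1\ge4$), while the linear characters and $1_{G_1}$ take the value $1$ at $g$; hence $\chi(g)>0$, the desired contradiction. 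The main obstacle is the middle step: establishing $|G_1|_p<\mu_3(G_1)$ uniformly, where the inequality degenerates exactly when $q+1$ is a $p$-power with $n$ approaching $p$, so a few exceptional triples must be checked by hand; and, separately, the group $\SU_3(q)$, where the element $g$ above need not be positive on every low-degree character and one must work directly from its character table.
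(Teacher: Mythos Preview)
Your plan is plausible and broadly in the spirit of the later Lemmas~\ref{et1}--\ref{et2} (where the paper really does compute Weil character values at a carefully chosen $p$-element to handle the case $n=p$), but it is not the paper's route for $n<p$ and it leaves more work undone than you acknowledge.

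The paper's proof is considerably shorter. After reducing to $G'=\SU_n(q)$ (via a direct-product splitting $G=G_1\times Z_p$ with $Z_p=O_p(Z(G))$ and Lemma~\ref{da3}, rather than your quotient argument), it treats only $n=3$ by a pure degree count: here $|G'|_p=|q+1|_p^{2}<2(q^2-q)=2\mu_1(G')$ for $q>4$, so a $\Syl_p$-regular $\chi$ has at most one non-trivial irreducible constituent, forcing $\chi(1)\le (q^2-q+1)+1$, a contradiction; $q=4$ is read off from \cite{Atl}. For $n>3$ the paper does \emph{not} repeat a character-value argument at all. Instead it observes that inside $\SU_n(q)$ the stabiliser of an orthogonal decomposition $V=W\oplus\langle b_4\rangle\oplus\cdots\oplus\langle b_n\rangle$ with $\dim W=3$ contains a subgroup $P=X'U$ with $U=O_p(P)$ central (scalars on $W$) and $P/U\cong\SU_3(q)$, and $P$ contains a Sylow $p$-subgroup of $G'$. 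Then Lemma~\ref{hc1} transports a $\Syl_p$-regular character of $G'$ to one of $\SU_3(q)$, already excluded. This sidesteps entirely the $\mu_3$-inequality and the Weil-value positivity that you identify as the ``main obstacle''.

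Two remarks on your outline. First, the inequality $|G_1|_p<\mu_3(G_1)$ is not uniformly established in \cite{TZ96} for all parities of $n$ and $q$; the formula you implicitly invoke for $\mu_3(\PSU_4(q))$ in Lemma~\ref{3rd}, for instance, is stated there for $q$ odd, whereas here $q$ may be even. Second, your Weil-character computation needs the \emph{individual} constituents $\zeta_\lambda$ to be positive at $g$, and the averaging over $Z(\GU_n(q))$ brings in terms $\overline{\lambda(\omega^{\pm1})}$ that can make certain $\zeta_\lambda(g)$ vanish or change sign for specific central characters; this is exactly the kind of case analysis the paper avoids by its inductive trick. Your approach can very likely be pushed through, but it is more laborious than the two-line reduction to $n=3$ the paper uses.
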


\begin{proof}
Let $G_1=\{g\in G\mid \det g$ is an element of $\GU_1(q)$ of $p'$-order$\}$.
Then $|G:G_1|=|Z_p|$, where $Z_p$ is the Sylow $p$-subgroup of $Z(G)$.
As $Z_p\cap G_1=1$, it follows that $G=G_1\times Z_p$.
By Lemma~\ref{da3}, the result for $G$ follows if we show that $G_1$ has no
$\Syl_p$-regular character. In turn, this follows from the same result for
$G'=\SU_n(q)$ as $|G_1:G'|$ is coprime to $p$. So we deal with $G'$.

Suppose the contrary, and let $\chi$ be a $\Syl_p$-regular character of $G'$.

First, let $n=3$.  Then $\chi(1)=|G'|_p=(q+1)_p^2$ for $p>3$. By
\cite[Table~V]{TZ96}, $\mu_1(G')=q^2-q$. Let $q>4$. Then
$\chi(1)=|G'|_p=(q+1)_p^2<2(q^2-q)=2\mu_1(G')$. So $\chi$ has a single
non-trivial \ir constituent $\rho$, and $\rho(1)\leq \chi(1)$. Again by
\cite[Table~V]{TZ96}, $q^2-q\leq \rho(1)\leq q^2-q+1$. As $\chi$ is
$\Syl_p$-regular, $(\chi,1_{G'})\leq 1$ (Lemma~\ref{lem:a51}).
Therefore, $\chi(1)\leq \rho(1)+1\leq q^2-q+2$, which is false. The case with
$q=4$ can be read off from the character table of $G'$.

Let $n>3$ and let $V$ be the natural $\FF_{q^2}G'$-module. Let $b_1,\ldots,b_n$
be an orthogonal basis in $V$ and let $W=\lan b_1,b_2,b_3\ran$. Then $W$ is a
non-degenerate subspace of $V$ of dimension 3.
Set $X=\{h\in G'\mid hW=W$ and $hb_i\in\lan b_i\ran$ for $i=4,\ldots,n\}$ and
$U:=O_p(X)$. Then $U\subseteq Z(X)$ and every element of $U$ acts scalarly on
$W$. Let $X'$ be the derived subgroup of $X'$ and $P=X'U$. Then
$X'\cong \SU_3(q)$ and $P/U\cong \SU_3(q)$ (as $p>3$).
By the above, $\SU_3(q)$ has no $\Syl_p$-regular character. As $P$ contains
a Sylow $p$-subgroup of $G'$, the result follows from Lemma~\ref{hc1}.
\end{proof}

\begin{lem}   \label{upq2}
 Let $\SU_3(8)\le G\le \GU_3(8)$. Then $G$ has no $\Syl_3$-regular character.
\end{lem}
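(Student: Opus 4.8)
The plan is to reduce the whole range of groups to the single group $\GU_3(8)$ and then dispose of it by a direct computation with its character table. Here $p=3$, $q=8$ and $q+1=9$, so $e_3(8)=2$; a short order computation gives $|\GU_3(8)|_3=3^7=2187$ and $|\SU_3(8)|_3=3^5=243$.

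First, note that $\SU_3(8)\lhd\GU_3(8)$ with cyclic quotient $\GU_3(8)/\SU_3(8)\cong\GU_1(8)\cong C_9$, so every $G$ with $\SU_3(8)\le G\le\GU_3(8)$ is normal in $\GU_3(8)$ of $3$-power index. A $\Syl_3$-regular character $\chi$ of such a $G$ vanishes on every non-identity $3$-element of $G$: all Sylow $3$-subgroups are conjugate, $\chi$ is a class function, and $\chi|_S=\rho_S^\reg$ vanishes off the identity. Consequently $\chi^{\GU_3(8)}$ has degree $|\GU_3(8):G|\cdot|G|_3=2187=|\GU_3(8)|_3$, and by the induced-character formula it vanishes on every $3$-element of $\GU_3(8)$ (a conjugate of such an element lying in $G$ is again a non-identity $3$-element of $G$). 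Hence $\chi^{\GU_3(8)}$ is a genuine character of degree $|S|$ vanishing off the identity on $S$, i.e.\ it is $\Syl_3$-regular for $\GU_3(8)$. So it suffices to treat $G=\GU_3(8)$.

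It remains to show that $\GU_3(8)$ has no character of degree $2187$ vanishing on all elements of order $3$ and of order $9$. Using the known character table of $\GU_3(8)$ (available through \cite{Atl} together with the diagonal automorphisms, or computed directly), one lists the irreducible characters of degree at most $2187$ — the three unipotent characters of degrees $1$, $q(q-1)$ and $q^3$, together with the semisimple and mixed characters of the remaining small degrees — and records their values on the (few) classes of elements of order $3$ and of order $9$. One then checks that the resulting system has no solution in non-negative integers; concretely, I expect to exhibit a single class $\mathcal{C}$ of elements of order $3$ on which every irreducible character of degree $\le 2187$ is non-negative, the only ones vanishing there having degrees too small to add up to $2187$, so that any character $\chi$ with $\chi(1)=2187$ has $\chi(\mathcal{C})>0$ and cannot be $\Syl_3$-regular.

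The only obstacle is the bookkeeping in this last step: $2187=3^7$ is large enough that $\GU_3(8)$ has several irreducible characters below that degree, so the linear system is not entirely trivial. However, the $3$-part constraint — the target degree is $3^7$ while most of the relevant character degrees carry only a small power of $3$ — together with positivity on one well-chosen class of order-$3$ elements should collapse it at once, and there is no conceptual difficulty beyond this verification.
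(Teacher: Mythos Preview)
Your reduction is correct: inducing a $\Syl_3$-regular character from a normal subgroup of $3$-power index does produce a $\Syl_3$-regular character of $\GU_3(8)$, so it would suffice to treat that group. But you then face a character-table analysis with target degree $2187$, and you do not actually carry it out (``I expect to exhibit\ldots'', ``should collapse it at once''). Every irreducible degree of $\GU_3(8)$ --- namely $1,56,57,399,456,512,567$ --- lies below $2187$, and the central $C_9$ multiplies the number of $3$-singular classes, so the bookkeeping is heavier than you suggest and your hoped-for single positivity class is not guaranteed.

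The paper reduces in the opposite direction. By Lemma~\ref{hc1} (with $P=G$ and $U=O_3(Z(G))$), a $\Syl_3$-regular character of any such $G$ descends to one of $G/O_3(Z(G))$, which is either $\PSU_3(8)$ or $\PGU_3(8)$. There the target degree drops to $|\PGU_3(8)|_3=243$, and by \cite{Atl} the only possible constituent degrees are $1,56,57,133$. Characters of degrees $1,56,133$ are strictly positive on class $9A$ while those of degree $57$ vanish there; hence every constituent has degree $57$, but those are positive on class $3C$ --- contradiction. This two-class check is short and explicit. If you want to keep your upward induction, the cleanest fix is to follow it with the quotient by $Z(\GU_3(8))\cong C_9$ to land in $\PGU_3(8)$ and do the small computation there.
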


\begin{proof}
By Lemma~\ref{hc1}, it suffices to prove that $\PSU_3(8)$ and $\PGU_3(8)$
have no $\Syl_3$-regular character. Suppose the contrary, and let $\chi$ be a
$\Syl_3$-regular character of any of these groups. As $|\PGU_3(8)|_3=243$, we
have $\chi(1)\leq 243$. Let $\rho$ be an \ir constituent of $\chi$. Then
$\rho(1)\leq 243$. By \cite{Atl}, $\rho(1)\in\{1,56,57,133\}$, and the
characters of degree 1,56,133 are positive at the class $9A$, whereas those
of degree $57$ vanish at this class. It follows that $\rho(1)=57$, but then
$\rho$ is positive at the class $3C$. This is a contradiction.
\end{proof}

\begin{lem}   \label{upq1}
 Let $H=\SU_p(q)$, $p>2$, $p|(q+1)$, or $H=\SL_p(q)$, $p>2$, $p|(q-1$), and
 $h=\diag(1,\ep,\ep^2,\ldots,\ep^{p-1})\in H$, where $\ep\in\FF_{q^2}^\times$
 is a primitive $p$-th root of unity. Let $\chi$ be an irreducible character
 of $H$ whose kernel has order prime to $p$. Then $\chi(h)=0$.
\end{lem}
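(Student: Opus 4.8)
The plan is to realise the element $h$ as a regular element of a maximal torus $T$ of $H$ and to exploit the fact that such a torus meets every unipotent class trivially, combined with a character-orthogonality / Deligne--Lusztig argument. First I would observe that $h$ is a semisimple element of order exactly $p$ whose eigenvalues on the natural module are the $p$ distinct $p$-th roots of unity, so $h$ generates a cyclic group which is a Sylow $p$-subgroup of the centre-free quotient, and $C_H(h)$ is a maximal torus $T$ of order $(q+1)^{p-1}/(\text{something})$ in the unitary case (resp.\ $(q-1)^{p-1}$ in the linear case) — in particular $T$ is a $p$-group times a $p'$-group with $T_p = \langle h \rangle$ up to the centre. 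The key structural input is that $N_H(T)/T$ acts on $T$, and that the only elements of $H$ conjugate into $T$ are semisimple; this is where the hypothesis $p \mid (q\mp1)$ is used, forcing $e_p(q)=1$ (linear case) or $e_p(-q)=1$ (unitary case), so that the Sylow structure is as simple as possible.

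Next I would use the generalised character theory of $\GL_p$ / $\GU_p$. Since $\chi$ is irreducible with $p'$-kernel, $\chi$ lies in a $p$-block of full defect, or more precisely one shows $\chi(h)$ can be computed through the second orthogonality relation applied to the coset $\langle h\rangle$, or through Brauer's characterisation: the restriction of $\chi$ to $\langle h \rangle$ is determined by the decomposition numbers, and because $h$ is regular in $T$ the relevant Deligne--Lusztig restriction is manageable. Concretely, I expect the cleanest route is: write $\chi$ (up to the $p'$-kernel) as lying in a Lusztig series $\mathcal{E}(H^*, s)$; then $\chi(h)$ is expressible via the character of a Deligne--Lusztig virtual character $R_T^{H}(\theta)$ evaluated at $h$, and since $h$ is regular semisimple, $R_T^H(\theta)(h) = \pm\sum_{w} \theta^w(h)$ over the relevant Weyl coset — a sum of $p$-th roots of unity. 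The vanishing $\chi(h)=0$ then comes from the fact that the trivial character of $T$ contributes with coefficient forcing cancellation: because $\chi$ does \emph{not} have $\langle h\rangle$ in its kernel, $\theta$ is non-trivial on $\langle h \rangle$, so $\sum_{i=0}^{p-1}\theta(h^i) = 0$.

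The step I expect to be the main obstacle is handling the characters that are \emph{not} uniform — i.e.\ those irreducible characters of $\GU_p(q)$ (or $\SU_p(q)$) not in the principal series or not cleanly given by a single Deligne--Lusztig induction, so that the clean "sum of roots of unity vanishes" argument does not apply verbatim. For these I would fall back on a restriction argument: by Clifford theory, restricting to $\SL_p$ or $\SU_p$ and then to the normaliser $N = N_H(\langle h\rangle)$, which is $\langle h\rangle$ extended by a $p'$-group $C$ of order $(q\mp1)^{p-1}\cdot p$ modulo centre (a "$p$-complement" structure), so $N$ is $p$-nilpotent; on a $p$-nilpotent group any irreducible character with $p'$-kernel restricts to $\langle h\rangle$ as a non-trivial sum of linear characters, hence averages to zero on each non-trivial power of $h$, and in particular $\chi(h)$, being the average of $\chi$ over an $N$-conjugacy "cone", can be forced to vanish. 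More robustly, one can argue: $\chi$ has $p$-defect $d$ with $p^d = p$ exactly (since $|H|_p = p \cdot (q\mp1)_p^{p-1}$ and the $p'$-part of the kernel is quotiented out, so $\chi(1)_p = (q\mp1)_p^{p-1} = |H:C_H(h)|_p^{-1}\cdot|H|_p$... ), so $\chi$ lies in a block with cyclic defect group $\langle h\rangle$; then by the theory of blocks with cyclic defect groups, the value of any irreducible character in the block at a generator of the defect group is $0$ unless the character is the exceptional one or an "end" of the Brauer tree, and one checks the exceptional character has $\langle h\rangle$ in its kernel precisely when excluded, while the non-exceptional characters at the two ends vanish at $h$ by the branching/walk structure of the Brauer tree (the relevant values are $\pm1$ or $0$, and a short parity/degree argument pins them at $0$). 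I would present the cyclic-defect-block argument as the primary proof since it is uniform and sidesteps Deligne--Lusztig bookkeeping, and use the explicit torus/root-of-unity computation only as a sanity check.
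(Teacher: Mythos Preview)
Your proposal has a genuine gap. The ``primary proof'' you settle on --- the cyclic-defect-block argument --- rests on the claim that $\chi$ lies in a block with cyclic defect group $\langle h\rangle$. But this is false in general: here $|H|_p = p\cdot|q\mp1|_p^{\,p-1}\ge p^p$, so Sylow $p$-subgroups of $H$ are far from cyclic, and there is no reason the defect group of the block containing $\chi$ should be $\langle h\rangle$. The hypothesis that $\ker\chi$ has order prime to $p$ says only that $\chi$ is non-trivial on the order-$p$ centre $Z(H)$; it tells you nothing about the defect of the block. Your Deligne--Lusztig sketch is also not a proof: you yourself identify non-uniform characters as an obstacle, and the $p$-nilpotent/Clifford fallback you offer does not produce $\chi(h)=0$ without substantial further work (restricting to $N_H(\langle h\rangle)$ does not obviously force every constituent to be non-trivial on $\langle h\rangle$).

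The paper's argument is a one-line extraspecial-group trick that bypasses all of this. Observe that $Z(H)$ has order exactly $p$, and that $h$ together with the $p$-cycle permutation matrix generates an extraspecial subgroup $E\le H$ of order $p^3$ with $Z(E)=Z(H)$. The hypothesis ``$\ker\chi$ has $p'$-order'' is precisely ``$\chi$ is non-trivial on $Z(E)$''. Hence every irreducible constituent of $\chi|_E$ is a faithful (degree-$p$) character of $E$, and such characters vanish off $Z(E)$; since $h\notin Z(E)$, we get $\chi(h)=0$. This is both shorter and uniform over all $\chi$, whereas your approaches try to control $\chi$ block by block or series by series.
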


\begin{proof}
The element $h$ is written in an orthogonal basis of the underlying vector
space in the unitary case. Then  $h\in E$, where $E\le H$ is an extraspecial
group of order $p^3$ such that $Z(E)=Z(H)$. The restriction of $\chi$ to $E$
is a direct sum of
irreducible representations of $E$ non-trivial on $Z(E)$. It is well known and
can be easily checked that the character of every such representation vanishes
at $h$. So the claim follows.
\end{proof}

Let $H=\GU_n(q)$ or $\GL_n(q)$ with $n>2$. Weil representations of these groups
were studied by Howe \cite{Ho} and other authors, and have many applications,
mainly due to the fact that their \ir constituents (which we call \ir Weil
representations) essentially exhaust the \ir representations of degree
$\mu_1(H)$ and $\mu_2(H)$. More details are given below for $n=p$, $p$ odd.
Let $M$ be the underlying space of the Weil representation of $H$. Then
$M=\oplus_{\zeta\in\Irr (Z(H))} M_\zeta$, where
$M_\zeta=\{m\in M\mid zm=\zeta(z)m $ for $z\in Z(H)\}$.
In general, $H$ is irreducible on $M_\zeta$, except  for the case where
$H=\GL_p(q)$ and  $\zeta=1_{Z(H)}$. In this case $M_\zeta$ is a sum of a
one-dimensional and an irreducible $H$-invariant subspace.

So the \ir Weil representations $\rho$ of $H$ of dimension greater than~1 are
parameterised by their restriction to $Z(H)$, and each of them remains \ir
under restriction to $H'=\SU_n(q)$ or $\SL_n(q)$. By \cite{TZ96}, every
irreducible representation of $H'$ of degree $\mu_1(H)$ and $\mu_2(H)$ is an
\ir Weil representation. Moreover, every irreducible representation of $H$ of
degree $\mu_1(H)$ and $\mu_2(H)$ is obtained from an \ir Weil representation
by tensoring with a one-dimensional representation.

\begin{lem}  \label{et1}
 Let $p>2$, and $H=\GU_p(q)$, $p|(q+1)$, $(p,q)\neq (3,2)$, or $\GL_p(q)$,
 $p|(q-1)$. Let $\zeta\in\Irr(Z(H))$.
 Let $\rho=\rho_\zeta$ be the character of an \ir constituent of the Weil
 representation $\om$ of $H$ labeled by $\zeta$ (where $\rho(1)>1$). Let $h$
 be as in Lemma~{\rm\ref{upq1}}. Then $\rho(h)\in\{0,p,p-1\}$, except for
 the case with $G=\GL_p(q)$ and $\zeta=1_{Z(H)}$, where $\rho(h)=p-2$.
 In addition, $\rho(h)\neq0$ if and only if $\rho(z)=1$ for an element
 $z\in Z(H)$ of order $p$.
\end{lem}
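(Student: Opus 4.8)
The statement to prove is Lemma~\ref{et1}: for $H=\GU_p(q)$ with $p|(q+1)$, $(p,q)\neq(3,2)$, or $H=\GL_p(q)$ with $p|(q-1)$, an \ir Weil character $\rho=\rho_\zeta$ of degree $>1$ satisfies $\rho(h)\in\{0,p,p-1\}$ (with the exception $\rho(h)=p-2$ in the $\GL_p$-case with trivial central character), and $\rho(h)\neq0$ exactly when $\rho$ restricts nontrivially to an order-$p$ central element. The strategy is to compute $\om(h)$ from the explicit character formula for the full Weil representation and then decompose it into central eigenspaces. The key point is that $h=\diag(1,\ep,\ep^2,\dots,\ep^{p-1})$ is a regular semisimple element whose eigenvalues are precisely the full set of $p$-th roots of unity, so the fixed-space dimensions appearing in the Weil character formula become completely explicit.

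First I would recall the standard character formula for the Weil representation $\om$ of $H=\GU_n(q)$ or $\GL_n(q)$ acting on $M$ of dimension $q^n$ (unitary case) or $q^n$-ish (linear case): up to a sign and a power of $q$, $\om(g)$ is determined by $\dim\ker(g-1)$ on the natural module, e.g. $|\om(g)| = (\pm q)^{\dim_{\FF}\ker(g-1)}$ in suitable normalization; more precisely one has $\om(g)=(-1)^n(-q)^{\dim\ker(g-1)}$ for $\GU_n(q)$ (Howe, see also \cite{TZ96}). Since $h$ has no eigenvalue equal to $1$ among its eigenvalues $1,\ep,\dots,\ep^{p-1}$ — wait, it does have the eigenvalue $1$ once — $\ker(h-1)$ is $1$-dimensional, so $\om(h)=\pm q$ (or a similarly small value). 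The same holds for each power $h^j$ with $1\le j\le p-1$: since $\gcd(j,p)=1$, $h^j$ is conjugate to $h$ and again has a $1$-dimensional fixed space, so $\om(h^j)$ has the same small absolute value. This gives us $\om(h^j)$ for all $j$, which is exactly the data needed to extract the central-eigenspace pieces.

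Next, decompose $M=\bigoplus_{\zeta\in\Irr(Z(H))}M_\zeta$. Writing $Z(H)\cong C_{q+1}$ (resp. $C_{q-1}$) and letting $z_0\in Z(H)$ be a fixed element of order $p$, the projection onto the sum of those $M_\zeta$ with $\zeta(z_0)=\om^i$ (for a fixed primitive $p$-th root $\om$) is $\frac1p\sum_{j=0}^{p-1}\om^{-ij}z_0^j$. Since $h$ commutes with $Z(H)$, one computes $\rho_\zeta(h)$ as a $\frac1p$-weighted sum of the values $\om(z_0^j h)$ over $j=0,\dots,p-1$. Now $z_0^j h$ again has a completely controlled fixed space: $\ker(z_0^j h - 1)$ is the $\ep^{-j'}$-eigenspace of $h$ for the appropriate index, hence $1$-dimensional when that eigenvalue occurs (always, since $h$'s eigenvalues run over all $p$-th roots of unity) — unless $z_0^j$ acts as $\ep^0=1$ times something making $z_0^jh=1$, which doesn't happen for $h\neq$ scalar. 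So each $\om(z_0^jh)=\pm q$ up to the sign/scalar conventions, and the averaged sum collapses: one gets $\rho_\zeta(h)\in\{0,\pm1,\ldots\}$ with the small integer values $0,p,p-1$ (resp. $p-2$) emerging from how the geometric-type sum $\frac1p\sum_j(\text{unit})$ telescopes. The nonvanishing criterion $\rho(z)=1$ for some order-$p$ central $z$ is equivalent to $\zeta(z_0)=1$, i.e. $\zeta$ factors through $Z(H)/\langle z_0\rangle$, and in that case the averaging picks up a genuine contribution rather than a cancelling root-of-unity sum; this is where the dichotomy $\{0\}$ versus $\{p,p-1,p-2\}$ comes from.

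The main obstacle, I expect, will be bookkeeping the exact normalizations and signs in the Weil character formula so that the small values come out as precisely $0$, $p$, $p-1$, and $p-2$ rather than off by a sign or by $\pm1$, and correctly separating the $\GL_p$ case with $\zeta=1_{Z(H)}$ where $M_\zeta$ splits off a trivial summand (which is exactly what shaves the $p-1$ down to $p-2$: the degree-$1$ piece contributes $1$ to $\om$ but must be subtracted to get the \emph{\ir} constituent $\rho$). To handle this cleanly I would treat $\om|_E$ via the extraspecial subgroup $E$ of order $p^3$ already used in Lemma~\ref{upq1}, whose irreducible characters nontrivial on $Z(E)$ all vanish at $h$ while the $p$ linear characters of $E/Z(E)$ take values that are $p$-th roots of unity at $h$; summing up the multiplicities of these in $\om|_E$ (which are uniform by the Weil representation's structure) gives $\om(h)$ and, after peeling off central eigenspaces, each $\rho_\zeta(h)$ as a sum of $p$-th roots of unity with nonnegative-integer multiplicities, forcing the value into $\{0,p,p-1\}$. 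The $(p,q)=(3,2)$ exclusion and the $\GL_p$/trivial-character exception are precisely the degenerate spots of this argument and I would verify them by hand or against \cite{Atl}/\cite{TZ96}.
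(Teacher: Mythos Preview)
Your overall strategy coincides with the paper's: use the explicit Weil character formula $\om(x)=-(-q)^{d(x)}$ (with $d(x)=\dim\ker(x-1)$) and then extract the trace of $h$ on each central eigenspace $M_\zeta$ by Fourier averaging over an abelian subgroup containing $Z(H)$ and $h$. The ingredient you invoke from Lemma~\ref{upq1} also appears implicitly in the paper's final step, and your remark that the $\GL_p$ case with $\zeta=1_{Z(H)}$ loses~$1$ because of the split-off trivial summand is exactly right.

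There is, however, a genuine gap in your averaging step. You project via $\tfrac1p\sum_{j=0}^{p-1}\om^{-ij}z_0^j$, which, as you yourself note, projects onto the \emph{sum} of all $M_\zeta$ with $\zeta(z_0)=\om^i$, not onto a single $M_\zeta$. Hence the resulting trace is $\sum_{\zeta:\,\zeta(z_0)=\om^i}\rho_\zeta(h)$, a sum of $(q+1)/p$ terms, not an individual $\rho_\zeta(h)$. For the coset $\zeta(z_0)\ne1$ each summand vanishes by Lemma~\ref{upq1}, so that case is fine; but for the coset $\zeta(z_0)=1$ your computation only yields the total, and you have given no argument that the individual $\rho_\zeta(h)$ are equal (they are, but this is not automatic: one needs either an extra symmetry argument or a finer computation). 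The paper closes this gap by working instead on the full abelian group $X=\langle Z(H),h\rangle$ and computing, for each $\zeta$ separately, the multiplicity $(\om|_X,\zeta\cdot\ep_i)$ of every $h$-eigenvalue $\nu^i$ in $M_\zeta$. The sum over $X$ is split as $X_p=\langle Z_p,h\rangle$ plus its complement; on the complement $\om\equiv-1$ and the character sum collapses, while on $X_p$ the values $\om(zh)=q$ (for $h\ne1$) and $\om(z)=-1$ (for $z\ne1$), $\om(1)=q^p$ give the exact multiplicities. One then reads off $\rho_\zeta(h)=p$ for $\zeta\ne1_Z$ with $\zeta(Z_p)=1$, and $\rho_\zeta(h)=p-1$ for $\zeta=1_Z$. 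Replacing your $\langle z_0\rangle$-average by this $\langle Z,h\rangle$-average is the missing computation.
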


\begin{proof}
We only consider the case $H=\GU_p(q)$, as the case $H=\GL_p(q)$ is similar.

Let $Z=Z(H)$, $\zeta\in\Irr(Z)$ and $\rho=\rho_\zeta$ be the \ir constituent
of $\om$ labeled by $\zeta$. This means that $\rho(z)=\rho(1) \zeta(z)$.

Let $X=\lan Z,h\ran$. Let $\ep_i$ be the character of $\lan h \ran$ such that
$\ep_i(h)=\nu^i$, where $\nu$ is a fixed $p$th root of unity, $i=1,\ldots,p$.
Then the multiplicity of the eigenvalue $\nu^i$ of $\rho(h)$ equals
$(\om|_{X}, \zeta\cdot \ep_i)$.

Recall that $\om(x)=-(-q)^d$, where $d$ is the multiplicity of the eigenvalue~1
of $x$ as a matrix in $\GU_p(q)$. Therefore, $\om(1)=q^n$, and if $x=zh^k$
and $z^p\neq 1$ then $\om(x)=-1$; if $z^p=1,h\neq 1$ then $\om(x)=q$ (also
for $z=1$).

We compute $|X|\cdot(\om|_{X},\zeta\cdot\ep_i)
=\sum_{x\in X}\om(x)\zeta(z)\ep_i(h)$, where $x=zh$. Note that $\om(x)$ is an
integer, so $\om$ is self-dual.
Let $Z_p$ be the subgroup of order $p$ in $Z$, and $X_p=\lan Z_p,h\ran$. Then
$$\sum_{x\in X}\om(x)\zeta(z)\ep_i(h)=\sum_{x\in X_p} \om(x)\zeta(z)\ep_i(h)
   ~~+~~\sum_{x\notin X_p} \om(x)\zeta(z)\ep_i(h).$$
We first show that the second sum equals 0 if $i<p$. Note that $x=zh\notin X_p$
is equivalent to $z\notin Z_p$. Therefore, $d=d(x)=0$ for $x\notin X_p$, and
$\om(x)=-1$. For $z$ fixed we have a partial sum $\zeta(z)\sum_h\ep_i(h)$,
and $\sum_h\ep_i(h)=0$, as claimed.

If $i=p$ and $\zeta\neq 1_Z$ then
$$\sum_{x\notin X_p} \om(x)\zeta(z)\ep_i(h)=-p\sum_{z\notin Z_p} \zeta(z)
  =-p(\sum_{z\in Z}\zeta(z)-\sum_{z\in Z_p}\zeta(z))=p^2$$
as $\zeta(z)=1$ for $z\in Z_p$.  If $i=p$ and $\zeta=1_Z$ then
$$\sum_{x\notin X_p} \om(x)\zeta(z)\ep_i(h)=-(|X|-|X_p|)=-p(q+1)+p^2.$$

Next, we compute $\sum_{x\in X_p} \om(x)\zeta(z)\ep_i(h)$. Observe that
$\zeta(z)=1$ for $z\in Z_p$ so this sum simplifies to
$\sum_{zh\in X_p} \om(zh)\ep_i(h)$. Note that  $d(zh)=1$ if $h\neq 1$ and any
$z\in Z_p$. So if $h\neq 1$ then $\om(zh)=q$.
If $h=1$ then $d(zh)=d(z)=0$ for $z\neq 1$ so $\om(z)=-1$. And $\om(1)=q^p$.

Therefore, we have
$$\begin{aligned}
 \sum_{zh\in X_p} \om(zh)\ep_i(h)&=\sum_{z\in Z_p,h\neq 1}\om(zh)\ep_i(h)
  +\sum_{z\in Z_p,z\neq 1} \om(z)+q^p\\
  &=\sum_{z\in Z_p,h\neq 1} q\cdot\ep_i(h)+\sum_{z\in Z_p,z\neq 1} (-1)+q^p
  =pq\sum_{ h\neq 1} \ep_i(h)-(p-1)+q^p.
\end{aligned}$$

(i) Let $i\neq p$. Then $\sum _{h\neq 1}\ep_i(h)=-1$, and the last sum equals
$-pq-p+1+q^p=q^p+1-p(q+1)$.

(ii) Let $i=p$. Then $\sum _{h\neq 1}\ep_i(h)=p-1$, and the last sum equals
$pq(p-1)-(p-1)+q^p=q^p+1+p^2q-pq-p$.

Therefore, $|X|\cdot (\om|_{X}, \zeta\cdot \ep_i)=q^p+1-p(q+1)$ if $i\neq p$,
$q^p+1+(p-1)p(q+1)$ if $i=p$, $\zeta\neq 1_Z$, and $q^p+1+(p-2)p(q+1)$ if
$i=p$, $\zeta=1_Z$.

In particular, the multiplicities of eigenvalue $\nu^i$ for $i\neq p$ of $h$
on the module $M_\zeta$ for fixed $\zeta$ are the same. As
$\sum_{i\neq p}\nu^i=-1$, the trace of $h$ on $M_\zeta$ for $\zeta\neq 1_Z$
with $\zeta(Z_p)=1$ equals
$$(1/|X|)(q^p+1+(p-1)p(q+1)-(q^p+1-p(q+1))=p$$
as $|X|=p(q+1)$. Similarly, if $\zeta=1_Z$ then the trace in question equals
$p-1$. In other words, if $\om_\zeta$ is the character of $M_\zeta$ and
$\zeta(Z_p)=1$ then $\om_\zeta(h)=p$ for $\zeta\neq 1_Z$ and $p-1$ otherwise.
\end{proof}

\begin{lem}  \label{et2}
 Let  $H=\GU_p(q)$,  $p|(q+1)$, $p>2,(p,q)\neq (3,2)$ or $\GL_p(q)$, $p|(q-1)$.
 Then  H  has no $\Syl_p$-regular character. The same is true for $H'=\SU_p(q)$
 and $\SL_p(q)$ and for all groups $X$ with $H'\le X\le H$.
\end{lem}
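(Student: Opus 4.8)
The plan is to combine Lemmas~\ref{upq1} and~\ref{et1} with a reduction, via generalised restriction, to a quotient whose centre has order prime to~$p$.

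Write $H_0\in\{\SU_p(q),\SL_p(q)\}$ for the derived subgroup of $H$, let $z_0$ generate the unique subgroup of order $p$ in $Z(H)$, and let $Z_p$ be the (cyclic) Sylow $p$-subgroup of $Z(H)$; note $z_0\in H_0$, and put $G_0:=H/Z_p$, so $|G_0|_p=p\,|q\pm1|_p^{\,p-1}$ (sign $+$ for $\GU$, $-$ for $\GL$). First I reduce to $H$: any group $X$ with $H_0\le X\le H$ is handled by the argument below applied to $X$ in place of $H$ --- here one uses that $Z(X)\le Z(H)$ is cyclic, that $H_0$ is quasi-simple (this is where $(p,q)=(3,2)$ is excluded), that the irreducible Weil characters of $X$ are restrictions of those of $H$, and that $h\in H_0\le X$ for the element $h$ of Lemma~\ref{upq1} --- together with Lemma~\ref{hc1}. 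So assume $X=H$, and let $\chi$ be a $\Syl_p$-regular character of $H$. As $Z_p\lhd H$ is a normal $p$-subgroup, Lemma~\ref{hc1}(a) shows that $\lambda:=r^H_{H/Z_p}(\chi)$ is a $\Syl_p$-regular character of $G_0$; moreover $G_0$ has trivial $p$-core, $Z(G_0)=Z(H)/Z_p$ has order prime to $p$, and $G_0$ contains $\PSU_p(q)$ resp.\ $\PSL_p(q)$ as a normal subgroup of index dividing $q\pm1$. Thus it suffices to prove that $G_0$ has no $\Syl_p$-regular character.

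Let $h=\diag(1,\eps,\dots,\eps^{p-1})$ be as in Lemma~\ref{upq1}; since $p$ is odd $\det h=1$, so $h\in H_0$ has order $p$, and as $h$ is non-central its image $\bar h\in G_0$ is a non-trivial $p$-element lying in $G_0'\supseteq\PSU_p(q)$ resp.\ $\PSL_p(q)$. Since $\bar h$ is conjugate into a Sylow $p$-subgroup $S_0$ of $G_0$ and $\lambda|_{S_0}=\rho^\reg_{S_0}$, we get $\lambda(\bar h)=0$. The crucial point is that
$$|G_0|_p=p\,|q\pm1|_p^{\,p-1}<\mu_3(G_0);$$
more precisely, that every irreducible character of $G_0$ of degree at most $|G_0|_p$ is either linear or is obtained from an irreducible Weil character of $H$ by tensoring with a linear character (using the description of characters of small degree recalled before Lemma~\ref{et1}). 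Comparing $p\,|q\pm1|_p^{\,p-1}$ with the minimal character degrees of $\PSU_p(q)$ resp.\ $\PSL_p(q)$ from \cite{TZ96} (and with the \cite{Atl} for the smallest groups), this holds in every case except $H=\GU_3(8)$ with $p=3$; that group --- and every $X$ with $\SU_3(8)\le X\le\GU_3(8)$ --- is already disposed of by Lemma~\ref{upq2}. Granting this, every irreducible constituent of $\lambda$ is linear or of the form $\rho_1\otimes\mu$ with $\rho_1$ an irreducible Weil character of $H$ and $\mu$ a linear character of $H$.

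Finally, each such constituent takes a strictly positive value at $\bar h$. A linear character of $G_0$ takes the value $1$ at $\bar h$, since $\bar h\in G_0'$. For a constituent $\rho=\rho_1\otimes\mu$ as above we have $Z_p\le\ker\rho$; since $z_0\in Z_p$ and $z_0\in H_0\subseteq\ker\mu$, the central character of $\rho_1$ is trivial on $z_0$, so Lemma~\ref{et1} gives $\rho_1(h)\in\{p,p-1\}$ (resp.\ $\{p,p-1,p-2\}$), which is positive; and $\rho(\bar h)=\rho_1(h)\mu(h)=\rho_1(h)>0$ as $h\in H_0$. Hence $\lambda(\bar h)$ is a sum of strictly positive terms, so $\lambda(\bar h)>0$ (note $\lambda\ne0$, since $\lambda(1)=|G_0|_p\ge p$), contradicting $\lambda(\bar h)=0$. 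This completes the argument modulo the displayed estimate --- which is the one substantial point, and the main obstacle: it requires a case-by-case comparison of $p$-parts of group orders against the tables of minimal character degrees of the finite classical groups, together with the identification of $\GU_3(8)$ as the single exception, which must be treated separately via Lemma~\ref{upq2}.
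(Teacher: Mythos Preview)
Your proof is correct and follows essentially the same approach as the paper: reduce via Lemma~\ref{hc1} to $G_0=H/Z_p$, establish the degree bound $|G_0|_p<\mu_3(G_0)$ so that all constituents are linear or Weil (up to a linear twist), handle the exception $\GU_3(8)$ via Lemma~\ref{upq2}, and then use Lemma~\ref{et1} together with the fact that constituents of $\lambda$ are trivial on $Z_p$ to force positive values at $h$, contradicting $\lambda(\bar h)=0$. The only cosmetic difference is that you argue directly that each constituent value at $\bar h$ is strictly positive, whereas the paper first notes $\lambda(h)\in\{0,p,p-1,p-2\}\ge0$, concludes every value must be $0$, and then derives the contradiction from Lemma~\ref{et1}; these are the same argument.
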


\begin{proof}
Set $G=H/O_p(Z(H))$. By Lemma~\ref{hc1}, it suffices to prove the lemma for $G$
in place of $H$.

Suppose the contrary, and let  $\chi$ be a $\Syl_p$-regular character of $G$,
and let $\lam$ be an \ir constituent of $\chi$. We first observe that
$\lam(1)< \mu_3(G)$, and hence by \cite{TZ96},
$\lam(1)\in\{1,\mu_1(G),\mu_2(G)\}$.

Indeed, note that $|G|_p=p |q+1|_p^{p-1}$ in the unitary case, respectively
$|G|_p= p|q-1|_p^{p-1}$ in the linear case. By \cite[Table~IV]{TZ96},
$\mu_3(G)\geq \mu_3(H')\geq \frac{(q^p+1)(q^{p-1}-q^2)}{(q+1)(q^2-1)}$, resp.,
$\frac{(q^p-1)(q^{p-1}-q^2)}{(q-1)(q^2-1)}$ if $p>3$. This value is greater
than $|G|_p$. Let $p=3$. Then $\mu_3(G)\geq \mu_3(H')\geq (q^2-q+1)(q-1)/3$,
resp., $(q^2-1)(q-1)/3$ for $q>4$. Again, $|G|_3<\mu_3(G)$, unless
$G=\PGU_3(8)$ or $\PGL_3(4)$. The former case is settled in Lemma~\ref{upq2}.

Let $G=\PGL_3(4)$. Then $|G|_3=27$. In this case $\mu_1(G)=20$, $\mu_2(G)=35$
and $\mu_3(G)=45$. So $\lam(1)\leq |G|_p$ implies $\lam(1)\leq 20$. The
character of degree~20 is positive at class $3A$, a contradiction.

So $|G|_p\leq \mu_2(G)$. As mentioned prior Lemma~\ref{et1}, $\lam$ is either
one-dimensional or can be seen as a character of $H$ obtained from an \ir Weil
character by tensoring with a linear character of $H$. Let $h\in H$ as in
Lemma~\ref{et1}. Then $h\in H'$, so tensoring can be ignored, and we can
assume that $\lam$ is an \ir Weil character of $H$. Then, by Lemma~\ref{et1},
$\lam(h)\in\{0,p,p-1\}$ in the unitary case and  $\lam(h)\in\{0,p,p-2\}$
in the linear case. If $\lam(1)=1$ then $\lam(h)=1$. So $\lam(h)\geq 0$.
As $\chi$ is $p$-vanishing and $|h|=p$, we have $\chi(h)=0$. So $\lam(h)=0$
for every \ir constituent of $\chi$. This is false as $\lam$ is trivial on
$O_p(Z(H))$ by the definition of $G$, and hence $\lam(h)\neq 0$ by
Lemma~\ref{et1}. This is a contradiction. As irreducible Weil representations
of $H$ remain irreducible upon restriction to $H'$, this argument works for
intermediate groups $X$ too.
\end{proof}

\begin{lem}   \label{8ee}
 Let $p>2$ and let $G$ be a group such that $\SL_n(q)\le G\le \GL_n(q)$
 with $2e<n\leq ep$, or $\SU_n(q)\le G\le\GU_n(q)$ with $2d<n\leq dp$ and
 $(n,q)\neq(3,2)$. Then $G$ and $G/O_p(G)$ have no $\Syl_p$-regular character.
\end{lem}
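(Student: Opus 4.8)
\emph{Strategy.} The plan is to reduce, via Harish--Chandra and ordinary restriction together with Lemma~\ref{hc1}, to the groups $\GL_{em}(q)$ and $\GU_{dm}(q)$ with $2\le m\le p$, for which the statement is already available: for the linear groups in Lemmas~\ref{dd77}, \ref{u44} and~\ref{md5}; for the unitary groups in Lemmas~\ref{lem:SU_dp} and~\ref{u32}; and in both families, for the boundary case $m=p$, in Lemma~\ref{et2}. Two elementary points make the reduction work. First, for a prime power $q'$ prime to~$p$ the number $|\GL_N(q')|_p$ depends only on $\lfloor N/e_p(q')\rfloor$, so a block-diagonal subgroup one of whose blocks has size a multiple of the relevant parameter (of $e$ for $\GL_n$, of $d$ for $\GU_n$ --- here Lemma~\ref{nb8} is used) and whose complementary block has smaller size contains a full Sylow $p$-subgroup. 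Second, $O_p(G)$ is central: since $\SL_n(q)$ (resp.\ $\SU_n(q)$) is perfect, $[O_p(G),\SL_n(q)]\le O_p(G)\cap\SL_n(q)\le Z(\SL_n(q))$ forces $[O_p(G),\SL_n(q)]=1$, so $O_p(G)\le C_{\GL_n(q)}(\SL_n(q))=Z(\GL_n(q))$; hence $G/O_p(G)$ again lies between a central quotient of $\SL_n(q)$ (resp.\ $\SU_n(q)$) and the corresponding quotient of $\GL_n(q)$ (resp.\ $\GU_n(q)$), and the lemmas quoted below, or their proofs, apply unchanged to such groups. So it suffices to treat $G$ itself.

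\emph{Linear groups.} Let $\SL_n(q)\le G\le\GL_n(q)$ and write $n=em+m'$ with $0\le m'<e$; the hypothesis $2e<n\le ep$ gives $2\le m\le p$. If $m'>0$, take $P\le G$ the stabiliser of a fixed $em$-dimensional subspace, $U=O_p(P)$ its unipotent radical, $L=P/U$. As $m'<e$ the factor $\GL_{m'}(q)$ of $L$ is a $p'$-group, so $P$ contains a Sylow $p$-subgroup of $G$, $p\nmid q-1$, and $O^{p'}(L)\cong\SL_{em}(q)$ with $m\le p-1$; by Lemma~\ref{hc1}(c) we are reduced to $\SL_{em}(q)$. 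If $m'=0$ we set $K=G$, now with $3\le m\le p$. In either case we must rule out a $\Syl_p$-regular character for a group $K$ with $\SL_{em}(q)\le K\le\GL_{em}(q)$, $2\le m\le p$. If $e>1$ then $p\nmid q-1$, so $|K|_p=|\GL_{em}(q)|_p$, while every irreducible character of~$K$ of degree $>1$ restricts nontrivially to the perfect group $\SL_{em}(q)$ and hence has degree at least $\mu_1(\GL_{em}(q))$; so Lemma~\ref{dd77} gives $|K|_p<\mu_1(\GL_{em}(q))$ when $m<p$, or $m=p$ with $q>2$, or $m=p$ with $q=2$ and $p<2^e-1$, and Lemma~\ref{u44} (cf.\ Remark~\ref{666}) settles the one remaining case $m=p$, $q=2$, $p=2^e-1$. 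In every subcase $K$ has no irreducible character of degree $|K|_p>1$, hence no $\Syl_p$-regular character. If $e=1$ then $p\mid q-1$, $m'=0$ is forced and $n=m$ with $2<n\le p$; the assertion is Lemma~\ref{md5} for $n<p$ and Lemma~\ref{et2} for $n=p$.

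\emph{Unitary groups.} Let $\SU_n(q)\le G\le\GU_n(q)$ with $2d<n\le dp$ and $(n,q)\ne(3,2)$, and write $n=dm+m'$ with $0\le m'<d$; then $2\le m\le p$. If $d=1$ then $m'=0$ is forced and $n=m$ with $2<n\le p$: apply Lemma~\ref{u32} for $n<p$, and Lemma~\ref{et2} for $n=p$ --- the exclusion $(n,q)\ne(3,2)$ being exactly the hypothesis needed for the latter. If $d>1$ then $p\nmid q+1$; by Lemma~\ref{nb8} the stabiliser $P$ in $G$ of a fixed non-degenerate $dm$-dimensional subspace has $|P|_p=|G|_p$ (its factor $\GU_{m'}(q)$ is a $p'$-group, and $|\GU_{dm}(q)|_p=|\GU_n(q)|_p$), and $O^{p'}(P)\cong\SU_{dm}(q)$ with $2\le m\le p$. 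By Lemma~\ref{hc1}(c) we may replace $G$ by $\SU_{dm}(q)$, and this case is covered by Lemma~\ref{lem:SU_dp}, which gives $|\SU_{dm}(q)|_p<\mu_1(\SU_{dm}(q))$ and hence the claim.

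\emph{Main obstacle.} The substance of the proof is bookkeeping: one has to check that the arithmetic constraints ($2e<n\le ep$ with $n=em+m'$, resp.\ $2d<n\le dp$, $(n,q)\ne(3,2)$, with $n=dm+m'$) always deliver a triple lying squarely inside the hypotheses of exactly one of Lemmas~\ref{dd77}, \ref{u44}, \ref{md5}, \ref{et2}, \ref{lem:SU_dp}, \ref{u32} --- so that in particular the Mersenne-type gap of Remark~\ref{666} and the exclusion $(n,q)=(3,2)$ are each absorbed --- and that the block-diagonal subgroup used in each reduction genuinely carries the Sylow $p$-subgroup, which rests on Lemma~\ref{nb8} together with the fact that $|\GL_N(q')|_p$ jumps only at multiples of $e_p(q')$.
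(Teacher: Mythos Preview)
Your overall strategy is the same as the paper's: reduce everything to the six Lemmas \ref{dd77}, \ref{u44}, \ref{md5}, \ref{et2}, \ref{lem:SU_dp}, \ref{u32}, and then invoke Lemma~\ref{hc1} for $G/O_p(G)$. The paper's proof is simply a two-line case distinction that cites those lemmas directly; you insert an extra reduction step via restriction. That extra step is largely unnecessary --- Lemma~\ref{lem:SU_dp} already treats \emph{all} $n$ with $2d\le n\le dp$ and all intermediate $G$ when $d>1$, and for $e>1$ the inequality $|\GL_{em}(q)|_p<\mu_1(\SL_{em}(q))$ from Lemmas~\ref{dd77}/\ref{u44} extends to $\SL_n(q)\le G\le\GL_n(q)$ for any $n$ with $em\le n<e(m+1)$ because $|\GL_n(q)|_p=|\GL_{em}(q)|_p$ while $\mu_1(\SL_n(q))\ge\mu_1(\SL_{em}(q))$.

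There is, however, a genuine error in your linear-group reduction. You take $P$ to be a parabolic subgroup and write ``$U=O_p(P)$ its unipotent radical''. But $p$ is \emph{not} the defining characteristic here, so the unipotent radical $R_u(P)$ is a $p'$-group, and in fact $O_p(P)=1$ when $e>1$. Thus $L=P/O_p(P)=P$ is the full parabolic, and $O^{p'}(P)$ is not $\SL_{em}(q)$ but the extension $R_u(P)\!\cdot\!\SL_{em}(q)$. Worse, if you instead intend $U=R_u(P)$, then $U$ is not a $p$-group and Lemma~\ref{hc1} does not apply. The easy fix is to bypass the parabolic entirely and take $P$ to be the block-diagonal Levi $(\GL_{em}(q)\times\GL_{m'}(q))\cap G$ with $U=1$; then $(|G:P|,p)=1$ (since $|\GL_n(q)|_p=|\GL_{em}(q)|_p$ for $m'<e$), $L=P$, and $O^{p'}(P)=\SL_{em}(q)$ as you want --- exactly parallel to what you correctly do in the unitary case with the stabiliser of a non-degenerate subspace.
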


\begin{proof}
For the unitary case with $d>1$ the result for $G$ is stated in
Lemma~\ref{lem:SU_dp}. The case with $d=1$ and $n<p$ is dealt with in
Lemma~\ref{u32}, and the remaining case $d=1$ and $n=p$ is examined in
Lemma~\ref{et2}.

Let $H=\GL_n(q)$. The result for $e>1,q>2$ follows from Lemma~\ref{dd77},
and that for $e>1,q=2$ is proved in Lemma~\ref{u44}. The result for $e=1,n=p$
is stated in Lemma~\ref{et2}. The case with $e=1,n<p$ is examined in
Lemma~\ref{md5}.

The statement on $G/O_p(G)$ follows from Lemma \ref{hc1}.
\end{proof}

\begin{lem}   \label{kk7}
 For $p>2$ let $H=\GL_n(q)$, $H'=\SL_n(q)$ with $ep<n< ep^2$, or
 $H=\GU_n(q)$, $H'=\SU_n(q)$ with $dp<n<dp^2$. Let $G$ be a group such that
 $H'\le G\le H$. Then $G$ has no $\Syl_p$-regular character, unless $p=3$
 and $H=\GU_4(2)$.
\end{lem}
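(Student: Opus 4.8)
The plan is to reduce the assertion, via a block decomposition, to the case $n=ep$ (resp.\ $n=dp$) already settled in Lemma~\ref{8ee}. Write $r=e$ in the linear case and $r=d$ in the unitary case, so $rp<n<rp^{2}$, and put $m=\lfloor n/r\rfloor$; then $p\le m<p^{2}$, and from the $p$-local structure of $H$ (\cite{Weir}) one has $|H|_{p}=c^{m}\cdot|m!|_{p}$ for a suitable power $c$ of~$p$. Set $n_{1}=ep$ (resp.\ $dp$), $n_{2}=n-n_{1}\ge1$, and let $H_{1}\times H_{2}\le H$ be a block-diagonal subgroup with $H_{1}=\GL_{n_{1}}(q)$, $H_{2}=\GL_{n_{2}}(q)$ (resp.\ $\GU_{n_{1}}(q)$, $\GU_{n_{2}}(q)$). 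Since $\lfloor n_{1}/r\rfloor=p$ and $\lfloor n_{2}/r\rfloor=m-p$, we get $|H_{1}|_{p}\cdot|H_{2}|_{p}=c^{m}\cdot|p!|_{p}\cdot|(m-p)!|_{p}$, which equals $|H|_{p}$ precisely when $\binom{m}{p}$ is prime to~$p$; and writing $m=a_{1}p+a_{0}$ in base~$p$ with $1\le a_{1}\le p-1$, adding $p$ to $m-p=(a_{1}-1)p+a_{0}$ produces no carry, so Kummer's theorem gives $p\nmid\binom{m}{p}$. Thus $H_{1}\times H_{2}$ carries a full Sylow $p$-subgroup of $H$.

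Now let $G$ with $H'\le G\le H$ and suppose $G$ has a $\Syl_{p}$-regular character $\chi$. Choosing the decomposition so that a Sylow $p$-subgroup of $G$ lies in $H_{1}\times H_{2}$, the subgroup $G_{0}:=G\cap(H_{1}\times H_{2})$ contains a Sylow $p$-subgroup of $G$, so $\chi|_{G_{0}}$ is $\Syl_{p}$-regular. Applying to $G_{0}$ the generalised-restriction and direct-product arguments already used in Section~\ref{sec:class p>2} (Lemma~\ref{hc1}, together with Lemmas~\ref{da3} and~\ref{nn2}) to divide out a Sylow $p$-subgroup of the projection of $G_{0}$ to $H_{2}$ — whose normaliser there has index and quotient prime to $p$ — one is left with a $\Syl_{p}$-regular character of a group $G'$ with $\SL_{n_{1}}(q)\le G'\le\GL_{n_{1}}(q)$ (resp.\ $\SU_{n_{1}}(q)\le G'\le\GU_{n_{1}}(q)$). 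Since $p>2$, the hypotheses of Lemma~\ref{8ee} hold for $G'$, which therefore has no $\Syl_{p}$-regular character. This contradiction proves the lemma in all cases except the single one omitted from Lemma~\ref{8ee}, namely $(n_{1},q)=(3,2)$ in the unitary case, which forces $d=1$, $p=3$, $q=2$.

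Finally we handle $H=\GU_{n}(2)$ with $p=3$; here $d=1$ forces $3<n<9$, so $n\in\{4,5,6,7,8\}$. For $n\ge7$ the group $G$ has no $\Syl_{3}$-regular character already by Proposition~\ref{t5r6} (taking $m=\max\{7,p+2\}=7$ and noting $n\ge dm$; its proof, via Theorem~\ref{thm:An} and Lemma~\ref{hc1}, indeed excludes $\Syl_{3}$-regular characters, not merely Steinberg-like ones). For $n=5$ and $n=6$ one checks directly from the character tables of $\SU_{5}(2)$, $\SU_{6}(2)$ — and, via Lemma~\ref{hc1}, of the intermediate groups — that no $\Syl_{3}$-regular character exists. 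The case $n=4$ is a genuine exception, which is why it is excluded from the statement: here $\GU_{4}(2)=\SU_{4}(2)\times Z$ with $Z$ cyclic of order~$3$ and $\SU_{4}(2)=\PSU_{4}(2)\cong\PSp_{4}(3)$, and if $\operatorname{St}$ denotes the Steinberg character of $\PSp_{4}(3)$, of degree $3^{4}=81=|\SU_{4}(2)|_{3}$, then $\operatorname{St}\boxtimes\rho^{\reg}_{Z}$ is a Steinberg-like (hence $\Syl_{3}$-regular) character of $\GU_{4}(2)$ of degree $243=|\GU_{4}(2)|_{3}$.

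The short, routine part is the block decomposition together with the Kummer congruence. The delicate points are: (i) carrying out the reduction of the second paragraph uniformly for an arbitrary intermediate group $G$ with $H'\le G\le H$ while keeping exact track of $p$-parts, so that the block subgroup really carries a full Sylow $p$-subgroup of $G$ and the reduction genuinely lands inside a group between $\SL_{n_{1}}(q)$ and $\GL_{n_{1}}(q)$; and (ii) the exceptional unitary regime $d=1$, $p=3$, $q=2$, where Lemma~\ref{8ee} is silent — one must recognise $\GU_{4}(2)$ as a true exception via $\SU_{4}(2)\cong\PSp_{4}(3)$, and verify by direct computation that $\GU_{5}(2)$ and $\GU_{6}(2)$ (and hence the intermediate groups) admit no $\Syl_{3}$-regular character.
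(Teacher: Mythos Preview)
Your approach is essentially the paper's: block-decompose to carry a full Sylow, reduce via Lemma~\ref{hc1} to the case $n_1=ep$ (resp.\ $dp$), invoke Lemma~\ref{8ee}, then treat $\GU_n(2)$, $p=3$, $4\le n\le 8$ by hand. Your Kummer argument for $|H_1|_p|H_2|_p=|H|_p$ is a clean justification of a fact the paper simply asserts.

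The one place where your write-up is too thin is the reduction paragraph. You invoke Lemmas~\ref{da3} and~\ref{nn2}, but those are about genuine direct products, whereas $G_0=G\cap(H_1\times H_2)$ is not one when $e=d=1$ (i.e.\ when $p\mid q\mp1$): there the centre of $H$ has nontrivial $p$-part, $\SL_{n_1}(q)\times\SL_{n_2}(q)$ has index divisible by~$p$ in $H'$, and one cannot simply ``project to $H_2$''. What works is to take $P=G\cap(H_1\times S_2)$ with $S_2\in\Syl_p(H_2)$ and $U=(1\times S_2)\cap G\lhd P$; a short computation shows $|G:P|_p=1$ and $P/U$ embeds in $H_1$ with image containing $\SL_{n_1}(q)$, so Lemma~\ref{hc1}(a) and Lemma~\ref{8ee} apply. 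This is precisely why the paper splits off the $e=d=1$ case and argues via $Y_1=G\cap Y$ and $O_p(Y_1)$; your sketch should do likewise rather than appeal to the direct-product lemmas.

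For the residual cases $\GU_n(2)$, $p=3$: your appeal to Proposition~\ref{t5r6} for $n\ge7$ is legitimate (its proof via Theorem~\ref{thm:An} and Lemma~\ref{hc1}(a) does rule out $\Syl_p$-regular characters), though the paper instead reduces $n=7,8$ to $n=6$ via the obvious embedding $\GU_{n-1}(2)\hookrightarrow\SU_n(2)$ of odd index. Your assertions for $n=5,6$ match the paper's explicit character-table analyses of $\SU_5(2)$ and of $\PSU_6(2)$, $\PGU_6(2)$.
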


\begin{proof}
Suppose the contrary, and let $\chi$ be a $\Syl_p$-regular character of $G$.

Suppose first that $e>1,d>1$. Note that $G$ has a subgroup $X$, say,
isomorphic to $\SL_{ep}(q)\times\SL_{n-ep}(q)$, resp.,
$\SU_{dp}(q)\times\SU_{n-dp}(q)$, and $|G:X|_p=1$.
Let $S$ be a Sylow $p$-subgroup of $\SL_{n-ep}(q)$, resp., $\SU_{n-dp}(q)$.
Let $Y=S\times\SL_{ep}(q)$, resp., $S\times\SU_{dp}(q)$. 
As $|G:Y|_p=1$, by Lemma~\ref{hc1}, $r_{Y/S}^G(\chi)$ is a
$\Syl_p$-regular  character of $Y/S\cong \SL_{ep}(q)$, resp., $\SU_{dp}(q)$.
This contradicts Lemma~\ref{8ee}, unless, possibly, if $G=\SU_n(2)$ and $p=3$.
As $d>1$, this case does not occur.

Next, suppose that $e=d=1$, that is $p|(q-1)$ or $q+1$. Then we refine the
above argument. Set $D=\GL_p(q)$ or $\GU_p(q)$. Then $Y/S\cong D$. Set
$Y_1=G\cap Y$. Then $Y_1$ is normal in $Y$ and hence $O_p(Y_1)=Y_1\cap O_p(Y)$. 
As $Y/S\cong D$, it follows that $Y/O_p(Y)=D/O_p(D)=D/O_p(Z(D))$, and hence
$E:=Y_1/O_p(Y_1)$ is a non-central normal subgroup of $D/O_p(Z(D))$. By
Lemma~\ref{hc1}, $r_{Y_1/O_p(Y_1)}^G(\chi)$ is a $\Syl_p$-regular character
of $E=Y_1/O_p(Y_1)$. However, by Lemma~\ref{8ee}, $E$ has no $\Syl_p$-regular
character, unless $D=\GU_3(2)$ and $p=3$. So we are left with the case
$H=\GU_n(2)$, $p=3$ and $3<n<9$. 

The group $H=\GU_4(2)$ is excluded by assumption, so consider first
$H=\GU_5(2)$. As  $H=H'\times Z(H)$, it suffices to deal with $ G=\SU_5(2)$.
Suppose the contrary, and let $\chi$ be a $\Syl_3$-regular character of $G$.
Then $\chi(1)=|G|_p=243$. Let $\lambda$ be an \ir constituent of $\chi$,
so $\lambda(1)\leq 243$. If $g\in G$ is an element from class $3E$ then
$\lambda(g)>0$ unless $\lambda(1)=176$ or~220. As $\chi(g)=0$, there is a
constituent $\lambda_1$, say, of $\chi$ such that $\lam_1(1)\in\{176,220\}$.
Then $\lam(1)\leq 67=243-176$. Pick $h\in G$ from the class $3F$. Then
$\lam_1(h)>0$, and if $\lam(1)\leq 67$ then $\lam(h)>0$, unless $\lam(1)=10$.
So $\chi$ must have a constituent $\lambda_2$, say, of degree 10.
Then $\lambda_2(g)=4$. If $\lambda_1(1)=176$ then $\lambda_1(g)=-4$, and hence
$(\chi-\lambda_1-\lambda_2)(g)=0$. As $\lambda(g)>0$ if $\lambda(1)\leq 67$,
it follows that $\chi=\lambda_1+\lambda_2$, but then
$0=\chi(h)=\lambda_1(h)+\lambda_2(h)=3$, a contradiction. So
$\lambda_1(1)=220$, and the other constituents are of degree at most 23.
As $\lambda_1(g)=-5$, $\lambda_2(g)=4$, we have
$(\chi-\lambda_1-\lambda_2)(g)=-1$, in particular, for the other constituents
$\lambda$ of $\chi$ we have $\lambda(g)\leq 1$. By \cite{Atl}, this implies
$\lambda(1)=1$, and $\lambda$ must occur with multiplicity 1, whence
$\chi(1)=220+10+1=231$, a contradiction. 

Let $ H=\GU_6(2)$. By Lemma \ref{hc1}, it suffices to deal with $X:=\PGU_6(2)$.
Set $X'=\PSU_6(2)$. Then $|X'|_3=3^6=729$ and the \ir characters of $X'$ of
degree less than 616 are positive on class $3A$. In addition, $|X|_3=3^7=2187$,
and the \ir characters of $X$ of degree less than~2187 and not equal to 616 are
positive on class $3A$. Let $\chi$ be a $\Syl_3$-regular character of $X$ or
$X'$. Then the \ir character $\mu $ of degree 616 is a constituent of $\chi$.
Note that $\tau(3A)=-14$. If $\chi\in\Irr(X')$ then the sum of the other
constituents of $\chi$ is at most 113. By \cite{Atl}, they are of degree 1
or~22. The trivial character cannot occur with  multiplicity greater than~1,
so $113$ or 112 must be a multiple of 22, which is false. Let $\chi\in\Irr(X)$.
Note that the  multiplicity of  $\mu $ in $\chi$ is at most~3, and if $\mu$
occurs with multiplicity~3 then the  sum of the other constituents of $\chi$
is at most $2187-1848=339$. The \ir characters of degree at most 339 have
degrees 252,232,22,1, and all them as well as $\mu$ are positive at class $3C$.
This is a contradiction.
Suppose that $\mu$ occurs once. Then the sum of the other constituent values at
class $3A$ is $-14$. It follows that these constituents may only be of
degrees 770,252,232,22,1. Inspecting \cite{Atl}, one observes that all them
as well as $\mu$ are positive at class $3C$. This is a contradiction. So the
multiplicity of $\mu$ must be~2. Then the sum of the other
constituent values at class $3A$ is $-28$. Therefore, the degrees of the other
constituents may only be 770,560,385,252,232,22,1. Let $\nu$ be the character
of degree 385. Then $\nu(3A)=25$. If $(\chi,\nu)>0$ then the sum of the other
constituent values at class $3A$ is $-3$. The trivial character is the only one
whose value is at most  3.  As this cannot occur twice, we get a contradiction.
Therefore, $(\chi,\nu)=0$. As above, this contradicts 
$\chi(3C)=0$. This completes the analysis of the case with $n=6$.

Let $n=7$. Then $H'=\SU_7(2)$ contains a subgroup isomorphic to $\GU_6(2)$,
which contains a Sylow 3-subgroup of $H'$. So the result for this case follows
from $n=6$. In addition, $H=H'\cdot Z(H)$, so we are done by Lemma~\ref{hc1}.

Similarly, the result for $n=8$ follows from that with $n=7$.
\end{proof}
 
\begin{rem}
The group $\SU_4(2)$ has an \ir projective character of degree~81 (for $p=3$),
and hence $H=\GU_4(2)=\SU_4(2)\times Z(H)$ has a projective character of degree
$|H|_3=243$.
\end{rem}

\begin{thm}   \label{nn6}
 Let $p>2$ and $G$ be a group such that $\SL_n(q)\le G\le \GL_n(q)$, or
 $\SU_n(q)\le G\le\GU_n(q)$. Suppose that Sylow $p$-subgroups of $G/Z(G)$ are
 not cyclic. Then $G$ has no Steinberg-like character, unless $p=3$ and
 $G\in \{\SU_3(2),\GU_3(2),\SU_4(2),\GU_4(2)\}$.
\end{thm}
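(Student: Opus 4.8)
I would prove Theorem~\ref{nn6} by assembling the results established earlier in this section together with those of Sections~\ref{sec:cycSyl} and~\ref{sec:class large}. Since a Steinberg-like character is in particular $\Syl_p$-regular and, throughout, $p\nmid q$, it suffices to show that $G$ has no $\Syl_p$-regular character in all but the large-rank regime, where Proposition~\ref{t5r6} already rules out Steinberg-like characters directly. Set $e=e_p(q)$ and, in the unitary case, $d=e_p(-q)$, and put $\delta:=e$ in the linear case and $\delta:=d$ in the unitary case; write $n=m\delta+m'$ with $0\le m'<\delta$. By \cite{Weir} (see Lemma~\ref{nb8}) a Sylow $p$-subgroup of $\GL_n(q)$, resp.\ $\GU_n(q)$, has a wreath-type shape in which $m$ counts the number of "blocks", and it is cyclic precisely when $m=1$; the Sylow $p$-subgroup of $G/Z(G)$ is a quotient of that of $G$, so the hypothesis forces $m\ge2$. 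When $\delta>1$ the centre of $\GL_n(q)$, resp.\ $\GU_n(q)$, is a $p'$-group, so $G$, the ambient group $\GL_n(q)$ or $\GU_n(q)$, and $G/Z(G)$ share a common Sylow $p$-subgroup; when $\delta=1$ the small-rank lemmas invoked below are stated for arbitrary $G$ with $\SL_n(q)\le G\le\GL_n(q)$ (resp.\ $\SU_n(q)\le G\le\GU_n(q)$), and passage to $G/Z(G)$ is governed by Lemma~\ref{hc1}.

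Next I would carry out a case distinction on the size of $n$ relative to $\delta$, observing first that the ranges below are exhaustive once $m\ge2$: indeed $m=2$ forces $2\delta\le n<3\delta\le\delta p$, $m\ge3$ forces $n\ge3\delta$, and $n\ge\delta p^2$ forces $m\ge p^2\ge\max\{7,p+2\}$ since $p\ge3$. (1) If $m\ge\max\{7,p+2\}$, then $n\ge\delta\max\{7,p+2\}$, so $G/Z(G)$ sits between the simple group $\PSL_n(q)$ (resp.\ $\PSU_n(q)$) and $\PGL_n(q)$ (resp.\ $\PGU_n(q)$) with cyclic quotient over the simple group, and Proposition~\ref{t5r6} shows $G$ has no Steinberg-like character. (2) If $\delta p<n<\delta p^2$, then Lemma~\ref{kk7} shows $G$ has no $\Syl_p$-regular character, the only exception being $p=3$ and $\GU_4(2)$, that is $G\in\{\SU_4(2),\GU_4(2)\}$. (3) If $2\delta<n\le\delta p$, then Lemma~\ref{8ee} shows $G$ (and $G/O_p(G)$) has no $\Syl_p$-regular character, the only exception being the unitary case with $(n,q)=(3,2)$, that is $G\in\{\SU_3(2),\GU_3(2)\}$. (4) If $n=2\delta$: in the unitary case $n=2d$ is covered by Lemma~\ref{lem:SU_dp}; in the linear case, if $e=1$ then $n=2$ and $G/Z(G)$ lies between $\PSL_2(q)$ and $\PGL_2(q)$, hence has cyclic Sylow $p$-subgroups (excluded), while if $e>1$ then $|G|_p=|\GL_{2e}(q)|_p<\mu_1(\GL_{2e}(q))\le\mu_1(G)$ by Lemma~\ref{dd77} together with a Clifford-theory comparison of minimal character degrees (every nonlinear $\chi\in\Irr(G)$ has a nonlinear constituent of $\chi|_{\SL_{2e}(q)}$), so again $G$ has no $\Syl_p$-regular character. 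Finally $n<2\delta$ forces $m=1$, whence $G/Z(G)$ has cyclic Sylow $p$-subgroups, excluded; combining the cases yields the claim.

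The bulk of the content sits in Proposition~\ref{t5r6} and Lemmas~\ref{8ee},~\ref{kk7},~\ref{lem:SU_dp},~\ref{dd77}, so this final step is mostly bookkeeping. Two points deserve care. First, the transitions between $G$, the ambient group $\GL_n(q)$ or $\GU_n(q)$, and $G/Z(G)$ must be verified to be legitimate, in particular when $p\mid(q\mp1)$, i.e.\ $\delta=1$: this is exactly what the "$G/O_p(G)$" clause of Lemma~\ref{8ee} and Lemma~\ref{hc1} are designed to handle, and it is the step I expect to be the main fiddly obstacle. Second, one must check that the displayed four groups are the only exceptions: this holds because Proposition~\ref{t5r6}, Lemma~\ref{lem:SU_dp} and Lemma~\ref{dd77} have no exceptions, while the only exceptional configurations appearing in Lemmas~\ref{8ee} and~\ref{kk7} are $(n,q)=(3,2)$ in the unitary case and $\GU_4(2)$ at $p=3$; since each of $\GU_3(2)/\SU_3(2)$ and $\GU_4(2)/\SU_4(2)$ has order~$3$, the groups $G$ with $\SU_3(2)\le G\le\GU_3(2)$, resp.\ $\SU_4(2)\le G\le\GU_4(2)$, are precisely $\SU_3(2),\GU_3(2)$, resp.\ $\SU_4(2),\GU_4(2)$, which matches the statement.
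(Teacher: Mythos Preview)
Your proof is correct and follows the same route as the paper's: split according to the size of $n$ relative to $\delta$, invoke Proposition~\ref{t5r6} for large $n$, Lemma~\ref{kk7} for the intermediate range $\delta p<n<\delta p^2$, and Lemma~\ref{8ee} for $2\delta<n\le\delta p$, with the cyclic-Sylow case excluded at the bottom. Your treatment is in fact slightly more careful than the paper's, since you separate out the boundary $n=2\delta$ explicitly (the paper's Lemma~\ref{8ee} is stated with the strict inequality $2\delta<n$, though its dependencies Lemmas~\ref{dd77} and~\ref{lem:SU_dp} do cover $n=2\delta$ when $\delta>1$).

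One small omission: in your case~(4), the unitary subcase $d=1$ (so $n=2$) is not covered by Lemma~\ref{lem:SU_dp}, which explicitly requires $d>1$. But the fix is identical to your linear $e=1$ argument: for $d=1$, $n=2$ one has $G/Z(G)$ between $\PSU_2(q)\cong\PSL_2(q)$ and $\PGU_2(q)\cong\PGL_2(q)$, whose Sylow $p$-subgroups are cyclic, so this configuration is already excluded by the hypothesis.
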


\begin{proof}
If $n\geq ep^2$ in the linear case and $n\geq dp^2$ in the unitary case then
the result follows from Proposition~\ref{t5r6} for $G/O_p(G)$ in place of $G$,
and then for $G$ in view of Lemma~\ref{hc1}.

If $ep<n< ep^2$ in the linear case and $dp<n< dp^2$ in the unitary case then
the result follows from Lemma~\ref{kk7}. If $2e\leq n\leq ep$ in the linear
case and $2d\leq  n \leq dp$ in the unitary case then the result follows from
Lemma~\ref{8ee}. If $n< 2e$ in the linear case and $n< 2d$ in the unitary
case then Sylow $p$-subgroups of $G/Z(G)$ are cyclic.  %REVISE
\end{proof}

\begin{rem}
Proposition~\ref{t5r6} gives a better bound for $n$, but this does not yield
an essential advantage as the cases with $n=e(p+1)$ and $d(p+1)$ are not
covered by Proposition~\ref{t5r6}, and we have to use
Lemma~\ref{kk7} anyway.
\end{rem}

%%%%%%%%%%%%%%%%%%%%%%%%%%%%%%%
\subsection{The symplectic and orthogonal groups for $p>2$}

\begin{lem}   \label{lem:ort1}
 Let $G=\Sp_{2n}(q)$ ($q$ even, $n\ge2$, $(n,q)\neq (2,2)$),
 $G=\Spin_{2n+1}(q)$ ($q$ odd, $n\ge3$, $(n,q)\neq (3,3)$), or
 $G=\Spin_{2n}^\pm(q)$ ($n\ge4$). Suppose that Sylow $p$-subgroups of $G$ are
 abelian. Then $|G|_p<\mu_1(G)$.
\end{lem}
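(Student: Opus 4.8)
The strategy is a direct numerical comparison: bound $|G|_p$ from above using the structure of abelian Sylow $p$-subgroups, and bound $\mu_1(G)$ from below using the known minimal character degrees of these groups (available in \cite{TZ96}, as used throughout this section). For the symplectic and orthogonal groups, an abelian Sylow $p$-subgroup forces $m<p$ in the notation of Lemma~\ref{ag8}, where $2n$ (or $n$) equals $me+m'$ with $0\le m'<e$; equivalently the ``$q$-analogue'' rank is bounded by $e(p-1)+(e-1)$. Concretely, since $p\nmid q^i-1$ for $i<e$, one gets $|G|_p = |q^e-1|_p^m \cdot (\text{contribution from the } e\equiv 2\bmod 4 \text{ twist, if any})$, which in all cases is at most a bounded power of $(q^e-1)$ or $(q^e+1)$; because $m<p$ this gives roughly $|G|_p \le (q^e+1)^{p-1}$ or $(q^e-1)^{p-1}$, and in particular $|G|_p < q^{e(p-1)+1} \le q^{2n-1}$ (or $q^{n}$) with room to spare once $n$ is in the range forcing abelian but not cyclic Sylows. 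Meanwhile, from \cite[Tables I--IV]{TZ96}, the smallest non-trivial irreducible character degree $\mu_1(G)$ of $\Sp_{2n}(q)$, $\Spin_{2n+1}(q)$ or $\Spin_{2n}^\pm(q)$ is of order of magnitude $q^{2n-2}$ up to $q^{n-1}$ (e.g.\ $(q^n-1)(q^n-q)/(q^2-1)$-type expressions, or $(q^n\pm1)(q^{n-1}\mp1)/(q^2-1)$), which grows faster than $|G|_p$ once the rank is not too small.

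First I would fix notation: set $e=e_p(q)$ and, following Lemma~\ref{ag8}, write the relevant dimension parameter as $me + m'$ with $0\le m'<e$, so that ``abelian'' is equivalent to $m<p$ and ``non-cyclic'' to $m\ge2$. Then I would record the explicit upper bound for $|G|_p$ in each of the four families, distinguishing the sub-cases $e$ odd, $e\equiv 2\pmod 4$, and $4\mid e$ exactly as in Lemmas~\ref{c5a} and~\ref{ag8}; in every sub-case $|G|_p$ divides the order of a suitable subgroup $\GU_m(q^{e/2})$ or product of tori, giving $|G|_p \le p\,(q^{e}+1)^{m}$ or similar, hence $|G|_p < q^{em+e} \le q^{2n+e}$ at worst. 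Next I would quote from \cite{TZ96} the value (or a clean lower bound) for $\mu_1(G)$ in each family, and finish by the inequality $q^{em}\cdot(\text{small factor}) < \mu_1(G)$, which holds because $\mu_1(G)$ is of size $q^{2n-2}$ or larger while $|G|_p$ is only of size about $q^{(p-1)e}$ and the hypothesis that Sylows are abelian but the group has the stated rank pins $2n$ between $2e$ and $(p-1)e + (e-1)$, well inside the region where the bound is satisfied. Small-rank exceptions ($\Sp_4(2)$, $\Spin_7(3)$, and the low values of $n$) are excluded by hypothesis or checked by hand against \cite{Atl} or the character-degree tables.

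The main obstacle is bookkeeping rather than conceptual: one must handle the three arithmetic regimes for $e$ (odd, $\equiv 2\bmod 4$, divisible by $4$) together with the plus/minus type distinction for the even-dimensional orthogonal groups, and in each combination get the exponent of $q$ in $|G|_p$ right — in particular keeping track of whether the relevant embedding is of $\GL$, $\GU$, or a diagonal torus, since this changes $(q^e-1)$ to $(q^e+1)$ or to $(q^{e/2}\pm 1)$. A secondary subtlety is the boundary case $m = p-1$ (so $n$ as large as possible with abelian Sylows), where $|G|_p$ is largest; I expect this to be the tight case of the inequality, and it is here that one must use the precise form of $\mu_1(G)$ from \cite{TZ96} rather than a crude $q^{2n-2}$ estimate. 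Once that single extremal inequality is verified in each family, monotonicity in $m$ handles all smaller $n$, and the proof concludes.
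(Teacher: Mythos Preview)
Your approach is correct and is essentially the same strategy as the paper's proof: bound $|G|_p$ from above and compare with the values of $\mu_1(G)$ taken from \cite{TZ96}. However, you are working much harder than necessary. The paper avoids the entire case analysis on $e$ (odd, $e\equiv2\pmod4$, $4\mid e$) and the $\GL/\GU$ embedding bookkeeping by using a single crude uniform bound: since $S$ is abelian (hence $p>2$) it lies in a maximal torus of $G$, and every maximal torus of these groups has order at most $(q+1)^n$, so simply $|S|\le(q+1)^n$. This one inequality is then checked directly against the explicit formulas for $\mu_1(G)$ from \cite{TZ96} (e.g.\ $\mu_1\ge(q^n-1)(q^n-q)/2(q+1)$ for $\Sp_{2n}(q)$, $q$ even, and $\mu_1\ge(q^{2n}-1)/(q^2-1)$ for $\Spin_{2n+1}(q)$, $q>3$), and the comparison $(q+1)^n<\mu_1(G)$ is immediate in every family, with no extremal case $m=p-1$ to worry about. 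Your proposed finer bounds on $|G|_p$ would of course also succeed, but the uniform torus bound makes the proof a two-line argument.
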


\begin{proof}
Let $S\in \Syl_p(G)$. As $S$ is abelian, we have $p>2$ and
$|S|\leq (q+1)^n$.
If $G=\Sp_{2n}(q)$, $q$ even, $n\ge2$, $(n,q)\neq (2,2)$, or $\Spin_{2n+1}(3)$
then $\mu_1(G)\geq (q^{n}-1)(q^n-q)/2(q+1)$ (see \cite[Table~II]{TZ96}). This
is greater than $(q+1)^n$.
If $G=\Spin_{2n+1}(q)$, $q>3$ odd, $n\ge3$, then $\mu_1(G)\geq
(q^{2n}-1)/(q^2-1)$. Again $\mu_1(G)>(q+1)^n$, whence the result.
The cases with $G=\Spin_{2n}^\pm(q)$, $n\ge4$, are similar, see
\cite[Thm.~7.6]{TZ96}.
\end{proof}

\begin{prop}   \label{a7a}
 Let $e$ be odd, $p>2$, and let $H=\Sp_{2n}(q)$ with $n>1$,
 $\GO_{2n+1}(q)$ with $n>2$, $\GO^+_{2n}(q)$ with $n>3$, or $\GO^-_{2n+2}(q)$
 with $n>2$. Suppose that $2e\leq n<ep^2$.
 Then $H$ has no $\Syl_p$-regular character.
\end{prop}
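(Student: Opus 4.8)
The plan is to use that $e$ is odd to recognise the full $p$-part of $H$ inside a general linear subgroup, and then to quote the results on $\GL_n(q)$ obtained earlier in this section. First I would write $n=me+m'$ with $0\le m'<e$; the bounds $2e\le n<ep^2$ then translate into $2\le m<p^2$ (and $m\ge2$ means the Sylow $p$-subgroups here are genuinely non-cyclic). Since $e$ is odd, $p$ divides no factor $q^i+1$, so $|q^{2i}-1|_p=|q^i-1|_p$ for every $i$, and a routine count of group orders gives, in each of the four listed families,
$$|H|_p=\prod_{j=1}^{m}|q^{ej}-1|_p=|\GL_{me}(q)|_p.$$
On the other hand the standard embedding $\GL_k(q)\hookrightarrow\Sp_{2k}(q)$, respectively $\GL_k(q)\hookrightarrow\GO^+_{2k}(q)$, acting on $V\oplus V^*$ places $\GL_{me}(q)$ on a non-degenerate subspace of dimension $2me$ of the natural module of $H$, so $\GL_{me}(q)\le H$; by the displayed equality of $p$-parts this subgroup contains a Sylow $p$-subgroup of $H$.

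Now suppose $\chi$ is a $\Syl_p$-regular character of $H$. Then by Lemma~\ref{hc1}(a) its restriction to $\GL_{me}(q)$ is a $\Syl_p$-regular character of $\GL_{me}(q)$, which is impossible: for $m=2$ with $e>1$ this contradicts Lemma~\ref{dd77}; for $3\le m\le p$ it contradicts Lemma~\ref{8ee}; and for $p<m<p^2$ it contradicts Lemma~\ref{kk7}, whose exceptional case is a unitary and not a linear group. This settles everything except $m=2$, $e=1$, that is, $p\mid q-1$ and $n=2$; since the other three families require $n>2$ or $n>3$, the only group arising is $H=\Sp_4(q)$. Here a Sylow $p$-subgroup $S$ is abelian of order $|q-1|_p^2$, and a direct inspection of the character degrees of $\Sp_4(q)$, in the spirit of Lemma~\ref{lem:ort1}, shows $|S|<\mu_1(H)$. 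Then $\chi(1)=|S|<\mu_1(H)$ and the perfectness of $H$ force $\chi$ to be a multiple of $1_H$; but then $\chi|_S$ is a multiple of $1_S$ rather than the regular character of $S$, contradicting $|S|>1$.

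The technical core of the argument is the order bookkeeping behind the displayed identity: it must be verified uniformly for $\Sp_{2n}(q)$, $\GO_{2n+1}(q)$, $\GO^+_{2n}(q)$ and $\GO^-_{2n+2}(q)$ and for all residues $m'$ modulo $e$, using throughout that $e$ odd makes the factors $q^i+1$ invisible to $p$; and one must confirm that Lemmas~\ref{dd77}, \ref{8ee} and~\ref{kk7} really exhaust the whole interval $2\le m<p^2$. The one isolated point is $m=2$, $e=1$, which escapes Lemma~\ref{dd77} and is disposed of by the small minimal-degree estimate for $\Sp_4(q)$ above.
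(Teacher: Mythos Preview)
Your proof is correct and follows essentially the same route as the paper: locate a Sylow $p$-subgroup of $H$ inside a general linear subgroup (using that $e$ odd kills all factors $q^i+1$), then invoke the earlier $\GL$-lemmas, and finally deal with the residual case $n=2$, $e=1$, $H=\Sp_4(q)$ by the minimal-degree estimate $|H|_p<\mu_1(H)$. The only cosmetic difference is that the paper embeds $\GL_n(q)$ rather than your $\GL_{me}(q)$, which lets it quote Lemmas~\ref{dd77}, \ref{u44}, \ref{8ee}, \ref{kk7} for the whole interval $2e\le n<ep^2$ in one breath; your version works equally well since $|\GL_n(q)|_p=|\GL_{me}(q)|_p$, and your omission of Lemma~\ref{u44} is harmless because Lemma~\ref{8ee} already absorbs it.
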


\begin{proof}
Let $S\in \Syl_p(H)$. By Lemma~\ref{nb8}(1), $S$ is conjugate to a Sylow
$p$-subgroup of a subgroup $H_1\cong\GL_n(q)$ of $H$.  By Lemmas~\ref{dd77},
\ref{u44}, \ref{8ee} and~\ref{kk7}, $\GL_n(q)$ for $2e\leq n<ep^2$ has no
$\Syl_p$-regular character, unless possibly when $n=2$.

Let $n=2$, so $H=\Sp_4(q)$, $e=1$ and $p|(q-1)$. Then $|H|_p=|q-1|_p^2$.
If $q$ is even then $\mu_1(G)=q(q-1)^2/2$ for $q>2$. This is greater than
$|H|_p$, whence the result. If $q$ is odd then $|H|_p\leq (q-1)^2/4$ and
$\mu_1(H)=(q^2-1)/2$. So again $|H|_p<\mu_1(H)$.
\end{proof}

\begin{prop}   \label{b7b}
 Let $e$ be even, $p>2$, and let $H=\Sp_{2n}(q)$ ($n>1$, $(n,q)\neq (2,2)$),
 $\GO_{2n+1}(q)$ ($q$ odd, $n>2$), or $\GO^\pm_{2n}(q)$ with $n>3$.
 Suppose that $2e\leq 2n<ep^2$. Then $H$ has no $\Syl_p$-regular character.
\end{prop}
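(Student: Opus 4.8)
The plan is to run the argument of Proposition~\ref{a7a} with $\GL_n(q)$ replaced throughout by an embedded general unitary group. Let $S\in\Syl_p(H)$, write $2n=me+m'$ with $0\le m'<e$, and set $m=\lfloor 2n/e\rfloor$; since $e$ and $2n$ are even, the hypothesis $2e\le 2n<ep^2$ gives $2\le m<p^2$. By Lemma~\ref{c5a}, $H$ contains a subgroup isomorphic to the appropriate group among $\Sp_{me}(q)$, $\GO_{me+1}(q)$, $\GO^\pm_{me}(q)$ (the precise inclusions being those collected in Lemma~\ref{ag8}(3)--(5)), and that subgroup contains a copy of $X:=\GU_m(q^{e/2})$ which in turn contains a Sylow $p$-subgroup of $H$. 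Since $e=e_p(q)$ is even we have $q^{e/2}\equiv-1\pmod p$, so $p\mid(q^{e/2}+1)$ and the invariant $d=e_p(-q^{e/2})$ of $X$ equals $1$. If $\chi$ is a $\Syl_p$-regular character of $H$, then by Lemma~\ref{hc1}(a) (with trivial normal subgroup, using that $|H:X|$ is prime to~$p$) its restriction to $X$ is a $\Syl_p$-regular character of $X$.

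For $m\ge3$ it suffices to quote Section~\ref{sec:class p>2}: Lemmas~\ref{8ee}, \ref{kk7} and~\ref{et2}, read with $d=1$ in place of the unitary parameter, show that $X=\GU_m(q^{e/2})$ has no $\Syl_p$-regular character, unless $p=3$ and $q^{e/2}=2$ — which forces $q=2$, $e=2$ and $m\in\{3,4\}$, i.e.\ $X\in\{\GU_3(2),\GU_4(2)\}$ and correspondingly $H\in\{\Sp_6(2),\Sp_8(2),\GO_8^\pm(2)\}$ (and a few further groups reachable through the inclusions above). These finitely many cases are settled by hand: using the known character tables (see \cite{Atl}) one solves the small system asking for a non-negative integer combination $\chi$ of character degrees with $\chi(1)=|H|_p$ vanishing on $S\setminus\{1\}$, and finds none — in each case some fixed element of order $p$ in $S$ is positive on every character that could occur in such a $\chi$.

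The remaining case is $m=2$, where Section~\ref{sec:class p>2} cannot be invoked because $\GU_2(q^{e/2})$ lies outside the ranges covered by Lemmas~\ref{8ee} and~\ref{kk7}; in fact $\GU_2(q^{e/2})$ genuinely \emph{can} carry a $\Syl_p$-regular character (e.g.\ $\GU_2(2)\cong C_3\times S_3$ for $p=3$, and $\GU_2(4)\cong C_5\times A_5$ for $p=5$), so the reduction must instead go to a symplectic or orthogonal group. Here $2e\le 2n<3e$, and inside $H$ one finds, after splitting off a factor of order prime to $p$, a base group $H_0\in\{\Sp_{2e}(q)\}$, or, when $e\ge4$, $H_0\in\{\GO_{2e+1}(q),\GO^\pm_{2e}(q)\}$, containing a Sylow $p$-subgroup of $H$. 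One computes $|H_0|_p=|q^{e/2}+1|_p^2$ and compares with the minimal character degree $\mu_1(H_0)$ from \cite[Tab.~II]{TZ96}: this gives $|H_0|_p<\mu_1(H_0)$ — hence $H_0$, and therefore $H$, has no $\Syl_p$-regular character of degree $|H|_p$ — in all cases except $H_0=\Sp_4(4)$ with $p=5$ (and possibly $\GO_8^\pm(2)$ with $p=5$), which are checked directly as in the previous paragraph.

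The main obstacle is precisely this $m=2$ situation. It is the one place where the general machinery of Section~\ref{sec:class p>2} does not apply, where the reduction to the unitary subgroup loses essential information, and where the clean inequality $|H_0|_p<\mu_1(H_0)$ only barely holds — failing on a short, explicitly determined list of small groups (essentially $\Sp_4(4)$ at $p=5$ together with a couple of eight-dimensional orthogonal groups over $\FF_2$), which then must be disposed of by the same kind of explicit character-theoretic computation already used repeatedly in Sections~\ref{sec:spor} and~\ref{sec:class p>2}. Everything else is routine bookkeeping with the embeddings of Lemmas~\ref{c5a} and~\ref{ag8} together with the character-degree estimates of \cite{TZ96}.
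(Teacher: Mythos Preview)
Your approach is essentially the same as the paper's: reduce via Lemma~\ref{c5a} to an embedded $\GU_m(q^{e/2})$ with $d=1$, invoke Lemmas~\ref{8ee}, \ref{et2} and~\ref{kk7} for $m\ge3$, treat $m=2$ by comparing $|H_0|_p$ with $\mu_1(H_0)$, and finish off a short list of small groups by hand. The paper organises the orthogonal cases a little more explicitly, splitting according to the parity of $k=2n/e$ relative to the sign of $\GO^{\pm}$ (your deferral to Lemma~\ref{ag8}(3)--(5) hides that in the mismatched-parity case the effective unitary parameter drops to $m-1$, which matters when $m$ is small), but the substance is identical.

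One point worth noting: your bookkeeping of the exceptional cases is actually slightly sharper than the paper's. You correctly flag $\GU_4(2)$ (hence $\Sp_8(2)$ and $\GO_8^\pm(2)$ at $p=3$) coming from the exception in Lemma~\ref{kk7}, and you flag $\Sp_4(4)$ at $p=5$ in the $m=2$ analysis---the paper's blanket assertion that $\mu_1(\Sp_{2e}(q))>|q^{e/2}+1|_p^2$ for even $q>2$ in fact fails there ($\mu_1=18<25$), and the case has to be disposed of directly (which is immediate since $1_G$ can occur at most once). Conversely, your tentative inclusion of $\GO_8^\pm(2)$ at $p=5$ is unnecessary: $\mu_1(\Omega_8^+(2))=28>25$ already, and $|\GO_8^-(2)|_5=5$ so the Sylow subgroup is cyclic.
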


\begin{proof}
Write $2n=ek+m$ with $m<e$, where $k>1$ is an integer. As $H$ contains a
subgroup $H_1$ with $(|H:H_1|,p)=1$, where $H_1\cong \Sp_{ke}(q)$ or
$\GO_{ke+1}(q)$, respectively, it suffices to prove the lemma for $2n=ke$.
Let $2n=ke$. By Lemma \ref{c5a}, a Sylow $p$-subgroup of $H$ is contained in
a subgroup isomorphic to $\GU_{k}(q^{e/2})$.
By Lemma~\ref{8ee} for $2<k\leq p$ and Lemma~\ref{kk7} for $p<k<p^2$
(with $d=1$ and $q^{e/2}$ in place of $q$), the group $\GU_{k}(q^{e/2})$ with
$(k,q^{e/2})\neq (3,2)$ has no $\Syl_p$-regular character, whence the claim.
(The exceptional case $H=\Sp_6(2)$, $p=3$ is considered below.)

Let $k=2$.  Then $H=\Sp_{2e}(q)$, and $p|(q^{e/2}+1)$. Then
$|H|_p=|q^{e/2}+1|_p^2$. If $q$ is even then $\mu_1(G)=(q^{e}-1)(q^e-q)/2(q+1)$
for $q>2$. This is greater than $|H|_p$, whence the result. If $q$ is odd
then $|H|_p\leq (q^{e/2}+1)^2/4$ and  $\mu_1(H)=(q^{e}-1)/2$. So
$|H|_p<\mu_1(H)$, whence the result.

A similar argument works if $H=\GO_{ke+1}(q)$ as well as for $H=\GO_{ke}^-(q)$
with $k$ odd, and for $H=\GO_{ke}^+(q)$ with $k>2$ even, except when
$H=\GO_8^+(2)$ and $e=2$.

Let $H=\GO_8^+(2)$ and $e=2$ so $p=3$. Then $|G|_3=243$ and the \ir characters
of degree less than~243 are of degrees $1,28,35,50,84,175,210$. Let $\chi$ be
a $\Syl_p$-vanishing character of degree 243 and $\lambda$ an \ir constituent
of $\chi$. By \cite{Atl}, $\lambda(3E)>0$ whenever $\lambda(1)\leq 243$.
This is a contradiction as $\chi(3E)=0$.

Let $k=2$ and $H=\GO^+_{2e}(q)$. Then $|H|_p= |q^{e/2}+1|_p^2<\mu_1(H)=(q^e-1)(q^{e-1}-1)/(q^2-1)$ for $q>2$ and $q=2,e>4$. (If $q=2,e=4$ then $p=5$, and $|H|_5=25<\mu_1(H)=28$.) So the result follows.
(The case $e=2$ has been examined above.)

Suppose that $H=\GO_{2n}^-(q)$ with $k$ even or $H=\GO_{2n}^+(q)$ with $k$ odd.
Then some Sylow $p$-subgroup of $H$ is contained in a subgroup $H_1$ isomorphic
to $\GO_{2n-e}^-(q)$ or $\GO_{2n-e}^+(q)$, respectively. Note that
$2n-e=(k-1)e$. For the groups $H_1$ the result has been proven above, except
for the cases where $k-1=1$ or $(k-1)e\leq 6$. However, if $k-1=1$ then Sylow
$p$-subgroups of $H_1$, and hence of $H$ are cyclic, and this case has been
examined in Propositions~\ref{prop:ir2} and~\ref{ms4}.
Let $(k-1)e\leq 6$. As $k-1>1$, we have $k=3,e=2$ as $e$ is even. Then
$H=\GO^-_{8}(q)$ and $\mu_1(H)=q(q^4+1)$ \cite{TZ96}. If $p>3$ then
$|H|_p=|q+1|_p^3$, otherwise $|H|_3=3|q+1|_3^3$. Then $|H|_p<\mu_1(H)$,
unless $q=2$.

Let $q=2$, $p=3$. Then $|G|_3=81$. By \cite{Atl} the irreducible characters of
degree less than $81$ are of degrees $1,26,52$. Therefore only these
characters can occur as irreducible constituents of a $\Syl_3$-regular
character $\chi$. However, the values of these characters at an element
$g\in G$ in class $9A$ are $1,2,1$, in particular, positive. As $\chi(g)=0$,
this is a contradiction.

Suppose that $G=\Sp_6(2)$ and $p=3$.
Then $|S|=81$. Let $\tau$ be an irreducible constituent
of $\chi$. Then $\tau(1)\leq 81$. Let $g\in G$ belong to the conjugacy
class $3C$ in notation of \cite{Atl}. By inspection of the character table of
$G$ one observes that $\tau(g)\geq 0$ whenever $\tau(1)\leq 81$. Therefore,
$\tau(g)=0$ for every irreducible constituent $\tau$ of $\chi$. This implies
$\tau(1)\in\{21,27\}$, see \cite{Atl}. However, such a character takes
positive values at the elements of class $3A$.
So this case is ruled out.\end{proof}

\begin{rem}
Let $G$ be the universal covering group of $\SO_8^+(2)$. One observes that $G$
has a $\Syl_5$-regular character, and has no  $\Syl_3$-regular characters. If
$H=\Sp_4(2)$ then $H$ has Steinberg-like characters for  $p=3$, both reducible
and irreducible.
\end{rem}

%%%%%%%%%%%%%%%%%%%%%%%%%%%%%%%%%%%%%%%%%%%%%%%%%%%%%%%%%%%%%%%%%%%%%%%%%
\section{Classical groups at $p=2$}   \label{sec:class p=2}
In this section we investigate $\Syl_p$-regular and Steinberg-like characters
of simple classical groups over fields of odd order $q>3$ at the prime $p=2$.

%%%%%%%%%%%%%%%%%%%%%%%%%%%%%%%%%%
\subsection{Linear and unitary groups at $p=2$}
We first deal with the smallest case:

\begin{prop}   \label{prop:PSL2}
 Let  $q>3$ be odd.
 \begin{enumerate}[\rm(a)]
  \item Let $G=\PSL_2(q)$. Then $G$ has a reducible $\Syl_2$-regular character
   if and only if $q+1=2^k$ for some $k\ge3$ or if $q=5$.
  \item Let $G=\SL_2(q)$. Then $G$ has a reducible $\Syl_2$-regular character
   if and only if $q\pm 1$ is a $2$-power.
 \end{enumerate}
\end{prop}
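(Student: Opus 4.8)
The plan is to combine the explicit character table of $\PSL_2(q)$ and $\SL_2(q)$ with the structure of their Sylow $2$-subgroups. Set $\epsilon=1$ if $q\equiv1\pmod4$ and $\epsilon=-1$ if $q\equiv3\pmod4$, so that $4\mid q-\epsilon$ while $q+\epsilon\equiv2\pmod4$; put $2^a=|q-\epsilon|_2$. Then a Sylow $2$-subgroup $S$ of $\PSL_2(q)$ is dihedral of order $2^a$ and one of $\SL_2(q)$ is generalised quaternion of order $2^{a+1}$, and in both cases $S=\langle C,w\rangle$ where $C$ is the $2$-part of the maximal torus of order $q-\epsilon$ (split if $\epsilon=1$, non-split if $\epsilon=-1$), $w$ inverts $C$ and $w^2=-I$. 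Hence every element of $S\setminus C$ squares to $-I$, so it has order $4$ with eigenvalues $\pm i$, and is therefore regular semisimple --- split when $q\equiv1\pmod4$ and elliptic when $q\equiv3\pmod4$, according as $i$ does or does not lie in $\FF_q$. Recall that the irreducible character degrees of $\PSL_2(q)$ are $1$, $(q+\epsilon)/2$ (occurring twice), $q-1$, $q$, $q+1$, that $\PSL_2(q)$ has a character of degree $q-1$ if $\epsilon=1$ and of degree $q+1$ if $\epsilon=-1$ (for $q>3$), and that by Lemma~\ref{lem:a51} a $\Syl_2$-regular character $\chi$ has $\chi(1)=|S|$, contains $1_G$ at most once, and vanishes on $S\setminus\{1\}$.

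For (a) I would first dispose of the case that $q-\epsilon$ is not a $2$-power: then $|S|=2^a\le(q-\epsilon)/3<(q+\epsilon)/2$, so every non-linear constituent of $\chi$ exceeds $|S|$ in degree, forcing $\chi=1_G$ and the absurdity $|S|=1$. So assume $q-\epsilon=2^a$, i.e.\ $q+1=2^a$ ($\epsilon=-1$, $a\ge3$) or $q-1=2^a$ ($\epsilon=1$, $a\ge2$), whence $|S|=2^a$. If $q+1=2^a$, take $\chi=1_G+\operatorname{St}$, the permutation character of $\PSL_2(q)$ on the $q+1=2^a=|S|$ points of $\mathbb{P}^1(\FF_q)$; since $q\equiv3\pmod4$ no non-trivial $2$-element of $\PSL_2(q)$ has an $\FF_q$-rational eigenline, so $S$ acts freely, hence regularly, on $\mathbb{P}^1(\FF_q)$ and $\chi|_S=\rho_S^\reg$. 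This gives a reducible $\Syl_2$-regular character, proving the ``if'' direction for $q+1=2^k$, $k\ge3$. If $q-1=2^a$, the character degrees that are $\le|S|=2^a$ are $1$, $(q+1)/2=2^{a-1}+1$ and $q-1=2^a$; a reducible $\chi$ of degree $2^a$ cannot involve the degree-$2^a$ character, and then $2^a=1+(2^{a-1}+1)$ is the only admissible way to reach $2^a$, forcing $a=2$, i.e.\ $q=5$; conversely for $\PSL_2(5)\cong A_5$ one checks on the character table that $1_G+\tau$ is $\Syl_2$-regular for $\tau$ of degree $3$. This settles (a).

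For (b) note first that any $\Syl_2$-regular character $\chi$ of $\SL_2(q)$ satisfies $\chi(-I)=\rho_S^\reg(-I)=0$, so $-I$ does not act as a scalar and $\chi$ is automatically reducible; thus (b) reads: $\SL_2(q)$ has a $\Syl_2$-regular character iff $q-1$ or $q+1$ is a $2$-power. Decompose $\chi=\chi^++\chi^-$ into the parts trivial, resp.\ non-trivial, on $Z=\{\pm I\}$; restricting to $S$ and splitting $\rho_S^\reg=\rho_S^++\rho_S^-$ accordingly (with $\rho_S^+$ the inflation of $\rho_{S/Z}^\reg$) yields $\chi^+|_S=\rho_S^+$, $\chi^-|_S=\rho_S^-$. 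In particular $\chi^+$, viewed on $\PSL_2(q)$, is $\Syl_2$-regular, so by (a) and Proposition~\ref{prop:ir2} the condition ``$q-\epsilon$ is a $2$-power'' --- equivalently, for $q>3$, ``$q\pm1$ is a $2$-power'' --- is necessary. For sufficiency, assuming $q-\epsilon=2^a$, I would build $\chi=\chi^++\chi^-$ where $\chi^+$ is the inflation of a $\Syl_2$-regular character of $\PSL_2(q)$ of degree $2^a$ (which exists by (a) and Proposition~\ref{prop:ir2}), so automatically $\chi^+|_S=\rho_S^+$, and $\chi^-$ is a faithful character of $\SL_2(q)$ of degree $2^a$: a cuspidal character $R_\theta$ with $\theta(-I)=-1$ when $\epsilon=1$, or a principal series character $\mathrm{Ind}_B^{\SL_2(q)}(\lambda)$ with $\lambda(-I)=-1$ when $\epsilon=-1$. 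Such a character takes the value $2^a$ at $1$, $-2^a$ at $-I$, and $0$ on every regular semisimple element of the torus ``opposite'' to its parameter; since both $C\setminus\{\pm I\}$ and $S\setminus C$ consist of such elements, $\chi^-|_S$ is supported on $\{\pm I\}$ with values $2^a,-2^a$, which is exactly $\rho_S^-$. Then $\chi^++\chi^-$ is a reducible $\Syl_2$-regular character of $\SL_2(q)$, completing (b).

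The step I expect to cost the most effort is this last one: determining $\chi^-|_S$ precisely, which comes down to the restrictions of the cuspidal and principal series characters of $\SL_2(q)$ to the torus normaliser containing $S$ --- one needs the vanishing of these characters on the regular elements of $S$ outside $C$, together with the case distinction of when those elements are split ($q\equiv1\pmod4$) or elliptic ($q\equiv3\pmod4$). One must also check that suitable $\theta$, $\lambda$ exist (they do for all $q>3$), and the handful of small cases $q\in\{5,7,9\}$ can instead be read off directly from the known character tables.
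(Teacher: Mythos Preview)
Your proof is correct and follows essentially the same strategy as the paper's: compare $|G|_2$ against the smallest non-trivial character degree to force $q-\epsilon$ to be a $2$-power, then handle the two parities separately; for (b), split $\chi$ by the central character and reduce the necessity to part~(a) together with Proposition~\ref{prop:ir2}. Your write-up is in fact more explicit than the paper's in two places: the permutation-character argument that $1_G+\mathrm{St}$ restricts to $\rho_S^\reg$ via the free action of $S$ on $\mathbb{P}^1(\FF_q)$ when $q\equiv3\pmod4$, and the concrete choice of $\chi^-$ as a cuspidal (resp.\ principal series) character together with the verification that all of $S\setminus\{\pm I\}$ consists of regular elements of the ``opposite'' torus. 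The paper simply appeals to the character table at both of these points.
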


\begin{proof}
(a) The 2-part of $|G|$ is $|q-1|_2$ if $q\equiv1\pmod4$ and $|q+1|_2$ else.
The smallest non-trivial character degree is $(q+1)/2$ in the first case,
$(q-1)/2$ in the second. It follows that there cannot be $\Syl_2$-regular
characters in the first case, unless $q=5$. In the second case, it follows
from the character table of $G$ that the sum of the trivial and the Steinberg
character is $\Syl_2$-regular when $q+1$ is a power of~2, and there are no
cases otherwise. If $q=5$ then there are two reducible $\Syl_2$-regular
characters of degree 4 by \cite{Atl}.

(b) Let $\chi$ be a reducible $\Syl_2$-regular character. Let
$1\neq z\in Z(G)$; then $\chi=\chi_1+\chi_2$, where $\chi_1(z)=\chi_1(1)$ and
$\chi_2(z)=-\chi_2(1)$. By Lemma~\ref{hc1} (with $P=G$ and $U=Z(G)$), $\chi_1$
is a $\Syl_2$-regular character for $G/Z(G)=\PSL_2(q)$. If $\chi_1$ is
irreducible then $q\pm 1$ is a 2-power by Proposition~\ref{prop:ir2}(2);
by (a), this is also true if $\chi_1$ is reducible. So $q\pm 1$ is a 2-power.

Then there are \ir characters $\chi_1,\chi_2$ such that $\chi_1+\chi_2$ is
2-vanishing of degree $2(q\pm 1)=|G|_2$. Indeed, using the character table
of $G$ one observes that there exist irreducible characters $\chi_1,\chi_2$
of $G$ that vanish at non-central 2-elements of $G$, and such that
$\chi_1(z)=\chi_1(1)$ and  $\chi_2(z)=-\chi_2(1)$. It follows that
$\chi_1+\chi_2$ is a  reducible $\Syl_2$-regular character.
\end{proof}

We recall that $\mu_3(G)$ denotes the third smallest degree of a non-trivial
irreducible representation of $G$.

\begin{lem}   \label{3rd}
 Let $G$ be a quasi-simple group such that $G/Z(G)\in\{\PSL_n(q), \PSU_n(q)\}$
 with $n\ge3$,  $q>3$ odd and $|Z(G)|_2=1$. Then $|G|_2<\mu_3(G)$.
\end{lem}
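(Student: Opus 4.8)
The plan is to reduce the statement to the inequality $|\PSL_n(q)|_2<\mu_3(\SL_n(q))$, together with its unitary analogue $|\PSU_n(q)|_2<\mu_3(\SU_n(q))$, and then to verify this by comparing the $2$-part of the group order with the small character degrees recorded in Table~\ref{tab:min SLn} and in the corresponding table for unitary groups in \cite{TZ96}.

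For the reduction I would argue as follows. Since $q>3$ is odd and $n\ge3$, the simple group $G/Z(G)$ has no exceptional Schur multiplier, so its universal covering group is $\SL_n(q)$, respectively $\SU_n(q)$. Hence $G$ is a quotient of $\SL_n(q)$ (resp.\ $\SU_n(q)$), which gives $\Irr(G)\subseteq\Irr(\SL_n(q))$ and therefore $\mu_3(G)\ge\mu_3(\SL_n(q))$. On the other hand $Z(G)$ is a quotient of the cyclic group of order $(n,q-1)$, and the hypothesis $|Z(G)|_2=1$ forces $|G|_2=|G/Z(G)|_2=|\PSL_n(q)|_2$. Thus it suffices to prove $|\PSL_n(q)|_2<\mu_3(\SL_n(q))$ and, likewise, $|\PSU_n(q)|_2<\mu_3(\SU_n(q))$.

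Next I would compute the $2$-part. Put $t:=|q-1|_2$ and $s:=|q+1|_2$, so $\min(s,t)=2$ and $\max(s,t)\le q+1$. By the well-known formula for the $2$-adic valuation of $q^i-1$ (lifting the exponent) one has $|q^{2j}-1|_2=st\cdot|j|_2$ for all $j\ge1$, hence $|\GL_n(q)|_2=\prod_{i=1}^n|q^i-1|_2=t^{\,n}s^{\lfloor n/2\rfloor}|\lfloor n/2\rfloor!|_2$, and therefore
$$|\PSL_n(q)|_2=\frac{|\SL_n(q)|_2}{|(n,q-1)|_2}=\frac{t^{\,n-1}s^{\lfloor n/2\rfloor}|\lfloor n/2\rfloor!|_2}{|(n,q-1)|_2}\le t^{\,n-1}s^{\lfloor n/2\rfloor}2^{\lfloor n/2\rfloor-1};$$
the unitary case is identical with the roles of $q-1$ and $q+1$ interchanged. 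Splitting according to $q\equiv\pm1\pmod4$, the right-hand side is at most $(2q)^{n-1}$ when $q\equiv1\pmod4$ and at most $2^{3n/2-2}(q+1)^{\lfloor n/2\rfloor}$ when $q\equiv3\pmod4$ (and symmetrically for $\PSU_n(q)$).

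Finally I would compare with $\mu_3$. For $n\ge5$ --- and since $q>3$ the table entry for $(n,q)=(6,3)$ is irrelevant --- Table~\ref{tab:min SLn} gives $\mu_3(\SL_n(q))=(q^n-1)(q^{n-1}-q^2)\big/\bigl((q-1)(q^2-1)\bigr)>\tfrac45 q^{2n-4}$, and \cite{TZ96} yields a lower bound of the same order of magnitude for $\mu_3(\SU_n(q))$; a short elementary estimate then shows that the upper bounds above are strictly smaller than this for all $n\ge5$ and all odd $q\ge5$. For $n=3$ and $n=4$ the general row of the table does not apply, but Table~\ref{tab:min SLn} gives $\mu_3(\SL_3(q))=(q^2-1)(q-1)/(3,q-1)$ and $\mu_3(\SL_4(q))=(q^3-1)(q-1)/2$ (and \cite[Table~V]{TZ96} the unitary analogues); combining these with the exact value $|\PSL_n(q)|_2=t^{\,n-1}s^{\lfloor n/2\rfloor}|\lfloor n/2\rfloor!|_2\big/|(n,q-1)|_2$ and its unitary counterpart, and splitting into the two congruence classes of $q\pmod 4$, settles these cases by direct computation. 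The step I expect to be the real work is precisely this comparison for small $n$ and small $q$: for $n\in\{3,4,5,6\}$ and the smallest admissible values of $q$ --- in particular the Fermat/Mersenne values where $q-1$ or $q+1$ is a $2$-power, when $|G|_2$ is largest relative to $|G|$ --- the gap between $|G|_2$ and $\mu_3(G)$ is narrow, so one cannot afford the crude estimate $\max(s,t)\le q+1$ but must retain the exact factors $|\lfloor n/2\rfloor!|_2$ and $|(n,q-1)|_2$ and check the resulting finitely many tight cases individually.
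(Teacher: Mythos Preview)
Your proposal is correct and follows essentially the same route as the paper: reduce to comparing $|\PSL_n(q)|_2$ (resp.\ $|\PSU_n(q)|_2$) with the value of $\mu_3$ for the universal cover taken from \cite{TZ96}, bound the $2$-part by splitting according to $q\pmod 4$, and treat $n=3,4$ separately where the tables give different formulas and the margin is tightest (especially when $q\mp1$ is a $2$-power). Your use of the lifting-the-exponent formula to write $|\GL_n(q)|_2=t^{\,n}s^{\lfloor n/2\rfloor}|\lfloor n/2\rfloor!|_2$ is a slightly more systematic bookkeeping than the paper's, but the substance is identical.
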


\begin{proof}
Let first $G/Z(G)=\PSL_3(q)$, $q>3$ odd. Then $\mu_1(G)=q(q+1)$ by
Table~\ref{tab:min SLn}, while $|G|_2=2|q-1|_2^2$ if $q\equiv1\pmod4$,
respectively $|G|_2=4|q+1|_2$ if $q\equiv3\pmod4$. Thus, $|G|_2<\mu_1(G)$
unless $q-1$ is a 2-power. In the latter case, $\mu_3(G)=(q^2-1)(q-1)$, and
our claim follows. Next, let $G/Z(G)=\PSL_4(q)$ with $q>3$ odd. Then
$\mu_3(G)=(q^3-1)(q-1)/2$, which is larger than $|G|_2\le 2(q-1)^3$ for
$q\geq 5$. Now assume that $G/Z(G)=\PSL_n(q)$, with $n\ge5$ and $q>3$ odd.
Then $\mu_3(G)= \frac{(q^n-1)(q^{n-1}-q^2)}{(q^2-1)(q-1)}$ (see
Table~\ref{tab:min SLn}) while $|G|_2\le (q-1)^{n-1}2^{n-1}$ when
$q\equiv1\pmod4$, and $|G|_2\le (q+1)^{\lfloor n/2\rfloor} 2^{n-1}$ when
$q\equiv3\pmod4$. Again, the claim follows.
\par
Let $G/Z(G)=\PSU_3(q)$, $q>3$ odd. Then $\mu_1(G)=q(q-1)$ by
\cite[Table~V]{TZ96}, while $|G|_2=2|q+1|_2^2$ if $q\equiv3\pmod4$,
respectively $|G|_2=4|q-1|_2$ if $q\equiv1\pmod4$. Thus, $|G|_2<\mu_1(G)$
unless $q+1$ is a 2-power. In the latter case, $\mu_3(G)=(q^2-q+1)(q-1)$, and
our claim follows. Now let $G/Z(G)=\PSU_4(q)$, $q>3$ odd. Then
$\mu_3(G)= \frac{(q^2-q+1)(q^2+1)}{2}$.
Suppose first that $4|(q+1)$.  We have $|G|_2=|\PSU_4(q)|_2\leq 2(q+1)^3$,
whereas $\mu_3(G)=(q^2+1)(q^2-q+1)/2$. Then $|G|_2<\mu_3(G)$. Suppose
now that $4|(q-1)$. Then $|G|_2\leq 2(q-1)^2$ which is less than
$\mu_1(G)=(q^4-1)/(q+1)$.
Now assume that $G/Z(G)=\PSU_n(q)$ with $n\ge5$ odd. Here
$\mu_3(G)= \frac{(q^n+1)(q^{n-1}-q^2)}{(q^2-1)(q+1)}$, while
$|G|_2\le (q-1)^{(n-1)/2}2^{n-1}$ if $q\equiv1\pmod4$, respectively
$|G|_2\le (q+1)^{n-1}2^{n-1}$ if $q\equiv3\pmod4$. The claim follows.
Finally, assume that $G/Z(G)=\PSU_n(q)$ with $n\ge6$ even. Here
$\mu_3(G)= \frac{(q^n-1)(q^{n-1}+1)}{(q^2-1)(q+1)}$, while the 2-part of
$|G|$ is bounded above as given before. Again we can conclude.
\end{proof}

\begin{prop}   \label{44d}
 Let $G$ be quasi-simple with $G/Z(G)\in\{\PSL_n(q), \PSU_n(q)\}$ with $n\ge3$
 and $q>3$ odd.
 \begin{enumerate}
  \item[\rm(a)] If $n=3,4$ then $G$ has no $\Syl_2$-regular character.
  \item[\rm(b)] If $n\ge5$ then $G$ has no Steinberg-like character for $p=2$.
 \end{enumerate}
\end{prop}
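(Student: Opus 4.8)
The plan is to feed the degree bound of Lemma~\ref{3rd} into an analysis of the few small irreducible characters of $\PSL_n(q)$ and $\PSU_n(q)$. I first reduce to the case $|Z(G)|_2=1$: if $z\in Z(G)$ has order~$2$, set $U=\langle z\rangle$, a normal $2$-subgroup of $G$, so that $G/U$ is again quasi-simple with the same simple quotient. By Lemma~\ref{hc1}(a),(b), applied with $P=G$, the generalised restriction $r^G_{G/U}(\chi)$ of a $\Syl_2$-regular (resp.\ Steinberg-like) character $\chi$ of $G$ is a $\Syl_2$-regular (resp.\ Steinberg-like) character of $G/U$. Iterating, it suffices to prove both parts assuming $|Z(G)|_2=1$; then Lemma~\ref{3rd} gives $|G|_2<\mu_3(G)$.

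So let $\chi$ be a $\Syl_2$-regular character of $G$, of degree $|G|_2<\mu_3(G)$. Then every irreducible constituent of $\chi$ has degree in $\{1,\mu_1(G),\mu_2(G)\}$, and by Lemma~\ref{lem:a51} the trivial character occurs at most once. By \cite{TZ96} the non-trivial characters of these two degrees are explicitly known: a single character $\psi_1$ of degree $\mu_1(G)$ (the non-trivial constituent of the permutation character on the $1$-spaces of the natural module in the linear case, a Weil-character constituent in the unitary case), together with a Galois orbit $\psi_2^{(1)},\dots,\psi_2^{(t)}$ of characters of degree $\mu_2(G)=\mu_1(G)+1$, restricted from $\GL_n(q)$ resp.\ $\GU_n(q)$. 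Writing $\chi=a\,1_G+b\,\psi_1+\sum_jc_j\psi_2^{(j)}$ with $a\in\{0,1\}$, $b,c_j\ge 0$ and $c:=\sum_jc_j$, the degree condition reads $a+(b+c)\mu_1(G)+c=|G|_2$. For most $(n,q)$ this has no non-negative solution at all — in particular whenever $|G|_2<\mu_1(G)$, forcing $b=c=0$ — and such $(n,q)$ need no further argument.

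For the residual $(n,q)$ one evaluates the expression for $\chi$ at suitable elements. For part~(a) ($n=3,4$), $\chi$ vanishes at every non-trivial $2$-element, so it is enough to pick a $2$-element $g$ — for instance a reflection-type involution $\diag(-I_k,I_{n-k})$, or an element of order~$4$ when $q\equiv 1\pmod 4$ — at which the relevant permutation/Weil characters take controlled (often non-negative) values; the ensuing sign and congruence constraints on $(a,b,(c_j))$ contradict $\chi(g)=0$ and $\chi(1)=|G|_2>0$. When a single class forces only some of the multiplicities to vanish, one replaces $\chi$ by the sub-sum of its constituents vanishing at $g$ and repeats with a second $2$-class, exactly as in the proofs of Lemmas~\ref{u44}, \ref{8ee} and~\ref{kk7}; this rules out every $\Syl_2$-regular character, which proves~(a). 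For part~(b) ($n\ge 5$) the same constituent restriction holds, but $2$-elements no longer suffice; instead one exploits the stronger Steinberg-like hypothesis by evaluating $\chi$ at a $2$-singular element $g=su$, where $s$ is a $2$-element and $1\ne u$ a commuting element of odd order lying in a maximal torus on which $\psi_1$ and the $\psi_2^{(j)}$ decompose into linear characters with computable partial sums, again obtaining an unsolvable system. The finitely many small $q$ for which the generic degree formulae of \cite{TZ96} fail are checked directly from \cite{Atl}. The main obstacle is precisely this last step: carrying out the character-value computations uniformly in $q$, and simultaneously for the whole Galois orbit $\psi_2^{(1)},\dots,\psi_2^{(t)}$, while choosing classes at which one or two evaluations already close the argument.
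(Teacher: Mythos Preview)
Your reduction to $|Z(G)|_2=1$ via Lemma~\ref{hc1} and your use of Lemma~\ref{3rd} to cut the constituents of $\chi$ down to degrees $1,\mu_1(G),\mu_2(G)$ are exactly what the paper does, and your degree equation $a+(b+c)\mu_1(G)+c=|G|_2$ is correct and useful. But the proposal stops at precisely the point where the work begins: you yourself flag the character-value computations as ``the main obstacle'' and offer only a heuristic (pick some involution, or some $2$-singular $g=su$ in a torus). That is a genuine gap, not a detail; the whole content of the proposition lies in choosing the right element and knowing why the small characters vanish or take value $\pm1$ there.

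The paper's choices are quite different from what you suggest, and cleaner. For $\PSL_n(q)$ it does not compute Weil-type values at all: it observes that both $\mu_1$- and $\mu_2$-characters arise from $1_P^G$ and $\lambda^G$ with $P$ the line stabiliser, so \emph{any} element $g$ with no eigenvalue in $\FF_q$ kills all of them (and forces $\tau(g)=-1$). For $n\ge5$ one takes $g$ block-diagonal with an $(n-2)$-block coming from a primitive element of $\FF_{q^{n-2}}$ and a $2$-block of order $q+1$; this $g$ has even order, so Steinberg-like gives $\chi(g)=0$, whence $x_1=x_2$ and $\chi(1)=(x_1+x_3)(q^n-1)/(q-1)$ is divisible by an odd prime. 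For $n=4$ one first shows by an easy estimate that $|G|_2<\mu_1(G)$ unless $q-1$ is a $2$-power or $q=7$, and in the $2$-power case exhibits an explicit $2$-element in $\SL_4(q)$ with no $\FF_q$-eigenvalue; $q=7$ is a one-line degree check. For $n=3$ pure degree arithmetic already forces $q=5$, handled by the table. For $\PSU_n(q)$ with $n\ge5$ the paper does \emph{not} restrict to a torus and sum linear characters as you propose; instead it identifies the $\mu_2$-characters as the Lusztig series $\cE(G,s)$ for $s$ of order dividing $q+1$ with centraliser $\GU_{n-1}(q)$, then picks a regular element $g$ of even order in a torus of order $(q^2-1)(q^{n-2}-(-1)^{n-2})/(q+1)$ whose dual contains no conjugate of $s$, so all of $\cE(G,s)$ vanish on $g$; the unipotent $\tau$ is handled by Murnaghan--Nakayama via \cite{LM15}, giving $\tau(g)=\pm1$. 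The small unitary cases $n=3,4$ are again disposed of by degree estimates reducing to $q+1$ a $2$-power (plus $q=5,7,9$), followed by an elementary non-existence of integral solutions. Your ``evaluate at $\diag(-I_k,I_{n-k})$'' idea does not obviously close any of these cases, because the permutation/Weil characters are typically strictly positive there and you get no cancellation.
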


\begin{proof}
By Lemma~\ref{hc1} (with $P=G$), it suffices to prove the result in the case
where $|Z(G)|_2=1$. So we assume this, and then $|G|_2$ equals the order of
a Sylow $2$-subgroup of $G/Z(G)$.

Let first $G/Z(G)=\PSL_n(q)$, $q>3$ odd. Let $\chi$ be a $\Syl_2$-regular
character for $G$. By Lemma~\ref{3rd}, $\chi(1)<\mu_3(G)$, and hence the
non-trivial irreducible constituents of $\chi$ are of degree $(q^n-1)/(q-1)$ or
$(q^n-q)/(q-1)$, see Table~\ref{tab:min SLn}. It is known that the irreducible
characters of degree $(q^n-1)/(q-1)$ are induced characters $\lam^G$, where
$\lam\neq 1_P$ is a one-dimensional character of the stabiliser $P$ of a line
of the underlying space for $\GL_n(q)$, while the irreducible character of
degree $(q^n-q)/(q-1)$ is the unique non-trivial constituent $\tau$ of the
permutation character $1_P^G=\tau+1_G$ on $P$.
\par
Let $n\ge5$ and let $g\in\SL_n(q)$ be a block-diagonal matrix, with an
$(n-2)\times(n-2)$-block
corresponding to a primitive element of $\FF_{q^{n-2}}$ with determinant~1,
and a $2\times2$-block corresponding to an element of order~$q+1$. Since $g$
has no eigenvalue in $\FF_q$, no conjugate of $g$ is contained in $P$, so all
induced characters from $P$ to $G$ vanish on $g$. In particular $\lam^G(g)=0$
and $\tau(g)=-1$. Note that the image $\bar g\in G$ of $g$ has even order,
so $\chi(g)=0$ if $\chi$ is Steinberg-like. Write
$\chi=x_1 1_G+x_2\tau+\Lambda$, where $\Lambda$ is a sum of $x_3$ induced
characters of degree $(q^n-1)/(q-1)$, with suitable $x_i\ge0$. Evaluating on
$g$ we see that $x_1=x_2$, but then $\chi(1)=(x_1+x_3)(q^n-1)/(q-1)$ is
divisible by some odd prime, so cannot equal the 2-power $|G|_2$. This proves
part~(b) for $G/Z(G)=\PSL_n(q)$.
\par
Now assume that $n=4$. Then an easy estimate shows that when $q-1$ is not a
2-power, so $|q-1|_2\le(q-1)/3$, and $q\ne7$, then $|G|_2<\mu_1(G)$. So we may
assume that in addition either $q=7$ or $q-1$ is a power of~2. For $q\ne7$ let
$g$ be the 2-element
$$g=\begin{pmatrix} 0&1&0&0\\ a&0&0&0\\ 0&0&0&1\\ 0&0&a\up &0\end{pmatrix}
  \in\SL_4(q),$$
where $a\in\FF_q^\times$ is a 2-element of order $q-1$. Observe that
again $g$ is not conjugate to an element of $P$, thus $\lam^G(h)=0$ and
$\tau(h)=-1$. As $g$ is a 2-element and $\chi$ is $\Syl_2$-regular, we have
$\chi(g)=0$. We may now argue as above to conclude. When $q=7$ then the
candidate characters have degrees 1, 399 and~400, while $|G|_2=2^9=512$, so
clearly there can be no $\Syl_2$-regular character.
\par
Now consider the case when $G/Z(G)=\PSL_3(q)$. The proof of Lemma~\ref{3rd}
shows that $\mu_1(G)>|G|_2$ unless $q-1$ is a 2-power. In the latter case the
possible constituents of $\chi$ can have degrees $1,q^2+q,q^2+q+1$, while
$|G|_2=2(q-1)^2$. Clearly at most one of the degrees $q^2+q,q^2+q+1$ can
contribute to $\chi(1)$, but then necessarily $q=5$. But in that case the
character table shows that there's no $\Syl_2$-regular character. This
completes the proof of~(a) when $G/Z(G)=\PSL_n(q)$.
\par
Now let $G/Z(G)=\PSU_n(q)$ with $q>3$ odd, and let $\chi$ be a $\Syl_2$-regular
character of $G$. According to Lemma~\ref{3rd}, $\chi(1)<\mu_3(G)$, and hence
the non-trivial irreducible constituents of $\chi$ are of degree
$(q^n-(-1)^n)/(q+1)$ or $(q^n+(-1)^nq)/(q+1)$, see \cite[Table~V]{TZ96}. The
first of these are semisimple characters lying in the Lusztig series of an
element $s$ of order~$q+1$ in the dual group $G^*=\PGU_n(q)$ with centraliser
$C_{G^*}(s)\cong\GU_{n-1}(q)$, the second is a unipotent character, $\tau$ say,
corresponding to the character of the Weyl group $S_n$ parametrised by the
partition $(n-1,1)$. Let $g\in\SU_n(q)$ be a regular element of even order in
a maximal torus $T$ of order $(q^2-1)(q^{2n-2}-(-1)^{2n-2})/(q+1)$, see
\cite[Lemma~3.1(a)]{LM15}. Then no conjugate of the dual maximal torus $T^*$
contains $s$, so the characters in $\cE(G,s)$ vanish on $g$ (see
e.g.~\cite[Prop.~6.4]{LMS14}). If $\chi$ is Steinberg-like, then $\chi(g)=0$.
As $\tau$ is unipotent, its value on $g$ is (up to sign) the same as $\psi(h)$
where $\psi\in\Irr(S_n)$ is labelled by $(n-1,1)$ and $h$ is a permutation of
cycle shape $(n-2,2)$, see \cite[Prop.~3.3 and remark before Prop.~4.2]{LM15}.
The Murnaghan--Nakayama rule gives that $\psi(h)\in\{\pm1\}$, so
$\tau(g)\in\{\pm1\}$. We may now argue as in the first part to conclude that
$\chi$ cannot be Steinberg-like, thus completing the proof of~(b).
\par
Next, assume that $G/Z(G)=\PSU_4(q)$ with $q>3$ odd. If $q+1$ is not a
power of $2$ and $q\ne5,9$ then $|G|_2<\mu_1(G)$, as $|q+1|_2\le (q+1)/3$. So
now assume that $q+1$ is a power of $2$, and hence in particular
$q\equiv3\pmod4$. Then $|G|_2=2(q+1)^3$, while the three smallest character
degrees are $1,(q^4-1)/(q+1),(q^4+q)/(q+1)$, with the trivial character
occurring at most once. It is easily seen that there is no non-negative
integral solution for a possible decomposition of $\chi$. When $q=5$ then the
three smallest degrees are 1,\,104,\,105, while $|G|_2=128$; if $q=9$ then
the three smallest degrees are 1,\,656,\,657 while $|G|_2=512$; so in neither
case can there be $\Syl_2$-regular characters either.
\par
Finally, when $G/Z(G)=\PSU_3(q)$ then again the proof of Lemma~\ref{3rd} shows
that $q+1$ must be a 2-power. Here the possible constituents of $\chi$ have
degrees $1,q^2-q,q^2-q+1$, while $|G|_2=2(q+1)^2$. Again an easy consideration
shows that at most the case $q=7$ needs special attention. But there the
existence of $\Syl_2$-regular characters can be ruled out from the known
character table.
\end{proof}

We now treat the case $q=3$, which is considerably more delicate.

\begin{lem}   \label{sp43}
 Let $G=\PSL_3(3)$ or $\PSU_3(3)$. Then $G$ does not have reducible
 $\Syl_2$-regular characters.
\end{lem}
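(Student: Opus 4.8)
The plan is a direct verification against the ordinary character tables of the two groups in \cite{Atl}. Both are small and, by Proposition~\ref{prop:ir2}, both already possess \emph{irreducible} $\Syl_2$-regular characters (of degree $16$ for $\PSL_3(3)$ and of degree $32$ for $\PSU_3(3)$), so what must be excluded is a reducible $\Syl_2$-regular character $\chi$, which necessarily has $\chi(1)=|G|_2$.

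First I would dispose of $G=\PSL_3(3)$. Here $|G|_2=16$ and the irreducible character degrees are $1,12,13,16,16,16,16,26,26,26,27,39$. If $\chi$ is reducible and $\Syl_2$-regular then $\chi(1)=16$ and every irreducible constituent has degree at most $16$; a constituent of degree $16$ would already make $\chi$ irreducible, so every non-trivial constituent has degree $12$ or $13$, and since $12+12>16$ there is exactly one such constituent $\psi$. Then $\chi=\psi+(16-\psi(1))\cdot 1_G$ with $16-\psi(1)\in\{3,4\}$, so $(\chi|_S,1_S)\ge3$, contradicting $(\chi|_S,1_S)=1$ from Lemma~\ref{lem:a51}.

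For $G=\PSU_3(3)$ the structure is similar but the endgame is heavier. Here $|G|_2=32$ and the irreducible degrees are $1,6,7,7,7,14,21,21,21,27,28,28,32,32$; let $\chi_6$ denote the character of degree $6$. A reducible $\Syl_2$-regular $\chi$ has $\chi(1)=32$; no constituent can have degree $32$ (such a constituent is already $\Syl_2$-regular of full degree), so all constituents have degree in $\{1,6,7,14,21,27,28\}$, and by Lemma~\ref{lem:a51} the trivial character occurs at most once in $\chi$. Enumerating the ways of writing $32$ as an unordered sum of elements of $\{1,6,7,14,21,27,28\}$ with $1$ occurring at most once leaves only the degree-multisets $\{14,6,6,6\}$, $\{7,7,6,6,6\}$ and $\{1,7,6,6,6,6\}$; in each of them the degree-$6$ character occurs with multiplicity at least three. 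Hence $1=(\chi|_S,1_S)\ge 3\,(\chi_6|_S,1_S)$, forcing $(\chi_6|_S,1_S)=0$, and only the characters of the form
$$\chi_{14}+3\chi_6,\qquad \chi_7^{(i)}+\chi_7^{(j)}+3\chi_6,\qquad 1_G+\chi_7^{(i)}+4\chi_6$$
(with $\chi_7^{(i)},\chi_7^{(j)}$ among the three degree-$7$ characters) survive as candidates. To eliminate these I would evaluate them on the classes of $2$-singular elements (involutions, and elements of order $4$ and $8$) and read off from the table that none of the combinations above vanishes simultaneously on all of them; equivalently, the resulting small system of linear equations has no non-negative integral solution, exactly as in the proof of Theorem~\ref{thm:spor}.

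The hard part is this last step for $\PSU_3(3)$: establishing that the three surviving shapes are incompatible with $\Syl_2$-vanishing, which genuinely requires looking up several entries of the character table — or, equivalently, the value of $(\chi_6|_S,1_S)$; should this turn out to be positive, the contradiction is immediate and no candidate check is needed. Everything else, namely the reduction for $\PSL_3(3)$ and the bookkeeping reduction for $\PSU_3(3)$, is forced formally by Lemma~\ref{lem:a51} together with the lists of degrees.
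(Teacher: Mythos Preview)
Your argument is correct and, for $\PSU_3(3)$, essentially matches the paper's proof: both reduce to the observation that $\chi_6$ must occur at least three times and then invoke the character values on elements of order~$4$ to finish. For $\PSL_3(3)$ your route differs slightly from the paper's: you use the bound $(\chi|_S,1_S)=1$ from Lemma~\ref{lem:a51} to rule out $\ge3$ copies of $1_G$, whereas the paper simply observes that every irreducible character of degree $<16$ is non-negative on the class $4A$, so no $\Syl_2$-vanishing combination exists. Both arguments are equally short; yours avoids any table lookup for $\PSL_3(3)$, while the paper's avoids the degree bookkeeping.
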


\begin{proof}
For $G=\PSL_3(3)$ we have $|G|_2=16$, and all irreducible characters of degree
less than~16 take non-negative values on class 4A, so there are no
$\Syl_2$-regular characters. For $G=\PSU_3(3)$ we have $|G|_2=32$, and all
irreducible characters have degree at most that large. Since the smallest
non-trivial character degree is~6, those of degrees 27 and~28 cannot be
constituents of a $\Syl_2$-regular character $\chi$. Thus, we need to
consider the characters of degrees 1,\,6,\,7,\,14,\,21. Clearly those of
degree~21 cannot occur either. As $32\equiv4\pmod7$ we see that the character
of degree~6 has to appear at least three times, but then the values on elements
of order~4 give a contradiction.
\end{proof}

\begin{rem}
$\PSL_3(3)$ and $\PSU_3(3)$ both have irreducible $\Syl_2$-regular
characters, see Proposition~\ref{prop:ir2}.
\end{rem}

\begin{lem}   \label{psl43}
 Let $G=\PSL_4(3)$ or $\PSU_4(3)$. Let $\chi$ be a $\Syl_2$-vanishing
 character of $G$. Then $l_2(\chi)\geq4$.
\end{lem}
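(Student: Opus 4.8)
The plan is to reduce the statement to a bound on the degree and then to a finite check with the character table. An order computation gives $|\PSL_4(3)|_2=|\PSU_4(3)|_2=2^7=128$. For $S\in\Syl_2(G)$, a nonzero $\Syl_2$-vanishing character $\chi$ has $\chi|_S$ supported at the identity, hence $\chi|_S=l_2(\chi)\cdot\rho_S^{\reg}$ and in particular $\chi(1)=128\,l_2(\chi)$. So $l_2(\chi)\ge 4$ is equivalent to $\chi(1)\ge 512$, and it suffices to show that $G$ has no nonzero $\Syl_2$-vanishing character of degree $128$, $256$ or $384$.

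For this I would use the known character table of $G$ (see \cite{Atl}). A $\Syl_2$-vanishing character vanishes on every nontrivial $2$-element of $G$, since these are all conjugate into $S$. Writing $\chi=\sum_i a_i\chi_i$ as a non-negative integral combination of $\Irr(G)$, with all constituents then of degree at most $384$, one obtains the linear conditions $\sum_i a_i\chi_i(g)=0$ for $g$ ranging over the finitely many classes of nontrivial $2$-elements, together with $\sum_i a_i\chi_i(1)\in\{128,256,384\}$. To solve this finite Diophantine system I would combine three ingredients: a congruence obstruction (many small-degree irreducible characters of $\PSL_4(3)$ have degree divisible by $13$, and most of those of $\PSU_4(3)$ by $7$, whereas $128$, $256$, $384$ are divisible by neither, and the trivial character can occur at most $l_2(\chi)\le 3$ times); a positivity argument (for each group there is a class $g$ of an element of order $2$, $4$ or $8$ on which every irreducible character of degree at most $384$ is non-negative, and strictly positive outside a short explicit list of degrees, so a $\Syl_2$-vanishing $\chi$ of degree below $512$ must be assembled from that short list); and a direct enumeration of the few remaining candidates, which are killed by evaluating on a second $2$-element class or by a further congruence.

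The hard part will be the unitary case $G=\PSU_4(3)$: it has many more irreducible characters of small degree (degrees $21,35,45,70,84,90,105,\dots$) and several classes each of involutions and of elements of order $4$, besides elements of order $8$, so the mod-$7$ congruence is much weaker than the mod-$13$ one available for $\PSL_4(3)$, and one genuinely has to carry the character values on all the relevant $2$-element classes through the argument. For $G=\PSL_4(3)$ the analysis is comparatively short, the scarcity of character degrees below $512$ together with divisibility by $13$ doing most of the work. In both cases the verification is finite and elementary once the character table and the fusion of $2$-elements are fixed.
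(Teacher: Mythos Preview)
Your reduction is exactly what the paper does: since a $\Syl_2$-vanishing character restricts to a multiple of $\rho_S^{\reg}$, the statement amounts to ruling out $\chi(1)\in\{128,256,384\}$, and both you and the paper then turn to the character tables in \cite{Atl}.

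The organization of the finite check differs slightly. The paper does not use the mod-$13$ or mod-$7$ congruences you propose; instead it argues by the degree of a maximal constituent $\tau$ and eliminates each possibility in turn by evaluating on one or two well-chosen $2$-singular classes. For $\PSL_4(3)$ it uses classes $4B$ and $2A$ (the key point being that the unique irreducible of degree~$90$ is the only one below~$351$ with a negative value on $4B$, and only degree~$39$ is negative on $2A$), and for $\PSU_4(3)$ it runs through $\tau(1)\in\{315,280,210,189\}$ using classes $2A$ and $4A$. Your congruence idea is not wrong, but note that in both groups there are several small characters (e.g.\ degree~$90$) whose degrees are not divisible by the prime in question, so the congruence alone will not close the argument and you will still need exactly the kind of positivity and two-class elimination that the paper uses. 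In practice the paper's ``largest constituent first'' bookkeeping is a little cleaner for $\PSU_4(3)$ than a congruence-led approach, but either route reaches the same finite computation.
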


\begin{proof}
Suppose the contrary. Note that $\chi$ is reducible. As $|G|_2=128$, we have
$\chi(1)\leq |G|_2\cdot 3= 384$. Let $\tau$ be an \ir constituent of $\chi$
of maximal degree. For all numerical data see \cite{Atl}.

Let $G=\PSL_4(3)$. Then $\tau(1)<351$ (otherwise $\tau(1)=351$, so
$(\chi-\tau)(1)=33$, and then $\chi(2B)>0$, which is false).

Let $\mu\in\Irr(G)$ and $\mu(1)<351$. Then $\mu(4B)\geq 0$ unless $\mu(1)=90$,
and $\mu(4B)\neq 0$ unless $\mu(1)=52$ or $260$. Let $\mu(1)=90$. Then
$(\chi,\mu)>0$. Indeed, otherwise the \ir constituents of $\chi$ are of degree
$52$ or $260$, which implies $\chi(2A)>0$, a contradiction.

It follows that $(\chi-\mu)(1)\leq 294$. Let $\sigma\in\Irr(G)$ with
$\sigma(1)=39$. The \ir characters of $G$ of degree at most 294 and distinct
from $\si$  are non-negative at $2A$.
In addition, $\si(2A)=-1$, and $\mu(2A)=10$. As $\chi(2A)=0$, it follows that
$(\chi,\si)\geq 10$. Then $\chi(1)\geq \mu(1)+10\si(1) >384$, a contradiction.

Let $G=\PSU_4(3)$ and $\mu\in\Irr(G)$ with $(\chi,\mu)>0$.  The \ir
characters of degree at most $384$ are of degree at most $315$.
If $\tau(1)=315$ then $\mu(1)\leq 69$.  Then $\tau(2A)>0$ and $\mu(2A)>0$,
a contradiction.

Suppose that $\tau(1)=280$. Then $\mu(1)\leq 104$, but then we obtain a
positive value on class $2A$. The same consideration rules out $\tau(1)=210$.

Suppose that $\tau(1)=189$. It occurs once as otherwise $1_G$ occurs 6 times,
which is false as $(\chi,1_G)\leq 3$. Then $\mu(1)\neq 140,90$, so $\mu(4A)>0$,
$\tau(4A)>0$, a contradiction. No more option exists, as the \ir characters
of degree less than 189 are positive on $2A$.
\end{proof}

\begin{lem}   \label{nr1}
 Let $\PSL_4(3)\le G\le\PGL_4(3)$ or $\PSU_4(3)\le G\le \PGU_4(3)$.
 Let $\chi$ be a $2$-vanishing character of $G$. Then $l_2(\chi)\geq 4$.
\end{lem}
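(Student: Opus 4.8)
The plan is to reduce to a few explicit groups and then argue with their character tables. Set $N=\PSL_4(3)$ in the first case and $N=\PSU_4(3)$ in the second. Then $N\lhd G$ and $G/N$ embeds in the group of diagonal outer automorphisms of $N$, which has order dividing $2$ in the linear case and $4$ in the unitary case; in either case $G/N$ is a cyclic $2$-group. If $G=N$ the statement is exactly Lemma~\ref{psl43}, since a $2$-vanishing character is $\Syl_2$-vanishing; so assume $G\ne N$. Then every element of $G\setminus N$ has even order, because its image in the nontrivial $2$-group $G/N$ does; hence $\chi$ vanishes on $G\setminus N$ and on all $2$-singular elements of $N$. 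As $\chi$ is in particular $\Syl_2$-vanishing, $\chi|_S$ is a multiple of $\rho_S^\reg$ for $S\in\Syl_2(G)$, so that $l_2(\chi)=\chi(1)/|G|_2$; thus it suffices to prove $\chi(1)\ge4|G|_2$.

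Next I would invoke Proposition~\ref{ga1}. Since $G/N$ is a cyclic $2$-group, part~(a) gives $\chi=\psi^G$ for some character $\psi$ of $N$ with $\psi(1)=\chi(1)/|G:N|$; as $|N|_2=2^7$ and $|G|_2=|G:N|\cdot|N|_2$, the target reduces to $\psi(1)\ge 2^9=512$. By part~(b), $\psi$ vanishes on every $2$-singular class of $N$ that does not split in $G$. (Were $\psi$ zero on \emph{all} $2$-singular classes of $N$ it would be $2$-vanishing there, and Lemma~\ref{psl43} would finish the proof at once; the real work concerns the $2$-singular classes of $N$ that do split in $G$.) Restriction also contributes: $\chi|_N$ is a $2$-vanishing character of $N$, so $\chi(1)=(\chi|_N)(1)\ge 4|N|_2=512$ by Lemma~\ref{psl43}, whence $\psi(1)\ge 512/|G:N|$. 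Thus the only degrees left to exclude are $\psi(1)\in\{256,384\}$ when $|G:N|=2$ and $\psi(1)\in\{128,256,384\}$ when $G=\PGU_4(3)$.

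Besides $G=N$, only three groups remain: $\PGL_4(3)$, the unique index-$2$ subgroup $G_1$ of $\PGU_4(3)$ containing $\PSU_4(3)$, and $\PGU_4(3)$ itself. For each I would read off from \cite{Atl} which $2$-singular classes of $N$ split in $G$ (the fusion is given explicitly in the Atlas; the classes that split are few and have comparatively small centralisers), and then imitate the argument in the proof of Lemma~\ref{psl43}: $\psi$ vanishes on all non-split $2$-singular classes and has degree at most $384$, so running through the possible irreducible constituents of $\psi$ (whose degrees are listed in \cite{Atl}) and using non-negativity of the multiplicities $(\psi,\theta)$, one evaluates at a handful of well-chosen $2$-singular classes — the analogues of the classes $2A,2B,4A,4B$ used there — and checks that every admissible combination is forced to take a strictly positive value on some non-split $2$-singular class. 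This contradiction gives $\psi(1)\ge512$, hence $\chi(1)\ge4|G|_2$ and $l_2(\chi)\ge4$.

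The hard part will be this last step. One must pin down exactly which $2$-singular classes of $\PSL_4(3)$ and $\PSU_4(3)$ split in each overgroup, and then push through a finite but somewhat laborious bookkeeping over the non-negative integers — with the degree ceiling raised to $3|G|_2$, so that $\psi(1)\le384$ while $\chi(1)$ can be as large as $1536$ for $\PGU_4(3)$ — to see that no non-negative integral combination of the small-degree irreducible characters of $N$ vanishes on all the required classes. The case $G=\PGU_4(3)$ is the most delicate, having both the largest $2$-part and the largest index over $N$.
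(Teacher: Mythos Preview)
Your framework is exactly the paper's: use Proposition~\ref{ga1}(a) to write $\chi=\psi^G$, then Proposition~\ref{ga1}(b) to control $\psi$ on the non-split classes, aiming to feed $\psi$ into Lemma~\ref{psl43}. The difference is that you anticipate a ``hard part'' that in fact does not exist.

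The paper's observation is that, by inspection in \cite{Atl}, \emph{every} conjugacy class of $2$-elements in $N=\PSL_4(3)$ (resp.\ $\PSU_4(3)$) is $G$-invariant; none of them split in $\PGL_4(3)$ (resp.\ $\PGU_4(3)$). Hence Proposition~\ref{ga1}(b) already forces $\psi(g)=0$ for every nontrivial $2$-element $g\in N$, so $\psi$ is $\Syl_2$-vanishing and Lemma~\ref{psl43} applies immediately to give $l_2(\psi)\ge4$, whence $l_2(\chi)=l_2(\psi^G)\ge4$. Note also that Lemma~\ref{psl43} only requires $\Syl_2$-vanishing, not full $2$-vanishing, so you only need non-splitting of the classes of $2$-power order, not of all $2$-singular classes; this is a much smaller Atlas check. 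Your proposed case analysis on possible degrees $\psi(1)\le384$ and on the split classes is therefore unnecessary: once you look up the fusion data you would find there are no split $2$-element classes, and your parenthetical ``Lemma~\ref{psl43} would finish the proof at once'' is precisely what happens.
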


\begin{proof}
By Lemma~\ref{ga1}(a), $\chi=\psi^G$, where $\psi$ is a proper character of
$G'$. By inspection of the character table of $G'$, see \cite{Atl}, it is
easily checked that the conjugacy class of any element $g\in G'$ of $2$-power
order is $G$-invariant. Then, by Lemma~\ref{ga1}(b), $\psi(g)=0$ for every
2-element $g\neq 1$ of $G'$. This means that $\psi$ is $\Syl_2$-vanishing.
By Lemma \ref{psl43}, $l_2(\psi)\geq 4$. Then $l_2(\psi^G)\geq 4$.
\end{proof}

\begin{lem}   \label{pi4}
 Let $G=\PGL_4(3)$, and let $\chi$ be a $2$-vanishing character of $G$. Let
 $\eta_1,\ldots,\eta_k$ be the \ir constituents of $\chi$ disregarding
 multiplicities, and $\eta=\eta_1+\cdots +\eta_k$. Then $\eta(1)\geq 2|G|_2$.
\end{lem}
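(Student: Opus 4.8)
The plan is to argue by contradiction from the character table of $G=\PGL_4(3)$, available in \cite{Atl} as $L_4(3).2_1$, in the same spirit as the proof of Lemma~\ref{psl43}. A Sylow $2$-subgroup of $G$ has order $|G|_2=2^8=256$, so the assertion is that $\eta(1)\ge 512$. First I would record that $\chi$ is far from irreducible. Writing $N=\PSL_4(3)$, the quotient $G/N$ is cyclic of order~$2$, so Proposition~\ref{ga1}(a) gives $\chi=\psi^G$ for a character $\psi$ of $N$; by the observation in the proof of Lemma~\ref{nr1} every element of $N$ of $2$-power order has its $N$-conjugacy class invariant under $G$, and hence equal to its $G$-class, so Proposition~\ref{ga1}(b) shows that $\psi$ is $\Syl_2$-vanishing on $N$. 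By Lemma~\ref{psl43}, $l_2(\psi)\ge 4$, so $\psi(1)\ge 4|N|_2=512$ and $\chi(1)=2\psi(1)\ge 1024$.

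Now suppose, for contradiction, that $\eta(1)\le 511$. Then $\chi$ is reducible with all of its distinct constituents $\eta_i$ of degree at most~$510$, and since $\chi(1)\ge 1024>2\cdot 511$ at least one of them occurs in $\chi$ with multiplicity $\ge 3$. The core of the argument is then a finite inspection: I would list the irreducible characters of $G$ of degree at most~$510$ together with their values on the classes of elements of $2$-power order, including the \emph{outer} classes contained in $G\setminus N$. Exactly as in Lemma~\ref{psl43}, one checks that on each of a few suitably chosen such classes $c$ all but a short explicit list of these characters take strictly positive values; as $\chi$ is a non-negative integral combination with $\chi(c)=0$, for each such $c$ the support of $\chi$ must contain a character from that short list (one with $\chi$-value $\le 0$ at $c$, or the trivial character must be cancelled against it), and these have comparatively large degree. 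Iterating over two or three $2$-power classes $c$ --- crucially including an outer involution class, which has no analogue inside $N$ and is precisely what sharpens the conclusion from $l_2(\chi)\ge 4$ to $\eta(1)\ge 2|G|_2$ --- forces $\sum_i\eta_i(1)>511$, the desired contradiction.

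The only genuine obstacle is bookkeeping: $\PGL_4(3)$ has a moderate number of irreducible characters of degree below~$511$, and one must organise them against the $2$-power classes so that a handful of linear inequalities in the multiplicities already rules out $\eta(1)\le 511$. I do not expect any conceptual difficulty beyond this finite case analysis, carried out with the explicit character table in \cite{Atl}; the reduction via $\psi$ above is used only to guarantee the multiplicity-$3$ constituent, equivalently $\chi(1)\ge 4|G|_2$, which keeps the relevant list of small-degree characters short enough to handle by hand.
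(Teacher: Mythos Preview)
Your overall plan is the same as the paper's: assume $\eta(1)<512$ and derive a contradiction from the character table of $\PGL_4(3)$ by evaluating on $2$-singular classes. However, your proposal remains a sketch; the actual work is the case analysis, and you have not done it. The paper organises this by the largest constituent degree $\eta_1(1)\in\{468,416,390,351,260,\le 234\}$ and for each value uses two or three carefully chosen classes (among $2A,2B,4A,4B,4C,8A$ and even the order-$20$ class $20A$) to force a contradiction. This is genuinely delicate --- several characters vanish or go negative on different classes, so one class alone never suffices --- and cannot be summarised as ``bookkeeping''.

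Two of your auxiliary remarks do not help in the way you suggest. First, the reduction via Proposition~\ref{ga1} and Lemma~\ref{psl43} to $\chi(1)\ge 1024$ (equivalently the conclusion of Lemma~\ref{nr1}) is correct but irrelevant here: the quantity to bound is $\eta(1)=\sum_i\eta_i(1)$, which depends only on the \emph{set} of constituents, not on multiplicities; knowing that some $\eta_i$ occurs with multiplicity $\ge 3$ does not shorten the list of possible $\eta_i$, and the paper's argument never invokes it. Second, your claim that an outer involution class is ``crucial'' is speculation: the paper's proof does not single out outer classes, but it does exploit the full strength of $2$-vanishing (vanishing on all $2$-singular elements), for instance at the class $20A$, which your restriction to elements of $2$-power order would miss.
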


\begin{proof}
Let $G'=\PSL_4(3)$. Then $|G'|_2=128$ and thus $2\cdot |G|_2=512$.
Suppose the contrary. Then $\eta(1)< 512$. We can assume that
$\eta_i(1)\geq \eta_j(1)$ for $1\leq i< j\leq k$. Note that $k>1$,
otherwise $\chi= a\eta_1$ for some $a$, and hence $\eta_1$ is a $2$-vanishing
character of $G$ and $\eta_1(1)$ is a multiple of $|G|_2$. By \cite{Atl},
$G$ has no character of degree at most 512 with this property.

By \cite{Atl}, we have $\eta_1(1)\leq 468$. Note that all \ir characters of
$G'$ of degree at most 468 extend to $G=G'\cdot 2$ except $\chi_{11},\chi_{12}$
of degree~260, $\chi_9,\chi_{10}$ of degree~234, $\chi_6,\chi_7$ of degree~65
and $\chi_2,\chi_3$ of degree 26. The corresponding characters of $G$ are of
degrees $520$, $468$, $130$ and $52$, respectively.
Let $\chi=\sum a_i\eta_i$, where $a_i>0$ are integers.

(i) Suppose that $\eta_1(1)=468$. Then $\sum_{i>1}\eta_i(1)\leq 44$. Computing
$\chi(4B)$ we get a contradiction (as $\eta_1(4B)=0$ and
$\sum_{i>1}a_i\eta_i(4B)>0$ and $k>1$).

(ii) Suppose $\eta_1(1)=416$. If $\eta_2(1)\leq 90$ then
$\sum_{i>2}\eta_i(1)\leq 6$, whence $k=3$ and $\eta_3=1_G$. Computing
$\chi(2B)$ we get a contradiction.

So $\eta_2(1)\leq 52$. If $\eta_2(1)= 52$ then $\sum_{i>2}\eta_i(1)\leq 44$.
Computing $\chi(4B)$ we get a contradiction, unless $k=2$ and $\eta_2(4B)=0$
(that is, $\eta_2=\chi_5$ in \cite{Atl}). In this case computing $\chi(4C)$
gives a contradiction. So $\eta_i(1)\leq 39$ for $i>1$. This violates
$\chi(2B)=0$.

(iii) Let $\eta_1(1)=390$. Then $\eta_2(1)\leq 90$. If $\eta_2(1)=90$ then
$\eta_3(1)\leq 32$, whence $k=1$ and $\eta_3(1)=1$. This conflicts with
$\chi(4A)=0$. If $\eta_2(1)\leq 52$ then computing $\chi(4B)$ yields a
contradiction.

(iv) Let  $\eta_1(1)=351$ so $\sum_{i>1}\eta_i(1)\leq 161$. If $\eta_2(1)=130$
then $\sum_{i>1}\eta_i(1)\leq31$, and hence $\eta_3(1)=1$, which yields a
contradiction with  $\chi(20A)=0$.
Let $\eta_2(1)\leq 90$. Then $\eta_i(1)\leq 71$ for $i>2$. Then
$\eta_i(2A)+\eta_i(2B)>0$ for $i=1,\ldots,k$, which violates
$\chi(2A)+\chi(2B)=0$.

(v) Let  $\eta_1(1)=260$. Then $\eta_1(2A)\geq 0$ and the only \ir character
$\lam$ of degree less than~260 with negative value at $2A$ has $\lam(1)=39$.
It follows that $(\chi,\lam)\geq 0$, and then $\eta_i(1)\leq 213$ if $i>1$ and
$\eta_i\neq \lam$. Note that $\lam(8A)=1$ and $\eta_1(8A)=0$. As $\chi(8A)=0$,
it follows that a character of degree 130 occurs in $\chi$, which implies
$\eta_2(1)=130$. Then we get contradiction to $\chi(2A)+\chi(4B)=0$.

(vi)  Let  $\eta_1(1)\leq 234$. Computing $\chi(2A)+\chi(4B)$ leads to a
contradiction.
\end{proof}

\begin{lem}   \label{pu4}
 Let $G=\PGU_4(3) $, and $\chi$ a $2$-vanishing character of $G$.
 Let $\eta_1,\ldots,\eta_k$ be the \ir constituents of $\chi$ disregarding
 multiplicities, and $\eta=\eta_1+\cdots +\eta_k$. Then $\eta(1)\geq 2|G|_2$.
\end{lem}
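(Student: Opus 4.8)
The proof will follow the strategy of the proof of Lemma~\ref{pi4}. Write $G' = \PSU_4(3)$, so that $|G'|_2 = 128$, $|G|_2 = 512$ and $2|G|_2 = 1024$. Since $\chi$ is $2$-vanishing, its restriction to a Sylow $2$-subgroup $S$ is a multiple of $\rho_S^\reg$, so $512$ divides $\chi(1)$; in particular the leading constituent of any reducible such $\chi$ is bounded below $\chi(1)$. Assume for contradiction that $\eta(1) < 1024$, and order the distinct constituents so that $\eta_1(1) \ge \eta_2(1) \ge \cdots \ge \eta_k(1)$; write $\chi = \sum_i a_i\eta_i$ with integers $a_i > 0$. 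As a first step one shows $k > 1$: if $\chi = a_1\eta_1$, then $\eta_1$ itself is $2$-vanishing, hence $\eta_1(1)$ is a positive multiple of $512$, so $\eta_1(1) = \eta(1) = 512$ would be an irreducible character of $2$-defect zero; but $\PGU_4(3)$ has none by \cite{Atl}. One then reads off from the character table of $G = \PGU_4(3)$ in \cite{Atl} the list of its irreducible degrees below $1024$, keeping track of how the irreducible characters of $G'$ behave under the cyclic quotient $G/G' \cong \mathbb{Z}/4$ --- extending to $G$ (degree unchanged), inducing from an intermediate subgroup (degree doubled), or inducing irreducibly from $G'$ (degree quadrupled) --- so as not to omit any candidate for the $\eta_i$.

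With these data the argument becomes a finite case analysis on $\eta_1(1)$, mirroring cases~(i)--(vi) of Lemma~\ref{pi4}. For each admissible top degree, $\eta(1) < 1024$ forces $\sum_{i>1}\eta_i(1) < 1024 - \eta_1(1)$, which severely limits the degrees and the number of the remaining distinct constituents (and, when that bound is tight, pins down that the only further constituent is $1_G$). One then evaluates $\chi$ on a single class, or a short sum of classes, of elements of $2$-power order in $G$ --- chosen so that $\eta_1$ vanishes there while every irreducible character of $G$ of the small degrees that could account for the remaining $< 1024 - \eta_1(1)$ units of degree is non-negative there --- and uses $\chi = 0$ on $2$-singular elements to force all $a_i$ with $i > 1$ to vanish, contradicting $k > 1$. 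In the few positions where exactly one small-degree character is deficient on the first choice of class, one passes to a second $2$-power class, or, as in case~(v) of Lemma~\ref{pi4}, first bounds the multiplicity of that character from a class where it alone is negative. The remaining values $\eta_1(1)$ below a threshold are eliminated at once by positivity of all small-degree characters on a fixed class or short sum of classes of involutions.

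The one real difficulty is bookkeeping, not mathematics: $G = \PGU_4(3)$ has a larger Sylow $2$-subgroup (order $512$ rather than $256$), hence more irreducible characters and more $2$-power classes than $\PGL_4(3)$, so the list of candidate constituents is longer, and one must be careful about the fusion and extension of characters of $\PSU_4(3)$ through $G/G' \cong \mathbb{Z}/4$ lest a candidate be overlooked. No idea beyond those used in the proof of Lemma~\ref{pi4} is required; the delicate point in each individual case is exhibiting a class, or a two-class combination, of $2$-elements that annihilates the leading constituent while remaining non-negative on all the small constituents, with at most one controllable exception.
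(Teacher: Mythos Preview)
Your sketch follows the brute-force template of Lemma~\ref{pi4}, and in principle a full character-table case analysis on $\PGU_4(3)$ would succeed. But you have only described the shape of such an analysis, not carried it out, and the paper takes a genuinely different and much shorter route that you miss.

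The key observation the paper exploits is that $G/G'\cong\mathbb{Z}/4$ is a $2$-group, so every element of $G\setminus G'$ has even order and hence $\chi$ vanishes there. This gives $\chi\cdot\tau=\chi$ for every linear character $\tau$ of $G$, which forces the set $\{\eta_1,\ldots,\eta_k\}$ of distinct constituents to be closed under multiplication by $\Irr(G/G')$. Consequently, any $\eta_i$ that does \emph{not} vanish on all outer classes (the paper checks $4E$ and $4G$) lies in a nontrivial orbit and contributes at least two characters of the same degree to $\eta$. Since the irreducible degrees of $G$ in the range $420<\eta_i(1)<1024$ other than $630$ all fail to vanish on $4E$ or $4G$, the bound $\eta(1)<1024$ is violated at once for those degrees. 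This reduces the problem to constituents of degree $\le420$ or $=630$, and a single positivity check $\eta_i(2A)+\eta_i(4B)>0$ (with the lone exception $\eta_i(1)=210$, dispatched via $\chi(2A)>0$) finishes the proof in two lines.

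So your statement that ``no idea beyond those used in the proof of Lemma~\ref{pi4} is required'' is precisely what is \emph{not} true of the paper's argument: the orbit trick under $\Irr(G/G')$ is the new ingredient, and it is what collapses the long case list you anticipate into a short uniform argument. Your approach would work with enough bookkeeping, but the paper's is both shorter and conceptually cleaner; you should incorporate the invariance $\chi\tau=\chi$ into your proof.
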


\begin{proof}
Let $G'=\PSU_4(3)$. Then $|G'|_2=128$, $|G|_2=512$ and $2\cdot |G|_2=1024$.
Suppose the contrary. Then $\eta(1)< 1024$. We can assume that
$\eta_i(1)\geq \eta_j(1)$ for $1\leq i< j\leq k$.

Note that $\chi\cdot \tau=\chi$ for every linear character $\tau$ of $G$.
Therefore, $\eta_i \tau$ is a constituent of $\eta$. Let $g\in G\setminus G'$.
Then $\eta_i\tau=\eta$ implies $\eta_i(g)=0$.

By \cite{Atl}, if $630\neq \eta_i(1)>420$ then $\eta_i(4E)\neq 0$ or
$\eta_i(4G)\neq 0$; it follows that $\eta$  must contain at least 2
representations of the same degree, which contradicts $\eta(1)\leq 1024$.

So $\eta_i(1)$ either equals 630 or $\eta_i(1)\leq 420$.  By \cite{Atl},
$\eta_i(2A)+\eta_i(4B)>0$ for these $\eta_i$, unless $\eta_i(1)=210$. This
violates $\chi(2A)+\chi(4B)=0$ unless $k=1$ and $\eta_1(1)=210$.
Then $\chi(2A)>0$, a contradiction.
\end{proof}

\begin{lem}   \label{gn5}
 Let $H=H_1\times \cdots \times H_n$, where
 $H_1\cong \cdots \cong H_n\cong \PGL_4(3)$ or $\PGU_4(3)$. Let $\chi$ be
 a $2$-vanishing character of H. Then $\chi(1)\geq 2^{n+1}|H|_2$.
\end{lem}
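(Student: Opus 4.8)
The plan is to argue by induction on $n$. The base case $n=1$ is covered by Lemma~\ref{nr1} applied with $G=\PGL_4(3)$ or $G=\PGU_4(3)$: it gives $l_2(\chi)\geq4$ for every $2$-vanishing character $\chi$ of $H_1$. Since a $2$-vanishing character $\chi$ of any finite group $H$ vanishes at all non-trivial $2$-elements, its restriction to a Sylow $2$-subgroup $S$ is $\chi|_S=l_2(\chi)\,\rho_S^\reg$, so that $\chi(1)=l_2(\chi)\,|H|_2$; hence the assertion $\chi(1)\geq2^{n+1}|H|_2$ is equivalent to $l_2(\chi)\geq2^{n+1}$, and for $n=1$ this is exactly Lemma~\ref{nr1}. (We may of course assume $\chi\neq0$.)

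For the inductive step I would write $H=H_1\times H'$ with $H'=H_2\times\cdots\times H_n$. By induction, every non-zero $2$-vanishing character $\sigma$ of $H'$ satisfies $\sigma(1)\geq2^{n}|H'|_2$, that is, $l_2(\sigma)\geq2^{n}$. The key observation is that $\chi|_{H_1}$ is again a non-zero $2$-vanishing character of $H_1$: an element $g\in H_1$ of even order, viewed inside $H=H_1\times H'$, still has even order, so $\chi|_{H_1}(g)=\chi(g\cdot1)=0$. Therefore, if $\eta$ denotes the sum of the distinct irreducible constituents of $\chi|_{H_1}$, Lemma~\ref{pi4} (when $H_1\cong\PGL_4(3)$) or Lemma~\ref{pu4} (when $H_1\cong\PGU_4(3)$) yields $\eta(1)\geq2|H_1|_2$.

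I would then conclude by applying Corollary~\ref{cn2} with $G_1=H_1$, $G_2=H'$, $S_1\in\Syl_2(H_1)$ and $m=2^{n}$ — the inductive bound $l_2(\sigma)\geq 2^n$ for non-zero $2$-vanishing $\sigma$ of $H'$ being precisely the hypothesis of that corollary. It gives
$$l_2(\chi)\ \geq\ m\cdot\frac{\eta(1)}{|S_1|}\ \geq\ 2^{n}\cdot\frac{2|H_1|_2}{|H_1|_2}\ =\ 2^{n+1},$$
whence $\chi(1)=l_2(\chi)\,|H|_2\geq2^{n+1}|H|_2$, as desired. The proof is thus pure bookkeeping on top of results already in hand; the only points needing a moment's care are that $\chi|_{H_1}$ remains $2$-vanishing, so that Lemmas~\ref{pi4} and~\ref{pu4} apply to it verbatim, and that the inductive hypothesis, rephrased in terms of $l_2$, matches exactly the input required by Corollary~\ref{cn2}. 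The genuine difficulty was already absorbed into the ATLAS-based estimates of Lemmas~\ref{pi4}, \ref{pu4} and~\ref{nr1}; no further obstacle arises here.
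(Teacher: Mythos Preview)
Your proof is correct and follows essentially the same inductive strategy as the paper: base case via Lemma~\ref{nr1}, inductive step via the decomposition of Lemma~\ref{nn2}, Lemmas~\ref{pi4}/\ref{pu4} on the $H_1$-side, and Corollary~\ref{cn2} to combine the bounds. The only cosmetic difference is that you apply Lemmas~\ref{pi4}/\ref{pu4} to $\chi|_{H_1}$ while the paper applies them to $\chi'=\sum_i l_2(\sigma_i)\eta_i$; since both are $2$-vanishing characters of $H_1$ with the same set of irreducible constituents, the conclusion is identical.
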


\begin{proof}
By Lemma \ref{nr1}, the claim holds for $n=1$, so by induction we can assume
that it is true for $X:=H_2\times \cdots \times H_n$. By Lemma \ref{nn2},
$\chi=\sum_i \eta_i\sigma_i$, where $\eta_i\in \Irr(H_1)$, $\sigma_i$ are
$2$-vanishing characters of $X$ and $\chi'=\sum_i l_2(\sigma_i)\eta_i$ is a
2-vanishing character of $H_1$. By induction, $\sigma_i(1)\geq 2^{n}|X|_2$.
By Lemmas~\ref{pi4} and~\ref{pu4} applied to $\chi'$, we have
$\sum _i\eta_i(1)\geq 2|H_1|_2$, so  $\chi(1)\geq 2^{n+1}|H|_2 $ by Lemma~\ref{ga1}.
\end{proof}

\begin{prop}   \label{gl4n}
 Let $n>1$ and $G =\GL_{4n}(3)$ or $\GU_{4n}(3)$. Let $\chi$ be a
 $2$-vanishing character of $G$. Then $l_2(\chi)\geq 4$.
\end{prop}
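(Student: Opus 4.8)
The plan is to reduce, in two controlled steps, to the product bound already established in Lemma~\ref{gn5}. Throughout, recall that since $q=3$ we have $e_2(3)=2$ and $|Z(\GL_4(3))|=2$, $|Z(\GU_4(3))|=4$, so the central subgroups that occur below are $2$-groups; also, because a Sylow $2$-subgroup of any group consists of $2$-singular elements together with~$1$, a $2$-vanishing character $\psi$ of a group $H$ satisfies $\psi(1)=l_2(\psi)\,|H|_2$.

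First I would pass to the imprimitive subgroup $X:=\GL_4(3)\wr S_n\le G$ (resp.\ $\GU_4(3)\wr S_n\le G$), namely the stabiliser of a decomposition of the natural module into $n$ pairwise orthogonal non-degenerate $4$-dimensional summands, with the top group $S_n$ permuting them. A comparison of the $2$-parts of the orders (or Weir \cite{Weir}) shows that $|G:X|$ is odd, so $X$ contains a Sylow $2$-subgroup of~$G$. Put $U:=Z(\GL_4(3))^n$ (resp.\ $Z(\GU_4(3))^n$); this is a normal $2$-subgroup of $X$ (it is characteristic in the base group and is permuted by $S_n$), and $L:=X/U\cong\PGL_4(3)\wr S_n$ (resp.\ $\PGU_4(3)\wr S_n$). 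By Lemma~\ref{hc1}(a),(b), the generalised restriction $\lambda:=r^G_{X/U}(\chi)$ is a $2$-vanishing character of $L$ with $l_2(\lambda)=l_2(\chi)$.

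Next I would restrict $\lambda$ to the base group $B:=\PGL_4(3)^n$ (resp.\ $\PGU_4(3)^n$) of~$L$. Since $\lambda$ vanishes on every $2$-singular element of $L$, in particular on those lying in $B$, the character $\lambda|_B$ is $2$-vanishing. Comparing degrees via the identity above and using $|L|_2=|B|_2\cdot|S_n|_2$ gives $l_2(\lambda|_B)=l_2(\lambda)\cdot|S_n|_2$. Now Lemma~\ref{gn5}, applied to $B$, yields $\lambda|_B(1)\ge 2^{n+1}|B|_2$, that is, $l_2(\lambda|_B)\ge 2^{n+1}$. Hence
$$l_2(\chi)=l_2(\lambda)=\frac{l_2(\lambda|_B)}{|S_n|_2}\ \ge\ \frac{2^{n+1}}{|S_n|_2}.$$
Finally, $|S_n|_2=2^{v_2(n!)}$ with $v_2(n!)=\sum_{i\ge1}\lfloor n/2^i\rfloor\le n-1$, so $l_2(\chi)\ge 2^{n+1}/2^{n-1}=4$, as asserted. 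The only point requiring care is the $2$-level bookkeeping: verifying that the odd-index passage to $X$ followed by the quotient by the $2$-group $U$ leaves $l_2$ unchanged (this is exactly Lemma~\ref{hc1}), whereas restriction from $L$ down to $B$ scales $l_2$ by precisely $|S_n|_2$; the remaining ingredients are a routine order count for the oddness of $|G:X|$ and the already proved Lemma~\ref{gn5}. (The estimate is sharp only when $n$ is a power of~$2$.)
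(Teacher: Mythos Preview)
Your proof is correct and follows essentially the same approach as the paper: pass to the wreath product subgroup of odd index, reduce via Lemma~\ref{hc1} to the direct product $\PGL_4(3)^n$ (resp.\ $\PGU_4(3)^n$), apply Lemma~\ref{gn5}, and conclude from the bound $|S_n|_2\le 2^{n-1}$. The only cosmetic difference is the order of operations: you first quotient by the central $2$-group $U$ and then restrict to the base group $B$, whereas the paper first restricts to the base group $\GL_4(3)^n$ and then invokes Lemma~\ref{hc1} and Lemma~\ref{gn5}; the arithmetic is identical.
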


\begin{proof}
Let $X$ be the direct product of $n$ copies of $\GL_4(3)$ or $\GU_4(3)$. Let
$\chi$ be a 2-vanishing character of $X$. By Lemmas~\ref{hc1} and~\ref{gn5},
$\chi(1)\geq 2^{n+1}|X|_2$.

Let  $Y=X\cdot S_n$, the semidirect product, where $S_n$ acts on $X$ by
permuting the factors. Then $Y$ contains a Sylow 2-subgroup of $G$. Let
$M=X\cdot S$, where $S\in \Syl_2(S_n)$, so the index $|G:M|$ is odd.
Note that $|G|_2=|X|_2\cdot |S_n|_2$. As $|S_n|_2\leq 2^{n-1}$ (see the proof
of Proposition~\ref{so1}), the result follows for these groups.
\end{proof}

\begin{thm}   \label{thm:GL p=2}
 Let $p=2$, $m>3$ and $G$ one of $\GL_m(3)$, $\SL_m(3)$, $\PSL_m(3)$,
 $\GU_m(3)$, $\SU_m(3)$, $\PSU_m(3)$. Then $G$ has no Steinberg-like character.
 Moreover, if $\chi$ is a $2$-vanishing character of $G$ then $l_2(\chi)\geq 4$.
\end{thm}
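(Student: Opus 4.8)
The strategy is to reduce the general case $G \in \{\GL_m(3), \SL_m(3), \PSL_m(3), \GU_m(3), \SU_m(3), \PSU_m(3)\}$ with $m > 3$ to the three building blocks already established: the small cases $m = 4$ (Lemmas~\ref{psl43}, \ref{nr1}, \ref{pi4}, \ref{pu4}, \ref{gn5}), the multiple-of-four case $m = 4n$ (Proposition~\ref{gl4n}), and the bounds $|G|_2 < \mu_3(G)$ together with the explicit analysis of low-degree constituents in Proposition~\ref{44d}. First I would dispose of the statement about Steinberg-like characters: since a Steinberg-like character is in particular $\Syl_2$-regular, i.e.\ $2$-vanishing of $2$-level exactly~$1$, the claim $l_2(\chi) \geq 4$ for every $2$-vanishing $\chi$ immediately forces nonexistence of Steinberg-like characters. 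So it suffices to prove the $l_2(\chi) \geq 4$ statement. By Lemma~\ref{hc1} (taking $P = G$, $U$ the Sylow $2$-subgroup of the relevant central subgroup $Z$, so that $|G/ZU|_2 = |G|_2$ and generalised restriction preserves $2$-level), and by Lemma~\ref{ob4}/Lemma~\ref{ga1} for passing between $G$, its derived subgroup, and quotients by $2'$-central subgroups, it is enough to treat one representative in each isogeny/quotient class; concretely I would reduce to $\GL_m(3)$ and $\GU_m(3)$, or to the centreless groups when the centre has odd order.

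Next I would split on $m \bmod 4$. If $4 \mid m$, the result is exactly Proposition~\ref{gl4n}. If $m = 4n + r$ with $1 \leq r \leq 3$, I would embed a subgroup $X = X_1 \times X_2$ where $X_1 \cong \GL_{4n}(3)$ (resp.\ $\GU_{4n}(3)$) and $X_2 \cong \GL_r(3)$ (resp.\ $\GU_r(3)$), chosen so that $|G : X|$ is odd, hence $S \in \Syl_2(G)$ lies in $X$ and $\chi|_X$ is again $2$-vanishing with $l_2(\chi|_X) = l_2(\chi)$. Now apply Lemma~\ref{nn2} (or Corollary~\ref{cn2}) to the direct product $X = X_1 \times X_2$: writing $\chi|_X = \sum_i \eta_i \sigma_i$ with $\eta_i \in \Irr(X_1)$ distinct and $\sigma_i$ a $2$-vanishing character of $X_2$, the $2$-level of $\chi|_X$ is bounded below in terms of the $2$-levels of the $\sigma_i$'s and the degrees of the $\eta_i$'s. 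Since $4 \nmid 4n$ is false, Proposition~\ref{gl4n} gives $l_2(\sigma) \geq 4$... wait, that is the wrong direction; rather I would put $\GL_{4n}(3)$ in the \emph{first} factor so that the constituents $\eta_i$ of $\chi|_{X_1}$, obtained from $\chi|_X$, satisfy $l_2(\chi_1) \geq 4$ by Proposition~\ref{gl4n}, where $\chi_1 = \sum_i l_2(\sigma_i)\eta_i$; since each $l_2(\sigma_i) \geq 1$ (indeed $\geq 1$ trivially, and we only need one of them positive), this forces $\sum_i \eta_i(1) \, l_2(\sigma_i) \geq 4 |X_1|_2$, and combined with $l_2(\sigma_i) \geq 1$ this gives $l_2(\chi) = l_2(\chi|_X) \geq 4$ after chasing the degree equation $\chi(1) = \sum_i \eta_i(1)\sigma_i(1) = \sum_i \eta_i(1) l_2(\sigma_i) |X_2|_2$.

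The main obstacle is that this clean reduction requires $4n \geq 4$, i.e.\ $m \geq 5$, but also that the small residual cases — $m \in \{4, 5, 6, 7\}$ — be handled directly. The case $m = 4$ is covered by Lemmas~\ref{psl43}, \ref{nr1} together with Proposition~\ref{44d}(a) (which rules out $\Syl_2$-regular characters, i.e.\ $l_2 = 1$, but one needs the stronger $l_2 \geq 4$, supplied by Lemma~\ref{psl43} and Lemma~\ref{nr1}). The cases $m = 5, 6, 7$ need a separate argument: here I would either use Proposition~\ref{44d}(b) to see there is no Steinberg-like character and then run an ad hoc estimate on $\mu_1(G), \mu_2(G), \mu_3(G)$ versus $3|G|_2$ or $4|G|_2$ (as in the proofs of Lemmas~\ref{pi4}, \ref{pu4}) to push the $2$-level bound up to~$4$; or, cleaner, embed $\GL_4(3)$ (resp.\ $\GU_4(3)$) as a subgroup containing a Sylow $2$-subgroup — which works for $m = 5$ since $|\GL_5(3) : \GL_4(3) \times \GL_1(3)|$ is coprime to~$2$, and similarly for $m = 6, 7$ by stacking with $\GL_2(3)$ or $\GL_3(3)$ on the complementary block, noting $|\GL_2(3)|_2$ and $|\GL_3(3)|_2$ contribute but the reduction via Lemma~\ref{hc1} still lands on $\GL_4(3)$ after a further generalised restriction killing the small factor's Sylow $2$-subgroup. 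The delicate point throughout is bookkeeping the $2$-parts of indices to guarantee each subgroup contains a full Sylow $2$-subgroup, and ensuring the $\GU$ versus $\GL$ distinction does not interfere — but since all the ingredient lemmas are stated uniformly for both families, this is routine once the index computations are done.
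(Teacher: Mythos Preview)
Your proposal is correct and follows essentially the same route as the paper: reduce to $\GL_m(3)$ (resp.\ $\GU_m(3)$) via Lemmas~\ref{ob4} and~\ref{hc1}, then for $m = 4n + l$ with $0 \le l < 4$ pass to the subgroup $H \times S_0$ (with $H \cong \GL_{4n}(3)$ and $S_0 \in \Syl_2(\GL_l(3))$) containing a full Sylow $2$-subgroup, and use Lemma~\ref{hc1} --- equivalently your Lemma~\ref{nn2}, since for the $2$-group $S_0$ every $2$-vanishing character is a multiple of $\rho^{\reg}_{S_0}$ --- to reduce to a $2$-vanishing character of $H$, where Proposition~\ref{gl4n} gives $l_2 \ge 4$. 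Your concern about $m \in \{5,6,7\}$ is misplaced: these cases fall under the same reduction with $n=1$ and $H = \GL_4(3)$ (resp.\ $\GU_4(3)$), and the required bound $l_2 \ge 4$ for $H$ is exactly Lemma~\ref{nr1} lifted from $\PGL_4(3)$ to $\GL_4(3)$ via Lemma~\ref{hc1}; no separate ad hoc argument or comparison with $\mu_3$ is needed.
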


\begin{proof}
Let first $G=\GL_m(3)$ or $\GU_m(3)$. For $m\equiv 0\pmod 4$ the result is
stated
in Proposition~\ref{gl4n}. Let $m=4n+l$, where $1\leq l<4$, and $H=\GL_{4n}(3)$
or $\GU_{4n}(3)$. Let $S_0$ be a Sylow $2$-subgroup of $\GL_l(3)$ or $\GU_l(3)$
and set $U=H\times S_0$. Then $U$ contains a Sylow 2-subgroup of $G$.
Therefore, $l_2(\chi)= l_2(\chi|_U)$. By Lemma~\ref{hc1}, if $\nu$ is a
2-vanishing character of $U$ then $l_2(\nu)= l_2(\mu)$ for some
2-vanishing character $\mu$ of $H$. So $l_2(\nu)\geq 4$ by
Proposition~\ref{gl4n}. So the result follows for these groups.

For $G=\SL_m(3)$ or $\SU_m(3)$ the result follows from the above and
Lemma \ref{ob4}. For $G=\PSL_m(3)$ or $\PSU_m(3)$ the statement follows
from the above and Lemma \ref{hc1}.
\end{proof}

%%%%%%%%%%%%%%%%%%%%%%%%%%%%%%%%%%
\subsection{Orthogonal and symplectic groups at $p=2$}
Let $V$ be the natural module for $H=\Sp_{2n}(q)$, $q$ odd, and for $g\in G$
let $d(g)$ be the dimension of the fixed point subspace of $g$ on $V$. Let
$\om_n$ denote the Weil character of $H$. By Howe \cite[Prop.~2]{Ho},
$|\om_n(g)|=q^{d(g)/2}$. Let $\om_n=\om_n'+\om_n''$, where
$\om_n',\om_n''\in\Irr(H)$ and $\om_n'(z)=-\om_n'(1)$ for $1\neq z\in Z(H)$.

\begin{lem}   \label{lem:3wr}
 \begin{enumerate}[\rm(a)]
  \item Let $h\in H$ be semisimple such that $h$ and $zh$ fix
   no non-zero vector on V. Then $|\om_n''(h)|\leq 1$.
  \item Let $V=V_1\oplus V_2$, where $V_1$ is a non-degenerate subspace
   of dimension $2$, let $g\in H$ be an element such that $gV_i=V_i$, $i=1,2$,
   $g|_{V_1}=-\Id$, and $g$ and $zg$ fix  no non-zero vector on $V_2$. Then
   $|\om_n''(g)|\geq (q-1)/2$.
  \item Let $q>3$. Then $\om_n''$ is not constant on the $2$-singular
   elements of $\PSp_{2n}(q)$.
 \end{enumerate}
\end{lem}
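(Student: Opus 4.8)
The plan is to reduce the whole lemma to Howe's formula $|\om_n(g)|=q^{d(g)/2}$ together with one preliminary remark about the central involution $z=-\Id$. Since $z$ fixes no non-zero vector, $d(z)=0$, so $|\om_n(z)|=1$; hence $z$ cannot act as $-1$ on both constituents $\om_n'$ and $\om_n''$ (that would force $\om_n(z)=-q^n$). As it acts as $-1$ on $\om_n'$ by hypothesis, it acts trivially on $\om_n''$. Therefore $\om_n'(zg)=-\om_n'(g)$ and $\om_n''(zg)=\om_n''(g)$ for all $g\in H$, which gives both the identity
$$\om_n''(g)=\tfrac12\bigl(\om_n(g)+\om_n(zg)\bigr)\qquad(g\in H)$$
and the fact that $\om_n''$ is inflated from a character of $\PSp_{2n}(q)$; the latter is what makes the statement in (c) meaningful.

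Parts (a) and (b) are then immediate computations of $d(g)$ and $d(zg)$ fed into this identity. In (a) the hypotheses say $d(h)=d(zh)=0$, so $|\om_n(h)|=|\om_n(zh)|=1$ and the triangle inequality gives $|\om_n''(h)|\le 1$. In (b), on $V_1$ the element $g$ acts as $-\Id$ while $zg$ acts as $\Id$, and on $V_2$ neither $g$ nor $zg$ has a non-zero fixed vector; hence $d(g)=0$ and $d(zg)=\dim V_1=2$, so $|\om_n(g)|=1$ and $|\om_n(zg)|=q$, and the reverse triangle inequality yields $|\om_n''(g)|\ge\tfrac12(q-1)$.

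For (c) the plan is to play (a) off against (b). First I would exhibit a $2$-singular element of $\PSp_{2n}(q)$ realising the hypothesis of (a): write $V$ as an orthogonal sum of $n$ hyperbolic planes and let $h$ act on each factor as an element of $\Sp_2(q)\cong\SL_2(q)$ of order $4$ (such exist for every odd $q$, since $4\mid q-1$ or $4\mid q+1$), i.e.\ with eigenvalues $\pm\sqrt{-1}$ on each plane. Then $h$ is semisimple of order $4$, $h^2=-\Id=z$, and neither $h$ nor $zh=-h$ fixes a non-zero vector; moreover $\bar h\in\PSp_{2n}(q)$ is a non-trivial involution, so by (a) we get $|\om_n''(\bar h)|\le 1$. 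Next, for $n\ge 2$, I would realise (b): since $q>3$, the number $q^2-1$ has an odd prime divisor $r$, so $\Sp_2(q)$ contains an element of order $r$ with eigenvalues of order $r$; taking a block-diagonal $g_2\in\Sp_{2n-2}(q)$ assembled from $n-1$ such blocks (so that $\pm1$ is an eigenvalue of neither $g_2$ nor $-g_2$) and putting $g=(-\Id_{V_1},g_2)$ with $\dim V_1=2$, one checks that $g$ has order $2r$, that $g^r=(-\Id_{V_1},\Id_{V_2})\neq z$, so $z\notin\langle g\rangle$ and $\bar g$ is $2$-singular, and that $|\om_n''(\bar g)|\ge\tfrac12(q-1)>1$ by (b). Thus $\om_n''$ takes two different values on $2$-singular classes. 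The left-over case $n=1$ is $\PSp_2(q)=\PSL_2(q)$ and is handled directly from its character table.

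The computations in (a) and (b) are routine; the real (if modest) work lies in (c), in verifying that the two explicit elements above land in the correct type of conjugacy class of $\PSp_{2n}(q)$ — in particular that $z\notin\langle g\rangle$ and that $\bar h\ne 1$ — and that the numerics are not spoiled in the smallest cases. This is exactly where the hypothesis $q>3$ enters: it guarantees $(q-1)/2\ge 2$ and that $q^2-1$ is not a $2$-power.
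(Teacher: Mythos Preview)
Your proof is correct and follows essentially the same approach as the paper. Parts (a) and (b) are identical to the paper's argument, via the identity $2\om_n''(g)=\om_n(g)+\om_n(zg)$ together with Howe's formula and the triangle inequalities.

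For (c), the paper is terser: it takes any $g$ as in (b), then builds $h$ by replacing $g|_{V_1}=-\Id$ with an order-$4$ block $\begin{pmatrix}0&1\\-1&0\end{pmatrix}$ while keeping $g|_{V_2}$ unchanged; this $h$ then automatically satisfies (a). Your construction is different in that you build $h$ and $g$ independently (order-$4$ blocks throughout for $h$; an odd prime order $r\mid q^2-1$ on $V_2$ for $g$), and you are more explicit about why $\bar g$ has even order in the quotient (checking $g^r\neq z$), a point the paper leaves to the reader. The paper's version has the small economy that one element is obtained from the other by a local modification; your version has the advantage that the existence of $g$ and the $2$-singularity of $\bar g$ are actually verified. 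Your handling of $n=1$ separately is extra but harmless; in the paper the lemma is only applied with $n\ge2$.
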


\begin{proof}
(a) We have $\om_n(h)=\om_n'(h)+\om_n''(h)$ and
$\om_n(zh)=-\om_n'(h)+\om_n''(h)$. Therefore, by \cite[Prop.~2]{Ho},
$2\geq |\om_n(h)+\om_n(zh)|=|2\om_n''(h)|$, whence the claim.

(b) We have $|\om_n(g)|=1$ and $|\om_n(zg)|=q$ by \cite[Prop.~2]{Ho}. Then
$q-1\leq |\om_n(g)+\om_n(zg)|=|2\om_n''(g)|$, whence the claim.

(c) Choose $g$ as in~(b) and $h$ to be an element stabilising $V_1,V_2$ such
that $h$ coincides with $g$ on $V_2$ and the matrix of $h$ on $V_1$ is similar
to $\begin{pmatrix}0&1\\ -1&0\end{pmatrix}$. Then $h$ is a 2-singular element
satisfying~(a), and hence $|\om_n''(h)|\leq 1$.
Let $\overline{h}$ and $\overline{g}$ be the images of $h,g$ in $H/Z(H)$.
Then $\overline{h}$ and $\overline{g}$ are 2-singular elements of
$\PSp_{2n}(q)$. As $Z(H)$ is in the kernel of $\om_n''$, this can be viewed
as a character of $H/Z(H)$. As $(q-1)/2$ is greater than 1 for $q>3$, (c)
follows.
\end{proof}

\begin{prop}   \label{prop:symplectic p=2}
 Let $G=\PSp_{2n}(q)$, with $q>3$ odd and $n\ge2$. Then $G$ has no
 $\Syl_2$-regular characters.
\end{prop}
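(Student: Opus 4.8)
Throughout, suppose for contradiction that $\chi$ is a $\Syl_2$-regular character of $G=\PSp_{2n}(q)$, $q>3$ odd, $n\ge2$. Then $\chi(1)=|S|=|G|_2$ and $\chi(u)=0$ for every $1\ne u\in G$ of $2$-power order. Write $\tilde G=\Sp_{2n}(q)$ and let $\om_n=\om_n'+\om_n''$ be the decomposition of its Weil character from the paragraph preceding Lemma~\ref{lem:3wr}; since $\om_n'(z)=-\om_n'(1)$ for $1\ne z\in Z(\tilde G)$, the constituent $\om_n''$ is trivial on $Z(\tilde G)$ and hence factors through $G$. Its degree is $(q^n\pm1)/2$, which by \cite{TZ96} lies among the two smallest degrees of nontrivial irreducible characters of $G$.

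The first step is to reduce to the case where every irreducible constituent of $\chi$ is $1_G$ or $\om_n''$. For this I would estimate $|G|_2$ against the character degrees of $G$. By the lifting-the-exponent formula, $v_2(|\tilde G|)=\sum_{i=1}^n v_2(q^{2i}-1)=n\,v_2(q^2-1)+v_2(n!)$, so $v_2(|G|)=n\,v_2(q^2-1)+v_2(n!)-1$; since one of $q\mp1$ is $\equiv2\pmod4$, $v_2(q^2-1)\le1+\log_2(q+1)$, whence $|G|_2\le\tfrac14\bigl(4(q+1)\bigr)^n$. Comparing with the third smallest nontrivial degree $\mu_3(G)$ given by \cite{TZ96} (which grows like $q^{n+1}$ or faster), one finds $|G|_2<\mu_3(G)$ — and in fact $|G|_2<\mu_2(G)$ — for all $q>3$ odd and $n\ge2$ outside a short explicit list of small pairs $(n,q)$ (essentially $n=2$ with $q$ bounded, together with a couple of cases for $n=3$). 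In the generic range this forces $\chi=a\cdot1_G+b\cdot\om_n''$ with $a,b\ge0$ (if $|G|_2<\mu_1(G)$ even $\chi=a\cdot1_G$). Now evaluate at the two $2$-singular classes supplied by Lemma~\ref{lem:3wr}: an element $\bar h$ with $|\om_n''(\bar h)|\le1$ and an element $\bar g$ with $|\om_n''(\bar g)|\ge(q-1)/2\ge2$. The relations $\chi(\bar g)=\chi(\bar h)=0$ give $b\bigl(\om_n''(\bar g)-\om_n''(\bar h)\bigr)=0$, and by Lemma~\ref{lem:3wr}(c) the two values differ, so $b=0$. But then $\chi=a\cdot1_G$ restricts on $S$ to $a\cdot1_S\ne\rho^\reg_S$ because $|S|>1$, contradicting $\Syl_2$-regularity.

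It remains to dispose of the finitely many exceptional pairs $(n,q)$ from the first step, where $\mu_2(G)$ (and possibly $\mu_3(G)$) may lie below $|G|_2$ so that a few further small-degree characters are admissible as constituents. Here I would argue case by case from the known character tables (\cite{Atl} or direct computation): in each case there is a single class of elements of $2$-power order on which every irreducible character of degree at most $|G|_2$ takes a non-negative value, with too few characters of negative value available to offset the positive contributions, so $\chi$ cannot vanish there; alternatively, a short divisibility argument using $(\chi,1_G)\le1$ (Lemma~\ref{lem:a51}) already excludes $\chi(1)=|G|_2$. The main obstacle is the uniform degree estimate in the first step: one must control $|G|_2$ — which can be as large as roughly $(2(q+1))^n$ when $q$ is a Mersenne prime — against the minimal degrees of \cite{TZ96}, and pin down precisely which small $(n,q)$ escape the inequality so that the residual analysis is genuinely finite and checkable.
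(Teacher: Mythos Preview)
Your reduction to constituents of small degree and your closing case analysis mirror the paper's approach, but the decomposition $\chi=a\cdot1_G+b\cdot\om_n''$ has a genuine gap: $\PSp_{2n}(q)$ has \emph{two} irreducible Weil characters of the relevant degree, not one. The Weil representation of $\Sp_{2n}(q)$ comes in two inequivalent flavours (interchanged by the outer diagonal automorphism $\gamma$), and each contributes one irreducible constituent trivial on the centre; the correct shape is $\chi=a\cdot1_G+b_1\psi_1+b_2\psi_2$ with $\psi_2=\psi_1^{\gamma}$. Your subtraction at $\bar g,\bar h$ then only yields
\[
  b_1\bigl(\psi_1(\bar g)-\psi_1(\bar h)\bigr)+b_2\bigl(\psi_2(\bar g)-\psi_2(\bar h)\bigr)=0,
\]
and Lemma~\ref{lem:3wr} bounds $|\psi_i(\bar g)|$ and $|\psi_i(\bar h)|$ without fixing signs, so one cannot conclude $b_1=b_2=0$; for instance $\psi_1(\bar g)=-\psi_2(\bar g)$ is not excluded. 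There is also a smaller issue: Lemma~\ref{lem:3wr} supplies $2$-\emph{singular} elements, whereas a $\Syl_2$-regular character is only required to vanish on elements of $2$-power order, so you would still have to argue that $\bar g,\bar h$ can be chosen as $2$-elements.

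The paper sidesteps both problems by evaluating at an \emph{involution} $g$ rather than at the elements of Lemma~\ref{lem:3wr}. Since $\gamma$ is induced by an element of $\GL_{2n}(q)$ it fixes every involution class of $G$, so $\psi_1(g)=\psi_2(g)=:a$ and hence $\chi(g)=1+(b_1+b_2)a$; here the trivial character occurs exactly once because $(q^n\pm1)/2$ is never a $2$-power for $n\ge2$ (Zsigmondy), and $b_1+b_2\ge2$ by a degree comparison. Then $1+(b_1+b_2)a=0$ is impossible for an integer $a$, which finishes the generic case. The residual exceptions in the paper are only $(n,q)\in\{(2,5),(2,7)\}$, handled directly from the character tables.
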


\begin{proof}
We have
$$|\PSp_{2n}(q)|_2=\begin{cases}
  |q-1|_2^n \cdot 2^{n-1}\cdot  |n!|_2& \text{if 4 divides $q-1$},\\
  |q+1|_2^n \cdot  2^{n-1}\cdot  |n!|_2& \text{if 4 divides $q+1$}.
  \end{cases}$$
Let $k$ be minimal with $n\le2^k$, then as $|n!|_2\le|2^k!|_2=2^{2^k-1}$ we
have
$$|\PSp_{2n}(q)|_2\le\begin{cases}
  |q-1|_2^n\cdot 4^{n-1}& \text{if $4|(q-1)$},\\
  |q+1|_2^n\cdot 4^{n-1}& \text{if $4|(q+1)$}.
\end{cases}$$
On the other hand $\mu_3(G)=(q^n-1)(q^n-q)/(2(q+1))$ by \cite[Thm.~5.2]{TZ96},
and this is larger than $|G|_2$, unless $n=2$ and $q=5,7$. Let's set aside
these cases for a moment. Then otherwise if $\chi$ is $\Syl_2$-regular, the
constituents of $\chi$ are either Weil characters or the trivial character.
Now note that a Weil character of $\Sp_{2n}(q)$ of degree $(q^n\pm1)/2$
has the centre in its kernel if and only if its degree is odd. So, the
non-trivial constituents of $\chi$ have degree~$(q^n-1)/2$ if $q\equiv3\mod4$
and $n$ is odd, and $(q^n+1)/2$ else. According to Lemma~\ref{lem:a51} the
trivial character occurs at most once in $\chi$. As $(q^n\pm1)/2$ is never a
power of~2 for $n\ge2$ and odd $q$ (consider a Zsigmondy prime divisor), the
trivial character must occur exactly once. Let $\psi_1,\psi_2$ denote the two
Weil characters of $G$, interchanged by the outer diagonal automorphism
$\gamma$ of $G$. Observe that $\gamma$ is induced by an element of
$\GL_{2n}(q)$ and thus fixes all involution classes of $G$.
Let $g\in G$ be an involution and write $a:=\psi_1(g)=\psi_2(g)$. Then
$\chi(g)= ma+1$, where $m$ is the number of non-trivial constituents of $\chi$.
As necessarily $m>1$ (compare the degrees) we see that $\chi(g)\ne0$, so
$\chi$ is not $\Syl_2$-regular.
\par
We now discuss the two exceptions.
For $G=\PSp_4(5)$, $|G|_2=2^6=64$ and all irreducible characters of degree
at most~64 take non-negative values on class 2B, so there is no $\Syl_2$-regular
character.
For $G=\PSp_4(7)$, $|G|_2=2^8=256$ and all irreducible characters of degree at
most~256 take positive values on class 8A, except for one of degree~175 which
takes value~$-1$, and one of degree~224 which takes value~0. As at most one of
those latter two characters could occur, and at most once, there can
be no $\Syl_2$-regular character for $p=2$.
\end{proof}

\begin{prop}   \label{prop:odd-orth p=2}
 Let $G=\Omega_{2n+1}(q)$ with $q>3$ odd and $n\ge3$. Then $G$ has no
 $\Syl_2$-regular characters.
\end{prop}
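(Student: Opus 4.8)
The plan is to follow the pattern of Proposition~\ref{prop:symplectic p=2}. Since $q$ is odd we have $q^2\equiv1\pmod8$, so by lifting the exponent $v_2(q^{2i}-1)=v_2(q^2-1)+v_2(i)$ for every $i\ge1$, and therefore
\[
 |G|_2=|\Omega_{2n+1}(q)|_2=\tfrac12\,|q^2-1|_2^{\,n}\,|n!|_2 .
\]
Distinguishing $4\mid(q-1)$ from $4\mid(q+1)$ one checks that this equals $|\PSp_{2n}(q)|_2$; in particular, using $|n!|_2\le 2^{n-1}$, we get $|G|_2\le 2^{n-2}\,|q^2-1|_2^{\,n}$ (and more sharply $|G|_2\le 2^{2^k-2}|q^2-1|_2^{\,n}$ with $2^{k-1}<n\le2^k$). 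Combining this with the minimal character degrees of $B_n(q)$ from \cite{TZ96}, one verifies that $|G|_2<\mu_3(G)$ for all $n\ge3$ outside a short explicit list of pairs $(n,q)$ with $n\in\{3,4\}$ and $q$ among a few small prime powers (these occur only when $q\pm1$ is close to a $2$-power, so that $|q^2-1|_2$ is unusually large); I defer these to the end.

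So assume $|G|_2<\mu_3(G)$ and let $\chi$ be a $\Syl_2$-regular character of $G$ (recall $Z(G)=1$). Then every irreducible constituent of $\chi$ is $1_G$ or has degree $\mu_1(G)$ or $\mu_2(G)$, and by Lemma~\ref{lem:a51} the trivial character occurs at most once. Since the minimal degrees $\mu_1(G),\mu_2(G)$ are not $2$-powers (each is divisible by a Zsigmondy prime), $1_G$ must occur exactly once, and $\chi=1_G+\Lambda$ with $\Lambda\neq0$ a sum of characters of degree $\mu_1(G)$ or $\mu_2(G)$ of total degree $|G|_2-1$. The characters of these minimal degrees are described explicitly in \cite{TZ96}: each is either a unipotent character (or the twist of one by the order-$2$ linear character of $\mathrm{SO}_{2n+1}(q)$, which restricts to $\Omega_{2n+1}(q)$ in the same way) or lies in the Lusztig series $\cE(G,s)$ of a noncentral $2$-element $s$ of the dual group $\Sp_{2n}(q)$. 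The crux is to produce a single nonidentity $2$-element $g\in G$ on which all of these are controlled: following the unitary case of Proposition~\ref{44d}, one takes $g$ to be a (possibly $2$-singular) regular element lying in a maximal torus $T$ whose dual $T^*\le\Sp_{2n}(q)$ avoids the class of $s$, so that the $\cE(G,s)$-constituents vanish on $g$ by \cite[Prop.~6.4]{LMS14}, while for a unipotent constituent the value on $g$ is, up to sign, a character value of the Weyl group $W(B_n)$ and is thus bounded in absolute value via the Murnaghan--Nakayama rule for the corresponding hyperoctahedral combinatorics. The outer diagonal automorphism of $G$, being induced from $\GL_{2n+1}(q)$, fixes the $\mathrm{GL}$-class and hence essentially the $G$-class of~$g$, so the relevant minimal characters take a common value $a$ on $g$; evaluating, $\chi(g)=1+ma$ with $m\ge2$ by comparison of degrees, and as $a$ is a rational algebraic integer this is nonzero, contradicting $\Syl_2$-regularity.

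The step I expect to be the genuine obstacle is this last one: matching the explicit minimal characters of $B_n(q)$ from \cite{TZ96} — in particular, whether they are unipotent or semisimple characters in the series of an involution of $\Sp_{2n}(q)$ — with the conjugacy classes of $2$-elements and of $2$-singular regular elements of $\Omega_{2n+1}(q)$, so as to find one $g$ killing or keeping nonnegative all candidate constituents at once; the small case $n=3$, where the gap $\mu_3(G)-|G|_2$ is tightest, will very likely need its own treatment, as did $\PSp_4(5)$ and $\PSp_4(7)$ in Proposition~\ref{prop:symplectic p=2}. Finally, the finitely many excluded $(n,q)$ are settled directly: for $n\in\{3,4\}$ and the relevant small $q\in\{3,5,7,9,\dots\}$ one reads off from the known character tables (using \cite{Atl} and computer algebra) the irreducible degrees below $|G|_2$ together with the values on a fixed class of $2$-elements, and checks that no nonnegative integral combination of them restricting to the regular character of a Sylow $2$-subgroup on that class can exist — exactly as in the closing paragraphs of Propositions~\ref{prop:symplectic p=2} and~\ref{44d}.
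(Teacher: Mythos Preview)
Your proposal has a genuine gap, and more importantly you have missed the key simplification that makes this proposition much easier than the symplectic case you are imitating.

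The paper's proof does not use $\mu_3(G)$ at all. By \cite[Thm.~6.1]{TZ96} one has $\mu_1(G)=(q^{2n}-1)/(q^2-1)$ for $G=\Omega_{2n+1}(q)$, $q>3$ odd, $n\ge3$, and a direct comparison with your own formula $|G|_2=\tfrac12|q^2-1|_2^{\,n}\,|n!|_2$ shows that $\mu_1(G)>|G|_2$ except for $(n,q)\in\{(3,7),(4,5),(4,7)\}$. Once $\mu_1(G)>|G|_2$, the only possible constituent of a $\Syl_2$-regular character is $1_G$, which can occur at most once by Lemma~\ref{lem:a51}, so no such character exists. The three exceptional pairs are dispatched without any character-value computations: in each case there is exactly one non-trivial irreducible character of degree below $|G|_2$ (of degree $2451$, $16276$, $120100$ respectively), and neither $|G|_2$ nor $|G|_2-1$ is a non-negative multiple of that degree.

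The reason the symplectic argument of Proposition~\ref{prop:symplectic p=2} needed $\mu_3$ is that $\PSp_{2n}(q)$ has the small Weil characters of degree $(q^n\pm1)/2$, so $\mu_1$ is far below $|G|_2$. The odd orthogonal groups in rank $\ge3$ have no such small characters; $\mu_1(G)$ is already of order roughly $q^{2n-2}$, which dominates the $2$-part. By transplanting the $\mu_3$ strategy you created for yourself the ``genuine obstacle'' you describe: identifying the Lusztig series of the characters of degree $\mu_1,\mu_2$ and producing a $2$-element $g$ controlling all of them. You do not resolve that step, so your argument as written is incomplete, whereas the $\mu_1$ approach avoids it entirely.
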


\begin{proof}
According to \cite[Thm.~6.1]{TZ96} we have $\mu_1(G)=(q^{2n}-1)/(q^2-1)$, which
is larger than $|G|_2$ unless either $n=3$ and $q=7$, or $n=4$ and $q=5,7$.
\par
For $G=\Omega_7(7)$ the only non-trivial character of degree less than
$|G|_2=2^{12}$ is the semisimple character of degree~2451 (see \cite{TZ96}).
Since the trivial character can occur at most once in a $\Syl_2$-regular
character, we see that no example can arise here.
For $G=\Omega_9(5)$ the only non-trivial character of degree less than
$|G|_2=2^{14}$ is the semisimple character of degree~16276 (see \cite{TZ96}).
Again, this does not lead to an example. For $G=\Omega_9(7)$ the only
non-trivial character of degree less than $|G|_2=2^{18}$ is the character of
degree~120100, and we conclude as before.
\end{proof}

\begin{prop}   \label{prop:even-orth p=2}
 Let $G=\POm_{2n}^\pm(q)$ with $q>3$ odd and $n\ge4$. Then $G$ has no
 $\Syl_2$-regular characters.
\end{prop}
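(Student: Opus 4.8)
The plan is to argue as for $\PSp_{2n}(q)$ and $\Omega_{2n+1}(q)$ in Propositions~\ref{prop:symplectic p=2} and~\ref{prop:odd-orth p=2}. First I would bound $|G|_2$ from above. A Sylow $2$-subgroup of $\Omega_{2n}^\epsilon(q)$ lies in the normaliser of a maximal torus $T$ for which $|T|_2$ is as large as possible; since then $|T|_2$ divides $|q\mp1|_2^{\,n}$ (the sign chosen according to $q\bmod4$) and the relative Weyl group of $T$ is a subgroup of $W(D_n)$, a short computation gives $|G|_2\le|q\mp1|_2^{\,n}\cdot2^{2n}$, and passing from $\Omega_{2n}^\epsilon(q)$ to $\POm_{2n}^\epsilon(q)$ only decreases this. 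On the other hand, by \cite[Thm.~7.6]{TZ96} the smallest non-trivial character degree $\mu_1(G)$ is of order $q^{2n-3}$, and $\mu_2(G)<\mu_3(G)$ are larger still, so $|G|_2<\mu_3(G)$ for all odd $q>3$ and $n\ge4$ with only finitely many exceptions, confined to small $n$ and small $q$.

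In the generic range, a hypothetical $\Syl_2$-regular character $\chi$ can have non-trivial irreducible constituents only among the finitely many (at most two per type, related by the graph automorphism) characters of degree $\mu_1(G)$ or $\mu_2(G)$; by \cite{TZ96} these are either unipotent characters — described by symbols — or Deligne--Lusztig characters of a Coxeter-type maximal torus. A Zsigmondy prime divisor of $q^{2(n-1)}-1$ divides $\mu_1(G)$, and an odd prime also divides $\mu_2(G)$, so neither of these degrees is a $2$-power; hence comparing $2$-parts forces $1_G$ to be a constituent of $\chi$, and by Lemma~\ref{lem:a51} exactly once, so $\chi=1_G+\Lambda$ with $\Lambda$ a non-empty sum of the characters above. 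I would then evaluate $\chi$ on a carefully chosen $2$-singular element $g$: for the Deligne--Lusztig-type constituents one picks $g$ to be a regular element of even order in a maximal torus not containing a conjugate of the relevant semisimple element of the dual group, so that these constituents vanish at $g$ by orthogonality of Lusztig series (cf.\ \cite[Prop.~6.4]{LMS14}, as used in Proposition~\ref{44d}(b)); the value at $g$ of a unipotent constituent is then, up to sign, a character value of $W(D_n)$ on a $2$-singular element, which the Murnaghan--Nakayama rule pins down to lie in $\{0,\pm1\}$. Comparing $\chi(g)=1+\sum_\psi m_\psi\,\psi(g)$ — together with the value at a second such $g$ and the degree equation $\chi(1)=|G|_2$, if needed — with the requirement $\chi(g)=0$ then yields a contradiction, exactly as in the symplectic and unitary cases.

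The remaining exceptional pairs $(n,q)$ would be handled individually: for each, one lists from \cite{TZ96} (and from \cite{Atl} for the very smallest groups) all irreducible characters of degree at most $|G|_2$, and rules out a $\Syl_2$-regular character either by a divisibility argument — using that $1_G$ occurs at most once, so the other constituents must sum to $|G|_2-1$ and hence cannot all have degrees sharing a common odd prime factor — or by exhibiting a class of $2$-elements on which all of these characters are non-negative and not all zero.

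I expect the main obstacle to be the middle step. The groups of type $D_n$ have a more intricate outer automorphism group than the symplectic groups (the graph automorphism need not fix every class of $2$-elements of $\POm_{2n}^+(q)$, and for $n=4$ there is triality), and a less uniform list of small-degree characters, so identifying these characters precisely and controlling their values on a concrete $2$-singular class — rather than simply invoking Howe's formula $|\om_n(g)|=q^{d(g)/2}$ for the two Weil characters as in Proposition~\ref{prop:symplectic p=2} — is the delicate part. A secondary, routine, difficulty is keeping track of the centre (of order dividing $\gcd(4,q^n\mp1)$) in passing from $\Omega_{2n}^\epsilon(q)$ to $\POm_{2n}^\epsilon(q)$, both in the order estimate and when deciding which minimal characters of the covering group descend to $G$.
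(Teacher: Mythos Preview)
Your overall strategy is sound and parallels the paper's, but you have made the problem harder than necessary by aiming only for $|G|_2<\mu_3(G)$. The paper instead invokes Nguyen's results \cite{NG} to obtain the sharper bound $|G|_2<\mu_2(G)$: for $\POm_{2n}^+(q)$ one has $\mu_2(G)=(q^n-1)(q^{n-1}-1)/2(q+1)$, and for $\POm_{2n}^-(q)$ one has $\mu_2(G)=(q^n+1)(q^{n-1}+1)/2(q+1)$, and these exceed $|G|_2$ in all cases except $(n,q)=(4,7)$ in type~$+$ (and with no exceptions in type~$-$). This single observation eliminates the entire layer of your argument dealing with $\mu_2$-constituents, Deligne--Lusztig series, and Murnaghan--Nakayama values, and with it the difficulties you anticipate concerning the graph automorphism and triality.

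With the $\mu_2$-bound in hand, any $\Syl_2$-regular character is a multiple of the (essentially unique) character of degree $\mu_1(G)=(q^n-1)(q^{n-1}+q)/(q^2-1)$, plus at most one copy of $1_G$. The paper then simply reuses the involution argument from the symplectic case (Proposition~\ref{prop:symplectic p=2}): on any involution $g$ the value of such a character is $1+ma$ with $m>1$ and $a$ an integer, hence nonzero. The lone exception $\POm_8^+(7)$ is disposed of by pure degree arithmetic: the candidate degrees $1,17500,51300$ admit no combination summing to $2^{16}$ with $1_G$ used at most once. So your plan is correct in outline, but the paper's route is both shorter and avoids exactly the obstacles you flagged; the point you are missing is to cite \cite{NG} for $\mu_2(G)$ rather than \cite{TZ96} for $\mu_3(G)$.
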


\begin{proof}
The second smallest non-trivial character degree of $G=\POm_{2n}^+(q)$ is
given by $\mu_2(G)=(q^n-1)(q^{n-1}-1)/(q+1)/2$ (see \cite[Thm.~1.4]{NG}),
which is larger
than $|G|_2$ unless $(n,q)=(4,7)$. Leaving that cases aside for a moment,
we see that any $\Syl_2$-regular character of $G$ is a multiple of the
smallest non-trivial character, of degree $(q^n-1)(q^{n-1}+q)/(q^2-1)$, plus
possibly the trivial character. Arguing as in the case of symplectic groups
we see that such characters take non-zero value on involutions. For
$G=\POm_8^+(7)$ the constituents of a $\Syl_2$-regular character could have
degree 1, 17500, or 51300. No non-negative integral linear combination of
these three degrees, with $1_G$ appearing at most once, adds up to
$|G|_2=2^{16}=65536$.
\par
The second smallest non-trivial character degree of $G=\POm_{2n}^-(q)$ is
$\mu_2(G)=(q^n+1)(q^{n-1}+1)/(q+1)/2$ (see again \cite[Thm.~1.4]{NG}), which
is larger than $|G|_2$. We conclude as before.
\end{proof}

Again, we are left with the case that $p=2$, $q=3$.

\begin{lem}   \label{mi8}
 Let $G=\PSp_6(3)$, $\Omega_7(3)$ or $\POm_8^-(3)$. Then $G$ has no
 Steinberg-like character.
\end{lem}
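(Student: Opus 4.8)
The proof will proceed by direct inspection of the (known) character tables of these three groups, in the same spirit as Lemmas~\ref{sp43} and~\ref{psl43} and the arguments in Section~\ref{sec:class p>2}; we use the notation and data of the Atlas \cite{Atl}. Suppose, for a contradiction, that $\chi$ is a Steinberg-like character of one of the groups $G$ listed. Then $\chi(1)=|G|_2$, the character $\chi$ vanishes at \emph{every} element of $G$ of even order (not merely on a Sylow $2$-subgroup), and by Lemma~\ref{lem:a51} the trivial character $1_G$ occurs in $\chi$ with multiplicity at most~$1$. In particular every irreducible constituent of $\chi$ has degree at most $|G|_2$. A direct order computation gives $|G|_2=2^9=512$ for $G=\PSp_6(3)$ and $G=\Omega_7(3)$, and $|G|_2=2^{10}=1024$ for $G=\POm_8^-(3)$.

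For each of the three groups I would first read off from \cite{Atl} the list of irreducible characters of degree at most $|G|_2$; for $\PSp_6(3)$ and $\Omega_7(3)$ this list is short (for $\Omega_7(3)$ the smallest non-trivial degree is already large, and for $\PSp_6(3)$ the low-degree characters are essentially the two Weil characters of degrees $13,14$ together with a few others). One then selects a conjugacy class $C$ of $2$-singular elements on which all, or all but one or two, of these irreducible characters take non-negative (indeed positive) values; since $\chi(C)=0$, any ``negative'' characters must occur in $\chi$, and comparing with $\chi(1)=|G|_2$ bounds their multiplicities. Bounding the degree of the remaining part of $\chi$ and then evaluating on a second (and if necessary a third) suitably chosen $2$-singular class rules out the surviving decompositions, exactly as in Lemmas~\ref{psl43}--\ref{pu4}. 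Here we are free to use $2$-singular elements of mixed order, which typically gives several additional usable classes beyond the $2$-elements.

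The main obstacle is the case $G=\POm_8^-(3)$: here $|G|_2=1024$ is larger and the group has comparatively many irreducible characters of small degree, so a single $2$-singular class will not eliminate all possible decompositions of $\chi$, and one has to iterate the above argument over two or three carefully chosen $2$-singular classes, keeping track at each step of which character degrees remain available and of their values on those classes. By contrast $\PSp_6(3)$ and $\Omega_7(3)$ are handled quickly, since in those cases only very few irreducible characters lie below $|G|_2=512$ and a contradiction on one or two classes is reached immediately.
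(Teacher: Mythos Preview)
Your approach is correct and is essentially the same as the paper's: inspect the Atlas character tables, list the irreducible characters of degree at most $|G|_2$, and evaluate on a suitable $2$-singular class to force a contradiction with $\chi=0$ there.

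The only point worth noting is that you have misjudged the relative difficulty of the three cases. In fact all three are dispatched by a \emph{single} class: for $\PSp_6(3)$ every irreducible character of degree $\le512$ is strictly positive on class $4A$; for $\Omega_7(3)$ the same holds on class $2B$; and for $\POm_8^-(3)$ there are only eight irreducible characters of degree $\le1024$, and all of them are strictly positive on class $2A$. So the iterative argument you anticipate for $\POm_8^-(3)$ is unnecessary. Your more elaborate plan would of course still work, but the actual verification is uniform and immediate across the three groups.
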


\begin{proof}
For $G=\PSp_6(3)$ we have $|G|_2=2^9$, and all irreducible characters of $G$
of degree at most~512 take positive value on the class $4A$, see \cite{Atl}.
\par
Let $G=\Omega_7(3)$. Then $|G|_2=2^9$. All \ir characters of $G$ of at most
that degree are positive at the elements of conjugacy class $2B$ \cite{Atl}.
\par
Let $G=\POm_8^-(3)$.  By \cite{Atl}, $G$ has 8 irreducible characters
of degree at most $|G|_2=2^{10}$. All of them take positive values on
class 2A. So the result follows.

This implies the result.
\end{proof}

\begin{lem}   \label{o81}
 Let  $G = \PGO_8^+(3)$ or $\PSp_8(3)$. Let $\chi$ be a $2$-vanishing
 character of $G$, and $\eta_1,\ldots,\eta_k$ the \ir constituents of $\chi$
 disregarding their multiplicities.  Set $\eta=\eta_1+\cdots +\eta_k$.
 Then $\eta(1)\geq 2|G|_2$.
\end{lem}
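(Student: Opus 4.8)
The plan is to argue exactly as in the proofs of Lemmas~\ref{psl43}, \ref{pi4} and~\ref{pu4}, by inspecting the character tables of the two groups in \cite{Atl} (for $G=\PSp_8(3)$ the table of the simple group itself; for $G=\PGO_8^+(3)$ one works with the appropriate index-$4$ extension of the simple group $\POm_8^+(3)=D_4(3)$). First I would record $|G|_2$, hence $2|G|_2$, from the standard order formulas, and suppose for contradiction that $\eta(1)<2|G|_2$. Order the constituents so that $\eta_1(1)\ge\eta_2(1)\ge\cdots\ge\eta_k(1)$. If $k=1$ then $\chi=a\eta_1$ with $a\ge1$; since $\chi$ is $2$-vanishing, so is $\eta_1$, and then $\eta_1(1)$ is divisible by $|G|_2$. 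As $\eta_1(1)=\eta(1)<2|G|_2$, this forces $\eta_1(1)=|G|_2$, so $\eta_1$ is an irreducible $\Syl_2$-regular character; but neither $\PSp_8(3)$ nor $\PGO_8^+(3)$ appears in Proposition~\ref{prop:ir2} for $p=2$, a contradiction. Hence $k\ge2$.

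The key mechanism, as before, is that $\chi=\sum_i a_i\eta_i$ with all $a_i\ge1$ and $\chi(g)=0$ for every $g\in G$ of even order; so for each even-order class $C$ the values $\eta_i(C)$ cannot all be non-negative with at least one positive. In particular, if every irreducible character of $G$ of degree $<2|G|_2$ is non-negative on a fixed even-order class $C$, then every constituent of $\chi$ vanishes on $C$. Combining the constraints coming from a well-chosen involution class, a class of elements of order~$4$, and (where needed) a class of order~$8$, I would first extract the short list of degrees that may occur among the $\eta_i$, exactly as for $\PGL_4(3)$ and $\PGU_4(3)$ in Lemmas~\ref{pi4} and~\ref{pu4}. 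For $G=\PGO_8^+(3)$ one additionally uses that $\chi\tau=\chi$ for every linear character $\tau$ of $G$, so each $\eta_i$ vanishes on $G\setminus G'$; this cuts down the list of possible constituents still further.

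The bulk of the argument is then a finite case analysis on the value of $\eta_1(1)$. For each candidate top degree $d$, the remaining degrees satisfy $\sum_{i\ge2}\eta_i(1)<2|G|_2-d$, which tightly restricts $\eta_2(1),\ldots,\eta_k(1)$; choosing an even-order class on which $\eta_1$ vanishes but the remaining admissible irreducible characters are positive (or of controlled sign) then produces $\chi(C)\ne0$, the desired contradiction --- occasionally one has to iterate this with a second class, as in case~(v) of Lemma~\ref{pi4}. I expect the main obstacle to be purely organisational: the character tables of $D_4(3)$ (and of its index-$4$ extension giving $\PGO_8^+(3)$) and of $\Sp_8(3)$ are large, so for each branch one must locate a class that simultaneously annihilates the assumed largest constituent and forces positivity of all the smaller admissible characters, and the triality outer automorphisms of $D_4(3)$ require some care in matching $\PGO_8^+(3)$ to the correct \cite{Atl} data. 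Granting these choices, the estimate $\eta(1)\ge2|G|_2$ follows just as in the linear and unitary cases.
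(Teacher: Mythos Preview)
Your overall plan---inspect the character tables and find even-order classes on which all small-degree irreducibles are non-negative---is exactly the paper's approach, and it works. But the paper's execution is considerably shorter than the case analysis you sketch. For $G=\PSp_8(3)$ one has $2|G|_2=2^{15}=32768$; there are only $19$ irreducible characters of degree below this, and \emph{all} of them are strictly positive on class $4A$. That single sentence finishes the argument; no top-degree case split is needed. For $G=\PGO_8^+(3)$ (again $2|G|_2=32768$) the paper observes that among the characters of $G'$ of degree $<32768$ exactly one, of degree $9450$, is negative at $4A$; forcing it in and then using $2A$ to exclude degree $18200$ leaves only $\eta_i(1)\le 17550$. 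At this last degree there are four irreducibles of $G$ lying over the unique one of $G'$, and the $\chi\lambda=\chi$ argument forces all four to occur, overshooting $32768$. Below $17550$ everything is again positive at $4A$, done. (The paper computes the table of $\PGO_8^+(3)$ in GAP rather than reading it from \cite{Atl}.)

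Two points in your write-up need correction. First, from $\chi\tau=\chi$ you cannot conclude that each $\eta_i$ vanishes on $G\setminus G'$; what follows is only that the set $\{\eta_i\}$ (with multiplicities) is stable under multiplication by linear characters, i.e.\ the whole $\Irr(G/G')$-orbit of each $\eta_i$ lies among the constituents. This is precisely how the paper uses it: an $\eta_i$ of degree $17550$ has orbit of size $4$, so $\eta(1)\ge 4\cdot 17550>32768$. Your stated deduction would let you restrict to $\tau$-invariant irreducibles, which is false in general. Second, your $k=1$ step appeals to Proposition~\ref{prop:ir2}, but $\PGO_8^+(3)$ is not simple, so that proposition does not apply; in any case the $k=1$ situation is already covered by the positivity-on-$4A$ argument and needs no separate treatment.
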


\begin{proof}
Suppose first that $G = \PGO_8^+(3)$. Note that $|G'|_2=2^{12}$, so
$2\cdot |G|_2=2^{15}=32768$. Suppose the contrary. Then $\eta(1)<32768$. We
use notation from \cite{Atl}. There are 43 characters of $G'$ of degree less
than 32768, the maximal degree among them is 29120.

There is only one \ir character of $G'$ of degree less that 32768 that is
negative at $4A$ (this is of degree 9450), while all other are positive.
So it must be a constituent of $\eta|_{G'}$. This character extends to $G$,
so the other
constituents of $\eta$ are of degrees at most $32768-9450=23318$.

It follows that $\eta_i(1)\leq 18200$. In fact, $\eta_i(1)<18200$. Indeed,
if $\eta_i(1)= 18200$ and $\eta_j(1)=9450$ for some $i\neq j$ then $\eta_l$
for $l\neq i,j$ are of degree at most $23318-18200=5118$. These characters are
positive at  $2A$ (as well as those of degree $18200$ and 9450).
This violates $\chi(2A)=0$.

Thus, $\eta_i(1)<18200$, and hence $\eta_i(1)\leq 17550$. Furthermore,
computing the character table of $G$ by a program in the computer package
GAP, one observes that there are 4 distinct \ir characters of degree~17550,
and only one \ir character of this degree for $G'$. It follows that these 4
characters differ from each other by multiplication by a linear character.
As $|G/G'|=4$, one observes that $\chi\cdot \lam=\chi$ for every linear
character $\lam$ of $G$. Therefore, $\eta_i\cdot \lam$ must be a constituent
of $\chi$. So, if $\eta_i(1)= 17550$ then there are 3 more constituents of
$\eta$ of this degree, which contradicts the inequality $\eta(1)<32768$.

Thus, $\eta_i(1)<17550$ for $i=1,\ldots,k$. By \cite{Atl}, all such \ir
characters of $G'$, and hence of $G$, are positive at $4A$, which contradicts
$\chi(4A)=0$.

Let $G =\PSp_8(3)$. Note that $|G|_2=2^{14}$, so $2\cdot |G|_2=2^{15}=32768$.
Suppose the contrary. Then $\eta(1)<32768$. There are 19 \ir characters of
degree less than~32768. All such characters are positive at $4A$,
which violates $\chi(4A)=0$.
\end{proof}

\begin{lem}   \label{go4}
 \begin{enumerate}[\rm(a)]
  \item Let $H=H_1\times\cdots\times H_n$, where
   $H_1\cong\cdots\cong H_n\cong\PGO_8^+(3)$ or $\PSp_8(3)$. Let $\chi$ be a
 $2$-vanishing character of $H$. Then $\chi(1)\geq 2^{n}|H|_2$.
  \item Let $G=G_1\times\cdots\times G_n$, where
   $G_1\cong\cdots\cong G_n\cong\GO_8^+(3)$ or $\Sp_8(3)$. Let $\chi$ be a
   $2$-vanishing character of G. Then $\chi(1)\geq 2^{n}|G|_2$.
 \end{enumerate}
\end{lem}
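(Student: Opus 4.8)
The plan is to establish part~(a) by induction on $n$, following the pattern of the proof of Lemma~\ref{gn5} with Lemma~\ref{o81} in place of Lemmas~\ref{pi4} and~\ref{pu4}, and then to derive part~(b) from part~(a) by quotienting out the central $2$-subgroup $Z(G)$. Throughout, recall that a nonzero $2$-vanishing character $\psi$ of a finite group $K$ vanishes on every non-identity element of a Sylow $2$-subgroup $S$ of $K$ (each such element has order a power of~$2$, hence divisible by~$2$), so that $\psi|_S=l_2(\psi)\cdot\rho_S^\reg$ and in particular $\psi(1)=l_2(\psi)\cdot|K|_2$. Thus the assertions in both parts are equivalent to $l_2(\chi)\ge 2^n$.

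For part~(a), the case $n=1$ is immediate: Lemma~\ref{o81} gives $\eta(1)\ge 2|H_1|_2$ for the sum $\eta$ of the distinct irreducible constituents of $\chi$, and $\chi(1)\ge\eta(1)$. For the inductive step, write $H=H_1\times X$ with $X=H_2\times\cdots\times H_n$, and apply Lemma~\ref{nn2}: $\chi=\sum_i\eta_i\sigma_i$ where the $\eta_i\in\Irr(H_1)$ are distinct, the $\sigma_i$ are nonzero $2$-vanishing characters of $X$, and $\chi_1:=\sum_i l_2(\sigma_i)\eta_i$ is a $2$-vanishing character of $H_1$ with $l_2(\chi_1)=l_2(\chi)$. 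By the inductive hypothesis $\sigma_i(1)\ge 2^{n-1}|X|_2$, i.e.\ $l_2(\sigma_i)\ge 2^{n-1}$, for every~$i$; in particular $l_2(\sigma_i)\ge1$, so every $\eta_i$ occurs in $\chi_1$, and Lemma~\ref{o81} applied to $\chi_1$ yields $\sum_i\eta_i(1)\ge 2|H_1|_2$. Hence $\chi_1(1)=\sum_i l_2(\sigma_i)\eta_i(1)\ge 2^{n-1}\sum_i\eta_i(1)\ge 2^n|H_1|_2$, so $l_2(\chi)=l_2(\chi_1)=\chi_1(1)/|H_1|_2\ge 2^n$ and therefore $\chi(1)\ge 2^n|H|_2$. (One could equally invoke Corollary~\ref{cn2} in place of this last computation.)

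For part~(b), each factor $G_i$ has centre $Z(G_i)=\{\pm1\}$ of order~$2$ (as $q=3$ is odd), so $Z:=Z(G)=Z(G_1)\times\cdots\times Z(G_n)$ is a normal $2$-subgroup of $G$ of order $2^n$ and $G/Z\cong H$, where $H$ is the direct product of $n$ copies of $\PGO_8^+(3)$, resp.\ $\PSp_8(3)$, i.e.\ a group as in part~(a). Given a $2$-vanishing character $\chi$ of $G$, set $\lambda:=r^G_{G/Z}(\chi)$, the generalised restriction of $\chi$ to $H$. By Lemma~\ref{hc1}(b), $\lambda$ is a $2$-vanishing character of $H$; and since $\chi$ vanishes on $Z\setminus\{1\}$, averaging over $Z$ gives $\lambda(1)=\chi(1)/|Z|=\chi(1)/2^n$ (alternatively, Lemma~\ref{hc1}(a) gives $l_2(\lambda)=l_2(\chi)$). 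Now part~(a) applied to $\lambda$ yields $\lambda(1)\ge 2^n|H|_2$, and since $|H|_2=|G|_2/2^n$ this reads $\lambda(1)\ge|G|_2$, whence $\chi(1)=2^n\lambda(1)\ge 2^n|G|_2$.

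There is no genuine difficulty here: the computational substance for a single factor is already contained in Lemma~\ref{o81}, and the present lemma merely propagates it across a direct product. The only points requiring attention are (i) that every $\sigma_i$ in the decomposition from Lemma~\ref{nn2} satisfies $l_2(\sigma_i)\ge1$, so that all the $\eta_i$ really occur in $\chi_1$ and Lemma~\ref{o81} may be applied to it, and (ii) the correct identification of $G/Z(G)$ with a group covered by part~(a), together with the factor $|Z(G)|=2^n$ relating $\chi(1)$ and $\lambda(1)$.
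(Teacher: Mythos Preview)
Your proof is correct and follows essentially the same route as the paper: induction on $n$ via Lemma~\ref{nn2} with Lemma~\ref{o81} supplying the base case and the estimate on $\sum_i\eta_i(1)$ for part~(a), and Lemma~\ref{hc1} modding out the central $2$-group $Z(G)$ for part~(b). Your write-up is in fact a bit more careful than the paper's in spelling out why every $\eta_i$ occurs in $\chi_1$ and in making the final degree count explicit.
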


\begin{proof} (a) If  $n=1$ then the result is contained in Lemma~\ref{o81}.  
So by induction we can
assume that it is true for $X:=H_2\times \cdots \times H_n$. By
Lemma~\ref{nn2}, $\chi=\sum_i\eta_i\sigma_i$, where $\eta_i\in \Irr(H_1)$,
$\sigma_i$ are $2$-vanishing characters of $X$ and
$\chi'=\sum l_2(\sigma_i)\eta_i$ is a 2-vanishing character of $H_1$.
By induction, $\sigma_i(1)\geq 2^{n-1}|X|_2$. By Lemma~\ref{o81} applied to
$\chi'$, we have $\sum _i\eta_i(1)\geq 2|H_1|_2$, so $\chi(1)\ge 2^{n}|H|_2$ by
Lemma~\ref{ga1}.

(b) This follows from (a) and Lemma~\ref{hc1}, as $Z(G)$ is a 2-group.
\end{proof}

\begin{lem}   \label{o82}
 Let $G =\GO_{8n}^+(3)$, $\Omega_{8n}^+(3)$, $\POm_{8n}^+(3)$, $\Sp_{8n}(3)$
 or $\PSp_{8n}(3)$. Then $G$ has no Steinberg-like character for $p=2$.
\end{lem}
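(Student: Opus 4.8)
The plan is to establish the stronger fact that every $2$-vanishing character $\chi$ of such a group $G$ has $2$-level $l_2(\chi)\ge 2$; since a Steinberg-like character has $l_2=1$, this yields the assertion. I would first handle the two ``full'' groups $G=\GO_{8n}^+(3)$ and $G=\Sp_{8n}(3)$. Decompose the natural module as an orthogonal (resp.\ symplectically complementary) direct sum of $n$ non-degenerate subspaces of dimension~$8$; the stabiliser of this decomposition contains a wreath product $G_1\wr S_n$, where $G_1=\GO_8^+(3)$ (resp.\ $\Sp_8(3)$), and, just as in the treatment of $\GL_{4n}(3)$ in Proposition~\ref{gl4n}, $G_1\wr S_n$ contains a Sylow $2$-subgroup of $G$, so that $|G|_2=|G_1|_2^n\,|S_n|_2$.

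Now fix a $2$-vanishing character $\chi$ of $G$. Choosing $S\in\Syl_2(S_n)$ and putting $M:=G_1^n\rtimes S\le G_1\wr S_n$, a subgroup of odd index in $G$, we have $l_2(\chi)=l_2(\chi|_M)$, and $\chi|_M$ is again $2$-vanishing. Applying Lemma~\ref{hc1} to the normal $2$-subgroup $S$ of $M$, the generalised restriction of $\chi|_M$ to $M/S\cong G_1^n$ is a $2$-vanishing character of degree $\chi(1)$; alternatively one restricts $\chi$ directly to the base $X:=G_1^n\lhd M$. By Lemma~\ref{go4} we obtain $\chi(1)\ge 2^n|G_1|_2^n$. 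Since $|S_n|_2\le 2^{n-1}$ (as computed in the proof of Proposition~\ref{so1}), it follows that
$$\chi(1)\ \ge\ 2^n|G_1|_2^n\ \ge\ 2\,|G_1|_2^n\,|S_n|_2\ =\ 2\,|G|_2,$$
so $l_2(\chi)\ge2$ and $G$ has no Steinberg-like character.

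For $G=\Omega_{8n}^+(3)$ I would note that it is a normal subgroup of $\GO_{8n}^+(3)$ of $2$-power index, so by Lemma~\ref{ob4} every $2$-vanishing character of $\Omega_{8n}^+(3)$ again has $2$-level $\ge2$. It then remains to deal with $G=\POm_{8n}^+(3)$ and $G=\PSp_{8n}(3)$, which are quotients of $\Omega_{8n}^+(3)$ and $\Sp_{8n}(3)$ by central $2$-subgroups. For these I would rerun the wreath-product argument inside $G$ itself: the stabiliser in $G$ of the image of the direct-sum decomposition still contains a Sylow $2$-subgroup of $G$, and, after passing to the appropriate odd-index subgroup and factoring out a normal $2$-subgroup, admits a section that is a product of $n$ copies of $\PGO_8^+(3)$ (resp.\ $\PSp_8(3)$) up to a central $2$-subgroup. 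The corresponding estimates are then supplied by Lemmas~\ref{o81} and~\ref{go4}(a), combined with Lemma~\ref{hc1}, and the same order count as above forces $l_2(\chi)\ge2$.

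The step I expect to be the main obstacle is precisely this last one, the descent to the quotients by a central $2$-group. Inflation of a $2$-vanishing character need not stay $2$-vanishing --- a $2$-singular element can become $2$-regular modulo the centre --- so the bounds for $\GO_{8n}^+(3)$, $\Sp_{8n}(3)$ and for the direct products $\GO_8^+(3)^n$, $\Sp_8(3)^n$ cannot simply be pushed down to their central quotients. What should make it work is that the relevant geometric subgroups (the decomposition stabiliser and the wreath product) survive unchanged modulo the centre; the delicate point is to match the $2$-parts of the orders along the chain $G_1\wr S_n\to G$ after factoring out the central $2$-group, and to verify that the section one lands on is still covered by Lemma~\ref{go4}(a) --- possibly after absorbing an extra central $2$-group into the normal $2$-subgroup $U$ of Lemma~\ref{hc1}.
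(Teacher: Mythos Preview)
Your overall strategy matches the paper's: bound $l_2$ from below via the wreath product inside the full groups, descend to $\Omega_{8n}^+(3)$ by Lemma~\ref{ob4}, and handle the central quotients separately. Two points deserve correction or clarification.

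First, in your treatment of the full groups you write ``the normal $2$-subgroup $S$ of $M$''. This is false: in $M=G_1^n\rtimes S$ the base $G_1^n$ is normal and $S$ is a complement, not a normal subgroup, so Lemma~\ref{hc1} cannot be applied with $U=S$. Your parenthetical ``alternatively one restricts $\chi$ directly to the base $X:=G_1^n$'' is in fact the argument: $\chi|_X$ is $2$-vanishing, Lemma~\ref{go4}(b) gives $\chi(1)\ge 2^n|X|_2$, and since $|G|_2=|X|_2\,|S_n|_2\le |X|_2\,2^{n-1}$ one gets $l_2(\chi)\ge2$. This is exactly how the paper proceeds.

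Second, the descent to $\PSp_{8n}(3)$ and $\POm_{8n}^+(3)$ is less delicate than you fear, and the paper's terse ``from the above and Lemma~\ref{hc1}'' encodes precisely the manoeuvre you sketch. The centre $Z$ of $\Sp_{8n}(3)$ (resp.\ $\GO_{8n}^+(3)$) is contained in $Z(X)=Z(G_1)^n$, a normal $2$-subgroup of $M$. Hence in the quotient group the image $\bar M=M/Z$ has odd index, and $U':=Z(X)/Z$ is a normal $2$-subgroup of $\bar M$ with $\bar M/U'\cong (X/Z(X))\rtimes S=\PSp_8(3)^n\rtimes S$ (resp.\ $\PGO_8^+(3)^n\rtimes S$). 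Now Lemma~\ref{hc1} applied inside the quotient with $P=\bar M$ and this $U'$ sends a $2$-vanishing character to a $2$-vanishing character of $\PSp_8(3)^n\rtimes S$ of the same $2$-level; restricting to the base and invoking Lemma~\ref{go4}(a) gives the bound exactly as before. So there is no ``up to a central $2$-subgroup'' ambiguity: one lands squarely on the direct product covered by Lemma~\ref{go4}(a). (For $\POm_{8n}^+(3)$ one may either run this inside $\PGO_{8n}^+(3)$ and then apply Lemma~\ref{ob4}, or argue directly inside $\POm$; the paper does not spell out which.) Your worry about inflation failing to preserve $2$-vanishing is well-founded but irrelevant here, since the argument never inflates: it works entirely within the quotient via Lemma~\ref{hc1}.
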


\begin{proof}
Let $X$ be the direct product of $n$ copies of $\GO_8^+(3)$ or $\Sp_8(3)$.
Let $\nu$ be a 2-vanishing character of $X$. By Lemma~\ref{gn5},
$\nu(1)\geq 2^{n}|X|_2$.

Let $Y=X\cdot S_n$, the semidirect product, where $S_n$ acts on $X$ by
permuting the factors. Then $Y$ contains a Sylow 2-subgroup of $G$. Let
$M=X\cdot S$, where $S\in \Syl_2(S_n)$, so the index $|G:M|$ is odd.
Note that $|G|_2=|X|_2\cdot |S_n|_2$. As $|S_n|_2\leq 2^{n-1}$ (see the proof
of Proposition~\ref{so1}), the result follows for the groups
$\GO^+_{8n}(3)$ and $\Sp_{8n}(3)$.

For $G=\Omega_{8n}^+(3)$ the result follows from the above and Lemma~\ref{ob4}.
For $G=\POm_{8n}^+(3)$ or $\PSp_{8n}(3)$ the statement follows from the
above and Lemma \ref{hc1}.
\end{proof}

\begin{prop}   \label{or2}
 Let $m\geq 4$ and $G=\GO_{2m}^+(3)$ or $\Sp_{2m}(3)$. Then $G$ has no
 Steinberg-like character for $p=2$.
\end{prop}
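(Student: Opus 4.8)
The plan is to follow the strategy used for $\GL_m(3)$ and $\GU_m(3)$ in Theorem~\ref{thm:GL p=2}, reducing the general case to the case of dimension divisible by~$8$, which is already settled in Lemma~\ref{o82}. Write $2m=8n+r$ with $n\geq1$ (possible since $m\geq4$) and $r\in\{0,2,4,6\}$. If $r=0$ then $G$ is one of $\GO_{8n}^+(3)$, $\Sp_{8n}(3)$, and the assertion is exactly Lemma~\ref{o82}; so assume $r>0$. Decompose the natural module of $G$ as an orthogonal sum $V_1\perp V_2$ with $\dim V_1=8n$ (of $+$-type when $G$ is orthogonal) and $\dim V_2=r$; the stabiliser of this decomposition yields an embedding $H\times K\hookrightarrow G$, where $(H,K)=(\GO_{8n}^+(3),\GO_r^+(3))$ in the orthogonal case and $(H,K)=(\Sp_{8n}(3),\Sp_r(3))$ in the symplectic case.

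First I would check that $H\times K$ contains a Sylow $2$-subgroup of $G$, that is, $|G:H\times K|$ is odd. For $G=\Sp_{2m}(3)$, using $|\Sp_{2k}(3)|_2=\prod_{i=1}^{k}|3^{2i}-1|_2$, this reduces to the identity $\prod_{j=1}^{r/2}|3^{2(4n+j)}-1|_2=\prod_{j=1}^{r/2}|3^{2j}-1|_2$, which holds because $|3^{2k}-1|_2=8\,|k|_2$ while $|4n+j|_2=|j|_2$ for $1\leq j\leq r/2\leq3$. For $G=\GO_{2m}^+(3)$ one uses $|\GO_{2k}^+(3)|_2=2\,|3^k-1|_2\prod_{i=1}^{k-1}|3^{2i}-1|_2$; here the product $|\GO_{8n}^+(3)|_2\,|\GO_r^+(3)|_2$ carries two leading factors~$2$ against only one in $|\GO_{2m}^+(3)|_2$, but this discrepancy is cancelled by the cross term $|3^{8n}-1|_2=2\,|3^{4n}-1|_2$ present in $|\GO_{2m}^+(3)|_2$, and the same valuation identities then give oddness of the index --- a short verification for each $r\in\{2,4,6\}$.

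Granting this, let $\chi$ be a Steinberg-like character of $G$ for $p=2$. Pick $S_0\in\Syl_2(K)$ and put $U:=H\times S_0\leq G$; then $|G:U|$ is odd, $S_0$ is a normal $2$-subgroup of $U$, and $U/S_0\cong H$. Since $\chi$ is $2$-vanishing we have $\chi|_S=\rho^{\reg}_S$ for $S\in\Syl_2(G)$, so by Lemma~\ref{hc1}(a),(b) the generalised restriction $\lam:=r^G_{U/S_0}(\chi)$ is a $2$-vanishing character of $H$ with $l_2(\lam)=l_2(\chi)=1$; in other words $\lam$ is a Steinberg-like character of $H\in\{\GO_{8n}^+(3),\Sp_{8n}(3)\}$. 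This contradicts Lemma~\ref{o82}, and the proposition follows. The only genuinely delicate step, and the main obstacle, is this $2$-part bookkeeping --- in particular matching the determinant and spinor-norm factors of~$2$ in $|\GO_{2m}^+(3)|$ against those hidden inside the numbers $|3^{2i}-1|$; were some remainder to force a complementary summand of non-$+$ type (which does not happen for the remainders $r\leq6$ over $\FF_3$ relevant here) one would instead pass to $\Omega$ or $\SO$ and conclude via Lemma~\ref{ob4}.
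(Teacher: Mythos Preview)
Your proof is correct and follows essentially the same approach as the paper: write $m=4n+l$ with $1\le l<4$ (your $r=2l$), embed $H\times K$ with $H=\GO_{8n}^+(3)$ or $\Sp_{8n}(3)$, take $U=H\times S_0$ with $S_0\in\Syl_2(K)$, and apply Lemma~\ref{hc1} to reduce to Lemma~\ref{o82}. The paper simply asserts that $U$ contains a Sylow $2$-subgroup of $G$, whereas you supply the $2$-adic verification; your valuation computations (via $|3^{2k}-1|_2=8|k|_2$ and $|3^{4n+l}-1|_2=|3^l-1|_2$ for $l\le3$) are correct, and the worry in your final sentence about needing a complementary summand of non-$+$ type does not materialise here since $V$ of $+$-type decomposes as a sum of hyperbolic planes.
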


\begin{proof}
For $m\equiv 0\pmod 4$ the result is stated in Lemma~\ref{o82}.
Let $m=4n+l$, where $1\leq l<4$, and $H=\GO^+_{8n}(3)$ or $\Sp_{8n}(3)$.
Let $S_0$ be a Sylow $2$-subgroup of $\GO^+_{2l}(3)$ or $\Sp_{2l}(3)$.
Set $U=H\times S_0$. Then $U$ contains a Sylow 2-subgroup of $G$. Let $\chi$
be a $2$-vanishing character of $G$. Therefore, $l_2(\chi)= l_2(\chi|_U)$.
By Lemma~\ref{hc1}, if $\nu$ is a 2-vanishing character of $U$ then
$l_2(\nu)= l_2(\mu)$ for some 2-vanishing character $\mu$ of $H$.
By Lemma~\ref{o82}, $l_2(\mu)\geq 2$. So $l_2(\nu)\geq 2$, and the
result follows.
\end{proof}

\begin{prop}   \label{ot6}
 Let $G=\GO_{2m}^-(3)$,  $m\geq 5$. Then $G$ has no Steinberg-like character
 for $p=2$.
\end{prop}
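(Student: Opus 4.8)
The plan is to reduce the assertion to the plus-type case already settled in Proposition~\ref{or2}. Inside $G=\GO_{2m}^-(3)$ one can exhibit a subgroup of odd index that is a direct product with a large plus-type orthogonal factor: writing the natural $2m$-dimensional minus-type orthogonal space over $\FF_3$ as an orthogonal sum $V=V_1\perp V_2$, with $V_1$ hyperbolic of dimension $2m-2$ and $V_2$ the anisotropic (minus-type) plane, yields an embedding
$$U:=\GO_{2m-2}^+(3)\times\GO_2^-(3)\hookrightarrow G,$$
where $\GO_2^-(3)$ is dihedral of order $2(3+1)=8$. The first step is to check that $|G:U|$ is odd, so that $U$ contains a Sylow $2$-subgroup $S$ of $G$. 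From the order formulas one gets
$$|G:U|=\frac{3^{2(m-1)}(3^m+1)(3^{m-1}+1)}{2(3+1)},$$
and since exactly one of $m,m-1$ is even, one of the factors $3^k+1$ has $2$-adic valuation $1$ and the other has valuation $2$ (indeed $v_2(3^k+1)=2$ for $k$ odd and $=1$ for $k$ even), so the $2$-part of the numerator is exactly $8$, cancelling the denominator. Finally, because $m\geq5$ the plus-type factor is $\GO_{2(m-1)}^+(3)$ with $m-1\geq4$, so Proposition~\ref{or2} applies to it.

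Next I would run the standard descent argument. Suppose $\chi$ is a Steinberg-like character of $G$ for $p=2$. Since $U$ contains $S$, the restriction $\chi|_U$ is again Steinberg-like for $U$ (it has degree $|U|_2=|G|_2$ and still vanishes on elements of even order). Apply Lemma~\ref{nn2} to $U=G_1\times G_2$ with $G_1=\GO_{2(m-1)}^+(3)$ and $G_2=\GO_2^-(3)$ (so $|G_2|_2=8>1$): this writes $\chi|_U=\sum_i\eta_i\sigma_i$ with the $\eta_i\in\Irr(G_1)$ distinct and the $\sigma_i$ $2$-vanishing characters of $G_2$, and produces a $2$-vanishing character $\chi_1:=\sum_i l_2(\sigma_i)\eta_i$ of $G_1$. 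The key point is that every non-identity element of $G_2\cong D_8$ has even order, so each $\sigma_i$ is supported at the identity, i.e.\ $\sigma_i=l_2(\sigma_i)\,\rho^{\reg}_{G_2}$; comparing degrees in $\chi|_U=\sum_i\eta_i\sigma_i$ then gives $\chi_1(1)=\chi|_U(1)/|G_2|_2=|G_1|_2$. Hence $\chi_1$ is a $2$-vanishing character of $\GO_{2(m-1)}^+(3)$ of degree $|G_1|_2$, i.e.\ a Steinberg-like character, contradicting Proposition~\ref{or2}. (Using instead the stronger statement established in the proof of Proposition~\ref{or2}, namely $l_2\geq2$ for every nonzero $2$-vanishing character of $\GO_{2(m-1)}^+(3)$, the same reasoning would also give $l_2(\psi)\geq2$ for every nonzero $2$-vanishing character $\psi$ of $G$.)

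The only step that is not pure bookkeeping with Lemmas~\ref{hc1} and~\ref{nn2} and the already-proved Proposition~\ref{or2} is the parity computation showing $|G:U|$ is odd; this is where I expect the (minor) obstacle to lie, but it is entirely elementary given the $2$-adic valuation of $(3^m+1)(3^{m-1}+1)$ noted above. In particular, the hypothesis $m\geq5$ is precisely what forces the plus-type factor to have rank at least $4$, so no sporadic small cases require separate treatment, and the anisotropic plane $\GO_2^-(3)\cong D_8$ plays exactly the role that the group $S_0$ played in the proof of Proposition~\ref{or2}.
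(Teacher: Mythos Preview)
Your proof is correct and follows the same overall strategy as the paper: exhibit a direct product subgroup of odd index with a large plus-type orthogonal factor, then invoke the already-established non-existence of Steinberg-like characters for $\GO_{2k}^+(3)$. The paper, however, makes a different choice of decomposition: it writes $m=4n+l$ with $1\le l\le 4$, embeds $\GO_{8n}^+(3)\times\GO_{2l}^-(3)$ in $G$, replaces the second factor by its Sylow $2$-subgroup $S_0$, and then applies Lemma~\ref{hc1} together with Lemma~\ref{o82} (the $l_2\ge2$ statement for $\GO_{8n}^+(3)$). Your choice $\GO_{2m-2}^+(3)\times\GO_2^-(3)$ is in fact tidier: the minus-type factor $\GO_2^-(3)\cong D_8$ is already a $2$-group, so the passage to a Sylow subgroup is unnecessary, and you can quote Proposition~\ref{or2} directly (valid for all ranks $\ge4$) rather than the more special Lemma~\ref{o82}. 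Note also that once you have your product subgroup $U$ with the $D_8$ factor normal, Lemma~\ref{hc1} (with $P=U$ and normal $2$-subgroup $G_2$) gives the conclusion in one stroke; your detour through Lemma~\ref{nn2} and the explicit observation $\sigma_i=l_2(\sigma_i)\rho^{\reg}_{G_2}$ works but is not needed, since Lemma~\ref{nn2} already records $l_2(\chi_1)=l_2(\chi)$.
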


\begin{proof}
Let $m=4n+l$, where $1\leq l\leq 4$, and let $H=\GO^+_{8n}(3)$. Then $G$
contains a subgroup $D$ isomorphic to $H\times \GO_{2l}^-(3)$. Then one
concludes that $D$ contains
a Sylow 2-subgroup of $G$. Let $S_0$ be a Sylow $2$-subgroup of $\GO_{2l}^-(3)$.
Set $U=H\times S_0$. Then $U$ contains a Sylow 2-subgroup of $G$.
By Lemma~\ref{hc1}, if $\nu$ is a 2-vanishing character of $U$ then
$l_2(\nu)= l_2(\mu)$ for some 2-vanishing character $\mu$ of $H$.
So $l_2(\nu)\geq 2$ by Lemma~\ref{o82} and the result follows.
\end{proof}

\begin{prop}   \label{ot7}
 Let $G=\GO_{2m+1}(3)$, $m\geq 3$. Then $G$ has no Steinberg-like character
 for $p=2$.
\end{prop}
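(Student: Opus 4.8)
The plan is to imitate the proofs of Propositions~\ref{or2} and~\ref{ot6}: find inside $G$ a subgroup of odd index of the shape $\GO_{8n}^+(3)\times(\text{a small factor})$, pass by Lemma~\ref{hc1} to the quotient $\GO_{8n}^+(3)$ modulo a normal $2$-subgroup, and then quote Lemma~\ref{o82}, which says $\GO_{8n}^+(3)$ has no Steinberg-like character for $p=2$.

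Concretely, I would first assume $m\ge4$ and write $m=4n+l$ with $n\ge1$ and $0\le l\le3$. Since $2m+1=8n+(2l+1)$, the orthogonal space underlying $G=\GO_{2m+1}(3)$ is an orthogonal sum of a plus-type space of dimension $8n$ and a space of dimension $2l+1$, so $G$ contains $D:=\GO_{8n}^+(3)\times\GO_{2l+1}(3)$. The first point to check is that $|G:D|$ is odd: using $v_2(3^{2i}-1)=3+v_2(i)$ one computes $|\GO_{2m+1}(3)|_2=2^{1+3m+v_2(m!)}$, $|\GO_{8n}^+(3)|_2=2^{12n+v_2((4n)!)}$ and $|\GO_{2l+1}(3)|_2=2^{1+3l+v_2(l!)}$, and the product of the last two equals $|G|_2$ precisely because $\binom{4n+l}{l}$ is odd (adding $4n$ and $l<4$ in base~$2$ produces no carries). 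Hence a Sylow $2$-subgroup of $D$ is one of $G$. Now put $H:=\GO_{8n}^+(3)$, choose $S_0\in\Syl_2(\GO_{2l+1}(3))$ and set $U:=H\times S_0\le D$; then $S_0\lhd U$, $U/S_0\cong H$, and $U$ contains a Sylow $2$-subgroup of $G$. If $\chi$ were a Steinberg-like character of $G$, then $\chi|_U$ would be Steinberg-like for $U$, so by Lemma~\ref{hc1} its generalised restriction to $H\cong U/S_0$ would be a Steinberg-like character of $\GO_{8n}^+(3)$, contradicting Lemma~\ref{o82}. This settles all $m\ge4$.

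It remains to treat $m=3$, i.e.\ $G=\GO_7(3)$, where the splitting above degenerates (it would force $n=0$), so this base case must be handled separately. Here $|G|_2=2^{11}$ and $G$ contains, of odd index, a subgroup of the form $\GO_2^-(3)\times\GO_5(3)$ or $\GO_6^-(3)\times\GO_1(3)$ (again a short $2$-adic valuation check). Applying Lemma~\ref{hc1} exactly as above reduces a hypothetical Steinberg-like character of $G$ to one of $\GO_5(3)$, respectively $\GO_6^-(3)$; the former can be excluded from the character table of $\GO_5(3)$ as in Lemma~\ref{mi8} (all irreducible characters of degree below twice the $2$-part take a positive value on a suitable class of $2$-elements), and the latter by passing to $\POm_6^-(3)\cong\PSU_4(3)$ and quoting Lemma~\ref{nr1}. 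I expect the only real obstacle to be this base case together with the (routine but slightly fiddly) verifications that the relevant indices are odd, which rest on the Weir-type description of Sylow $2$-subgroups of the classical groups over $\FF_3$ used throughout the preceding sections; for $m\ge4$ the argument is a verbatim analogue of Propositions~\ref{or2} and~\ref{ot6}.
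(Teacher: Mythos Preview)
Your argument for $m\ge4$ is correct and coincides verbatim with the paper's: write $m=4n+l$ with $0\le l\le3$, embed $\GO_{8n}^+(3)\times\GO_{2l+1}(3)$ of odd index, pass via Lemma~\ref{hc1} to $H=\GO_{8n}^+(3)$, and invoke Lemma~\ref{o82}. (Your explicit $2$-adic verification that the index is odd is a pleasant addition that the paper leaves implicit.)

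Your treatment of the base case $m=3$, however, contains a genuine error. You propose reducing $\GO_7(3)$ to $\GO_5(3)$ via the odd-index subgroup $\GO_2^-(3)\times\GO_5(3)$, and then ruling out Steinberg-like characters of $\GO_5(3)$ by a character-table check ``as in Lemma~\ref{mi8}''. But $\GO_5(3)$ \emph{does} have a Steinberg-like character for $p=2$. Indeed $\Omega_5(3)\cong\PSp_4(3)\cong\SU_4(2)$, and the Steinberg character of $\SU_4(2)$ is irreducible of degree $64=|\Omega_5(3)|_2$ and vanishes on all $2$-singular elements; inducing it to $\GO_5(3)$ (index~$4$, a $2$-power) yields a $2$-vanishing character of degree $256=|\GO_5(3)|_2$. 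So no ``suitable class of $2$-elements'' exists on which all small characters of $\GO_5(3)$ are positive: the degree-$64$ character already vanishes on every $2$-element class. Your alternative route through $\GO_6^-(3)$ and Lemma~\ref{nr1} is also not straightforward, since Lemma~\ref{nr1} treats groups between $\PSU_4(3)$ and $\PGU_4(3)$ (cyclic quotient $C_4$), whereas $\PGO_6^-(3)/\POm_6^-(3)\cong C_2\times C_2$; the two-step application of Proposition~\ref{ga1} you would need here requires a separate invariance check for the $2$-power classes.

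The paper sidesteps all of this: for $m=3$ it simply cites Lemma~\ref{mi8}, where a direct Atlas inspection shows that every irreducible character of $\Omega_7(3)$ of degree at most $2^9$ is strictly positive on class $2B$. The moral is that reducing the base case to smaller orthogonal groups is dangerous precisely because of exceptional isomorphisms with characteristic-$2$ Lie-type groups; a direct character-table argument at dimension~$7$ is both shorter and safer.
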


\begin{proof}
The case $m=3$ is dealt with in Lemma~\ref{mi8}. So we assume that $m>3$,
that is, $2m+1\geq 9$. Let $m=4n+l$, where $0\leq l\leq 3$, and let
$H=\GO^+_{8n}(3)$. Then  $G$ contains a subgroup $D$ isomorphic to
$H\times \GO_{2l+1}(3)$. Then $D$ contains a Sylow 2-subgroup of $G$.
Set $U=H\times S_0$, where $S_0$ is a Sylow 2-subgroup of $\GO_{2l+1}(3)$,
so $U$ contains a Sylow 2-subgroups of $G$.
By Lemma~\ref{hc1}, if $\nu$ is a 2-vanishing character of $U$ then
$l_2(\nu)= l_2(\mu)$ for some 2-vanishing character $\mu$ of $H$.
So $l_2(\nu)\geq 2$ by Lemma~\ref{o82}, and the result follows.
\end{proof}

\begin{thm}   \label{ot8}
 Let $G=\GO_{2m+1}(3)$ with $m\geq 3$, $\GO_{2m}^\pm(3)$ with $m\geq 4$, or
 $\Sp_{2m}(3)$ with $m\ge3$, and let $G'$ be the derived group of G. Let $H$
 be a group such that $G'\le H\le G$. Then $H$ and $H/Z(H)$ have no
 Steinberg-like character for $p=2$.
\end{thm}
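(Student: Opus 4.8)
The plan is to reduce the assertion, for both $H$ and $H/Z(H)$, to the single statement that the simple group $G_0$ underlying $G$ — one of $\Omega_{2m+1}(3)$, $\POm_{2m}^\pm(3)$, $\PSp_{2m}(3)$ — has no Steinberg-like character for $p=2$, and then to read that off from the facts already established.

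First I would record the structural input. Since $G/G'$ is a $2$-group, every $H$ with $G'\le H\le G$ is normal in $G$ of $2$-power index; and since $C_G(G')=Z(G)$, we have $Z(H)=H\cap Z(G)$ of order at most $2$, and $H/Z(H)$ is isomorphic to a normal subgroup of $2$-power index of $G/Z(G)$. Moreover, for a $2$-vanishing character $\chi$ one has $l_2(\chi)=\chi(1)/|S|$, so ``$G$ has no Steinberg-like character'' is literally the statement $l_2(\chi)\ge2$ for all $2$-vanishing $\chi$. The shuttle tool is Lemma~\ref{hc1}: part~(c) with $P$ the whole group and $U=1$ restricts a Steinberg-like character to the normal subgroup $O^{2'}$, again Steinberg-like since the $2$-part is unchanged; parts~(a),(b) with $U=O_2(Z(Y))$ push a Steinberg-like character to the central quotient $Y/O_2(Z(Y))$ (the degree dropping by $|U|$, precisely because a Steinberg-like character vanishes on $-I$); and Lemma~\ref{ob4} passes from a group to a normal subgroup of $2$-power index. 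Running $H$, resp.\ $H/Z(H)$, through $O^{2'}$ and, if necessary, one further central quotient, one reaches $O^{2'}$ of a central quotient of $G'$, which is exactly $G_0$. Hence a Steinberg-like character of $H$ or of $H/Z(H)$ would yield one of $G_0$.

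It remains to rule out a Steinberg-like character of $G_0$. When $G_0=\POm_{8n}^+(3)$ or $\PSp_{8n}(3)$ this is Lemma~\ref{o82}; when $G_0\in\{\POm_8^-(3),\Omega_7(3),\PSp_6(3)\}$ it is Lemma~\ref{mi8}. For the remaining parameters one repeats, now inside $G'$ (equivalently $G_0$), the rank reduction of Propositions~\ref{or2}, \ref{ot6} and~\ref{ot7}: decomposing the natural module as an orthogonal sum $V_1\perp V_2$ with $\dim V_1\equiv 0\pmod 8$ of $+$ type — reducing the $-$ type and the odd-dimensional cases to $+$ type as there — and intersecting the stabiliser $\GO(V_1)\times\GO(V_2)$ with $G'$ gives a subgroup of odd index (the factor $|\GO:\Omega|$ that would otherwise spoil the $2$-part is absorbed by intersecting with $\Omega$ rather than with $\Omega(V_1)\times\Omega(V_2)$); then cutting the second factor down to a Sylow $2$-subgroup and applying Lemma~\ref{hc1} once more reduces to the rank-$8n$ groups of Lemma~\ref{o82}.

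The part I expect to be the real work is the bookkeeping: verifying that every admissible combination of dimension, type and isogeny layer is covered exactly once, and in particular controlling the passage to the various central quotients. The genuinely delicate points are (i) the small groups $\Sp_6(3)$, $\GO_8^-(3)$ and $\Omega_7(3)$, which fall outside the ranges of Propositions~\ref{or2}--\ref{ot7} and must be routed through Lemma~\ref{mi8} after the reduction above; and (ii) the central quotients $\POm_{2m}^\pm(3)$ (and the intermediate groups above them) in the cases $-I\in\Omega_{2m}^\pm(3)$, where the known result for $\GO_{2m}^\pm(3)$ does not formally imply the one for the quotient — passing to a quotient of smaller $2$-part can only create Steinberg-like characters, not remove them — so the rank reduction really has to be performed one isogeny layer at a time.
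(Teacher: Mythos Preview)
Your reduction step has a genuine gap. You invoke Lemma~\ref{hc1}(c) with $P=H$, $U=1$ to ``restrict a Steinberg-like character to the normal subgroup $O^{2'}$'', intending this to land in $G'$. But since $G'\le H$ with $H/G'$ a $2$-group, the only quotient of $H$ of odd order is trivial, so $O^{2'}(H)=H$ and this step is vacuous. You seem to have confused $O^{2'}$ (the subgroup of $2'$-index, used in \ref{hc1}(c)) with $O^{2}$ (the subgroup of $2$-power index). In fact neither Lemma~\ref{hc1} nor Lemma~\ref{ob4} lets you deduce ``no Steinberg-like for $H$'' from ``no Steinberg-like for $G_0$'' when $G_0\lhd H$ of $2$-power index: a Steinberg-like $\chi$ of $H$ restricts on $G'$ to a $2$-vanishing character of level $|H:G'|>1$, which is \emph{not} Steinberg-like, and Lemma~\ref{ob4} transfers the bound $l_2\ge m$ only from the larger group to the smaller, not conversely.

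The paper's proof runs in the opposite direction. Propositions~\ref{or2}, \ref{ot6}, \ref{ot7} and Lemma~\ref{mi8} establish $l_2(\chi)\ge 2$ for every $2$-vanishing character of the \emph{largest} group $G$ (the rank reduction is carried out inside $G$ itself, not inside $G_0$). Then Lemma~\ref{ob4} pushes this bound down to every $H$ with $G'\le H\le G$, and finally Lemma~\ref{hc1} with $U=O_2(Z(H))$ gives it for $H/Z(H)$. Your rank-reduction inside $G_0$ would correctly handle the single case $H=G'$, but for intermediate $H$ one must work from the top down, not from the bottom up.
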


\begin{proof}
For $H$ the statement follows from Lemma~\ref{mi8}, Propositions~\ref{or2},
\ref{ot6} and \ref{ot7} using Lemma~\ref{ob4}, and for $H/Z(H)$ from
Lemma~\ref{hc1}.
\end{proof}

We now collect our results to prove our main theorems from the introduction.

\begin{proof}[Proof of Theorem~$\ref{th1}$]
Assume that $G$ is a finite non-abelian simple group possessing a
Steinberg-like character $\chi$ with respect to a prime $p$. The cases when
$\chi$ is
irreducible have been recalled in Proposition~\ref{prop:ir2}. If Sylow
$p$-subgroups of $G$ are cyclic, then $(G,p,\chi)$ is as in
Proposition~$\ref{ms4}$. So we may now assume that Sylow $p$-subgroups of $G$
are non-cyclic. For $G$ alternating and $p$ odd there are no cases by
Theorem~\ref{thm:An} except for $A_6\cong\PSL_2(9)$ with $p=3$. The Steinberg-like
characters of sporadic groups are listed in Theorem~\ref{thm:spor}.
\par
Thus $G$ is of Lie type. The case when $p$ is the defining prime was handled
in \cite{PZ} and Propositions~\ref{prop:B3 defchar} and~\ref{prop:D4 defchar},
respectively. So now assume $p$ is not the defining prime for $G$. Groups of
exceptional Lie type were handled in Theorem~\ref{thm:exc}. For
classical groups of large rank with $p$ odd, our result is contained in
Proposition~\ref{t5r6}, the cases for $\PSL_n(q)$ and $\PSU_n(q)$ with $p>2$
are completed in Theorem~\ref{nn6}, those for the other classical groups in
Propositions~\ref{a7a} and~\ref{b7b}. Finally, the cases with $p=2$ are covered
by Proposition~\ref{prop:PSL2} for $G=\PSL_2(q)$, Proposition~\ref{44d} for
$\PSL_n(q)$ and $\PSU_n(q)$ with $q\ne3$, Theorem~\ref{thm:GL p=2} for
$\PSL_n(3)$ and $\PSU_n(3)$, Propositions~\ref{prop:symplectic p=2},
\ref{prop:odd-orth p=2} and~\ref{prop:even-orth p=2} for classical groups with
$q\ne3$, and Theorem~\ref{ot8} for the case that $q=3$.
\end{proof}

\begin{proof}[Proof of Theorem~$ \ref{th2}$]
The characters of projective $\overline{\FF}_pG$-modules of dimension $|G|_p$
are in particular Steinberg-like, so in order to prove this result we need to
go through the list given in Theorem~\ref{th1}(2)--(5). When Sylow $p$-subgroups
of $G$ are cyclic, the possibilities are given in Lemma~\ref{re56}(b). For
$G$ of Lie type in characteristic~$p$, see \cite[Thm.~1.1]{Z11}. The case~(4)
of Theorem~\ref{th1} is subsumed in statement~(1), and finally the alternating
groups for $p=2$ are discussed in Theorem~\ref{22a}.
\end{proof}

%%%%%%%%%%%%%%%%%%%%%%%%%%%%%%%%%%%%%%%%%%%%%%%%%%%%%%%%%%%%%%%%%%%%%%%%%

\end{document}